\numberwithin{equation}{section}
\theoremstyle{plain}
\newtheorem{theo}{Theorem}[section]
\newtheorem{lem}[theo]{Lemma}
\newtheorem{prop}[theo]{Proposition}
\newtheorem{conj}[theo]{Conjecture}
\newtheorem{coro}[theo]{Corollary}
\newtheorem{claim}[theo]{Claim}
\theoremstyle{definition}
\newtheorem{defn}[theo]{Definition}
\newtheorem{defi}[theo]{Definition}
\newtheorem{const}[theo]{Construction}
\theoremstyle{remark}
\newtheorem{rem}[theo]{Remark}
\newtheorem{exam}[theo]{Example}
\newcommand{\Mo}{\mathcal{M}}
\newcommand{\Chow}{{\it Chow}}
\newcommand{\Fix}{{\rm Fix}}
\newcommand{\eq}{{\rm eq}}
\newcommand{\Spec}{\mathrm{Spec}}
\newcommand{\Aut}{\text{\rm Aut}}
\newcommand{\Hom}{\text{\rm Hom}}
\newcommand{\Q}{{\mathbb Q}}
\newcommand{\Z}{{\mathbb Z}}
\newcommand{\CH}{\mathrm{CH}}
\newcommand{\cone}{\mathrm{cone}}
\newcommand{\Tot}{\mathrm{Tot}}
\newcommand{\SL}{\mathrm{SL}}
\newcommand\et{{\text{\'{e}t}}}    
\newcommand\ch{{\mathrm{ch}}}    
\newcommand\cF{{\mathcal F}}
\newcommand\cC{{\mathcal C}}
\newcommand\cS{{\mathcal S}}
\newcommand\Lam{\Lambda}
\newcommand\Ln{\Lam_n}
\renewcommand\k{\kappa}
\newcommand{\wh}[1]{\widehat{#1}}
\newcommand{\ol}[1]{\overline{#1}}
\newcommand{\sumd}[2]{\underset{x\in {#1}_{(#2)}}\bigoplus}
\newcommand{\sumdy}[2]{\underset{y\in {#1}_{(#2)}}\bigoplus}
\newcommand{\sumdG}[2]{\underset{x\in {#1}_{(#2)}/G}\bigoplus}
\newcommand\bC{{\mathbb C}}
\newcommand\bR{{\mathbb R}}
\newcommand\bZ{{\mathbb Z}}
\newcommand\bQ{{\mathbb Q}}
\newcommand\bP{{\mathbb P}}
\newcommand\qz{{\bQ}/{\bZ}}
\newcommand\qzl{{\bQ_\ell}/{\bZ_\ell}}
\newcommand\nz{\bZ/n\bZ}
\newcommand\lnz{\bZ/\ell^n\bZ}
\newcommand{\indlim}[1]{\underset{{\underset{#1}{\longrightarrow}}}{\mathrm{lim}}\; }
\newcommand{\rmapo}[1]{\overset{#1}{\longrightarrow}}
\newcommand{\lmapo}[1]{\overset{#1}{\longleftarrow}}
\newcommand\isom{\overset{\cong}{\longrightarrow}}
\newcommand\qfor{\quad\text{for }}
\newcommand\Sb{\ol S}
\newcommand\Xb{\ol X}
\newcommand\Yb{\ol Y}
\newcommand\fb{\ol f}
\newcommand\pib{\ol \pi}
\newcommand\tX{\widetilde{X}}
\newcommand\tY{\widetilde{Y}}
\newcommand\Het{H^{\et}}
\newcommand{\graphhom}[2]{\gamma^{#1}_{#2}}
\newcommand\eqRS{{\text{\bf (RS)}_{eq}}}
\newcommand{\Xd}[1]{X_{(#1)}}
\newcommand{\Xcd}[1]{X^{(#1)}}
\newcommand{\Yd}[1]{Y_{(#1)}}
\newcommand{\Ud}[1]{U_{(#1)}}
\newcommand\Hc{H_{c}}
\newcommand{\HcG}[1]{\Hc^{#1}}
\newcommand{\HcGX}[2]{\Hc^{#1}(X,G;#2)}
\newcommand{\HcGY}[2]{\Hc^{#1}(Y,G;#2)}
\newcommand{\HcGU}[2]{\Hc^{#1}(U,G;#2)}
\newcommand{\HcGZ}[2]{\Hc^{#1}(Z,G;#2)}
\newcommand{\HcGx}[1]{\Hc^{#1}}
\newcommand{\HcHx}[1]{\Hc^{#1}}
\newcommand{\HG}[2]{\Het_{#1}(#2,G)}
\newcommand{\HWGM}[2]{H^W_{#1}(#2,G;M)}
\newcommand{\HWM}[2]{H^W_{#1}(#2;M)}
\newcommand{\HWG}[2]{H^W_{#1}(#2,G)}
\newcommand{\HWGL}[2]{H^W_{#1}(#2,G;\Lambda)}
\newcommand{\HWGLinf}[2]{H^W_{#1}(#2,G;\Linf)}
\newcommand{\HWGX}[1]{H^W_{#1}(X,G)}
\newcommand{\HWGXa}[1]{H^W_{#1}(X_a,G)}
\newcommand{\HWGQ}[2]{H^W_{#1}(#2,G;\bQ)}
\newcommand{\HWL}[2]{H^W_{#1}(#2;\Lam)}
\newcommand{\HWQ}[2]{H^W_{#1}(#2;\bQ)}
\newcommand{\HWLinf}[2]{H^W_{#1}(#2;\Linf)}
\newcommand\Shv{{\it Shv}}
\newcommand\ShGXL{{\it Shv}_G(X,\Lambda)}
\newcommand\ShGYL{{\it Shv}_G(Y,\Lambda)}
\newcommand{\ShGL}[1]{{\it Shv}_G(#1,\Lambda)}
\newcommand\ModLG{\Mod_{\Lambda[G]}}
\newcommand\ModL{Mod_{\Lambda}}
\newcommand\sch{\cC_{/k}}
\newcommand\Sch{\cC}
\newcommand\ChowG{\Chow_{G/k}}
\newcommand\schGkp{\Sch_{G/k'}}
\newcommand\schG{\Sch_{G/k}}
\newcommand\tk{\widetilde{k}}
\newcommand\eqsch{\Sch_{\eq/k}}
\newcommand\eqschp{\Sch_{\eq/k *}}
\newcommand\eqscht{\Sch_{\eq/\tk}}
\newcommand\cSeq{\cS_{\eq/k}}
\newcommand\cSeqp{\cS_{\eq/k'}}
\newcommand\cSpr{\cS^{\rm prim}_{/k}}
\newcommand\cSGpr{\cS^{\rm prim}_{G/k}}
\newcommand\cSG{\cS_{G/k}}
\newcommand\LSG{\Lam\cSG}
\renewcommand\Ln{\Lambda_n}
\newcommand\Linfty{\Lambda_\infty}
\newcommand\Linf{\Lambda_\infty}
\newcommand\Mod{{Mod}}
\newcommand{\KCM}[1]{KC(#1;M)}
\newcommand{\KHM}[2]{KH_{#1}(#2;M)}
\newcommand{\KHG}[2]{KH_{#1}(#2,G)}
\newcommand{\KHGX}[1]{KH_{#1}(X,G)}
\newcommand{\KHGXa}[1]{KH_{#1}(X_a,G)}
\newcommand{\KHLinf}[2]{KH_{#1}(#2,\Linf)}
\newcommand{\KHGM}[2]{KH_{#1}(#2,G;M)}
\renewcommand{\KHM}[2]{KH_{#1}(#2;M)}
\newcommand{\KHnz}[2]{KH_{#1}(#2;\nz)}
\newcommand{\KHqzl}[2]{KH_{#1}(#2;\qzl)}
\newcommand{\KHlnz}[2]{KH_{#1}(#2;\lnz)}
\newcommand{\KHGLinf}[2]{KH_{#1}(#2,G;\Linf)}
\newcommand{\KCGM}[1]{KC(#1,G;M)}
\newcommand{\edgehom}[2]{\epsilon^{#1}_{#2}}
\newcommand{\EGXM}[3]{E^{#1}_{#2,#3}(X,G;M)}
\newcommand{\EGX}[3]{E^{#1}_{#2,#3}(X,G)}
\newcommand\EYb{\overline{E}_Y}
\newcommand\pihat{\widehat{\pi}}
\renewcommand{\wh}[1]{\widehat{#1}}
\newcommand{\wht}[1]{\widehat{\tilde{#1}}}
\newcommand{\dualG}[1]{\Gamma(#1)}
\newcommand{\pics}[1]{\pi_1^{cs}(#1)}
\begin{document}

\title{
Cohomological Hasse principle and resolution of quotient singularities}

\author{Moritz Kerz}
\author{Shuji Saito}
\address{Moritz Kerz\\
NWF I-Mathematik\\
Universit\"at Regensburg\\
93040 Regensburg\\
Germany}
\email{moritz.kerz@mathematik.uni-regensburg.de}
\address{Shuji Saito\\
Interactive Research Center of Science, 
Graduate School of Science and Engineering,
 Tokyo Institute of Technology\\
Ookayama, Meguro\\
Tokyo 152-8551\\
Japan
}
\email{sshuji@msb.biglobe.ne.jp}

\begin{abstract}
In this paper we study weight homology of singular schemes.
Weight homology is an invariant of a singular scheme defined in terms of hypercoverings of resolution of singularities. 
Our main result is McKay principle for weight homology of quotient singularities, 
i.e.\ we describe weight homology of a quotient scheme in terms of weight homology of an equivariant scheme.
Our method is to reduce the geometric McKay principle for weight homology to Kato's cohomological Hasse principle for arithmetic schemes.
The McKay principle for weight homology implies McKay principle for the homotopy type of the dual complex of 
the exceptional divisors of a resolution of a quotient singularity.
As a consequence we show that the dual complex is contractible for isolated quotient singularities.
\end{abstract}

\maketitle

\tableofcontents

\section*{Introduction}\label{intro}

Quite a few examples have been observed which show that an arithmetic method can play a significant role 
for a geometric question. In this paper we present such a new example.
The geometric question concerns the dual (or configuration) complex of the exceptional divisors
of a resolution of a quotient singularity $X/G$, where $X$ is a quasi-projective smooth
scheme over a perfect field $k$
endowed with an action of a finite group $G$. Let $Z$ be the singular locus of $X/G$.
Assume furthermore that there exists a resolution of singularities $g:\tY \to X/G$ such that $g$ is proper birational and an isomorphism outside $Z$
and $E=g^{-1}(Z)_{red}$ is a simple normal crossing divisor on the smooth scheme $\tY$.
The dual complex $\dualG E$ is a $CW$-complex (which is a $\Delta$-complex in the sense of \cite[Section 2.1]{Hat})
whose $a$-simplices correspond to the connected components of 
\[
E^{[a]} =\underset{1\leq i_0< i_1< \cdots< i_{a}\leq N}{\coprod} 
E_{i_0}\cap\cdots\cap E_{i_a}
\]
where $E_1,\dots, E_N$ are the irreducible components of $E$.
\medbreak

A model case is a Klein quotient singularity $\bC^2/G$, where $G\subset \SL_2(\bC)$ acts linearly on $\bC^2$.
The origin $0\in \bC^2/G$ is the unique singular point of $\bC^2/G$.
Let $g: \tY \to \bC^2/G$ be the minimal resolution.
The irreducible components of the exceptional locus $E=g^{-1}(0)$ are rational curves
that form a configuration expressed by the Dynkin diagrams.
For example, if we take a binary dihedral
\[
G=<\sigma,\tau\;|\; \sigma^n=1,\; \tau^2=-1,\; \tau\sigma\tau=-\sigma^{-1} >,
\]
the irreducible components of $E$ form a configuration looking as

\begin{center}
\setlength{\unitlength}{1mm}
\hskip 50pt
\begin{picture}(200,25)

\put(0,0){\line(1,1){20}}
\put(15,20){\line(1,-1){20}}
\put(40,8){$\cdots$}
\put(55,0){\line(1,1){20}}
\put(70,20){\line(1,-1){20}}
\put(85,0){\line(1,1){20}}
\put(100,20){\line(1,-1){20}}
\put(92,20){\line(1,-1){20}}

\end{picture}
\end{center}
\bigskip
consisting of $(n+2)$ rational curves.
The configuration complex $\dualG E$ is 
\begin{center}
\setlength{\unitlength}{1mm}
\begin{picture}(200,20)

\put(18,0){\circle{1}}
\put(19,0){\line(1,0){17}}
\put(37,0){\circle{1}}
\put(38,0){\line(1,0){8}}
\put(50,-2){$\cdots$}
\put(60,0){\line(1,0){11}}
\put(72,0){\circle{1}}
\put(73,0){\line(1,0){17}}
\put(91,0){\circle{1}}
\put(92,0){\line(1,0){17}}
\put(110,0){\circle{1}}
\put(110.5,0.5){\line(1,1){12}}
\put(123,13){\circle{1}}
\put(110.5,-0.5){\line(1,-1){12}}
\put(123,-13){\circle{1}}
\put(123,13){\circle{1}}

\end{picture}
\end{center}
\vskip 50pt
with $(n+2)$ vertices and $(n+1)$ edges.
\medbreak

For general $G\subset\SL_2(\bC)$ McKay observed a mysterious coincidence of $\dualG E$ and
the so-called McKay graph which is computed in terms of representations of $G$.
The higher dimensional generalization of this fact, called McKay correspondence, is 
now a fertile land of algebraic geometry. For more details on this, we refer the readers 
to an excellent exposition by M.~Reid \cite{Re}. Here we just quote the following principle:
\medbreak\noindent
{\bf McKay principle:}
Let $G$ be a finite group and $X$ be a quasi-projective smooth $G$-scheme over a field $k$ (i.e.
a quasi-projective smooth scheme over $k$ endowed with an action of $G$), 
and $g:\tY\to X/G$ be a resolution of singularities of $X/G$.
Then the answer to any well posed question about the geometry of $\tY$ is 
the $G$-equivariant geometry of $X$.
\medbreak

In this paper we investigate McKay principle for the homotopy type of $\dualG E$, which is known to be 
an invariant of $X/G$ independent of a choice of a resolution $\tY\to X/G$ by a theorem of Stepanov
and its generalizations (\cite{St}, \cite{ABW}, \cite{Pa}, \cite{Th}).
\bigskip

Let us fix the setup for our main result.
Let $X$ and $G$ be as before and $\pi:X\to X/G$ be the projection.
Fix a closed reduced subscheme $S\subset X/G$ which is projective over $k$ and contains the singular locus $(X/G)_{sing}$ of $X/G$.
Let $T=\pi^{-1}(S)_{red}$ be the reduced part of $\pi^{-1}(S)$. 
Assume that we are given the following datum:
\begin{itemize}
\item
a proper birational morphism $g:\tY \to X/G$ such that $\tY$ is smooth, $E_S=g^{-1}(S)_{red}$ is a simple normal crossing divisor on $\tY$ and $g$ is an isomorphism over $X/G-S$.
\item
a proper birational $G$-equivariant morphism $f:\tX \to X$ in $\schG$ such that $\tX$ is a smooth $G$-scheme, 
$E_T=f^{-1}(T)_{red}$ is a $G$-strict simple normal crossing divisor on $\tX$ 
(cf. Definition \ref{def.Gstrict}) and $f$ is an isomorphism over $X-T$.
\end{itemize}
\[
\begin{CD}
\pi^{-1}(S)_{red}=\; @. T @>>>  X @<{f}<< \tX @<<< E_T = f^{-1}(T)_{red}   \\
 @. @VVV @VV{\pi}V\\
 @. S  @>>> {X/G} @<{g}<<   \tY  @<<< E_S=g^{-1}(S)_{red} 
\end{CD}
\]
\medbreak

Note that we do not assume the existence of a morphism $E_T \to E_S$.
By definition $G$ acts on $\Gamma(E_T)$ and we can form a $CW$-complex $\Gamma(E_T)/G$.

\begin{theo}\label{thm.MPhtintro}(Theorem \ref{thm.MPht}) 
We assume that $\ch(k)=0$ or that $k$ is perfect and canonical resolution of singularities in the sense of \cite{BM} holds over $k$.
In the homotopy category of $CW$-complexes, there exists a canonical map
\[
\phi: \Gamma(E_T)/G \to \Gamma(E_S)
\]
which induces isomorphisms on the homology and fundamental groups:
\[
H_a(\Gamma(E_T)/G) \isom  H_a(\Gamma(E_S))\qfor \forall a\in \bZ,
\]
\[
\pi_1(\Gamma(E_T)/G) \isom  \pi_1(\Gamma(E_S)).
\]
\end{theo}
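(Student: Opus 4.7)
The plan is to construct the map $\phi$ using a common refinement of the two given resolutions, deduce the homology isomorphism from the McKay principle for weight homology (the main result alluded to in the abstract and established earlier in the paper via Kato's cohomological Hasse principle), and finally upgrade the argument to the fundamental group via a van Kampen argument on the $2$-skeleton.

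For the construction of $\phi$, the composite $\pi\circ f:\tX\to X/G$ is $G$-invariant, so it factors through the quotient $\tX/G$, which is generally singular. Using the hypothesis of canonical resolution, I resolve $\tX/G$ and obtain, after further functorial blow-ups inside the preimage of $S$, a smooth scheme $\tZ$ dominating both $\tX/G$ and $\tY$, with $E_\tZ := (\tZ\to X/G)^{-1}(S)_{\rm red}$ a simple normal crossing divisor mapping to $E_S$. Pull-back of divisors then yields simplicial maps
\[
\Gamma(E_T)/G \lra \Gamma(E_\tZ) \lra \Gamma(E_S),
\]
whose composition is the desired $\phi$. The independence from the choice of $\tZ$, up to homotopy, is supplied by the theorems of Stepanov, Arkhipov--Bondal--Wemyss, Payne, and Thuillier cited in the introduction.

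The homology isomorphism then reduces to the weight-homology McKay principle: for any simple normal crossing divisor $E=E_1\cup\cdots\cup E_N$ in a smooth proper scheme, the reduced simplicial chain complex of $\Gamma(E)$ is canonically isomorphic to the weight-zero row of the spectral sequence associated to the simplicial scheme $E^{[\bullet]}$, so $\tilde H_*(\Gamma(E_S))$ is identified with the weight homology of the pair $(X/G,\,(X/G)\sm S)$ and $\tilde H_*(\Gamma(E_T)/G)$ with its $G$-equivariant counterpart for $(X,\,X\sm T)$. The previously established McKay principle for weight homology then gives the isomorphism on homology, compatibly with $\phi$.

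Finally, the fundamental group statement requires more work since $\pi_1$ is non-abelian whereas weight homology is abelian. Using that $\pi_1$ depends only on the $2$-skeleton, I would show surjectivity of $\pi_1(\phi)$ by lifting edge-loops in $\Gamma(E_S)$ to $\Gamma(E_T)/G$ via the McKay principle applied to $\pi_0$ of the double intersections $E_i\cap E_j$, and then establish injectivity by matching the defining relations that arise from connected components of triple intersections. The main obstacle is thus upgrading the weight-homology McKay principle to a statement about the $G$-equivariant combinatorics of connected components of double and triple intersections, i.e.\ verifying that the natural map from $G$-orbits of components of triple intersections in $E_T$ to components of triple intersections in $E_S$ is a bijection; once this combinatorial compatibility is established, a standard van Kampen argument on the $2$-skeleton yields bijectivity of $\pi_1(\phi)$.
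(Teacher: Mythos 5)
Your plan for the homology isomorphism is broadly the paper's: identify $H_*(\Gamma(E_T)/G)$ and $H_*(\Gamma(E_S))$ with weight homology (Example~\ref{Whom.ex}), then appeal to the McKay principle for weight homology (Theorem~\ref{thm.Whomgraph}); the paper does this through a five-lemma argument on the localization sequences for $E_T\hookrightarrow\tX$ and $E_S\hookrightarrow\tY$ (Claim~\ref{thm.fundgroup0}), using $(\tX-E_T)/G\cong\tY-E_S$, rather than a ``pair'' formulation, but the substance is the same. However, in your construction of $\phi$ the arrows run the wrong way: a resolution $\tZ\to\tX/G$ pushes forward to give $\Gamma(E_{\tZ})\to\Gamma(E_T)/G$, not a map $\Gamma(E_T)/G\to\Gamma(E_{\tZ})$. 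The paper instead modifies $\tX$ to a $\tX'$ (Claim~\ref{MP.claim1} and Theorem~\ref{thm.hidual}) so that a genuine morphism $\tX'/G\to\tY$ exists, which produces $\phi$ directly; then a further resolution $\tY'\to\tX'/G$ yields the factorization $\Gamma(E'_S)\to\Gamma(E_T)/G\to\Gamma(E_S)$ with composite a homotopy equivalence by the equivariant Stepanov theorem.

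The $\pi_1$ step is where your proposal has a genuine gap. You want to match $G$-orbits of connected components of double and triple intersections in $E_T$ against connected components of double and triple intersections in $E_S$ and run van Kampen on the $2$-skeleton. This cannot work in general: $\Gamma(E_T)/G$ and $\Gamma(E_S)$ arise from \emph{different} resolutions, and there is no reason for their $2$-skeletons to be combinatorially comparable --- Stepanov-type theorems give only homotopy equivalence, not a simplicial isomorphism, and under a proper birational modification the number and incidence of exceptional components can change drastically. The paper avoids this by introducing a genuinely different geometric tool in \S\ref{csfundgroup}: the completely split fundamental group $\pi_1^{cs}$, with the identification $\pi_1(\Gamma(E))\cong\pi_1^{cs}(E)$ (Proposition~\ref{prop.cspi1}, Corollary~\ref{csfundgroup.thm}) and the surjectivity statement for proper surjective morphisms with connected fibers (Proposition~\ref{thm.pics}). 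Applying this to $\tY'\to\tX'/G\to\tY$, whose fibers are geometrically connected by Zariski's main theorem, gives surjectivity of both induced maps on $\pi_1$; since the composite $\pi_1(\Gamma(E'_S))\to\pi_1(\Gamma(E_T)/G)\to\pi_1(\Gamma(E_S))$ is an isomorphism by Stepanov, both factors are isomorphisms. You would need to replace your van Kampen idea with something of this kind --- a scheme-theoretic interpretation of $\pi_1$ of the dual complex that is insensitive to the particular simplicial model.
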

\medbreak

By using basic theorems in algebraic topology (Whitehead and Hurewicz), Theorem \ref{thm.MPhtintro}
implies the following:

\begin{coro}\label{thm.homequiv.intro} 
Let the assumption be as in Theorem \ref{thm.MPhtintro}.
\begin{itemize}
\item[(1)]
If $\Gamma(E_T)/G$ is simply connected, $\phi$ is a homotopy equivalence.
\item[(2)]
If $\dualG {E_T}/G$ is contractible, $\dualG {E_S}$ is contractible.
\item[(3)]
If $T$ is smooth (e.g. $\dim(T)=0$ which means that $(X/G)_{sing}$ is isolated), 
$\dualG {E_S}$ is contractible.
\end{itemize}
\end{coro}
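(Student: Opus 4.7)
My plan is to deduce all three parts directly from Theorem \ref{thm.MPhtintro} by combining the map $\phi$ with classical results of algebraic topology (Whitehead and Hurewicz), together with a natural choice of $G$-equivariant resolution in part (3).

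For part (1), I would first transport simple connectivity of $\Gamma(E_T)/G$ to $\Gamma(E_S)$ via the $\pi_1$-isomorphism supplied by the theorem. The map $\phi$ is then a morphism between simply connected CW complexes inducing isomorphisms on integral homology in every degree; by the homological version of Whitehead's theorem (obtained from the relative Hurewicz theorem applied to the mapping cylinder of $\phi$), it is a weak equivalence, and Whitehead's theorem for CW complexes upgrades this to a genuine homotopy equivalence. Part (2) is then immediate: contractibility of $\Gamma(E_T)/G$ implies simple connectivity, so part (1) gives that $\phi$ is a homotopy equivalence, whence $\Gamma(E_S)$ is contractible.

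The only substantive geometric input appears in part (3). Arguing componentwise on $S$, I may assume $S$ is connected, so that $T=\pi^{-1}(S)_{red}$ is $G$-invariant and, by hypothesis, smooth. I would then take $f:\tX\to X$ to be the $G$-equivariant blow-up of $X$ along $T$. Since the center is smooth and $G$-invariant, $\tX$ is smooth, $f$ is an isomorphism over $X-T$, and the exceptional divisor is the projectivized normal bundle $E_T=\mathbb{P}(N_{T/X})$, a disjoint union of smooth irreducible divisors with no pairwise intersections. Hence $E_T$ is trivially a $G$-strict simple normal crossing divisor in the sense of Definition \ref{def.Gstrict}, its dual complex $\Gamma(E_T)$ is the discrete set of connected components of $T$, and these form a single $G$-orbit because the topological quotient $T/G$ is homeomorphic to the connected $S$. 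Therefore $\Gamma(E_T)/G$ is a single point, and part (2) yields contractibility of $\Gamma(E_S)$.

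The main obstacle I anticipate is not conceptual but rather one of bookkeeping: the topological inputs are completely standard, but I would need to verify carefully that the blow-up construction in part (3) fits the setup of the theorem---specifically the $G$-strict SNC condition, the smoothness of $\tX$, and the resolution hypothesis (characteristic zero or canonical resolution in the sense of \cite{BM}). The $G$-strict SNC property is essentially vacuous here because distinct exceptional components lie over disjoint connected components of $T$ and so are pairwise disjoint, so the real content of (3) lies in choosing this particular $f$ and recognizing that Theorem \ref{thm.MPhtintro} does not depend on which admissible $f$ is chosen.
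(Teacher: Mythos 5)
Your proposal is correct and follows essentially the same route as the paper: parts (1) and (2) are obtained by feeding the homology and $\pi_1$ isomorphisms of Theorem \ref{thm.MPhtintro} into the Hurewicz/Whitehead machinery, and part (3) is obtained by choosing $f$ to be the blow-up of $X$ along the smooth $G$-invariant center $T$, whence $E_T$ is a disjoint union of irreducible divisors (so trivially $G$-strict SNC), $\Gamma(E_T)$ is discrete, and $\Gamma(E_T)/G$ is a point. Your extra remark about reducing to connected $S$ makes explicit a step the paper elides (it only notes that $\Gamma(E_T)/G$ consists of $0$-simplices, which forces contractibility only when there is a single $G$-orbit of components), but this does not change the argument in substance.
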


We will deduce the following variant of Corollary \ref{thm.homequiv.intro}(3).

\begin{coro}(Corollary \ref{thm.homequiv3})
Let $A$ be a complete regular local ring containing $\Q$ and let $G$ be a finite group acting on
$A$. Set $X=\Spec (A)$ and assume that $X/G$ has an isolated singularity $s\in X/G$. Let $g:\tilde Y\to
X/G$ be a proper morphism such that $g$ is an isomorphism outside $s$ and $E_s=g^{-1}(s)_{red}$
is a simple normal crossing divisor in the regular scheme $\tilde Y$. Then the topological
space $\Gamma(E_s)$
is contractible.
\end{coro}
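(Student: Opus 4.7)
The plan is to reduce the statement to Corollary \ref{thm.homequiv.intro}(3) by producing a quasi-projective algebraic $G$-model of $X = \Spec(A)$ whose completion at a $G$-fixed closed point is $X$ equivariantly, and then transferring the conclusion back across the completion.

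I first linearize the $G$-action. Since $A$ is complete regular local containing $\Q$ and $G$ acts fixing the maximal ideal $\fm$, the residue field $k$ has characteristic zero and $|G|$ is invertible in $k$. A standard averaging (Cartan's lemma) supplies a $G$-equivariant coefficient field $k \hookrightarrow A$ together with a $G$-equivariant $k$-linear section $\fm/\fm^2 \to \fm$; passing to the symmetric algebra and completing produces a $G$-equivariant isomorphism $A \cong k[[V^\vee]]$, where $V = (\fm/\fm^2)^\vee$ is the cotangent representation of $G$. Set $W = \Spec(k[V^\vee]) = \AAA_k^n$ with the induced linear $G$-action; it is a smooth quasi-projective $G$-scheme whose formal completion at the origin is $G$-equivariantly $X$.

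Next I arrange the hypothesis of the main corollary on the global model. Because $X/G$ has an isolated singularity at $s$ and the completion $W \to \wh{W}_0$ is faithfully flat, the singular locus of $W/G$ is isolated at $0$; removing the $G$-preimage of $(W/G)_{sing} \setminus \{0\}$ yields a $G$-invariant quasi-projective open subscheme $U \subseteq W$ containing $0$ with $(U/G)_{sing} = \{0\}$. By resolution of singularities in characteristic zero, choose a resolution $g_0 : \tilde Y_0 \to U/G$ that is an isomorphism over $U/G \setminus \{0\}$ and has $g_0^{-1}(0)_{red}$ simple normal crossing. Since $U$ is already smooth, take $\tilde X_0 = U$, so that the corresponding $E_T = \pi_U^{-1}(0)_{red}$ is a finite set of reduced points, hence smooth. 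Corollary \ref{thm.homequiv.intro}(3) applies and yields that $\Gamma(g_0^{-1}(0)_{red})$ is contractible.

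Finally I identify $\Gamma(E_s)$ with $\Gamma(g_0^{-1}(0)_{red})$ up to homotopy. Both are dual complexes of good resolutions of a singularity that becomes the same after formal completion, namely $\Spec(A^G) \cong \Spec\bigl((k[[V^\vee]])^G\bigr)$. By Artin approximation one algebraizes the resolution $g$ over an étale neighborhood of $0 \in U/G$, and the invariance up to homotopy of the dual complex of an SNC resolution (Stepanov \cite{St} and its generalizations \cite{ABW}, \cite{Pa}, \cite{Th}) then identifies the homotopy types of the three resulting dual complexes. Consequently $\Gamma(E_s)$ is contractible. The main obstacle is this last step: the cited invariance is usually formulated for resolutions of a fixed quasi-projective singularity, so transferring it to the complete-local setting requires either an Artin-approximation argument that algebraizes $g$ while preserving the SNC structure of $E_s$, or a direct étale-local version of Stepanov's weak-factorization argument applied to the formal neighborhood of $s$; everything else is routine given the main results of the paper.
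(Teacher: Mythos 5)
Your overall strategy is reasonable, and the Cartan-lemma linearization of the $G$-action to obtain an explicit quasi-projective model $W=\AAA^n_k$ is a clean alternative to the paper's route (which instead algebraizes the covering $X\to X/G$ by combining Artin's algebraization of the isolated singularity $Y=\Spec(A^G)$ with Elkik's theorem to lift the group action). A minor point: before invoking Corollary~\ref{thm.homequiv.intro}(3) you should not take $\tX_0=U$ itself, since the hypotheses require $E_T$ to be a $G$-strict simple normal crossing \emph{divisor}; one blows up $U$ along the finite set $T$, which is exactly how the paper justifies that corollary. Also be aware that if $G$ acts nontrivially on $A/\fm$, the linearization gives a $G$-action on the coefficient field, so some care is needed to land in the category $\eqsch$ of $G$-schemes over a field with trivial $G$-action.

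The real problem is the step you yourself flag as the ``main obstacle,'' and it is precisely the substance of the paper's proof rather than a routine patch. Producing a contractible algebraic dual complex $\Gamma(g_0^{-1}(0)_{red})$ does not by itself say anything about the given formal-local resolution $g$: the two resolutions live over schemes that agree only after completion, and Stepanov's theorem (and the other references \cite{ABW}, \cite{Pa}, \cite{Th}) is proved for resolutions of a fixed quasi-projective variety, not for resolutions of $\Spec$ of a complete local ring. The paper closes this gap directly: after algebraizing $Y$, $X\to Y$, and the $G$-action, it uses Artin approximation once more to produce a proper $\tilde Y'\to Y'$ whose base change to $Y=\Spec(A^G)$ agrees with $\tilde Y\to Y$ to arbitrarily high order with respect to the maximal ideal. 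Choosing the order large enough guarantees that $\tilde Y'$ is regular, $\tilde Y'\to Y'$ is an isomorphism outside $s'$, and $E_{s'}=g'^{-1}(s')_{red}$ is a simple normal crossing divisor with $\Gamma(E_{s'})=\Gamma(E_s)$ as topological spaces (not merely up to homotopy), because the reduced exceptional fibers and their incidence pattern are determined by a sufficiently thick infinitesimal neighborhood of $s$. At that point one applies Corollary~\ref{thm.homequiv2} directly to the algebraic setup $(X',G)$ and $\tilde Y'\to X'/G$, and no appeal to Stepanov is needed. Your detour through the auxiliary resolution $g_0$ and a complete-local Stepanov statement is therefore both the source of the gap and avoidable: algebraize $g$ itself (together with the $G$-cover) and apply the main corollary once.
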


\bigskip

Here we recall some known results on the contractibility of the dual complex of the exceptional divisor of a resolution of 
singularities. Let $(Y,S)$ be an isolated singularity over algebraically closed field of characteristic $0$. 
We recall the following implications for $(Y,S)$:
\[
\text{finite quotient $\Rightarrow$ KLT $\Rightarrow$ rational,}
\]
where $KLT$ stands for Kawamata log terminal.
Let $g: \tY\to Y$ be proper birational such that 
$\tY$ smooth, $E_S=g^{-1}(S)_{red}$ is a simple normal crossing divisor and $g$ is an isomorphism over $Y-S$.
The following facts are known:
\begin{itemize}
\item
If $(Y,S)$ is rational, $H_*(\Gamma(E_S))$ is torsion (\cite{ABW2}).
The proof uses weight argument in Hodge theory.
\item
there exists a rational singularity $(Y,S)$ such that $\Gamma(E_S)$ has the homotopy type of $\bP^2_{\bR}$,
in particular $\pi_1(\Gamma(E_S))= H_1(\Gamma(E_S))=\bZ/2\bZ$ (\cite{Pa}).
\item
If $(Y,S)$ is KLT, $ \pi_1(\Gamma(E_S))=1$. This is a consequence of a theorem of Koll\'ar and Takayama (\cite{Ko}, \cite{Ta}).
\end{itemize}

Recently, the intriguing question  whether $\Gamma(E_S)$ is contractible if $(Y,S)$ is KLT has been
answered positively in \cite{FKX}.
\bigskip

Now we explain the main idea of the proof of Theorem \ref{thm.MPhtintro}.
The proof of the assertion on the fundamental group relies on a geometric interpretation of
the fundamental group of the dual complex of a simple normal crossing divisor.
More generally, for any locally noetherian scheme $E$, we associate a $CW$-complex $\Gamma(E)$ to $E$
in the same way as the case of simple normal crossing divisors and show a natural isomorphism
\[
\pi_1(\Gamma(E))\simeq \pics E,
\]
where the right hand side denotes the classifying group of {\it cs-coverings} $E'\to E$,
which is by definition a morphism of schemes such that any point $x\in E$ has a Zariski open neighborhood $U\subset E$ such that $E'\times_E U \simeq \coprod U$ with a possibly infinite coproduct (see \S\ref{csfundgroup}). 
\bigskip
 
In order to show the assertion of Theorem \ref{thm.MPhtintro} on the homology groups, 
we need introduce the equivariant weight homology. 
Let $\eqsch$ be the category of pairs $(X,G)$ of a finite group $G$ and a $G$-scheme $X$ which is quasi-projective over $k$ (see \S\ref{eqWhom} for the definition of morphisms in $\eqsch$). 
For simplicity we ignore the $p$-torsion if $\ch(k)=p>0$ and work over $\Lam:=\displaystyle{\bZ[\frac{1}{p}]}$
(we can extend the following results to the case $\Lam=\bZ$ assuming appropriate form of resolution of singularities).
Let $\ModL$ be the category of $\Lam$-modules.
An equivariant homology theory $H=\{H_a\}_{a\ge 0}$ with values in $\Lam$-modules on $\eqsch$ is
a sequence of functors:
$$
H_a(-): \eqsch  \rightarrow \ModL\quad (a\in \bZ_{\geq 0})
$$
which are covariant for proper morphisms and contravariant for strict open immersions
(a strict morphism in $\eqsch$ means a $G$-equivariant morphism for some fixed $G$) such that
if $i:Y\hookrightarrow X$ is a strict closed immersion in $\eqsch$,
with open complement $j:V\hookrightarrow X$ there is a long exact sequence
called localization sequence (see Definition \ref{def.eqhom})
$$
\cdots\rmapo{\partial} H_a(Y) \rmapo{i_*} H_a(X) \rmapo{j^*} H_a(V) \rmapo{\partial} H_{a-1}(Y)
\longrightarrow \cdots.
$$

A main input is the following result (see Theorem \ref{thm.Whom}).

\begin{theo}\label{thm.intro.Whom}
For a $\Lam$-module $M$, there exists a homology theory on $\eqsch$ with values in $\Lam$-modules:
\[
(X,G) \mapsto \HWGM {a} X\quad (a\in \bZ_{\geq 0})
\]
called equivariant weight homology with coefficient $M$ satisfying the following condition:
Let $E$ be a projective $G$-scheme over $k$ which is a $G$-strict simple normal crossing divisor 
on a smooth $G$-scheme over $k$ (see Definition \ref{def.Gstrict} for $G$-strict).
Then we have 
\[
\HWGM {*} E \simeq H_*(\Gamma(E)/G)
\]
which is computed as the homology of the complex:
$$ 
\cdots\to M^{\pi_0(E^{[a]})/G} \rmapo{\partial} M^{\pi_0(E^{[a-1]})/G} 
\rmapo{\partial} \cdots \rmapo{\partial} M^{\pi_0(E^{[1]})/G}. 
$$
In particular, if $X$ is a projective smooth $G$-scheme over $k$, we have

$$\HWGM a X \cong \left\{ \begin{array}{lr}
M^{\Xcd 0/G} & \text{ for } a=0 \\
0 & \text{ for } a\ne 0 \end{array} \right. $$ 
where $\Xcd 0$ is the set of the generic points of $X$.
\end{theo}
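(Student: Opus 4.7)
The plan is to construct $\HWGM{*}{-}$ by adapting the Gillet--Soul\'e--Guill\'en--Navarro weight complex formalism to $G$-equivariant schemes, bootstrapping from the combinatorial complex that appears in the statement. On the full subcategory of $\eqsch$ consisting of projective $G$-schemes $E$ that are $G$-strict simple normal crossing divisors on a smooth ambient $G$-scheme, I define a functor $W_\bullet(E,G;M)$ to $C^-(\ModL)$ by the explicit complex $\cdots\to M^{\pi_0(E^{[a]})/G}\to M^{\pi_0(E^{[a-1]})/G}\to\cdots$ with alternating-sum face maps induced by the inclusions of deeper intersections into shallower ones. The $G$-strict hypothesis (Definition~\ref{def.Gstrict}) is exactly what guarantees that $G$ permutes the irreducible components, and hence the set of connected components of each stratum, so that $\pi_0(E^{[a]})/G$ is meaningful.

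For a general $(X,G)\in\eqsch$ I would extend by $G$-equivariant cubical hyperresolution. Under the standing hypothesis on $k$, canonical resolution of singularities is functorial and hence $G$-equivariant, and equivariant compactification is available, so one can build a $G$-equivariant hyperresolution $X_\bullet\to X$ whose terms lie in the SNCD subcategory above. Setting $\HWGM{a}{X}:=H_a(\Tot W_\bullet(X_\bullet,G;M))$ gives the candidate theory. Independence of the choice of hyperresolution, covariance for proper $G$-maps, and contravariance for strict open immersions would all follow from an equivariant Guill\'en--Navarro descent principle: the acyclicity axiom to check is that $W_\bullet(-,G;M)$ carries any $G$-equivariant abstract blow-up square (with smooth $G$-stable center inside a smooth $G$-ambient scheme) to an acyclic square in $C^-(\ModL)$, which amounts to a direct combinatorial analysis of the stratification of the exceptional divisor as a $G$-equivariant projective bundle over the center. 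The localization sequence is then obtained by choosing compatible $G$-equivariant hyperresolutions of $Y\hookrightarrow X\hookleftarrow V$ so that $W_\bullet(X_\bullet)$ sits in a termwise short exact sequence with $W_\bullet(Y_\bullet)$ and $W_\bullet(V_\bullet)$, and passing to total complexes.

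The closing assertions of the theorem are then immediate from the construction: when $E$ itself lies in the SNCD subcategory, the identity is already a hyperresolution, so $\HWGM{*}{E}$ computes the homology of the displayed complex, which is precisely the cellular chain complex with $M$-coefficients of the quotient CW-complex $\Gamma(E)/G$; if $X$ is projective smooth then $E^{[\bullet]}$ collapses to $X$ in degree $0$ and is empty elsewhere, giving $\HWGM{0}{X}=M^{\Xcd 0/G}$ and vanishing in higher degrees. The main obstacle will be executing equivariant Guill\'en--Navarro descent: one must verify that $G$-equivariant hyperresolutions form a ``category of acyclic squares'' in the sense of \emph{loc.~cit.}, which requires canonicity of resolution (to force $G$-equivariance) together with a careful analysis of how $G$ permutes components and connected components of strata under equivariant blow-ups, particularly when $G$ acts with non-trivial inertia on the center.
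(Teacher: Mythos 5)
Your proposal takes a genuinely different route from the paper, and the route has a gap that is fatal in positive characteristic.

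The paper does \emph{not} start from the SNCD complex and extend by cubical hyperresolution in the Guill\'en--Navarro style. Instead (Construction~\ref{constweighthom}, via \S\ref{envelopes}--\S\ref{descentii}) it defines a \emph{pre-homology functor} only on smooth projective $(X,G)$, namely $F^W_M(X,G)=M^{\Xcd 0/G}[0]$, observes that this factors through the category $\ChowG$ of equivariant Chow motives, and deduces the descent property $\text{\bf (D)}_F$ abstractly from Theorem~\ref{DescentiiThm}: any complex of smooth projectives that is killed by every descent-satisfying homology functor is in particular killed by the Gersten--Quillen functors $R_q$, hence (Proposition~\ref{propdescentGS}) is zero in $K_+(\ChowG)$, hence killed by $F^W_M$. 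The extension to all of $\eqsch$ is then done by $\Lam$-admissible simplicial hyperenvelopes and a cone construction \`a la Gillet--Soul\'e (Construction~\ref{funconst}); the formula for an SNCD $E$ appears only afterwards, as a computation (Example~\ref{Whom.ex}), because the \v{C}ech simplicial scheme $\coprod E_i^{[\bullet]}$ is a hyperenvelope of $E$. Your plan inverts this: you take the SNCD complex as the primitive datum and propose to verify the Guill\'en--Navarro acyclicity axiom for equivariant blow-up squares by hand. That verification is itself a substantial combinatorial task (and abstract blow-up squares are not all projective-bundle blow-ups, so ``a direct combinatorial analysis of the exceptional divisor as a $G$-equivariant projective bundle'' does not cover the required generality), whereas the paper's Chow-motive factorization dispenses with it in one stroke. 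Both approaches would need the localization axiom and independence-of-resolution, but the paper gets these from the uniformity of hyperenvelopes and the $\text{\bf (D)}_{\Phi}$ criterion, whereas you defer to an equivariant Guill\'en--Navarro formalism whose existence you don't establish.

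The decisive gap is positive characteristic. The statement is meant to hold under condition $(\bigstar)$, and the most important branch of that condition is $\Lambda=\bZ[1/p]$ with $\chara(k)=p>0$, where \emph{no} equivariant resolution of singularities is assumed. There the paper works with $\Lam$-admissible envelopes — proper maps that are surjective on $G$-equivariant $L$-points only up to a prime-to-$\ell$ extension — and produces them from Gabber's equivariant refinement of de Jong's alteration theorem (Theorem~\ref{Gabberres}). These are emphatically not hyperresolutions in the Guill\'en--Navarro sense. Your assertion that ``canonical resolution of singularities is functorial and hence $G$-equivariant'' under the standing hypothesis is simply false when $\chara(k)>0$ and $\Lam=\bZ[1/p]$, and a cubical-hyperresolution construction cannot even get off the ground in that case. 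Your argument would at best reprove the theorem in the case $\Lam=\bZ$ and $\eqRS$ (essentially characteristic zero), which misses the arithmetic case the paper cares most about.
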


In case $G$ is the trivial group $e$, we write $\HWM a X$ for $\HWGM a X$.
\medbreak

The construction of $\HWGM * X$ hinges on a descent argument due to
Gillet--Soul\'e (\cite{GS1}, \cite{GS2}) and Jannsen \cite{J1},
going back to Deligne \cite{SGA4} Expos\'e $\rm V^{bis}$. If $X$ is proper over $k$, we take a $\Lam$-admissible hyperenvelope
$(X_\bullet,G)$ (see Definition \ref{def.hypenv}) which is a certain simplicial object in $\eqsch$ such that
$X_n$ are smooth projective over $k$ for all $n$, and then define $\HWGM * X$ as homology groups of the complex
$$ 
\cdots\to M^{\pi_0(X_n)/G} \rmapo{\partial} M^{\pi_0(X_{n-1})/G} 
\rmapo{\partial} \cdots \rmapo{\partial} M^{\pi_0(X_0)/G}. 
$$
The existence of hyperenvelopes relies on equivariant resolution of singularities.
In case $\ch(k)=0$ it follows from canonical resolution of singularities \cite{BM}.
In case $\ch(k)>0$ the construction depends on Gabber's refinement of de Jong's alteration theorem \cite{Il}.
\medbreak

Theorem \ref{thm.MPhtintro} is deduced from the following theorem
(see Theorem \ref{thm.Whomgraph} and  Corollary \ref{cor.Whomgraph2} in \S\ref{eqWhom}).

\begin{theo}[McKay principle for weight homology] \label{MP.intro}
For $(X,G)\in \eqsch$ let $\pi: X\to X/G$ be the projection viewed as a morphism 
$(X,G)\to (X/G,e)$ in $\eqsch$, where $e$ is the trivial group.
Then the induced map
$$\pi_* : \HWGM{a}X\to \HWM {a} {X/G}$$
is an isomorphism for all $a\in \bZ$. In particular, if $X$ is projective smooth over $k$, we have
$\HWM {a} {X/G}=0$ for $a\not=0$.
\end{theo}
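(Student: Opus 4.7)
The plan is to prove the isomorphism by Noetherian induction on $\dim(X)$. Both assignments $(X,G)\mapsto \HWGM{a}{X}$ and $(X,G)\mapsto \HWM{a}{X/G}$ define equivariant homology theories on $\eqsch$, and $\pi_*$ is a natural transformation between them that is compatible with the localization long exact sequences. Hence for a $G$-invariant closed immersion $Y\hookrightarrow X$ with open complement $U$, the five-lemma applied to the commutative ladder of localization sequences shows that the theorem for $(Y,G)$ and $(U,G)$ implies it for $(X,G)$. In particular, by the inductive hypothesis, I may replace $X$ by any $G$-invariant dense open subset, at the cost of handling lower-dimensional strata separately.

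First I would reduce to the case where $X$ is smooth: the singular locus $X_{\mathrm{sing}}$ is $G$-invariant and of strictly smaller dimension, so the inductive hypothesis applies to $(X_{\mathrm{sing}},G)$ and localization reduces the claim to its smooth open complement. Next, using $G$-equivariant smooth compactification (available under the resolution hypothesis of the paper), I would embed $X$ into a smooth projective $G$-scheme $\bar X$ with boundary a $G$-strict simple normal crossing divisor of strictly smaller dimension; again the boundary is handled inductively and the localization sequence reduces the problem to the case of $X$ smooth projective.

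For smooth projective $X$, Theorem \ref{thm.intro.Whom} directly computes $\HWGM{a}{X}=M^{\pi_0(X)/G}$ concentrated in degree $0$. Since the finite quotient map induces a bijection $\pi_0(X)/G \cong \pi_0(X/G)$, the theorem becomes equivalent to the two assertions $\HWM{a}{X/G}=0$ for $a>0$ and $\HWM{0}{X/G}\cong M^{\pi_0(X/G)}$. To establish these, I would construct a $\Lambda$-admissible hyperenvelope $Y_\bullet \to X/G$ with $Y_n$ smooth projective, obtained by starting from a resolution $g:\tilde Y\to X/G$ whose exceptional divisor is simple normal crossing, and then iteratively resolving the singularities of the singular locus $S=(X/G)_{\mathrm{sing}}$ together with the fiber products governing the cosimplicial structure. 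The associated descent spectral sequence
\[
E^1_{p,q}=\HWM{q}{Y_p}\Longrightarrow \HWM{p+q}{X/G}
\]
collapses onto the row $q=0$, reducing the problem to showing that the $\pi_0$-complex $M^{\pi_0(Y_\bullet)}$ has homology concentrated in degree $0$ with value $M^{\pi_0(X/G)}$.

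The main obstacle is the vanishing of this $\pi_0$-complex in positive degrees. This is precisely where the arithmetic input announced in the introduction enters: by identifying the complex with a fragment of a Kato-type arithmetic complex naturally attached to $X/G$ and its successive resolutions, Kato's cohomological Hasse principle supplies the required exactness in positive degrees. The singular-locus contributions in the descent, namely those supported on $S$, are handled by the inductive hypothesis after comparing $S$ to the $G$-equivariant $T=\pi^{-1}(S)_{red}$ of strictly smaller dimension. Combining the reduction steps with this arithmetic vanishing closes the induction and yields the isomorphism $\pi_*\colon \HWGM{a}{X}\isom \HWM{a}{X/G}$ for all $a$.
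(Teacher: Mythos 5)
Your proposal captures the right global idea — use a descent spectral sequence for weight homology and invoke Kato's cohomological Hasse principle to control the $\pi_0$-complex — but the way the arithmetic input enters is where the argument has a genuine gap, and you also skip two bookkeeping steps that are essential.

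The paper does not "identify the $\pi_0$-complex with a fragment of a Kato-type arithmetic complex." Those two complexes are not the same object and there is no direct morphism between them; comparing them is exactly the content that needs a proof. What the paper does instead is construct a second full-fledged homology theory, the equivariant Kato homology $\KHGM a X$ (Definitions \ref{def.eqKC}--\ref{def.eqKH}), for which McKay is a \emph{tautology}: the complex $\KCGM X$ is literally isomorphic to $\KCM {X/G}$ (Proposition \ref{eqKH.prop1}). One then builds a comparison map of homology theories $\gamma_M\colon \KHM a - \to \HWM a -$ (Theorem \ref{thm.weighthom}) and proves it is an isomorphism for $M=\Linfty$ (Theorem \ref{mainthm1}). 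The non-trivial step there — and this is what your proposal is missing — is the quadrilateral argument: for $(X,G)\in\cSeq$ one has a commutative square
\[
\begin{CD}
\KHGM a X @>{\cong}>{\pi_*}> \KHM a {X/G} \\
@VV{\gamma}V @V{\cong}V{\gamma}V \\
\HWGM a X @>>{\pi_*}> \HWM a {X/G}
\end{CD}
\]
where the top is a tautological isomorphism and the right is an isomorphism by the (non-equivariant!) Kato conjecture for smooth projective schemes plus the hyperenvelope descent Lemma \ref{eqKhom.lem3}; since $\HWGM a X = 0$ for $a\neq 0$ by construction, the commutativity forces $\KHGM a X = 0$. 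That vanishing is what then powers Lemma \ref{eqKhom.lem3} for \emph{all} $G$ and yields McKay for weight homology. Nothing in your sketch supplies the analogous leverage: you cannot apply the Kato conjecture directly to the singular scheme $X/G$, nor to an equivariant $(Z,G)\in\cSeq$, because the published Hasse principle (Theorem \ref{thm.KeS}) is for $G=e$ and smooth projective $Z$ only.

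Two further gaps. \textbf{Coefficients:} the Hasse-principle input is available only for torsion coefficients $M=\Linfty$ (or $\Lam_n$ over a finite field). The statement you are trying to prove is for arbitrary $\Lam$-modules $M$. The paper's proof of Theorem \ref{thm.Whomgraph} devotes a whole paragraph to this: first reduce to $M=\Lam$ via the universal-coefficient spectral sequence of Theorem \ref{thm.Whom}$(iv)$, then use finite generation of $\HWGL a X$ (Theorem \ref{thm.Whom}$(v)$), work in the Serre quotient category $\Mod_\Lam/\mathit{Modf}_\Lam$ to transport the isomorphism from $\Linfty$ to $\Q$, show via Lemma \ref{lem.equalQLam} that it is a genuine isomorphism of $\Q$-modules, and finally conclude for $\Lam$ by the five lemma on the sequence $0\to\Lam\to\Q\to\Linfty\to 0$. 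Your proposal never says which coefficients you are working with. \textbf{Base field:} Theorem \ref{thm.KeS} requires $k$ to be a purely inseparable extension of a finitely generated field; the McKay theorem is over arbitrary (perfectible) $k$. The paper reduces to the finitely generated case using Theorem \ref{thm.Whom}$(vi)$, and you do not address this. Finally, the Noetherian-induction/localization reduction to the smooth projective case in your first paragraph is valid but unnecessary: once $\gamma_M$ is an isomorphism of \emph{homology theories} and the Kato-side McKay is tautological, the conclusion holds for all $(X,G)$ at once.
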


Note that the last statement is non-trivial since $X/G$ may be singular even though $X$ is smooth.

\medskip

For $k=\ol k$ and $M$ uniquely divisible Theorem~\ref{MP.intro} follows from the yoga of weights. 
To see this let us for simplicity
assume $k=\mathbb C$. In terms of the weight filtration on singular cohomology
with compact support, see \cite{D2}, one has
\[
H^W_a(X,G;M) = \Hom_\bQ ( W_0 H^a_c(X_{\rm an} , \bQ )^G ,M ) ,
\]
where the upper index $G$ stands for $G$-invariants. It is not
difficult to check that 
\[
H^a_c( ( X/G)_{\rm an} , \bQ ) \stackrel{\simeq}{\to}  H^a_c(X_{\rm an} , \bQ )^G 
\]
is an isomorphism, from which Theorem~\ref{MP.intro} follows in this
special case.

\medskip

Our basic strategy to show Theorem \ref{MP.intro} in general is to
introduce another (arithmetic)   homology theory
\[
(X,G) \mapsto \KHGM {a} X\quad (a\in \bZ_{\geq 0})
\]
called equivariant Kato homology with coefficient $M$ as a
replacement for the weight zero part of singular cohomology with
compact support above. Here  we assume that $M$ is a torsion $\Lam$-module.
As in case of the weight homology, we simply write $\KHGM {a} X=\KHM {a} X$ in case $G=e$.
For this homology theory McKay principle is rather easily shown: 
 
\begin{prop}\label{MPKH.intro}(Proposition \ref{eqKH.prop1})
For $(X,G)\in \eqsch$, $\pi: X\to X/G$ induces an isomorphism
$$
\pi_* : \KHGM{a}X\isom \KHM {a} {X/G}\;\;\text{for all } a\in \bZ.
$$ 
\end{prop}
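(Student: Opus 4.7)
The plan is to lift the claimed isomorphism from homology to the underlying Kato chain complexes. By definition $\KHGM a X$ and $\KHM a {X/G}$ are the homology groups of complexes $\KCGM X$ and $\KCM {X/G}$ whose term in degree $a$ is a direct sum of Galois cohomology groups indexed respectively by $G$-orbits of points of dimension $a$ on $X$ and by points of dimension $a$ on $X/G$, and $\pi_*$ comes from a natural morphism of such complexes. I will argue that this morphism of complexes is already an isomorphism in each degree.

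First I would match up the indexing sets: since $\pi : X \to X/G$ is the geometric quotient by the finite group $G$, the assignment $[x] \mapsto \pi(x)$ gives a bijection $X_{(a)}/G \simeq (X/G)_{(a)}$ for every $a \ge 0$. Next I would match up the individual summands: for $x \in X_{(a)}$ with stabilizer $G_x$ and image $\bar x = \pi(x) \in (X/G)_{(a)}$, the residue field extension $k(\bar x) \subseteq k(x)$ is a Galois extension with group $G_x$ acting via the given geometric action. Consequently the $G_x$-equivariant Galois cohomology of $\Spec k(x)$ with coefficients in $M(a)$, which contributes the $[x]$-summand of $\KCGM X$ in degree $a$, is identified with the Galois cohomology of $k(\bar x)$ contributing the $\bar x$-summand of $\KCM {X/G}$; at the level of quotient stacks this is just $[\Spec k(x)/G_x] = \Spec k(\bar x)$.

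The remaining thing to check is that the two Kato boundary maps agree under this degreewise identification. Both are defined as sums of residue maps along codimension-one specializations of residue fields. Since $\pi$ is finite surjective, a codimension-one specialization in $X$ maps to a codimension-one specialization in $X/G$; conversely the valuations of $k(\bar x)$ correspond bijectively to $G_x$-orbits of valuations of $k(x)$. Naturality of residue maps for finite extensions of discretely valued fields then yields the desired compatibility.

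The main obstacle is precisely this last step: verifying the compatibility of differentials in the presence of the possibly ramified extension $k(x)/k(\bar x)$. The assumption that $M$ is a torsion $\Lam$-module with $\Lam = \bZ[1/p]$ is what makes the argument go through cleanly, since it guarantees that the standard trace and norm relations in Galois cohomology apply at all the codimension-one valuations involved, sidestepping wild-ramification pathologies coming from the $p$-part of the stabilizers $G_x$ in characteristic $p$.
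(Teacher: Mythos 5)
Your overall plan — identify the two Kato complexes termwise and then check that the differentials agree — is the same strategy the paper uses: Proposition \ref{eqKH.prop1} is reduced in one line to Proposition \ref{eqhom.prop1}(4), which produces an isomorphism of the $E^1_{\bullet,0}$ rows of the niveau spectral sequences, compatibly with the $d^1$-differentials. So the shape of the argument is right. The gap is in the middle step, where you identify the individual summands.

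You assert that for $x\in X_{(a)}$ with image $\bar x=\pi(x)$, the extension $k(\bar x)\subseteq k(x)$ is Galois with group $G_x$, so that, stackily, $[\Spec k(x)/G_x]=\Spec k(\bar x)$. This is false in general, for two independent reasons. First, the decomposition group $G_x$ need not act faithfully on $k(x)$: there is an inertia subgroup $I_x=\ker\bigl(G_x\to\Aut(k(x))\bigr)$, and the actual group of field automorphisms of $k(x)$ over $k(\bar x)$ is a quotient $G_x/I_x$, not $G_x$ itself. (Take $G=\Z/2$ acting on $\AAA^1_k$ by $t\mapsto -t$ and $x$ the origin: $G_x=\Z/2$ acts trivially on $k(x)=k$.) When $I_x\neq 1$, the quotient stack $[\Spec k(x)/G_x]$ is a nontrivial gerbe over $\Spec k(\bar x)$, not $\Spec k(\bar x)$. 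Second, in positive characteristic the subextension $k(x)^{G_x}/k(\bar x)$ is in general a nontrivial purely inseparable extension (Bourbaki, Alg.\ comm.\ Ch.\ V \S2.2 Th.\ 2), so even the $G_x$-invariants do not land in $k(\bar x)$. Your proposal does not account for either phenomenon.

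The paper's proof of Proposition \ref{eqhom.prop1}(4) handles both points, and neither fix is just bookkeeping. The inertia issue is dealt with by passing to the image $H$ of $G$ in $\Aut(Gx)$, shrinking to an $H_x$-stable affine open $U\subset\overline{\{x\}}$ on which $H_x$ acts freely and the $H$-translates of $U$ are disjoint, and then invoking the free-quotient comparison Proposition \ref{eqhom.pr1} (equivariant cohomology with compact support of a free $G$-action agrees with that of the quotient). The degree of the resulting cohomology is controlled by the affine Lefschetz theorem together with the spectral sequence \eqref{eqhom.eq1.5}, which is why only $G$-(co)invariants, and no higher group cohomology of the kernel, intervene. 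The inseparability issue is dealt with by Lemma \ref{eqhom.radicial}, the topological invariance of \'etale cohomology with compact support under finite surjective radicial maps (SGA4 XVIII 1.2). None of this falls out of "naturality of residue maps."

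Finally, your explanatory remark that inverting $p$ is what makes the argument work, by "sidestepping wild-ramification pathologies," points at the wrong mechanism. The isomorphism $\pi_*:\KCGM X\isom\KCM{X/G}$ holds for any torsion $\Lam[G_k]$-module $M$, including $\Lam=\bZ$; the paper's proof uses no trace or transfer on wildly ramified extensions. The purely inseparable phenomena are controlled by radicial invariance of the \'etale site, which is unconditional, and the possible $p$-part of $I_x$ is absorbed by the free-quotient reduction, not by a norm argument. The hypothesis $\Lam=\bZ[1/p]$ enters elsewhere in the paper (for the existence of $\Lam$-admissible envelopes and for quasi-finite flat pullback/trace used in the descent machinery), not in Proposition \ref{eqKH.prop1}.
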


Theorem \ref{MP.intro} follows from Proposition \ref{MPKH.intro} and the following.

\begin{theo}\label{KHW.intro}(Theorem \ref{mainthm1})
There exists a map of homology theories on $\eqsch$:
\[
\KHGM * X \to \HWGM * X
\]
which is an isomorphism if $k$ is a purely inseparable extension of a finitely generated field and $M=\Linf:=\bQ/\Lam$.
\end{theo}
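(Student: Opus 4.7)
The plan is to construct the natural transformation $\KHGM{*}{X} \to \HWGM{*}{X}$ via the hyperenvelope descent formalism, and then to establish the isomorphism statement by reducing to smooth projective $G$-schemes and invoking Kato's cohomological Hasse principle for arithmetic schemes.

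For the construction, recall that weight homology is defined by $\Lambda$-admissible hyperenvelopes $(X_\bullet,G) \to (X,G)$ with each $X_n$ smooth projective, giving the complex $M^{\pi_0(X_\bullet)/G}$. Equivariant Kato homology, being a functor on $\eqsch$ satisfying a localization sequence and built from the Kato complex of points, admits a compatible descent spectral sequence
\[
E^1_{p,q} = \KHGM{q}{X_p} \Longrightarrow \KHGM{p+q}{X}.
\]
On a smooth projective $G$-scheme $Y$ there is a natural augmentation $\KHGM{0}{Y} \to M^{Y^{(0)}/G}$ sending a Kato $0$-cycle to its support class on the $G$-orbit set of generic points, together with a target which vanishes in positive degrees. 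I would take this to define the map on $E^1$-pages and check compatibility with simplicial differentials, yielding a morphism of spectral sequences and hence of the abutments. Independence from the chosen hyperenvelope follows from the usual comparison of hyperenvelopes by simultaneous refinement.

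By Theorem \ref{thm.intro.Whom}, $\HWGM{*}{Y}$ for smooth projective $Y$ is concentrated in degree zero with value $M^{Y^{(0)}/G}$. Using the five lemma applied to the localization sequences of both theories, and the hyperenvelope descent spectral sequences above, the isomorphism statement reduces to proving, for $M = \Linf$ and $Y$ smooth projective, that the augmentation
\[
\KHGM{0}{Y} \;\isom\; M^{Y^{(0)}/G}
\]
is an isomorphism and that $\KHGM{a}{Y} = 0$ for $a > 0$. By the McKay principle for Kato homology (Proposition \ref{MPKH.intro}), $\KHGM{*}{Y} \cong \KHM{*}{Y/G}$, and the $G$-orbits of generic points of $Y$ correspond to the generic points of $Y/G$. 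So the problem reduces to the non-equivariant vanishing on $Y/G$; a further Gillet--Soul\'e--Jannsen descent applied to $Y/G$ via a hyperenvelope by smooth projective schemes reduces this to the vanishing of Kato homology in positive degrees on smooth projective schemes over $k$.

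The hard part is precisely this last vanishing, which is Kato's cohomological Hasse principle. Over a finite field it is the Kerz--Saito theorem; over a finitely generated field of higher transcendence degree it follows by induction on the transcendence degree via a fibration/Leray argument that reduces a variety fibered over a curve (or lower-dimensional base) to the already-known case on generic and closed fibers. Purely inseparable extensions of the base field are harmless for $\Linf$-coefficients because $\Lam = \bZ[1/p]$ and such extensions induce isomorphisms on Galois and \'etale cohomology with prime-to-$p$ torsion coefficients; this is precisely where the hypothesis ``$k$ is a purely inseparable extension of a finitely generated field'' is used. The cohomological Hasse principle in this generality is the deep arithmetic input that forces the comparison between the two \emph{a priori} very different homology theories.
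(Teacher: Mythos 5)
Your map construction and the reduction, via localization and hyperenvelope descent, to proving $\KHGM a Y = 0$ for $a>0$ and $(Y,G)$ smooth projective are in the same spirit as the paper (there the map is built at the level of complexes as a morphism of pre-homology functors $\KCGM{-} \to F^W_M$ and descended via Construction \ref{funconst}(4); the reduction to the smooth projective case is Lemma \ref{eqKhom.lem3}). But the vanishing argument has a genuine gap.

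Having used Proposition \ref{eqKH.prop1} to identify $\KHGM a Y \cong \KHM a {Y/G}$, you assert that a further descent by a smooth projective hyperenvelope of the (generally singular) quotient $Y/G$ reduces the vanishing to the Hasse principle for smooth projective schemes. That step fails. If $Z_\bullet \to Y/G$ is such a hyperenvelope and $\KHM q {Z_n}=0$ for $q>0$, the descent spectral sequence degenerates to $\KHM a {Y/G} \cong H_a(\KHM 0 {Z_\bullet})$, and since $\KHM 0 {Z_n}\cong M^{Z_n^{(0)}}$, this is precisely $\HWM a {Y/G}$ --- the weight homology of the singular quotient, which has no a priori reason to vanish in positive degrees (for a simple normal crossing divisor it computes the homology of the dual complex). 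What descent delivers is the comparison isomorphism $\gamma:\KHM a {Y/G}\isom \HWM a {Y/G}$, not the vanishing of either side. The missing idea --- the crux of the paper's proof --- is the commutative square
\[
\begin{CD}
\KHGM a Y @>{\simeq}>{\pi_*}> \KHM a {Y/G}\\
@VV{\gamma}V @VV{\gamma}V \\
\HWGM a Y @>>{\pi_*}> \HWM a {Y/G}
\end{CD}
\]
For $(Y,G)\in\cSeq$ the lower-left corner $\HWGM a Y$ vanishes for $a\ne 0$ by Theorem \ref{thm.Whom}$(i)$ --- this is tautological from the construction of the equivariant weight homology and has nothing to do with the Hasse principle. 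Since the top row is an isomorphism (McKay for Kato homology) and the right column is an isomorphism (the non-equivariant comparison you correctly set up via descent and Theorem \ref{thm.KeS}), commutativity forces $\KHGM a Y=0$, and as a byproduct $\HWM a {Y/G}=0$ (Corollary \ref{mainthm1.cor}). Thus the vanishing is transferred across the square from the equivariant weight homology side; it cannot be obtained by pure descent on the Kato side. Secondarily, Theorem \ref{thm.KeS} over general finitely generated fields is quoted from \cite{JS1} and \cite{KeS}, not reproven here by a fibration/Leray induction on transcendence degree, and the reduction from a purely inseparable extension of a finitely generated field to its perfection is handled by Theorem \ref{thm.Whom}$(vi)$ together with Proposition \ref{eqKH.prop2}(2), both of which should be cited explicitly.
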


The proof of Theorem \ref{KHW.intro} relies on Theorem \ref{KeS.intro} below 
on the cohomological Hasse principle.
The cohomological Hasse principle originally formulated as conjectures by K.Kato \cite{K}, concerns a certain complex of Bloch-Ogus type on an arithmetic scheme $X$ (which means a scheme of 
finite type over $\bZ$):
\begin{multline}\label{eq.KC1}
\cdots\rmapo{\partial} \sumd X a H^{a+1}(x,\nz(a))\rmapo{\partial} 
\sumd X {a-1} H^{a}(x,\nz(a-1))\rmapo{\partial}\cdots\\
\cdots\rmapo{\partial} \sumd X 1 H^{2}(x,\nz(1))\rmapo{\partial} \sumd X 0 H^{1}(x,\nz)
\end{multline}
Here $\Xd a$ denotes the set of points $x\in X$ such that $\dim\overline{\{x\}}=a$
with the closure $\overline{\{x\}}$ of $x$ in $X$, and the term in degree $a$ is 
the direct sum of the Galois cohomology $H^{a+1}(x,\nz(a))$ of the residue fields
$\k(x)$ for $x\in \Xd a$ (for the coefficients $\nz(a)$, see \cite[Lemma 1.5]{KeS}. 
If $(n,\ch(\k(x)))=1$, $\nz(a)$ is the Tate twist of $n$-roots of unity).
The homology groups of the above complex is denoted by $\KHnz a X$ and called
the Kato homology of $X$ (in case $X$ has a component flat over $\bZ$, one need modify the definition 
in order to take into account contributions of $\bR$-valued points of $X$ but we ignore it).
Kato \cite{K} conjectured

\begin{conj}\label{KatoconjG} 
Let $X$ be a proper smooth scheme over a finite field, or a regular scheme proper flat over $\bZ$. Then 
$$
\KHnz a X =0 \qfor a\not=0.
$$
\end{conj}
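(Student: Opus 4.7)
The plan is to treat the two cases separately, with the core technical work concentrated in the case of $X$ proper smooth over a finite field $\bF_q$; the arithmetic case of $X$ regular proper flat over $\bZ$ then follows from a devissage along the localization sequence associated to the special fibers $X_p \hookrightarrow X$ and the open complement $X_{\bQ}$. This reduces the problem to the finite-field case just established, combined with an analogous statement over the number field $\bQ$ which is attacked by Jannsen's method via Poitou--Tate duality.

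For smooth projective $X$ over $\bF_q$, I would proceed by induction on $d=\dim X$. The base cases $d=0$ and $d=1$ are classical: the complex is concentrated in a single degree in dimension zero, and in dimension one the vanishing of $\KHnz 1 C$ for a smooth proper curve $C/\bF_q$ is exactly the reciprocity law for its function field, i.e.\ the exact sequence
\[
0\to \Br(\kappa(C))[n]\to \bigoplus_{x\in C_{(0)}} H^1(\kappa(x),\nz)\to \nz\to 0
\]
supplied by class field theory. For the inductive step I would apply a Bertini-type theorem over $\bF_q$ (Gabber--Poonen) to produce a smooth geometrically integral ample divisor $Y\subset X$, and then exploit the localization sequence
\[
\cdots\to \KHnz a Y\to \KHnz a X\to \KHnz a {X\setminus Y}\to \KHnz{a-1}Y\to\cdots .
\]
Combining the inductive hypothesis $\KHnz a Y=0$ for $a\neq 0$ with a parallel control of $\KHnz * {X\setminus Y}$ (obtained via a normal-crossings compactification and Gabber's $\ell'$-alteration to reduce each stratum to dimension $<d$) forces $\KHnz a X$ to vanish in all positive degrees.

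A crucial ancillary input is the Bloch--Kato (Voevodsky--Rost) theorem, which lets one reinterpret each term of the Kato complex in terms of Milnor $K$-theory modulo $n$ and hence motivic cohomology; this identification is what makes the Gersten-type complex amenable to geometric manipulation via blow-up squares and hypercoverings.

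The main obstacle is the Bertini step over $\bF_q$: the classical Bertini theorem fails, so one must work with very ample linear systems of sufficiently large degree via Poonen's equidistribution method to guarantee simultaneously that $Y$ is smooth, geometrically integral, and adequately compatible with the inductive setup. Even more delicate is arranging the divisor $Y$ so that the boundary map $\KHnz a {X\setminus Y}\to\KHnz{a-1}Y$ can be controlled against the inductive vanishing --- this compatibility, essentially a surjectivity statement for specialization maps in Galois cohomology along $Y\subset X$, is the technical heart of the argument and is where the bulk of the proof's difficulty lies.
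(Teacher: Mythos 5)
The statement you have been asked to prove is labelled \textbf{Conjecture} \ref{KatoconjG} in the paper, not a theorem, and the paper deliberately offers no proof of it. It is Kato's original conjecture from \cite{K}, included only as motivation and background. What the paper actually invokes is the weaker Theorem~\ref{KeS.intro}, quoted from \cite{KeS}: the vanishing of $\KHqzl a X$ for $a\neq 0$ when $X$ is proper smooth over a purely inseparable extension of a finitely generated field, with a torsion coefficient restriction ($\qzl$ or, over a finite field, $\nz$ with $n$ prime to $p$). In particular the ``regular proper flat over $\bZ$'' branch of Conjecture~\ref{KatoconjG} is \emph{not} addressed by the paper or its references: that mixed-characteristic case remained open at the time of writing. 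So there is no ``paper's own proof'' to compare against, and no correct proof along any lines could have been produced blind.

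That said, your sketch for the equicharacteristic branch is a reasonable high-level description of the Kerz--Saito strategy (induction on $\dim X$, Gabber--Poonen Bertini over $\F_q$, localization sequence, Gabber's $\ell'$-alterations to flatten strata). But two substantial points are missing or misrepresented. First, you omit the single most important arithmetic input: Deligne's Weil II theorem \cite{D}, which the paper explicitly names as a key ingredient. Weil II is what makes the specialization/pullback maps in Galois cohomology controllable along the ample divisor $Y\subset X$; without it the ``technical heart'' you gesture at has no engine. Bloch--Kato is indeed used (via Beilinson--Lichtenbaum and Geisser--Levine in \cite{KeS}), but it is not the load-bearing ingredient you make it out to be. Second, the reduction of the $\bZ$-case to the finite-field case plus ``Jannsen's method via Poitou--Tate duality'' is far too casual: the localization devissage produces terms over the generic fiber $X_\bQ$ whose vanishing is itself a deep open problem (the horizontal Kato conjecture), and is not a formal consequence of Poitou--Tate duality plus the fiberwise results. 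You should flag both halves of the statement as conjectural rather than present them as reducible to a sequence of known techniques.
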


In case $\dim(X)=1$ the Kato conjecture is equivalent to the Hasse principle for the Brauer group of 
a global field (i.e. a number field or a function field of a curve over a finite field), 
a fundamental fact in number theory.
\medbreak

A generalization of the Kato homology to the (non-equivariant) case over a finitely generated field 
was introduced in \cite{JS1} and the equivariant Kato homology $\KHGM {a} X$ is its equivariant version
(it agrees with $\KHnz a X$ in case $G=e$ and $M=\nz$).
As for non-equivariant case, the following result has been shown.

\begin{theo}\label{KeS.intro}(\cite{KeS})
Assume that $k$ is a purely inseparable extension of a finitely generated field.
For a proper smooth scheme $X$ over $k$ and for a prime to $\ell\not=\ch(k)$, we have 
$$
\KHqzl a X =\indlim n \KHlnz a X =0 \qfor a\not=0.
$$
If $k$ is finite, the same holds by replacing $\qzl$ by $\nz$ with $n$ prime to $\ch(k)$.
\end{theo}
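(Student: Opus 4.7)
The plan is to reduce the theorem to Kato's original conjecture~\ref{KatoconjG} for proper smooth varieties over a finite field, and then to propagate the resulting vanishing to positive transcendence degree over the prime field by a spreading-out and fibration argument.

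First I would dispose of the purely inseparable extension. Every term $H^{a+1}(x,\nz(a))$ of the Kato complex \eqref{eq.KC1} is a Galois cohomology group of a residue field with prime-to-$\ch(k)$ torsion coefficients, and such Galois cohomology is invariant under purely inseparable extensions of the base field. Hence the whole Kato complex is insensitive to purely inseparable base change on $k$, and one may assume that $k$ is itself finitely generated over its prime field.

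Next I would spread $X$ out over an arithmetic base. Choose a regular integral scheme $B$ of finite type over $\Spec\bZ$ (respectively over $\Spec\F_p$ in positive characteristic) with function field $k$, and spread $X$ to a proper smooth model $f\colon\cX\to U$ over some non-empty open $U\subset B$. Since Kato homology is a homology theory satisfying a localization sequence and $\KHqzl{a}{X}$ is the filtered colimit of $\KHqzl{a}{\cX_V}$ as $V$ shrinks inside $U$, an induction on $\dim B$ reduces the problem to controlling the Kato homology of the arithmetic scheme $\cX$ itself together with the contributions of the closed fibers of $f$ over the points of $B$ outside $U$. The induction anchor $\dim B=0$ is precisely Kato's conjecture~\ref{KatoconjG} for proper smooth varieties over a finite field, which is the main result of \cite{KeS}; its proof is itself a delicate secondary induction on $\dim X$ combining Bertini-type hyperplane section arguments over finite fields, Bloch--Ogus spectral sequence analysis of the Kato complex, and, in the base case $\dim X=1$, the classical Hasse principle for the Brauer group of a global function field. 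The finer coefficients $\nz$ (and not just $\qzl$) survive in this anchor because Galois cohomology of finite fields is already finite, hence no direct limit is needed.

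The hardest step is the descent along the closed fibers of $f$: these fibers over points of $B\sm U$ are in general singular, so the inductive hypothesis, which lives on smooth proper schemes, is not directly available on them. To overcome this obstacle I would invoke Gabber's refinement of de~Jong's alteration theorem to produce, on each such singular fiber, a prime-to-$\ell$ alteration by a smooth scheme and hence a smooth hyperenvelope compatible with the Kato complex. The functoriality and descent properties of the Kato complex along such a hyperenvelope then propagate the vanishing from the smooth components back to the original singular fiber, which closes the induction and gives the claimed vanishing of $\KHqzl{a}{X}$ for all $a\ne 0$.
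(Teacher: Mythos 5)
The paper does not actually prove this theorem; it is quoted verbatim from \cite{KeS} and the only indication given of its proof is the remark immediately following the statement: ``Key ingredients of the proof of Theorem \ref{KeS.intro} is Deligne's theorem \cite{D} on the Weil conjecture and Gabber's refinement of de Jong's alteration.'' Your reconstruction correctly invokes Gabber's prime-to-$\ell$ alterations and correctly identifies the classical Hasse principle for Brauer groups of global fields as the $\dim X = 1$ base case, and your reduction of the purely inseparable extension is fine (Galois cohomology of a field and of its perfect closure agree). However, you omit entirely what the paper singles out as the other key ingredient: Deligne's Weil~II. In the actual argument of \cite{KeS}, the positive-dimensional inductive step is not carried by Bertini plus Bloch--Ogus bookkeeping alone; it hinges on a weight argument --- one needs the hard vanishing of certain \'etale cohomology groups that only follows from bounds on Frobenius eigenvalues. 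Without that input, the descent of the Kato homology vanishing from hyperplane sections (or from a finite-field base) to the total space simply does not close. So while the scaffolding of your proposal (Gabber alterations, Brauer-group base case, devissage by dimension of the base) is recognizable, the load-bearing step is missing.

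A secondary point: your spreading-out scheme, where you spread $X/k$ to $\cX/U$ over an open $U$ in an arithmetic model $B$ of $k$ and then induct on $\dim B$ via the localization sequence, is not obviously sound as stated. The localization sequence for Kato homology compares $\cX$ with a closed subscheme and its open complement, but the ``Kato homology of the arithmetic scheme $\cX$'' that appears in your middle term is not the Kato homology of a proper smooth scheme over a field, so the inductive hypothesis you want to feed in does not directly apply to it; and the closed fibers you need to control are singular and live over smaller fields, so the induction has to be set up with more care (on the dimension of the total arithmetic scheme, with alterations handling singular pieces). This is a fixable imprecision, but as written the devissage does not match the level of care the actual argument in \cite{KeS} requires.
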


Key ingredients of the proof of Theorem \ref{KeS.intro} is Deligne's theorem \cite{D} 
on the Weil conjecture and Gabber's refinement of de Jong's alteration.
\medbreak

Theorem \ref{KHW.intro} is shown by using Theorem \ref{KeS.intro}.
As a corollary of Theorem \ref{KHW.intro} and Proposition \ref{MPKH.intro}, 
we get the following extension of Theorem \ref{KeS.intro} to a singular case.

\begin{coro}\label{KeS.intro.cor}(Corollary \ref{mainthm1.cor})
Let $k$ be a purely inseparable extension of a finitely generated field.
Let $X$ be a proper smooth scheme with an action of a finite group $G$ over $k$.
For a prime to $\ell\not=\ch(k)$, we have 
$$
\KHqzl a {X/G} =0 \qfor a\not=0.
$$
If $k$ is finite, the same holds by replacing $\qzl$ by $\nz$ with $n$ prime to $\ch(k)$.
\end{coro}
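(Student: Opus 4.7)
The plan is to combine three ingredients already established in the paper: the McKay principle for equivariant Kato homology (Proposition~\ref{MPKH.intro}), the comparison with equivariant weight homology (Theorem~\ref{KHW.intro}), and the trivial vanishing of equivariant weight homology for smooth projective $G$-schemes (the final clause of Theorem~\ref{thm.intro.Whom}).

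First, I apply Proposition~\ref{MPKH.intro} to the pair $(X,G)\in\eqsch$. For any torsion $\Lam$-module $M$, the projection $\pi\colon X\to X/G$ induces an isomorphism
\[
\pi_*\colon \KHGM{a}{X}\isom \KHM{a}{X/G}
\]
for every $a\in \bZ$. This reduces both assertions of the corollary to proving that $\KHGM{a}{X}=0$ for $a\neq 0$ when $M=\qzl$ (in general), and when $M=\nz$ (in the finite-field case).

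Next, observe that $X\in\eqsch$ is quasi-projective, so being proper it is in fact projective; and it is smooth by hypothesis. Under the assumption that $k$ is a purely inseparable extension of a finitely generated field, Theorem~\ref{KHW.intro} yields an isomorphism of equivariant homology theories
\[
\KHGM{*}{X}\isom \HWGM{*}{X}
\]
for $M=\Linf$. The last statement of Theorem~\ref{thm.intro.Whom} then gives $\HWGM{a}{X}=0$ for $a\neq 0$ on any smooth projective $G$-scheme. Combining these with the McKay isomorphism of the previous paragraph yields $\KHqzl{a}{X/G}=0$ for $a\neq 0$, settling the first assertion of the corollary.

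The hard part will be the refinement over a finite field $k$ with coefficients $\nz$. The McKay isomorphism of the first step still applies verbatim, but Theorem~\ref{KHW.intro} supplies the Kato--weight comparison only for the divisible coefficient $\Linf$, not directly for $\nz$. The natural plan is to bootstrap from the divisible case via the Bockstein long exact sequence associated with $0\to \nz\to \Linf\xrightarrow{n}\Linf\to 0$, using the non-equivariant finite-field statement of Theorem~\ref{KeS.intro} as an auxiliary input to control torsion in adjacent degrees. Making this descent from divisible to finite coefficients rigorous in the equivariant setting is the principal technical point; once completed, it closes the argument.
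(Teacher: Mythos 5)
Your argument for the $\qzl$ case matches the paper's route: combine the McKay isomorphism for Kato homology (Proposition \ref{eqKH.prop1}), the comparison $\gamma_{\Linfty}: \KHGLinf{a}{X}\isom \HWGLinf{a}{X}$, and the vanishing of equivariant weight homology of smooth projective $G$-schemes in positive degrees. That part is fine.

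For the finite-field refinement with $\nz$ coefficients, however, your proposal has a genuine gap, and the Bockstein plan you sketch is not the right repair. The issue is that Kato homology is \emph{not} a priori equipped with a long exact coefficient sequence: the Kato complex $\KCGM{X}$ is assembled from a single line of the niveau $E^1$-page, namely $\bigoplus_{x\in X_{(a)}/G} H^{\et}_{a}(x,G;M)$ in degree $a$ only, so a short exact sequence of coefficient modules produces a long exact sequence of these terms but \emph{not} a short exact sequence of the Kato complexes. Whether the resulting boundary terms vanish amounts to nontrivial statements about divisibility and cohomological dimension of the residue fields (Bloch--Kato-type inputs), and is precisely the sort of content that the finite-field Kato conjecture already packages. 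So the bootstrap from $\Linfty$ to $\Ln$ would not "close the argument" without substantial further work.

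What you are missing is simpler: the in-text Theorem \ref{mainthm1} (the full form of the Theorem you cite as \ref{KHW.intro}) already states, in its last sentence, that the comparison $\gamma_M$ is an isomorphism for $M=\Ln$ when $k$ is finite, because the underlying non-equivariant input Theorem \ref{thm.KeS} covers $\Ln$ over finite fields directly. With that, both cases of the corollary follow at once from Proposition \ref{eqKH.prop1} and Theorem \ref{mainthm1}, with no Bockstein needed. So the correct fix is to replace your final paragraph by a citation of the full statement of Theorem \ref{mainthm1}, rather than attempting a coefficient-descent argument.
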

\bigskip

The paper is organized as follows. In \S\ref{eqWhom} we explain the
properties of the equivariant weight homology theory and state McKay principle for the
equivariant weight homology and its corollaries. 
The construction of the equivariant weight homology theory occupies four section
\S\ref{envelopes} through \S\ref{descentii}. The sections
\S\ref{eqcohc} and \S\ref{eqethom} are preliminaries for the
construction of the equivariant Kato homology and the proof of its
basic properties given in \S\ref{eqKhom}. In \S\ref{proof} a
comparison theorem of the equivariant Kato homology and the weight
homology is proved. McKay principle for the equivariant weight
homology stated in \S\ref{eqWhom} is deduced from the comparison theorem. 
In \S\ref{csfundgroup} we relate the fundamental group of the dual complex to the classifying group of cs-coverings.
Using this and the McKay principle stated in \S\ref{eqWhom}, we prove the main theorem \ref{thm.MPhtintro} in \S\ref{MacKayhomtype}.

\smallskip

Recently, A.\ Thuillier observed that one  can deduce the main results  of our paper using
the theory of Berkovich spaces as developed in his article \cite{Th}. This Berkovich approach has the
advantage that it does not use resolution of singularities.

\medbreak

A part of this work was done while the second author's stay at University of Regensburg.
He thanks Uwe Jannsen and the first author cordially for hospitality and financial support.
We thank Sam Payne for explaining us his work on configuration complexes and Yoshihiko Mitsumatsu for helpful discussions on algebraic topology.

\section{Equivariant weight homology}\label{eqWhom}

\noindent
By $k$ we denote a  field and let $\Lam $ be either $\bZ$ or $\bZ[1/p]$, where
$p$ is the exponential characteristic of $k$. 
Let $\eqsch$ be the category of pairs $(X,G)$ of a finite group $G$ and a $G$-scheme $X$ which is quasi-projective over $k$
(i.e. $X\in \sch$ equipped with a left action of $G$ over $k$). 
For objects $(X,G)$ and $(Y,H)$ of $\eqsch$, the morphisms $(X,G)\to (Y,H)$ are
 pairs $(\phi,f)$ of a group homomorphism $\phi:G\to H$ and a map of 
$G$-schemes $f:X\to   Y$, where we endow $Y$ with the induced $G$-action.
Note that the category $\cC_{\eq/k}$ has fibre products. 
We will often write just $X$ for an object $(X,G)\in \eqsch$ when it is clear from the context 
that $X$ is endowed with a group-action.
\medbreak

We say $(\phi,f)$ is strict if $G=H$ and $\phi$ is the identity.
For fixed $G$, let $\schG\subset \eqsch$ be the subcategory of $G$-schemes and strict morphisms. A morphism in $\schG$ is simply denoted by $f:X\to Y$.
Notice that in case $G=e$ the trivial group, $\schG$ is identified with $\cC_{/k}$, i.e.\ the category of quasi-projective schemes over $k$.
\medbreak

A strict open immersion (resp.\ strict closed immersion, resp.\ strict proper morphism) in $\eqsch$ means 
an open immersion (resp.\ closed immersion, resp.\ proper morphism) in $\schG$ for some $G$.
An equivariant simplicial scheme is a simplicial object in $\schG$ for some $G$.

\begin{defn}\label{def.eqhom}
Let $\eqschp$ be the category with the same objects as $\eqsch$ and with morphisms in $\eqsch$ whose
underlying morphisms of schemes are proper. Let $\Lambda$ be a commutative ring.
An equivariant homology theory $H=\{H_a\}_{a\ge 0}$ with values in $\Lam$-modules on $\eqsch$ is
a sequence of covariant functors:
$$
H_a(-): \eqschp  \rightarrow \ModL
$$
satisfying the following conditions:

\medskip

\begin{itemize}
\item [$(i)$] For each strict open immersion $j:V\hookrightarrow X$ in $\eqsch$, 
there is a map $j^*:H_a(X)\rightarrow H_a(V)$,
associated to $j$ in a functorial way.
\item [$(ii)$] If $i:Y\hookrightarrow X$ is a strict closed immersion in $\eqsch$,
with open complement $j:V\hookrightarrow X$, there is a long exact sequence
(called localization sequence)
$$
\cdots\rmapo{\partial} H_a(Y) \rmapo{i_*} H_a(X) \rmapo{j^*} H_a(V) \rmapo{\partial} H_{a-1}(Y)
\longrightarrow \cdots.
$$
(The maps $\partial$ are called the connecting morphisms.) 
\item [$(iii)$]
The sequence in $(ii)$ is functorial with respect to proper morphisms or strict open
immersions in $\eqsch$, in the following sense.
Consider a commutative diagram in $\eqsch$
\[
\begin{CD}
(U',G') @>{j'}>> (X',G') @<{i'}<< (Z',G') \\
@VV{f_U}V @VV{f}V @VV{f_Z}V \\
(U,G) @>{j}>> (X ,G) @<{i}<<  (Z,G), \\
\end{CD}
\]
such that the squares of the underlying schemes are cartesian, and 
$i$ (resp. $i'$) is a strict closed immersion and $j$ (resp. $j'$) is its open complement.

If $G=G'$ and $f$ is a strict open immersion, the following diagram is commutative.
$$
\xymatrix@C=10pt{
 H_a(Z,G) \ar[r] \ar[d]_{f_Z^*} &   H_a(X,G) \ar[r] \ar[d]_{f^*}  & H_a(U,G) \ar[r] \ar[d]_{f_U^*}  & 
H_{a+1}(Z,G)  \ar[d]_{f_Z^*}  \\
  H_a(Z',G) \ar[r]  &   H_a(X',G) \ar[r]  &  H_a(U',G) \ar[r]  &  H_{a+1}(Z',G) 
}
$$

If $f$ is proper, the following diagram is commutative.
$$
\xymatrix@C=10pt{
 H_a(Z',G') \ar[r] \ar[d]_{(f_Z)_*} &   H_a(X',G') \ar[r] \ar[d]_{f_*}  & H_a(U',G') \ar[r] \ar[d]_{(f_U)_*}  & 
H_{a+1}(Z',G')  \ar[d]_{(f_Z)_*}  \\
  H_a(Z,G) \ar[r]  &   H_a(X,G) \ar[r]  &  H_a(U,G) \ar[r]  &  H_{a+1}(Z,G) 
}
$$
\end{itemize}

\medskip

A morphism between homology theories $H$ and $H'$ is a
morphism $\phi: H \rightarrow H'$ of functors on $\eqschp$, which is compatible with the long exact sequences from (ii).
\end{defn}
\medbreak

\begin{defn}\label{def.cS}
We call $(X,G) \in \eqsch$ primitive if $G$ acts transitively on the set of irreducible
components of $X$. Let $\cSeq\subset \eqsch$ (resp. $\cSG\subset \schG$) be the full subcategory of objects 
whose underlying schemes are smooth projective over $k$ and $\cS^{\rm prim}_{\eq/k}  \subset \cSeq$ (resp. $\cSGpr\subset \cSG$) be the full subcategories of primitive objects.
\end{defn}

\medskip

Let $p$ be the exponential characteristic of $k$. 
Let $\tk$ be the perfection of $k$. 
We consider the following condition on resolution of singularities:
\begin{enumerate}
\item[{$\eqRS$}] :
For $(X,G)\in \eqscht$ with $X$ reduced, there are $(X',G)\in \eqscht$ with
$X'$ smooth over $\tk$ and a strict, projective, birational morphism $f:X' \to X$ such that
$f$ is an isomorphism over the regular locus $X_{reg}$ of $X$ and that
the reduced part of $f^{-1}(X-X_{reg})$ is a simple normal crossing divisor on $X'$.
\end{enumerate}

\begin{rem}\label{eqRS.rem1}
\begin{itemize}
\item[(1)]
The condition $\eqRS$ includes as a special case $G=e$ (non-equivariant) resolution of singularities 
proved by Hironaka \cite{H} assuming $\ch(k)=0$.
Under the same assumption $\eqRS$ is a consequence of canonical resolution of singularities 
shown in \cite{BM}.
\item[(2)]
In case $\ch(k)>0$, $\eqRS$ would be false in general if we did not
work over the perfection $\tk$ of  $k$.
\end{itemize}
\end{rem}

\medbreak

\begin{theo}\label{thm.Whom}(see Construction ~\ref{constweighthom})
Assume the following conditions:
\begin{enumerate}
\item[$(\bigstar)$]
$\displaystyle{\Lam=\bZ[\frac{1}{p}]}$, or $\Lam=\bZ$ and $\eqRS$ holds.
\end{enumerate}
For a $\Lam$-module $M$, there exists a homology theory on $\eqsch$ with values in $\Lam$-modules:
\[
(X,G) \mapsto \HWGM {a} X
\]
called equivariant weight homology. It satisfies:
\begin{itemize}
\item[$(i)$]
there are canonical isomorphisms for $(X,G)\in \cSeq$ 
$$\HWGM a X \cong \left\{ \begin{array}{lr}
M^{\Xcd 0/G} & \text{ for } a=0 \\
0 & \text{ for } a\ne 0 \end{array} \right. $$ 
compatible with pushforward in $\cSeq$, where 
$\Xcd 0$ is the set of the generic points of $X$.
\item[$(ii)$]
$M\mapsto \HWM {*} -$ is a covariant functor from $\Lam$-modules to the category of equivariant homology theories,
\item[$(iii)$] for a short exact sequence of $\Lam$-modules
\[
0\to M_1 \to M_2 \to M_3 \to 0
\]
there is a natural long exact sequence 
\[
\cdots \to H^W_a(-;M_1) \to H^W_a(-;M_2) \to H^W_a(-;M_3) \to H^W_{a-1}(-;M_1)\to \cdots ,
\]
\item[$(iv)$]
for $(X,G)\in \cC_{\rm eq/k}$ there is a natural convergent spectral sequence 
\[
E^2_{a,b} = \mathrm{Tor}^{\Lam}_{a}(M,H^W_{b}(X,G;\Lam) )   \Longrightarrow    \HWGM {a+b} X,
\]
\item[$(v)$]
$\HWGL {a} X$ is a finitely generated $\Lam$-module for all $a\ge 0$,
\item[$(vi)$] 
For extensions of fields $k\subset k'$ there are natural pushforward maps
\[
\tau_{k'/k}: \HWGM {a} {X_{k'}} \to  \HWGM {a} X ,
\]
where the left hand side weight homology is relative to the base field $k'$. It satisfies:
\begin{itemize}
\item[$(a)$]
If $k'$ is purely inseparable over $k$, $\tau_{k'/k}$ is an isomorphism.
\item[$(b)$]
For fixed $(X,G)\in \cC_{\rm eq/k}$ and $a\ge 0$,
and for a directed system of fields $\{k_\alpha\}_{\alpha\in I}$ with $k_\alpha \subset k_{\alpha'}$ for $\alpha' \succeq \alpha$ and $\varinjlim_\alpha k_\alpha = k' $ the inverse system 
$$\{ \HWGM {a} {X_{k_\alpha}}\}_\alpha$$ becomes stationary for big $\alpha $  and equal to 
$ \HWGM a {X_{k'}}$.
\end{itemize}
\end{itemize}
\end{theo}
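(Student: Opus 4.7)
The plan is to follow the descent strategy indicated in the introduction. First, for proper $(X,G) \in \eqsch$ I would choose a $\Lam$-admissible hyperenvelope $a_\bullet : X_\bullet \to X$ in $\eqsch$ with each $(X_n,G) \in \cSeq$; its existence when $\Lam = \bZ$ relies on $\eqRS$, and when $\Lam = \bZ[1/p]$ on an equivariant version of Gabber's refinement of de Jong's alteration theorem. I would then define $\HWGM a X$ as the $a$-th homology of the associated chain complex
$$
\cdots\to M^{\pi_0(X_n)/G} \rmapo{\partial} M^{\pi_0(X_{n-1})/G}
\rmapo{\partial} \cdots \rmapo{\partial} M^{\pi_0(X_0)/G}.
$$
For general quasi-projective $(X,G)$, I would pick a $G$-equivariant compactification $X \hookrightarrow \bar X$ with strict closed complement $Z = \bar X \setminus X$, and define $\HWGM a X$ using a mapping cone of $\HWGM *{Z} \to \HWGM *{\bar X}$ built from compatible hyperenvelopes of the pair.

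The first serious step is to show independence of all the choices above. This is exactly the equivariant form of the cohomological descent theorem of Gillet--Soul\'e and Jannsen: any two $\Lam$-admissible hyperenvelopes of $X$ are dominated by a common third, and I would verify that the induced comparison map of simplicial $\Lam$-modules $M^{\pi_0(X_\bullet)/G}$ is a quasi-isomorphism. The reduction would be to the case $(X,G) \in \cSGpr$, where one must show that $M^{\pi_0(X_\bullet)/G}$ computes $M$ concentrated in degree zero; this follows because each $X_n \to X$ is a proper $G$-equivariant morphism of smooth projective $G$-schemes, so the simplicial set of $G$-orbits of connected components is contractible by the hyperenvelope property combined with the descent spectral sequence at the level of $\pi_0$.

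The localization sequence for a strict closed immersion $Y \hookrightarrow X$ with complement $V$ would follow from the octahedral axiom applied to the filtration $Y \subset X \subset \bar X$ together with hyperenvelopes compatible with this filtration, whose existence I would extract by equivariant blowup of the stratification. Property (i) is then immediate since a smooth projective object is its own constant hyperenvelope. Properties (ii) and (iii) follow from the fact that $M \mapsto M^{\pi_0(X_n)/G}$ is exact in $M$; property (iv) comes from a flat resolution $P_\bullet \to M$ of $\Lam$-modules, producing a double complex whose two spectral sequences give the universal-coefficient sequence. Property (v) uses that bounded-dimensional hyperenvelopes exist, so the complex is eventually zero and each term $\Lam^{\pi_0(X_n)/G}$ is a finitely generated free $\Lam$-module. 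Property (vi) is base change: (a) follows because $\tk$ is invariant under purely inseparable extensions of $k$, while (b) uses that any hyperenvelope over the directed colimit of the $k_\alpha$ spreads out to a finite level.

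The main obstacle, as is standard for Deligne--Gillet--Soul\'e--Jannsen style constructions, will be the descent argument underlying well-definedness: verifying that every morphism of $\Lam$-admissible hyperenvelopes in $\eqsch$ induces a quasi-isomorphism on the associated complexes, and that these identifications are canonical enough to give strict functoriality in proper morphisms and strict open immersions, as well as compatibility with the localization sequence of (ii). A secondary technical prerequisite is the existence of $G$-equivariant $\Lam$-admissible hyperenvelopes compatible with a prescribed stratification, which ultimately rests on equivariant resolution of singularities or Gabber's refinement of de Jong's alteration theorem.
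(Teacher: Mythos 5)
Your overall plan matches the paper's architecture: define the theory via $\Lam$-admissible hyperenvelopes and a cone over a compactification, reduce well-definedness to a descent statement for hyperenvelopes of smooth projective objects, and read off properties (i)--(vi) from the construction. However, your justification of the central descent step is not correct, and this is precisely where all the difficulty lies.

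You claim that for $(X,G)\in\cSGpr$ with hyperenvelope $X_\bullet\to X$, "the simplicial set of $G$-orbits of connected components is contractible by the hyperenvelope property combined with the descent spectral sequence at the level of $\pi_0$." This is false. A $\Lam$-admissible envelope $X_n\to(\mathrm{cosk}^X_{n-1}\mathrm{sk}_{n-1}X_\bullet)_n$ is not required to be bijective, or even surjective, on connected components in any way that would make $\pi_0(X_\bullet)$ a Čech-type hypercovering of a point in $\Sets$: one can always replace $X_n$ by $X_n$ disjoint union with any smooth projective scheme mapping properly into the coskeleton, which freely adds components and hence nontrivial cells to $\pi_0(X_\bullet)/G$. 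Even more fundamentally, when $\Lam=\bZ[1/p]$ an admissible envelope of $\Spec(k)$ may have no $k$-point at all, only points in prime-to-$\ell$ extensions, so the naive Čech contractibility argument has no purchase. The acyclicity of $M^{\pi_0(X_\bullet)/G}$ with $\Lam$-coefficients is a genuinely deep statement, essentially a weight argument, and there is no elementary $\pi_0$-level proof.

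The paper's route through this is the theory of equivariant Chow motives (Section 5). One first introduces the equivariant Gersten complexes $R_q$ for algebraic $K$-theory, which admit quasi-finite flat pullbacks; the general descent criterion (Theorem \ref{descentthmi}) then shows that each $R_q$ satisfies $\text{\bf (D)}_{\Phi}$. By a Gillet--Soul\'e argument (Proposition \ref{propdescentGS}), a complex $X_*\in C_+(\Lam\cSG)$ on which all $R_q$ are acyclic has vanishing image in $K_+(\Chow_{G/k})$. Finally (Theorem \ref{DescentiiThm}), any pre-homology functor factoring additively through $\Chow_{G/k}$ -- in particular $F^W_M(X,G)=\Hom_\bZ(\CH^0(X,G),M)[0]$ -- inherits the descent property $\text{\bf (D)}_F$. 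That is the missing ingredient: your proposal needs, and does not contain, either the factorization of the weight functor through Chow correspondences or some equivalent input from the theory of weights. A secondary (minor) inaccuracy: for (v), finite generation follows simply because each term $\Lam^{\pi_0(X_n)/G}$ is a finitely generated $\Lam$-module; you do not need bounded hyperenvelopes, and in fact the boundedness statement (Proposition \ref{prop.bounded}) is a separate result in the paper whose proof in positive characteristic is arithmetic and goes through Kato homology, not through geometry alone.
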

If $G=e$ is trivial, we write $\HWM a X$ for $\HWGM a X$.
\medbreak

In \S\ref{descenti} and \S\ref{descentii} we give a descent construction of the equivariant weight homology.
In the non-equivariant case this is due to Gillet--Soul\'e
(\cite{GS1}, \cite{GS2}) and Jannsen \cite{J1}, relying on ideas of
Deligne~\cite{SGA4}  Expos\'e $\rm V^{bis}$.
If $\ch(k)>0$ our construction depends on Gabber's refinement of de Jong's alteration theorem \cite{Il}, see Theorem~\ref{Gabberres}. 
\medskip

\begin{defn}\label{def.Gstrict} \mbox{}
A $G$-stable simple normal crossing divisor $E$ on a smooth scheme over $k$ with a $G$-action
is $G$-strict if for any irreducible component $E_i$ of $E$ and for any $g\in G$, $g(E_i)=E_i$ or $E_i\cap g(E_i)=\varnothing$.
\end{defn}

\begin{exam}\label{Whom.ex}
Let $X\in \schG$ be a $G$-strict simple normal crossing divisor (Definition \ref{def.Gstrict}).
Then $\HWGM {*} X$ is the homology of the complex:
$$ 
\cdots\to M^{\pi_0(X^{[a]})/G} \rmapo{\partial} M^{\pi_0(X^{[a-1]})/G} 
\rmapo{\partial} \cdots \rmapo{\partial} M^{\pi_0(X^{[0]})/G}. 
$$
Here $X_1,\dots, X_N$ are the $G$-primitive components of $X$ and
$$
 X^{[a]} =\underset{1\leq i_0< \cdots< i_a\leq N}{\coprod} X_{i_0,\dots,i_a}
\quad (X_{i_0,\dots,i_a} = X_{i_0}\cap\cdots\cap X_{i_a})\;,
$$
and $\pi_0(-)$ denotes the set of connected components, and the differentials $\partial$ are obvious alternating ones.
Note that assuming equivariant resolution of singularities one can essentially calculate $H^W$ by these recipes up to extensions of groups.
\end{exam}

\medskip

One of the main results of this paper is the following. Our proof in Section~\ref{proof} uses arithmetic results
due to Jannsen and the authors
 on Kato's cohomological Hasse principle (\cite{J1}, \cite{JS1} and \cite{KeS}).
Recall that $p$ is the exponential characteristic of $k$.

\begin{theo}[McKay principle for weight homology]\label{thm.Whomgraph}
Assume the condition $(\bigstar)$ of Theorem \ref{thm.Whom}.
For $(X,G)\in \eqsch$ let $\pi: X\to X/G$ be the projection viewed as a morphism 
$(X,G)\to (X/G,e)$ in $\eqsch$.
Then the induced map
$$\pi_* : \HWGM{a}X\to \HWM {a} {X/G}$$
is an isomorphism for all $a\in \bZ$.
\end{theo}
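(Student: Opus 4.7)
The plan is to reduce this McKay principle for weight homology to the corresponding (and more easily proved) McKay principle for equivariant Kato homology (Proposition \ref{MPKH.intro}), using the comparison isomorphism supplied by Theorem \ref{KHW.intro}. The key point is that both $KH$ and $H^W$ are equivariant homology theories in the sense of Definition \ref{def.eqhom}, so the comparison map is compatible with the proper pushforward along $\pi:(X,G)\to (X/G,e)$.

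I would first perform two reductions. By Theorem \ref{thm.Whom}(vi)(a) the pushforward along the perfect closure $k^{\rm perf}/k$ is an isomorphism, so we may assume $k$ is perfect. Writing $(X,G)$ as defined over a finitely generated subfield $k_0\subset k$ and applying the limit property (vi)(b), we may further replace $k$ by $k_0^{\rm perf}$, a purely inseparable extension of a finitely generated field --- exactly the hypothesis of Theorem \ref{KHW.intro}. For the coefficients, the universal coefficient spectral sequence (iv) reduces to $M=\Lam$, and then the long exact sequence (iii) attached to $0\to\Lam\to\bQ\to\Linf\to 0$ together with the five lemma reduces further to the two cases $M=\Linf$ and $M=\bQ$.

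For $M=\Linf$, Theorem \ref{KHW.intro} provides a natural isomorphism of homology theories $\KHGM a - \isom \HWGM a -$, which by proper functoriality fits into a commutative square
\[
\begin{CD}
\KHGM{a}{X} @>{\pi_*}>> \KHM{a}{X/G} \\
@VV{\cong}V @VV{\cong}V \\
\HWGM{a}{X} @>{\pi_*}>> \HWM{a}{X/G}
\end{CD}
\]
whose top arrow is an isomorphism by Proposition \ref{MPKH.intro}, forcing the bottom arrow to be one as well. For $M=\bQ$ (uniquely divisible), the Hodge-theoretic argument sketched in the introduction applies: after base change to a suitable embedding into $\bC$ one identifies $H^W_a(-;\bQ)$ with the dual of $W_0 H^a_c(-,\bQ)^G$, and the tautological isomorphism $H^a_c((X/G)_{\rm an},\bQ) \isom H^a_c(X_{\rm an},\bQ)^G$ completes the rational case.

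The main obstacle is the comparison isomorphism of Theorem \ref{KHW.intro}; its proof rests on the Kerz-Saito resolution of Kato's cohomological Hasse principle (Theorem \ref{KeS.intro}), which in turn depends on Deligne's proof of the Weil conjectures and Gabber's refinement of de Jong's alteration. By contrast, Proposition \ref{MPKH.intro} for Kato homology is essentially formal, since points of $X/G$ correspond to $G$-orbits in $X$ and the Kato complex \eqref{eq.KC1} is built directly from Galois cohomology of residue fields, which behaves transparently under taking $G$-invariants.
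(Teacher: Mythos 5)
Your reductions to a perfect field, to a purely inseparable extension of a finitely generated base field, and to the coefficient cases $M=\Linf$ and $M=\bQ$ all match the paper, and your treatment of the $\Linf$ case (Proposition~\ref{eqKH.prop1} plus Theorem~\ref{mainthm1} via the commuting square) is exactly right. The gap is in your $M=\bQ$ step.

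The Hodge-theoretic argument from the introduction is offered there only as a heuristic for the special case $k=\bar k$, $\ch(k)=0$, and the paper does not actually use it in the proof. After your reductions the base field is a purely inseparable extension of a finitely generated field, which may well have positive characteristic, so there is no embedding into $\bC$ at all. Even in characteristic zero, you would need to know that weight homology is unchanged under the base change $k\hookrightarrow \bC$; Theorem~\ref{thm.Whom}$(vi)$ gives a pushforward along field extensions and invariance only for purely inseparable ones, not for transcendental or algebraic ones, and for non-geometrically-connected schemes the set $\Xcd{0}/G$ genuinely changes under base extension, so such an invariance is not free. Finally, identifying $H^W_a(-;\bQ)$ with the dual of $W_0 H^a_c(-,\bQ)^G$ is itself a comparison that would require proof.

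The paper's actual route for the rational case avoids all this: by Theorem~\ref{thm.Whom}$(v)$ the integral weight homology groups $\HWGL{a}{X}$ are finitely generated $\Lam$-modules, so the long exact sequence attached to $0\to\Lam\to\bQ\to\Linf\to 0$ shows that $\HWGQ{a}{X}\to\HWGLinf{a}{X}$ is an isomorphism in the Serre quotient category ${\it QMod}_\Lam={\it Mod}_\Lam/{\it Modf}_\Lam$. Chaining this with the $\Linf$-case isomorphism and its inverse for $X/G$ shows $\pi_*:\HWGQ{a}{X}\to\HWQ{a}{X/G}$ is an isomorphism in ${\it QMod}_\Lam$, and Lemma~\ref{lem.equalQLam} (a morphism of $\bQ$-modules which becomes an isomorphism modulo finitely generated $\Lam$-modules is already an isomorphism, since its kernel and cokernel are simultaneously $\bQ$-modules and finitely generated over $\Lam$, hence zero) promotes it to a genuine isomorphism. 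Replacing your Hodge-theoretic paragraph with this ${\it QMod}_\Lam$ argument closes the gap; the five-lemma step afterward is fine as you wrote it.
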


\begin{coro}\label{cor.Whomgraph}
Let the assumption be as in \ref{thm.Whomgraph}.
Assume  $(X,G)\in \cS_{\rm eq/k}^{\rm prim}$ . Then 
\[
\HWM {a} {X/G} =
\left.\left\{\begin{gathered}
 M \\
 0 \\
\end{gathered}\right.\quad
\begin{aligned}
&\text{$a=0$}\\
&\text{$a\not=0$}
\end{aligned}\right.
\]
\end{coro}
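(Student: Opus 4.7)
The plan is simply to combine the McKay principle for weight homology (Theorem \ref{thm.Whomgraph}) with the explicit computation of equivariant weight homology on smooth projective $G$-schemes provided by Theorem \ref{thm.Whom}(i). There is essentially nothing new to prove: the corollary is a direct specialization.

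First, I would apply Theorem \ref{thm.Whomgraph} to the projection $\pi : X \to X/G$, regarded as a morphism $(X,G) \to (X/G,e)$ in $\eqsch$. This gives an isomorphism
\[
\pi_* : \HWGM{a}{X} \isom \HWM{a}{X/G} \qfor \text{all } a \in \bZ.
\]
Hence it suffices to compute the left-hand side $\HWGM{a}{X}$.

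Second, since $(X,G) \in \cSGpr \subset \cSeq$, the underlying scheme $X$ is smooth projective over $k$, so Theorem \ref{thm.Whom}(i) applies and yields
\[
\HWGM{a}{X} \cong \begin{cases} M^{\Xcd{0}/G} & \text{if } a=0, \\ 0 & \text{if } a\neq 0, \end{cases}
\]
where $\Xcd{0}$ denotes the set of generic points of $X$.

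Finally, the primitivity assumption on $(X,G)$ means exactly that $G$ acts transitively on the set of irreducible components of $X$, equivalently on $\Xcd{0}$. Therefore the quotient set $\Xcd{0}/G$ consists of a single element, so $M^{\Xcd{0}/G} = M$. Combining the two displayed isomorphisms gives the assertion. The main (and only nontrivial) input is of course Theorem \ref{thm.Whomgraph}, whose proof is postponed to later sections; granting it, the corollary is immediate.
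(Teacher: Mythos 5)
Your proof is correct and follows exactly the paper's own approach: the paper simply remarks that the corollary is a direct consequence of Theorem~\ref{thm.Whomgraph} and Theorem~\ref{thm.Whom}$(i)$, and your write-up spells out that argument, including the observation that primitivity makes $\Xcd{0}/G$ a singleton so that $M^{\Xcd{0}/G}=M$.
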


Corollary~\ref{cor.Whomgraph} is a direct consequence of Theorem~\ref{thm.Whomgraph} and
Theorem~\ref{thm.Whom}$(i)$. 

\begin{coro}\label{cor.Whomgraph2}
Let the assumption be as in \ref{thm.Whomgraph}.
Let $(X,G)\in \eqsch$ and assume $X$ is smooth over $k$.
Let $S\subset X/G$ be a reduced closed subscheme which contains the singular locus of $X/G$.
Let $f:Y\to X/G$ be a proper birational morphism such that $Y$ is smooth over $k$ and $f$ is an isomorphism outside $S$. 
Assume
\begin{itemize}
\item[$(i)$]
$S$ is proper over $k$ (e.g. $S$ is isolated),
\item[$(ii)$]
There exist a strict open immersion $j:X \to \Xb$ in $\schG$ and an open immersion 
$Y\hookrightarrow \Yb$ with $\Xb$ and $\Yb$ projective and smooth over $k$, and a commutative diagram
\[
\xymatrix@C=20pt{
Y \ar[d]_{f}\ar[r] & \Yb\ar[d]_{\overline{f}} \\
X/G \ar[r] & \Xb/G \\
}
\]
where $\overline{f}$ is proper birational.
\end{itemize}
Then the map 
\[\HWM a {f^{-1}(S)}   \stackrel{\sim}{\to} \HWM a S\]
is an isomorphism for all $a\in \bZ$.
In particular, if $S$ is smooth over $k$, 
$ \HWM a {f^{-1}(S)}=0$ for $a\not=0$.
\end{coro}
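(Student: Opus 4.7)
The plan is to establish the isomorphism $\HWM a{f^{-1}(S)}\cong \HWM aS$ by two successive applications of the five lemma to ladders of localization sequences of equivariant weight homology, whose main inputs are the McKay principle (Theorem~\ref{thm.Whomgraph}) and the computation of weight homology of smooth projective schemes (Theorem~\ref{thm.Whom}$(i)$).

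\smallskip

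\noindent\emph{First reduction.} Set $U:=X/G\setminus S$; since $f$ is an isomorphism outside $S$ one has $Y\setminus f^{-1}(S)=U$. The localization sequences for the pairs $(Y,f^{-1}(S),U)$ and $(X/G,S,U)$ fit into a commutative ladder under the proper map $f$, with identity columns on $\HWM aU$ and middle columns $f_*\colon\HWM a{f^{-1}(S)}\to\HWM aS$ and $f_*\colon\HWM aY\to\HWM a{X/G}$. Hence it suffices to prove that $f_*\colon\HWM aY\to\HWM a{X/G}$ is an isomorphism for every $a$. To analyze this map I pass to the compactifications from hypothesis (ii). McKay applied to the smooth projective $G$-scheme $\ol X$ combined with Theorem~\ref{thm.Whom}$(i)$ gives $\HWM a{\ol X/G}\cong\HWGM a{\ol X}=M^{\ol X^{(0)}/G}$ for $a=0$ and $0$ otherwise; similarly $\HWM a{\ol Y}=M^{\ol Y^{(0)}}$ in degree $0$ and vanishes otherwise. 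Since $\ol f$ is birational it induces a bijection $\ol Y^{(0)}\leftrightarrow\ol X^{(0)}/G$, so $\ol f_*\colon\HWM a{\ol Y}\stackrel{\sim}{\to}\HWM a{\ol X/G}$ in every degree.

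\smallskip

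\noindent\emph{Second reduction.} Using $(\bigstar)$ I would refine the given compactification so that $\ol f^{-1}(X/G)=Y$ (for instance by replacing $\ol Y$ with a smooth projective resolution of the closure of the graph of $f$ inside $\ol Y_0\times\ol X/G$ for an auxiliary compactification $\ol Y_0$). Then $\ol f$ restricts to a proper morphism $\ol Y\setminus Y\to Z:=\ol X/G\setminus X/G$, and the localization sequences for $(\ol Y,\ol Y\setminus Y,Y)$ and $(\ol X/G,Z,X/G)$ form a commutative ladder under $\ol f_*$ whose middle column is already an isomorphism. A downward induction on $a$, starting from the vanishing of weight homology in sufficiently high degrees, reduces the sought isomorphism $f_*\colon\HWM aY\stackrel{\sim}{\to}\HWM a{X/G}$ to the at-infinity comparison $\ol f_*\colon\HWM a{\ol Y\setminus Y}\stackrel{\sim}{\to}\HWM aZ$ for every $a$.

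\smallskip

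\noindent\emph{Main obstacle and conclusion.} The principal difficulty is the at-infinity comparison, and I would settle it by Noetherian induction on $\dim(\ol X/G)$: the restricted map $\ol f|_{\ol Y\setminus Y}\colon \ol Y\setminus Y\to Z$ is a proper morphism of proper $k$-schemes of strictly smaller dimension which, after a further refinement of the compactifications permitted by $(\bigstar)$, falls within the hypotheses of the corollary, so that the induction hypothesis applies, with the base case furnished by Theorem~\ref{thm.Whom}$(i)$. Assembling the two reductions then produces the isomorphism $\HWM a{f^{-1}(S)}\cong\HWM aS$. The final assertion of the corollary is automatic: if $S$ is smooth then by (i) it is smooth projective, hence $S\in\cSeq$, and Theorem~\ref{thm.Whom}$(i)$ gives $\HWM aS=0$ for $a\ne 0$; the just-proved isomorphism then forces $\HWM a{f^{-1}(S)}=0$ for $a\ne 0$.
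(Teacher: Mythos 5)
Your first reduction and the observation that $\overline f_*\colon H^W_a(\overline Y;M)\to H^W_a(\overline X/G;M)$ is an isomorphism (via the McKay principle and Theorem~\ref{thm.Whom}$(i)$) are the right ingredients, but the way you finish creates a genuine gap. The ``at-infinity comparison'' $\overline f_*\colon H^W_a(\overline Y\setminus Y;M)\to H^W_a(Z;M)$, with $Z=\overline X/G\setminus X/G$, is \emph{not} an instance of the corollary you are proving, and no ``refinement of the compactification permitted by $(\bigstar)$'' can make it one. The corollary requires the target to be a quotient of a \emph{smooth} $G$-scheme and the source to be the exceptional locus of a resolution of that quotient; but $Z=(\overline X\setminus X)/G$ with $\overline X\setminus X$ typically singular (it is a closed complement, not a smooth $G$-scheme), and $\overline Y\setminus Y$ is just the boundary of a compactification, not a resolution of $Z$ with simple-normal-crossing exceptional locus. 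So the Noetherian induction you invoke has no legitimate inductive step, and the argument does not close.

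The paper's proof avoids this at-infinity problem entirely by a small but decisive trick: instead of comparing $Y$ with $X/G$ first and then worrying about the boundary, it sets $\overline S := S\cup(\overline X/G\setminus X/G)$ and compares the localization sequences for $(\overline Y,\overline f^{-1}(\overline S))$ and $(\overline X/G,\overline S)$ in a single ladder. The vertical maps on $\overline Y\to\overline X/G$ are isomorphisms by Corollary~\ref{cor.Whomgraph}, and the vertical map on the open complements is the identity since $\overline Y\setminus\overline f^{-1}(\overline S)\cong X/G\setminus S\cong \overline X/G\setminus\overline S$. The five lemma then gives $H^W_a(\overline f^{-1}(\overline S);M)\cong H^W_a(\overline S;M)$, and because $\overline S$ is a \emph{disjoint} union $S\sqcup(\overline X/G\setminus X/G)$ (using that $S$ is proper, hypothesis $(i)$), the isomorphism splits and yields the desired $H^W_a(f^{-1}(S);M)\cong H^W_a(S;M)$ directly. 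Note that the hard at-infinity comparison you were trying to do by induction just drops out as the other summand. This is why the paper never needs it as a separate step; enlarging $S$ to $\overline S$ is the key move you were missing.
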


\begin{rem}\label{cor.Whomgraph2.rem}
The assumption $(ii)$ always holds if $\ch(k)=0$ thanks to \cite{H} and \cite{BM}.
Assuming that $k$ is perfect, the assumption $(ii)$ is a consequence of $\eqRS$.
\end{rem}

\medbreak

Here we deduce Corollary~\ref{cor.Whomgraph2} from Theorem~\ref{thm.Whomgraph}
We may assume that $(\Xb,G)$ is primitive and $\Yb$ is connected.  
Put $\Sb=S\cup (\Xb/G-X/G) \subset \Xb/G$.
By the localization sequence for weight homology we have a commutative diagram with exact rows
\begin{small}
\[
\xymatrix@C=20pt{
 H^W_{a+1}(\Yb) \ar[d] \ar[r] & H^W_{a+1}(\Yb-f^{-1}(\Sb)) \ar[r] \ar[d]^{\simeq} & 
 H^W_{a}(f^{-1}(\Sb)) \ar[d] \ar[r] &  H^W_{a}(\Yb) \ar[d]  \\
 H^W_{a+1}(\Xb/G) \ar[r]  &  H^W_{a+1}(\Xb/G-\Sb) \ar[r] &   H^W_{a}(\Sb) \ar[r]  &   H^W_{a}(\Xb/G) .
}
\]
\end{small}
In the diagram we suppressed the coefficients $M$ for simplicity of notation.
The second vertical isomorphism holds since $Y-f^{-1}(\Sb) \cong \Xb/G-\Sb$.
By Corollary~\ref{cor.Whomgraph} we get
\begin{equation*}\label{eqWhomeq1}  
H^W_a(\Yb,M) \stackrel{\simeq}{\to} H^W_a(\Xb/G,M) \stackrel{\simeq}{\to}
\left.\left\{\begin{gathered}
 M \\
 0 \\
\end{gathered}\right.\quad
\begin{aligned}
&\text{$a=0$}\\
&\text{$a\not=0$}
\end{aligned}\right.
\end{equation*}
Thus, from the above diagram we deduce an isomorphism
\[H^W_a( {f^{-1}(\Sb)};M)   \stackrel{\sim}{\to} H^W_a(\Sb;M ) \]
By the assumption $(i)$, $\Sb$ is the disjoint sum of $S$ and $(\Xb/G-X/G)$.
Hence the above isomorphism induces the desired isomorphism of Corollary~\ref{cor.Whomgraph2}.
$\square$
\medskip

From the proof of Theorem~\ref{thm.Whomgraph} we also deduce the boundedness of $H^W$ in Section~\ref{proof}.  

\begin{prop}\label{prop.bounded}
For $X\in \schG$ and for a $\Lam$-module $M$ we have under condition $(\bigstar)$ of Theorem \ref{thm.Whom} $$\HWGM {a} X=0$$
if $a\ge \dim(X)+1$. 
\end{prop}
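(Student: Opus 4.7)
The plan is to induct on $d = \dim X$, using the equivariant localization sequence (Definition~\ref{def.eqhom}(ii)) together with equivariant resolution of singularities ($\eqRS$ when $\Lambda = \bZ$) or Gabber's alteration theorem (when $\Lambda = \bZ[1/p]$) to reduce to the smooth projective case already handled by Theorem~\ref{thm.Whom}(i). The base case $d = 0$ is immediate: after base change to the perfection of $k$ via Theorem~\ref{thm.Whom}(vi)(a), $X_{\mathrm{red}}$ is \'etale over $k$, and the localization sequence for the nilimmersion $X_{\mathrm{red}} \hookrightarrow X$ with empty open complement, together with Theorem~\ref{thm.Whom}(i), gives $H^W_a(X, G; M) = 0$ for $a \geq 1$.

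For the inductive step with $\dim X = d \geq 1$, I first treat the proper case. Apply the appropriate resolution/alteration hypothesis to choose a proper $G$-equivariant morphism $f : \tilde X \to X$ with $\tilde X$ smooth projective over $k$, which is an isomorphism over $X \setminus Z$ for some $G$-stable closed subscheme $Z \subset X$ with $\dim Z < d$; set $E = f^{-1}(Z)_{\mathrm{red}}$, so $\dim E < d$. For $a \geq d + 1$, Theorem~\ref{thm.Whom}(i) yields $H^W_a(\tilde X, G; M) = 0$, while the inductive hypothesis gives $H^W_a(E, G; M) = H^W_{a-1}(E, G; M) = 0$ and $H^W_a(Z, G; M) = H^W_{a-1}(Z, G; M) = 0$. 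The localization sequence for $E \hookrightarrow \tilde X$ then forces $H^W_a(\tilde X \setminus E, G; M) = 0$, and the isomorphism $\tilde X \setminus E \simeq X \setminus Z$ yields $H^W_a(X \setminus Z, G; M) = 0$; the localization sequence for $Z \hookrightarrow X$ finally gives $H^W_a(X, G; M) = 0$. The non-proper case reduces to the proper one by taking a $G$-equivariant projective compactification $X \hookrightarrow \bar X$ (obtained as the closure of $X$ in a $G$-linearized projective embedding, so $\dim \bar X = d$ and $\dim (\bar X \setminus X) < d$) and applying the localization sequence to $(\bar X \setminus X)_{\mathrm{red}} \hookrightarrow \bar X$.

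The main technical obstacle is the case $\Lambda = \bZ[1/p]$ with $\ch(k) = p > 0$, where Gabber's alteration only provides a morphism $f : \tilde X \to X$ that is generically \'etale of degree invertible in $\Lambda$, rather than birational. Here $\tilde X \setminus E \to X \setminus Z$ is merely finite \'etale of such a degree, and the identification $H^W_a(\tilde X \setminus E, G; M) \simeq H^W_a(X \setminus Z, G; M)$ used above must be replaced by the descent property of weight homology under $\Lambda$-admissible hyperenvelopes built into the construction in \S\ref{descenti}--\S\ref{descentii}: invertibility of the alteration degree in $\Lambda$ makes $H^W_a(X \setminus Z, G; M)$ a direct summand of $H^W_a(\tilde X \setminus E, G; M)$, so the vanishing argument still goes through.
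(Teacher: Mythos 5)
Your inductive argument via localization sequences and resolution (or alteration) is precisely the ``direct'' route that the paper names and sets aside: the remark immediately following Proposition~\ref{prop.bounded} says the result ``would follow immediately from Theorem~\ref{thm.Whom} under the assumption of a strong form of equivariant resolution of singularities, which we do not have in positive characteristic at the moment'' (citing \cite[Sec.~2.5]{GS1} for the analogous argument). The paper instead proves boundedness arithmetically, by pulling it over from Kato homology: Theorem~\ref{mainthm1} gives $\HWGLinf a X \cong \KHGLinf a X$, the Kato complex is by construction supported in degrees $0,\dots,\dim X$, and a d\'evissage in $M$ (via torsion coefficients, then finitely generated $\Lam$-modules using Theorem~\ref{thm.Whom}(v)) yields the general case. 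In characteristic $0$, and for $\Lam=\bZ$ under $\eqRS$, your argument is correct and amounts to the Gillet--Soul\'e approach; the genuine problem is your treatment of $\Lam=\bZ[1/p]$ with $\ch k=p>0$, where the final paragraph of the proposal does not close the gap.

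Three distinct things go wrong there. First, Gabber's condition $\text{\bf (G)}_{\ell}$ gives, for each fixed prime $\ell\ne p$, an alteration whose degree is prime to $\ell$ but typically divisible by other primes $\ne p$, hence not invertible in $\Lam=\bZ[1/p]$; there is no single alteration of degree invertible in $\Lam$ as your patch needs. Second, even if one had such an alteration $f$ of degree $d$ invertible in $\Lam$, exhibiting $H^W_a(X\setminus Z)$ as a direct summand of $H^W_a(\tilde X\setminus E)$ would require a transfer $f^*$ for the finite morphism $f$ with $f_*\circ f^*=d$; the weight homology constructed in Construction~\ref{funconst} is equipped only with proper pushforward and strict open pullback, and no such trace is produced --- this is precisely the structural asymmetry with Kato homology, which does have quasi-finite flat pullback (Proposition~\ref{eqhom.prop1}(1)) and so satisfies the descent criterion of Theorem~\ref{descentthmi}. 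Third, the ``descent property'' for $\Lam$-admissible hyperenvelopes is a quasi-isomorphism statement along simplicial resolutions, not a direct-summand statement for a single alteration; in the descent spectral sequence $E^1_{a,b}=H^W_b(X_a,G)\Rightarrow H^W_{a+b}(X,G)$ the row $E^1_{a,0}$ lives in all $a\ge 0$, so boundedness of $H^W_*(X)$ cannot be read off from boundedness of the $H^W_*(X_a)$. This last point is the real content of the obstruction in positive characteristic, and it is exactly why the paper routes the proof through the Kato complex, whose boundedness is tautological.
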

Proposition~\ref{prop.bounded} would follow immediately from Theorem~\ref{thm.Whom} under the assumption of a strong form of equivariant resolution of singularities, which we do not have in positive characteristic at the moment. See \cite[Sec.\ 2.5]{GS1} for an analogous argument. Our proof
of Proposition~\ref{prop.bounded} relies on equivariant Kato homology and is of arithmetic nature
(see its proof in the last part of \S\ref{proof}).

\bigskip

\section{Admissible envelopes}\label{envelopes}

\noindent
Let the notation be as in Section~\ref{eqWhom}.
We define $\Lam$-admissible envelopes of equivariant schemes generalizing an idea of Gillet \cite{G1}. 
Then we state an equivariant alteration theorem due to Gabber and sketch a proof. 
This allows us to construct smooth hyperenvelopes of equivariant simplicial schemes.
Recall that $p$ denotes the exponential characteristic of the base
field $k$ and that $\Lam$ is either $\bZ$ or $\bZ[1/p]$.        

\begin{defi}(\cite[1.4.1]{GS1})
A morphism $f:(X,G) \to (Y, G)$ in $\cC_{G/k}$ is called a $\Lam$-admissible envelope if 
the underlying morphism $X\to Y$ is proper and it satisfies the following condition.

If $\Lam=\bZ$, 
for a primitive $G$-equivariant scheme $(\Spec(L),G)$ where $L$ is a finite product of fields,
the induced map $X(L)\to Y(L)$ on the the sets of $G$-equivariant points of $X$ and $Y$ with values
in $(\Spec(L),G)$ is surjective.

If $\Lam\ne \bZ$, 
for each prime number $\ell\not=p$
and for each $G$-equivariant point $P\in Y(L)$ with values in $(\Spec(L),G)$, 
there is a finite $G$-equivariant extension $(\Spec(L'),G) \to (\Spec(L),G)$ of degree prime to $\ell$
and a $G$-equivariant point $P'\in X(L')$ mapping to $P\in Y(L) \hookrightarrow Y(L')$.
\end{defi}

Clearly the system of $\Lam$-admissible envelopes is closed under compositions and base changes.

For the construction of admissible envelopes we need some sort of resolution of singularities over the base field $k$. For this consider the following
condition:

\begin{quote}
$\text{\bf (G)}_{\ell}$ : For a prime number $\ell$ and $(X,G)\in \eqsch$ with $X$ reduced, 
there are $(X',G)\in \eqsch$ with $X'$ smooth over $k$ and a strict, surjective, projective morphism
$f:X' \to X$ which is finite of degree prime to $\ell$ over any maximal point of $X$.
\end{quote}

\begin{theo}[Gabber]\label{Gabberres}
If $k$ is perfect, $\text{\bf (G)}_{\ell}$ holds whenever $\ell\not=p$.
\end{theo}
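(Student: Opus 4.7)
The plan is essentially to invoke Gabber's equivariant refinement of de Jong's alteration theorem in the form exposed in Illusie \cite{Il}, and to indicate the strategy of its proof, since the statement of $\text{\bf (G)}_{\ell}$ is precisely what Gabber's theorem gives in this setting.

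First I would reduce to the case where $(X,G)$ is primitive. Indeed, $X$ decomposes as a finite disjoint union of $G$-stable subschemes $X = \coprod_i X_i$, where each $X_i$ is the union of a single $G$-orbit of irreducible components and is therefore primitive. Since $\text{\bf (G)}_{\ell}$ is compatible with finite disjoint unions (one may treat each $(X_i,G)$ separately and then take the coproduct of the resulting alterations), it suffices to find the required $X' \to X$ when $(X,G)$ is primitive. One may further reduce to $X$ projective by choosing a $G$-equivariant compactification (which exists because $X$ is quasi-projective and $G$ is finite) and showing that a suitable prime-to-$\ell$ alteration of the compactification restricts to a solution over the open piece.

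The core of the argument is Gabber's equivariant version of de Jong's alteration procedure, which proceeds by induction on $\dim X$. One chooses a $G$-equivariant rational fibration $X \dashrightarrow Y$ onto a lower-dimensional $G$-scheme (for example, via Noether normalization applied to $X/G$ and pulled back along $\pi:X\to X/G$), applies the inductive hypothesis both to $Y$ and to the generic fiber, and then combines the two using an equivariant semistable reduction theorem for families of curves. The crucial refinement by Gabber over the classical de Jong alteration is the control of the degree modulo $\ell$: by a Belyi-type trick one arranges that the ramification of the fibration is concentrated over three points of the base, and then replaces the base by a prime-to-$\ell$ cyclic cover tamely ramified at those three points. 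That each of these choices (the fibration, the blow-up centers in the semistable reduction, the sections, the tame covers) can be made $G$-equivariantly while the resulting alteration has degree prime to $\ell$ over every maximal point of $X$ is the content of \cite{Il}. Perfectness of $k$ is used in order to have access to generic smoothness and to ensure that normalizations of tame covers of smooth $k$-varieties remain smooth.

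The main obstacle is the simultaneous control of three things throughout the inductive construction: $G$-equivariance of all choices, the prime-to-$\ell$ condition on degrees over maximal points, and smoothness over $k$ of the final alteration. Each of these alone is fairly standard (equivariance follows from the finiteness of $G$ together with averaging arguments on the quotient $X/G$; the prime-to-$\ell$ condition is Gabber's Belyi trick; smoothness over $k$ uses perfectness); the novelty of Gabber's theorem is combining all three in a single induction, and it is precisely this combination that I would defer to \cite{Il}.
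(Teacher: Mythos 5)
Your proposal diverges from the paper's actual argument, and moreover contains a genuine gap in the form of a misattribution. You assert that the $G$-equivariant statement ``is precisely what Gabber's theorem gives in this setting'' and that all the equivariant choices can be made ``is the content of \cite{Il}.'' This is not correct: \cite{Il} proves the \emph{non-equivariant} case $G=e$, and the paper explicitly says so (``In case $G=e$ this result due to Gabber is shown in \cite{Il}. Gabber communicated the following proof \cite{Ga1} to us in the general case.''). So deferring the equivariance to \cite{Il} does not close the argument; the equivariant extension is precisely what needs to be supplied.

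The paper's actual proof does not run the de Jong--Gabber induction by curve fibrations equivariantly at all. Instead it performs a reduction to the non-equivariant theorem: after passing to a $G$-primitive component $X_1$ with stabilizer $G_1$, one picks an $\ell$-Sylow subgroup $G_\ell\subset G_1$ (so that $[G_1:G_\ell]$ is prime to $\ell$), passes to the quotient $Y = X_1/G_\ell$, and applies the non-equivariant Gabber theorem to $Y$ to produce a smooth prime-to-$\ell$ alteration $Y'\to Y$ with $D'=g^{-1}(D)$ a simple normal crossing divisor, where $D$ is the non-free locus. Since $G_\ell$ is an $\ell$-group with $\ell\ne p$, the cover $X_1\to Y$ is tame, so the normalization $Y_1$ of $Y'$ in $k(Y')\otimes_{k(Y)}k(X_1)$ is, by Abhyankar's lemma, Kummer \'etale over the log scheme $(Y',D')$, hence log regular by \cite[Prop.\ 6.6(c)]{Il}. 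One then resolves $(Y_1,D_1)$ equivariantly using the Kato--Niziol--Gabber resolution of log regular schemes, and finally induces from $G_\ell$ up to $G$ by setting $X' = G\times_{G_\ell} X'_1$. This Sylow--then--log-resolution strategy is the essential content; your proposal, by deferring to an equivariant de Jong induction that is not actually carried out in \cite{Il}, leaves the key step unsubstantiated.

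Separately, your auxiliary reduction to the projective case via equivariant compactification is unnecessary for this lemma and introduces extra bookkeeping (one would need to check that the alteration restricts correctly to the open piece and that the degree condition over maximal points is preserved); the paper works directly with the quasi-projective $X$.
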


In case $G=e$  this result due to Gabber is shown in \cite{Il}.
Gabber communicated the following proof \cite{Ga1} to us in the general case.

\begin{proof}
Without loss of generality $X$ is $G$-primitive.
Let $X_1$ be an irreducible component of $X$ and put 
\[
G_1=\{g\in G |\, g(X_1)=X_1 \}.
\] 
Let $G_\ell$ be an $\ell$-Sylow subgroup of $G_1$ and denote by $D\hookrightarrow Y=X_1/G_\ell$ 
the locus over which $G_\ell$ does not act freely on $X_1$. According to \cite[Theorem 1.3]{Il}, 
there is a dominant generically finite morphism $g:Y' \to Y$ of degree prime to $\ell$
such that $Y'$ is smooth and $D'=g^{-1}(D)$ is a simple normal crossing divisor on $Y'$. 
Let $Y_1$ be the normalization of $Y'$ in $k(Y') \otimes_{k(Y)} k(X_1)$. 
As $Y_1$ is tamely ramified over $Y'$, we use Abhyankar's lemma \cite[XIII Proposition 5.1]{SGA1} in order to find \'etale locally over $Y'$ an embedding of $Y_1$ into a Kummer \'etale extension of the log scheme $(Y',D')$. 
By \cite[ Proposition~6.6(c)]{Il} $(Y_1,D_1)$ is log regular where $D_1$ is the reduced preimage of $D'$. 
Using results of Kato, Niziol and Gabber \cite{K2} and \cite{Ni} on resolution of log regular schemes,
we find an equivariant resolution of singularities $(X'_1,G_\ell)\to (Y_1,G_\ell) $. 
If we let $(X',G)$ be the equivariant smooth scheme given by $X'=G \times_{G_\ell} X'_1$,
the morphism $f:(X',G) \to (X,G)$ is as demanded in $\text{\bf (G)}_{\ell}$.
\end{proof}

\medskip

\begin{prop}\label{env.existence}
Assume the condition $(\bigstar)$ of Theorem \ref{thm.Whom} and further that $k$ is perfect.
Then, for any $(X,G)\in \cC_{G/k}$ with $X$ reduced, there is a $\Lam$-admissible 
envelope $f:(X',G)\to (X,G)$ with $X'$ smooth over $k$.
\end{prop}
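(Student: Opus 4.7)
The plan is Noetherian induction on $\dim X$. The base case $\dim X = 0$ is immediate: a reduced zero-dimensional scheme over a perfect field is smooth, so $X' = X$ works. For the inductive step I decompose $X$ into the union of its $G$-primitive pieces (one for each $G$-orbit on the set of irreducible components) and, after assembling envelopes disjointly, reduce to the case where $(X,G)$ is itself $G$-primitive. The structural idea is then to produce a strict proper morphism $\tilde X \to X$ with $\tilde X$ smooth over $k$, together with a dense $G$-invariant open $U \subset X$ over which this morphism is already a $\Lam$-admissible envelope. Setting $Z = X \setminus U$ with its reduced structure, the inductive hypothesis applied to $Z$ (which has smaller dimension) furnishes a smooth admissible envelope $Z' \to Z$, and then $X' = \tilde X \sqcup Z' \to X$ is the desired envelope of $X$.

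When $\Lam = \bZ$, assumption $\eqRS$ directly supplies $\tilde X \to X$ as an isomorphism over the (automatically $G$-invariant) regular locus $U = X_{\mathrm{reg}}$, so admissibility over $U$ is trivial. When $\Lam = \bZ[1/p]$, I apply Theorem~\ref{Gabberres} iteratively. Pick any prime $\ell_0 \neq p$ and invoke $\text{\bf (G)}_{\ell_0}$ to obtain a smooth cover $X_{\ell_0} \to X$ whose generic degree $d$ is prime to $\ell_0$. Then, for each of the finitely many primes $\ell \mid d$ with $\ell \neq p$, I invoke $\text{\bf (G)}_\ell$ to get a smooth cover $X_\ell \to X$ of generic degree prime to $\ell$, and set $\tilde X = X_{\ell_0} \sqcup \coprod_{\ell \mid d,\ \ell \neq p} X_\ell$, a finite disjoint union. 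I shrink $U$ to a dense $G$-invariant open contained in $X_{\mathrm{reg}}$ over which every constituent cover is finite \'etale (generic finiteness plus flatness over a regular base gives finite flat over a dense open, and generic \'etaleness between smooth varieties of the same dimension then upgrades this to finite \'etale after further shrinking).

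The admissibility of $\tilde X \to X$ over $U$ rests on the following elementary observation: for a primitive $G$-equivariant point $P : \Spec L \to U$ and a $G$-equivariant finite \'etale cover $Y \to U$ of (per-component) rank $e$, the pullback $Y \times_U \Spec L$ breaks into $G$-orbits of component fields whose per-component ranks sum to $e$, so whenever a prime $\ell'$ does not divide $e$ at least one such orbit yields a primitive $G$-equivariant extension $L'/L$ of degree prime to $\ell'$. Applied to $X_{\ell_0}$ this handles all primes $\ell' \neq p$ with $\ell' \nmid d$; applied to $X_\ell$ it handles each of the remaining primes $\ell \mid d$ with $\ell \neq p$. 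Combining with the inductively supplied $Z' \to Z$ then finishes the proof: since $U$ and $Z$ are both $G$-invariant, any primitive $G$-equivariant point of $X$ factors entirely through one of them and hence lifts to $X' = \tilde X \sqcup Z'$ in the required sense, while the general (non-primitive) case reduces to the primitive one by decomposing $\Spec L$ into $G$-orbits of components. I expect the main technical hurdle to be precisely the $\Lam = \bZ[1/p]$ case, where the need to accommodate all primes $\ell \neq p$ simultaneously forces this finite combination of Gabber covers indexed by the prime divisors of the initial generic degree $d$, rather than a single application of Theorem~\ref{Gabberres}.
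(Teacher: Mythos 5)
Your proof follows essentially the same route as the paper: induction on $\dim X$, with the $\Lam=\bZ[1/p]$ case handled by invoking Gabber's Theorem~\ref{Gabberres} once for an initial prime and then once for each prime dividing the resulting generic degree, taking the disjoint union of these covers, shrinking to a dense $G$-invariant open where admissibility holds, and applying the inductive hypothesis to the reduced complement. Your write-up usefully makes explicit what the paper leaves terse — the reduction to $G$-primitive $X$, the base case, and the verification that a $G$-equivariant generically finite cover of degree prime to $\ell'$ yields the required lift of primitive $G$-equivariant points — but the underlying argument is the one in the paper.
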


\begin{proof}
In case $\Lam=\bZ$ and $\eqRS$ holds, the assertion is proved by the same argument as \cite{GS1} Lemma 2.
We prove the assertion in case $\displaystyle{\Lam=\bZ[\frac{1}{p}]}$.
We use the induction on $d=\dim(X)$. Start with an arbitrary prime $\ell\not=p$.
According to Theorem \ref{Gabberres}, there is $(X'_\ell,G)$ with $X'_\ell$ smooth over $k$ and 
a surjective proper morphism $f_\ell:(X'_\ell,G) \to (X,G)$ which is finite
of degree prime to $\ell$ over each  maximal point of $X$. For every prime $\ell'$ in the set
\[
L=\{ \ell' \text{ prime} \;|\, \ell'\not=p \text{ and } \ell' | \deg (f_\ell)  \},
\]
choose $(X'_{\ell'},G)$ with $X'_{\ell'}$ smooth over $k$ and a surjective proper morphism
$f_{\ell'}:(X'_{\ell'},G) \to (X,G)$ which is finite of degree prime to
$\ell'$ over each maximal point of $X$. Consider the object
\[
X'_{\rm gen} =   X'_\ell \amalg	 \coprod_{\ell'\in L} X'_{\ell'} 
\]
in $\cC_{G/k}$. Then $f_{\rm gen}:(X'_{\rm gen},G) \to (X,G)$ is a $\Lam$-admissible envelope over a dense open $G$-equivariant subscheme $U\subset X$. Now by induction 
there is a  $\Lam$-admissible envelope $f_{\rm sp}:(Y',G) \to (Y,G)$ with $Y'$ smooth over $k$, 
where $Y=X-U$ with reduced structure. The morphism
\[
f=f_{\rm gen} \amalg f_{\rm sp} : X'_d \amalg Y' \to X
\]
is the envelope we are looking for.
\end{proof}

\medskip

In the next section $\Lam$-admissible equivariant hyperenvelopes will play a central role.

\begin{defi}\label{def.hypenv}
A $\Lam$-admissible hyperenvelope is an equivariant morphism of simplicial schemes  $f:X_\bullet \to Y_\bullet$ in $\eqsch^\Delta$ such that 
the morphism
\[
X_a  \to (\mathrm{cosk}^{Y_\bullet}_{a-1}\mathrm{sk}_{a-1} X_\bullet)_a
\]
is a $\Lam$-admissible envelope for any $a\ge 0$.
\end{defi}

The system of $\Lam$-admissible hyperenvelopes of equivariant simplicial schemes is closed under composition and base change. This can be deduced from the characterization of hypercovers given in 
\cite[Lem.\ 2.6]{GS2} and the analogous statement for $\Lam$-admissible envelopes.
\medbreak

Combining Proposition~\ref{env.existence} and Theorem \ref{Gabberres} with the proof of \cite[Lemma 2]{GS1},
we deduce:

\begin{prop}\label{Ladmienvelop}
Assume the condition $(\bigstar)$ of Theorem \ref{thm.Whom} and further that $k$ is perfect.
Then for $X_\bullet\in \cC^\Delta_{G/k} $ there is a 
$\Lam$-admissible hyperenvelope $Y_\bullet \to X_\bullet$ in
$\Sch^\Delta_G$ with $Y_a/k$ smooth for all $a\ge 0$.
\end{prop}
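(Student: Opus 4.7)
The plan is to mimic the Gillet--Soul\'e inductive construction of hypercovers (cf.\ \cite[Lemma 2]{GS1}), carried out in the equivariant category $\schG$. The key point is that the admissible envelope at each simplicial level is furnished by Proposition \ref{env.existence}, whose hypothesis $(\bigstar)$ is available by assumption (either via $\eqRS$ when $\Lam = \bZ$, or via Theorem \ref{Gabberres} when $\Lam = \bZ[1/p]$).

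I would proceed by induction on the simplicial degree $a$. Assume we have already constructed the truncation $\mathrm{sk}_{a-1}Y_\bullet$ in $\schG^\Delta$, together with a strict morphism to $\mathrm{sk}_{a-1}X_\bullet$, such that the $Y_b$ are smooth over $k$ for all $b \le a-1$ and, for each such $b$, the matching map
\[
Y_b \;\longrightarrow\; M_b := (\mathrm{cosk}^{X_\bullet}_{b-1}\mathrm{sk}_{b-1} Y_\bullet)_b
\]
is a $\Lam$-admissible envelope. Form the next matching object $M_a$ by the same formula. It is computed as a finite inverse limit in $\schG$ assembled out of finitely many of the previously constructed $Y_b$ together with $X_a, X_{a-1}, \ldots$, and therefore embeds as a $G$-stable closed subscheme of a finite product of quasi-projective $G$-schemes. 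Hence $(M_a)_{\mathrm{red}} \in \schG$. Applying Proposition \ref{env.existence} to $(M_a)_{\mathrm{red}}$ produces a smooth $Y_a \in \schG$ together with a $\Lam$-admissible envelope $Y_a \to (M_a)_{\mathrm{red}} \to M_a$. The face and degeneracy maps at level $a$ are then obtained by composing with the canonical projections from $M_a$ to the $Y_b$ and to $X_a$; the base case $a=0$ is just $M_0 = X_0$, so Proposition \ref{env.existence} applies directly.

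The main point to verify is precisely the quasi-projectivity of the matching object $M_a$ in $\schG$: once that is in hand, the rest of the proof is a formal assembly. This can be deduced from the usual explicit description of $(\mathrm{cosk}_{a-1})_a$ as an equalizer of maps between finite products of the lower-degree terms, all of which lie in $\schG$, so the limit is a closed $G$-subscheme of a quasi-projective $G$-scheme. With $M_a$ quasi-projective, the characterization of hypercovers recalled in \cite[Lem.\ 2.6]{GS2} (valid for any reasonable class of morphisms stable under composition and base change, as is the class of $\Lam$-admissible envelopes) shows that the simplicial object $Y_\bullet$ built inductively in this way is indeed a $\Lam$-admissible hyperenvelope of $X_\bullet$, completing the proof.
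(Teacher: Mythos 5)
Your strategy is essentially the one the paper has in mind for its (very brief) proof, namely the inductive construction of \cite[Lemma~2]{GS1}: at each simplicial degree form the relative matching object $M_a = (\mathrm{cosk}^{X_\bullet}_{a-1}\mathrm{sk}_{a-1}Y_\bullet)_a$, pass to the reduction, and apply Proposition~\ref{env.existence} to obtain a smooth admissible envelope; the composite $Y_a \to (M_a)_{\mathrm{red}} \to M_a$ is again an admissible envelope because the class is closed under composition and points with values in a product of fields factor through the reduction. Your justification that $M_a$ lies in $\schG$ (a finite limit of quasi-projective $G$-schemes over $k$, hence a closed $G$-subscheme of a finite product of quasi-projective $G$-schemes) is correct and is exactly the point that must be checked, and once each $Y_a \to M_a$ is an admissible envelope the conclusion is immediate from Definition~\ref{def.hypenv}; citing \cite[Lem.~2.6]{GS2} for this is unnecessary.

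There is, however, one genuine imprecision: the degeneracy maps. You write that ``the face and degeneracy maps at level $a$ are then obtained by composing with the canonical projections from $M_a$ to the $Y_b$ and to $X_a$.'' This is correct for the face maps $d_i\colon Y_a \to Y_{a-1}$ and the augmentation to $X_a$, which are indeed structural projections of the matching object, but degeneracies go the opposite way, $s_j\colon Y_{a-1} \to Y_a$, and cannot be produced by composing maps out of $M_a$. The standard remedy --- the one actually underlying \cite[Lemma~2]{GS1} and going back to Deligne's construction of hypercoverings in \cite{SGA4} Expos\'e $\rm V^{bis}$ --- is to build a \emph{split} simplicial object: having chosen a smooth admissible envelope $Y'_a \to (M_a)_{\mathrm{red}}$, set
\[
Y_a \;=\; Y'_a \;\amalg\; L_a, \qquad L_a \;=\; \coprod_{\substack{\sigma\colon [a]\twoheadrightarrow [b]\\ b<a}} (NY)_b,
\]
where $(NY)_b$ denotes the nondegenerate piece chosen in degree $b<a$; this finite coproduct exists in $\schG$ and is smooth and quasi-projective. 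The latching object $L_a$ carries a canonical ``degenerate simplex'' map $L_a \to M_a$, the degeneracies $s_j$ are then the tautological inclusions into the corresponding summands of $L_a\subset Y_a$, and $Y_a \to M_a$ is still a $\Lam$-admissible envelope because adding a disjoint summand never destroys the surjectivity condition of the definition. With this correction the inductive step produces a genuine simplicial object in $\schG$, and the proof is complete.
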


\bigskip

\section{Descent construction of homology theories} \label{exthom}

In this section we explain how to extend equivariant homology functors defined for smooth projective varieties to all varieties, assuming that they satisfy a certain descent property. The construction is an equivariant version of Gillet--Soul\'e \cite[\S2]{GS1}
\medbreak

Let $C_+ (\Mod_\Lam )$ be the category of homological complexes of
$\Lam$-modules vanishing in negative degrees.
By definition an equivariant simplicial scheme $(X_\bullet, G)$ is a simplicial object in $\schG$ for 
a fixed finite group $G$. We say that $(X_\bullet,G)$ has proper face maps if the underlying morphisms
of the face maps are proper.

\begin{defi}\label{def.homfunc}
A homology functor 
\[
\Phi: \eqsch \to  C_+(\Mod_\Lam ) 
\]\
is a covariant functor on $\eqschp$ which is also contravariant with respect to strict open immersions and which satisfies the
following conditions.

\begin{enumerate}
\item[($i$)]
If $i:Y\to X$ is a strict closed immersion in $\eqsch$ with strict open  complement $j:V \to X$ the composition 
\[
\Phi(Y) \rmapo {i_*} \Phi(X) \rmapo{j^*} \Phi(V)
\]
vanishes as a map of complexes.
\item[($ii$)]
In the situation ($i$) the induced homomorphism 
\[
{\rm cone}[  \Phi (Y) \stackrel{i_*}{\to} \Phi (X) ]  \longrightarrow  \Phi(V)
\]
is a quasi-isomorphism.
\item[($iii$)]
Consider a diagram in $\eqsch$
\[
\begin{CD}
(V',G') @>{j'}>> (X',G')  \\
@VV{f_V}V @VV{f}V  \\
(V,G) @>{j}>> (X ,G) , \\
\end{CD}
\]
such that the diagram of the underlying schemes is cartesian, and $f$ and $f_V$ are proper, and
$j$ and $j'$ are strict open immersions.
Then the following square commutes:
\[ 
\xymatrix{
\Phi(X',G')  \ar[r]^{j^*}  \ar[d]_{(f_V)_*}  &  \Phi(V',G') \ar[d]^{f_*} \\
\Phi(X,G)  \ar[r]_{j^*}            &   \Phi(V,G)  
}
\]
\item[($iv$)]
For $(X,G)$ and $(Y,G)$ in $\cSeq$, the natural morphism of complexes
\[
\Phi(X) \oplus \Phi(Y)   \stackrel{\sim}{\to}  \Phi(X \coprod Y)
\]  
is an isomorphism.
\end{enumerate}
We also use the notion of a homology functor on $\schG$ for a fixed finite group $G$, whose definition is analogous. 
Using total complexes one extends a homology functor $\Phi: \eqsch \to C_+(\Mod_\Lam ) $
to a functor
\[
\Phi: \eqschp^\Delta \to C_+(\Mod_\Lam),
\]
where $ \eqschp^\Delta$ is the category of equivariant simplicial
schemes with proper face maps and proper morphisms. 

\end{defi}

\medbreak

\begin{defi}\label{def.homfunc2}
For a homology functor $\Phi$, the functors $H(\Phi)=\{H_a(\Phi)\}_{a\ge 0}$ with
\[
H_a(\Phi): \eqsch \to \ModL\;;\; (X,G) \to H_a(\Phi(X,G))
\]
form a homology theory on $\eqsch$ (this is obvious from the definition). 
We call $H(\Phi)$ the homology theory associated to $\Phi$.
\end{defi}

\medbreak

\begin{defi}\label{def.prehomfunc}
A pre-homology functor 
\[
F: \cSeq \to  C_+(\Mod_\Lam ) 
\]
is a covariant functor on $\cSeq$ which satisfies the following condition:
For $(X,G)$ and $(Y,G)$ in $\cSeq$, the natural morphism of complexes
\[
F(X) \oplus F(Y)   \stackrel{\sim}{\to}  F(X \coprod Y)
\]  
is an isomorphism.

\end{defi}

\begin{defi}\label{ZS} (\cite{GS1})
\noindent
\begin{enumerate}
\item[(1)]
Let $\LSG$ be the category with the same objects as $\cSG$ and with morphisms 
\[
{\Hom}_{\Lam\cSG} (X,Y) =\Lam \Hom_{\cSG} (X,Y),
\]
the free $\Lam$-module on the set ${\rm Hom}_{\cSG} (X,Y)$.
It is easy to check that $\LSG$ is a $\Lam$-additive category and the coproduct
$X\oplus Y$ of $X,Y\in Ob(\cS_{/k})$ is given by $X\coprod Y$. 
\item[(2)]
For a simplicial object in $\cSG$:
$$
X_\bullet\;:\; \quad
\cdots X_2 \;
\begin{matrix}
\rmapo{\delta_0}\\
\lmapo{s_0}\\
\rmapo{\delta_1}\\
\lmapo{s_1}\\
\rmapo{\delta_2}\\
\end{matrix}
\; X_1 \;
\begin{matrix}
\rmapo{\delta_0}\\
\lmapo{s_0}\\
\rmapo{\delta_1}\\
\end{matrix}
\; X_0 ,
$$
we define a homological complex in $\LSG$:
$$
\Lam X_\bullet\;:\; \quad
\cdots \; \to X_n \rmapo{\partial_n} X_{n-1} \to \cdots,\quad
\partial_n=\sum_{j=0}^n (-1)^j \delta_j.
$$
This gives a functor
\[
 \cSG^\Delta \to C_+(\LSG),
\]
where $C_+(\LSG)$ is the category of homological complexes in $\LSG$.
\end{enumerate}
\end{defi}

Let $F :\cSG \to C_+(\Mod_\Lam )$ be a pre-homology functor.
Using total complexes, one extends $F$ to a functor 
\begin{equation}\label{estprehomfunc}
F : C_+(\LSG) \to C_+(\Mod_\Lam ).
\end{equation}
For a simplicial object $X_\bullet$ in $\cSG$, we write
$F(X_\bullet)=F(\Lam X_\bullet)$.

\medskip

Consider the following descent conditions for a homology functor $\Phi$ on $\eqsch$
(resp.\ a pre-homology functor $F$ on $\cSeq$). 
For a homology functor $\Phi$, we let $\Phi_{|\cSeq} $ denote the pre-homology functor obtained
by restricting $\Phi$ to the subcategory $\cSeq$ of $\eqsch$.

\begin{quote}
  $\text{\bf (D)}_{\Phi}$ : For any finite group $G$ and for any 
  $\Lam$-admissible equivariant hyperenvelope of simplicial schemes
  $f:X_\bullet \to Y_\bullet$ in $\cC^\Delta_{G/k *}$, the map
\[
f_* : \Phi(X_\bullet ) \to \Phi(Y_\bullet )
\]
is a quasi-isomorphism.
\end{quote}
\medskip

\begin{quote}
$\text{\bf (D)}_{F}$
For any finite group $G$ and for any $X_* \in C_+(\LSG)$ with the
property that $\Phi_{|\cSeq}(X_*) $ is acyclic for any homology functor
$\Phi$ satisfying 
$\text{\bf (D)}_{\Phi}$, 
we have that $F(X_*) $ is acyclic. 
\end{quote}

\begin{lem}\label{PHIF.lem}
\begin{itemize}
\item[(1)]
If a homology functor $\Phi$ satisfies $\text{\bf (D)}_{\Phi}$, then $F=\Phi_{|\cSeq} $ satisfies 
$\text{\bf (D)}_{F}$.
\item[(2)]
Let $f:X_\bullet\to Y_\bullet$ be a $\Lam$-admissible hyperenvelope in ${\schG}$ such that
$X_\bullet$ and $Y_\bullet$ are objects in ${\cSG}$.
If a pre-homology functor $F$ satisfies $\text{\bf (D)}_{F}$, then
$f_*: F(X_\bullet)\to F(Y_\bullet)$ is quasi-isomorphism.
\end{itemize}
\end{lem}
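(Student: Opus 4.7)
Part (1) is a direct tautology from the definitions. Let $X_* \in C_+(\LSG)$ be any complex having the property that $\Psi_{|\cSeq}(X_*)$ is acyclic for every homology functor $\Psi$ on $\eqsch$ satisfying $\text{\bf (D)}_\Psi$. The goal is to show that $F(X_*)$ is acyclic. Apply the hypothesis to the single functor $\Psi = \Phi$, which satisfies $\text{\bf (D)}_\Phi$ by the assumption on $\Phi$; since $F(X_*)$ and $\Phi_{|\cSeq}(X_*)$ are computed by the same total complex, they agree as objects of $C_+(\Mod_\Lam)$, so the acyclicity of the latter gives the acyclicity of the former.

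For Part (2) my strategy is to realize $f_*$ as the result of applying $F$ to a morphism of complexes in $C_+(\LSG)$ and then to translate quasi-isomorphism into acyclicity of the mapping cone. Concretely, view $\Lam f : \Lam X_\bullet \to \Lam Y_\bullet$ as a morphism in $C_+(\LSG)$ (using the extension of Definition \ref{ZS}(2)), and form the mapping cone
\[
C_\bullet \;:=\; \cone\bigl(\Lam f : \Lam X_\bullet \to \Lam Y_\bullet\bigr) \;\in\; C_+(\LSG).
\]
This lives in non-negative degrees because both source and target do. A routine check of signs shows that total complexes are compatible with cone formation, so for any pre-homology functor $H$ on $\cSeq$ one has a natural isomorphism $H(C_\bullet) \cong \cone\bigl(H(X_\bullet) \to H(Y_\bullet)\bigr)$.

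Next I show that $C_\bullet$ satisfies the hypothesis of $\text{\bf (D)}_F$. Let $\Psi$ be any homology functor on $\eqsch$ satisfying $\text{\bf (D)}_\Psi$. Since $f: X_\bullet \to Y_\bullet$ is a $\Lam$-admissible hyperenvelope in $\cSG^\Delta$, and since all face and structure maps between smooth projective $k$-schemes are automatically proper, $f$ is in particular a $\Lam$-admissible hyperenvelope in $\cC^\Delta_{G/k*}$. Hence by $\text{\bf (D)}_\Psi$ the induced map $\Psi(X_\bullet) \to \Psi(Y_\bullet)$ is a quasi-isomorphism, so its cone is acyclic; by the compatibility noted above this cone is $\Psi_{|\cSeq}(C_\bullet)$. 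Therefore $C_\bullet$ meets the acyclicity hypothesis of $\text{\bf (D)}_F$ for every such $\Psi$.

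Invoking $\text{\bf (D)}_F$ gives that $F(C_\bullet)$ is acyclic. Since $F(C_\bullet) \cong \cone(F(X_\bullet) \to F(Y_\bullet))$ by the same compatibility, this means $f_* : F(X_\bullet) \to F(Y_\bullet)$ is a quasi-isomorphism. The only non-routine input in this argument is the compatibility of mapping cones in $C_+(\LSG)$ with the total-complex extension of pre-homology functors; this is a sign-tracking exercise for double complexes and I expect no real obstacle there. The conceptual content sits entirely in the axiom $\text{\bf (D)}_F$, which was designed precisely to make this kind of descent argument work.
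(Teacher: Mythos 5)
Your proposal is correct and takes essentially the same route as the paper: form the cone $C_\bullet$ of $\Lam f$ in $C_+(\LSG)$, use $\text{\bf (D)}_\Psi$ to see $\Psi_{|\cSeq}(C_\bullet)$ is acyclic for every homology functor $\Psi$ satisfying the descent condition, and then invoke $\text{\bf (D)}_F$ to conclude $F(C_\bullet) \cong \cone(F(X_\bullet)\to F(Y_\bullet))$ is acyclic. The only (minor) addition you make is spelling out why $f$ qualifies as a hyperenvelope with proper face maps, which the paper leaves implicit.
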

\begin{proof}
(1) is obvious. We prove (2).
Put $C_*={\rm cone}(\Lam X_\bullet \rmapo{\Lam f} \Lam Y_\bullet)$ in $C_+(\LSG)$.
For a homology functor $\Phi$, we have
\[
\Phi_{|\cSeq}(C_*)={\rm cone}(\Phi(X_\bullet) \rmapo{f_*} \Phi(Y_\bullet)).
\]
Hence, if $\Phi$ satisfies $\text{\bf (D)}_{\Phi}$, $\Phi_{|\cSeq}(C_*)$ is acyclic since $f_*$ is a quasi-isomorphism. 
Thus $\text{\bf (D)}_{F}$ implies
\[
F(C_*)={\rm cone}(F(X_\bullet) \rmapo{f_*} F(Y_\bullet))
\]
is acyclic, which proves the desired conclusion of Lemma \ref{PHIF.lem}.
\end{proof}

\medskip

The heuristic idea for the following construction is that there should
be a universal extension up to quasi-isomorphisms of a pre-homology
functor $F$ satisfying 
$\text{\bf (D)}_{F}$ to something similar to a homology functor $\Phi^F$ satisfying $\text{\bf (D)}_{\Phi^F}$. However we do not know how to make this precise, but we can at least construct the expected homology theory associated to $\Phi^F$. The construction we explain below is essentially due to Gillet-Soul\'e \cite{GS1}.
 
\begin{const}\label{funconst} 
We now assume the following:
\begin{itemize}
\item[$(i)$]
$k$ is perfect,
\item[$(ii)$]
the condition $(\bigstar)$ of Theorem \ref{thm.Whom} holds.
\end{itemize}
Given a pre-homology functor $F$ with values in $\Lam$-modules satisfying $\text{\bf (D)}_{F}$,
we construct an equivariant homology theory $H^F$ (see Definition~\ref{def.eqhom}) 
with the following properties:
\begin{enumerate}
\item 
There is a canonical isomorphism of homology groups 
$$
H^F_a|_{\cSeq} \cong H_a( F ) \;\;\;\; \text{ for all } a\in \Z
$$ 
compatible with equivariant pushforwards.
\item 
For $X_\bullet \in \cSG^\Delta$ and $X\in \schG$ proper and for a $\Lam$-admissible equivariant hyperenvelope $X_\bullet \to X$, there is a natural descent spectral sequence
\[
E^1_{a,b}  =  H_{b}^F(X_a,G)  \;\; \Longrightarrow  \;\;  H^F_{a+b}(X,G).
\]
\item 
If $F=\Phi_{|\cSeq}$ for a homology functor $\Phi$ satisfying $\text{\bf (D)}_{\Phi}$,
we have a canonical isomorphism of homology theories $H^F \cong H(\Phi)$.
\item 
Let $F \to F'$ be a morphism of pre-homology functors satisfying $\text{\bf (D)}_{F}$ and $\text{\bf (D)}_{F'}$ with associated homology theories $H^F$ and $H^{F'}$. 
Then there is a canonical morphism of homology theories $H^F\to H^{F'}$. 
Furthermore, if for every $(X,G)\in \cSeq$, the map $F(X,G) \to F'(X,G)$ is a quasi-isomorphism,
the induced morphism of homology theories $ H^F\to H^{F'}$ is an isomorphism.
\item 
Let $F''$ be ${\rm cone}(F \to F'$) for a morphism of pre-homology functors $F\to F'$ satisfying $\text{\bf (D)}_{F}$ and $\text{\bf (D)}_{F'}$ and let $H^F,H^{F'}$ and $H^{F''}$ be the associated homology theories. Then for $(X,G)\in \cSeq$ there is a long exact sequence
\[
\cdots \to H^F_{a}(X,G) \to H^{F'}_{a}(X,G) \to H^{F''}_{a}(X,G) \to H^F_{a-1}(X,G) \to \cdots
\]
compatible with equivariant proper pushforward and strict equivariant open pullback.
\item 
Assume that for every $(X,G)\in \cSeq$ and every $a\in \Z$, the complex $F(X,G)$ consists of 
finitely generated $\Lam$-modules. Then the homology groups 
$H^{F}_{a}(X,G) $ are finitely generated $\Lam$-modules for every $(X,G)\in \eqsch$.
\item 
Let $M$ be a $\Lam$-module. Assume that for every $(X,G)\in \cSeq$, the complex $F(X,G)$ consists of 
flat $\Lam$-modules. There is a natural spectral sequence
\[
E^2_{a,b} = \mathrm{Tor}^{R}_{a}(M,H_{b}^{F}(X,G) )   \Longrightarrow    H_{a+b}^{F\otimes_{\Lam} M } (X,G),
\]
where $F\otimes_{\Lam} M(X,G)=F(X,G)\otimes_{\Lam} M$.
\end{enumerate}
\end{const}

\medskip

Now we describe how this constructions is accomplished. The non-equivariant case is explained in detail by Gillet-Soul\'e \cite[\S2]{GS1} and the equivariant case works similarly, which we explain briefly. 
\medbreak

We start with a pre-homology functor $F$ satisfying $\text{\bf (D)}_{F}$.
For $(X,G)\in \eqsch$ choose an equivariant compactification $j:(X,G) \to (\ol X, G)$, where
$\Xb$ is projective over $k$ and $j$ is a strict open immersion with dense image.
It exists by a construction explained in the first part of Section~\ref{eqcohc}. 
Let $i:(Y,G)\hookrightarrow (\ol X, G)$ be the strict closed immersion of the (reduced) complement of $X$ in $\ol X$. By Proposition \ref{Ladmienvelop} we can choose a commutative diagram in $\schG^\Delta$
\begin{equation}\label{exthom.sqcom-1}
\xymatrix{
(Y_\bullet,G)  \ar[r]^{i^\Delta} \ar[d]^{f} &   (\ol X_\bullet,G) \ar[d]^{g}\\
(Y,G)  \ar[r]^{i}  & (\ol X,G) .
}
\end{equation}
where $(Y_\bullet,G),\; (\Xb_\bullet,G) \in \cSG^\Delta$ and $f,g$ are
$\Lam$-admissible equivariant hyperenvelopes. Then we set
\[
H^F_{a}(X,G)  = H_a ( {\cone}[ F(Y_\bullet) \rmapo{i^\Delta_*} F(\ol X_\bullet ) ] ).
\]
It can be shown that the homology groups $H^F_{a}(X,G) $ are independent of the choices made, 
i.e.\ are unique up to canonical isomorphism, and satisfy the axioms of an equivariant homology theory.
\medbreak

We sketch a proof of independence. Let us first fix the compactification $(X,G) \to (\ol X,G)$. 
Choose a different commutative diagram in $\schG^\Delta$
\begin{equation}\label{exthom.sqcom-2}
\xymatrix{
(Y'_\bullet,G)  \ar[r]^{i^{'\Delta}} \ar[d]^{f'} &   (\ol X'_\bullet,G) \ar[d]^{g'}\\
(Y,G)  \ar[r]^{i}  & (\ol X,G) .
}
\end{equation}
where $(Y'_\bullet,G),\; (\Xb'_\bullet,G) \in \cSG^\Delta$ and $f',g'$ are
$\Lam$-admissible equivariant hyperenvelopes. 
Thanks to Lemma \ref{PHIF.lem}(2), the method of \cite[\S2.2]{GS1} produces a canonical isomorphism
\[
{\cone}[ F(Y_\bullet) \to F(\ol X_\bullet ) ]  \cong   {\cone}[ F(Y'_\bullet) \to F(\ol X'_\bullet ) ]
\;\;\text{in } D_+(\Mod_\Lam ).
\]
\medbreak

As for independence from compactifications, consider another compactification $(X,G) \to (\ol X',G)$ with complement $(Y',G)$. By a standard trick, see \cite[\S2.3]{GS1},
one can assume that there is a commutative diagram
\begin{equation}\label{exthom.sqcom}
\xymatrix{
(Y',G)  \ar[r] \ar[d]_{\pi} &   (\ol X',G) \ar[d]^{\pi} \\
(Y,G)  \ar[r]  & (\ol X,G) .
}
\end{equation}
Choose smooth $\Lam$-admissible hyperenvelopes mapping to \eqref{exthom.sqcom} 
\begin{equation}\label{exthom.sqcom2}
\xymatrix{
(Y'_\bullet,G)  \ar[r] \ar[d]_{\pi_\bullet} &   (\ol X'_\bullet,G) \ar[d]^{\pi_\bullet} \\
(Y_\bullet,G)  \ar[r]  & (\ol X_\bullet,G) .
}
\end{equation}
where all terms are in $\cSG^\Delta$.
We are ought to show that the induced map
\[
{\cone} [ F(Y'_\bullet) \to F(\ol X'_\bullet ) ]   \stackrel{\pi_*}{\to} 
{\cone} [ F(Y_\bullet) \to F(\ol X_\bullet ) ] 
\]
is a quasi-isomorphism. This follows from the following.

\begin{claim}
Let $\Xi$ denote the total complex of the diagram in $C_+(\Lam \cSG )$ obtained by
applying the functor in Definition \ref{ZS}(2) to the diagram \eqref{exthom.sqcom2}.
Then 
\[
 F(\Xi)  \simeq  0  \;\;\;\; \text{ in  } D_+(\Mod_\Lam ).
\]
\end{claim}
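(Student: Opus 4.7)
I would apply $\text{\bf (D)}_{F}$ to $\Xi$ itself: it suffices to show that $\Phi_{|\cSeq}(\Xi)$ is acyclic for every homology functor $\Phi$ satisfying $\text{\bf (D)}_{\Phi}$. So fix such a $\Phi$. Because the functor in Definition \ref{ZS}(2) together with the extension to complexes by total complexes is compatible with taking the total complex of a diagram, $\Phi_{|\cSeq}(\Xi)$ is naturally identified with the total complex in $C_{+}(\Mod_{\Lam})$ of the commutative square
\[
\xymatrix{
\Phi(Y'_\bullet) \ar[r] \ar[d]_{\pi_*} & \Phi(\ol X'_\bullet) \ar[d]^{\pi_*} \\
\Phi(Y_\bullet) \ar[r] & \Phi(\ol X_\bullet).
}
\]

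Next, because the four augmentations $Y'_\bullet\to Y'$, $\ol X'_\bullet\to \ol X'$, $Y_\bullet\to Y$, $\ol X_\bullet\to \ol X$ are $\Lam$-admissible equivariant hyperenvelopes (viewing each target as a constant simplicial scheme), the descent condition $\text{\bf (D)}_{\Phi}$ implies that each of the induced maps $\Phi(Y'_\bullet)\to \Phi(Y')$, $\Phi(\ol X'_\bullet)\to \Phi(\ol X')$, $\Phi(Y_\bullet)\to \Phi(Y)$, $\Phi(\ol X_\bullet)\to \Phi(\ol X)$ is a quasi-isomorphism. Hence $\Phi_{|\cSeq}(\Xi)$ is quasi-isomorphic to the total complex of the square
\[
\xymatrix{
\Phi(Y') \ar[r]^{i'_*} \ar[d]_{\pi_*} & \Phi(\ol X') \ar[d]^{\pi_*} \\
\Phi(Y) \ar[r]^{i_*} & \Phi(\ol X).
}
\]

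Finally, I would invoke axiom $(ii)$ of a homology functor to produce quasi-isomorphisms $\mathrm{cone}[\Phi(Y')\to \Phi(\ol X')]\simeq \Phi(X)$ and $\mathrm{cone}[\Phi(Y)\to \Phi(\ol X)]\simeq \Phi(X)$, and axiom $(iii)$, applied to the cartesian diagram
\[
\xymatrix{
X \ar[r]^{j'} \ar@{=}[d] & \ol X' \ar[d]^{\pi} \\
X \ar[r]_{j} & \ol X,
}
\]
to conclude that $\pi_*$ induces the identity on these two copies of $\Phi(X)$. Since the total complex of a commutative square is, up to a shift, the cone of the induced map between the vertical cones, our total complex is quasi-isomorphic to $\mathrm{cone}[\,\id\colon \Phi(X)\to \Phi(X)\,]$, which is acyclic. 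Hence $\Phi_{|\cSeq}(\Xi)\simeq 0$, and $F(\Xi)\simeq 0$ follows from $\text{\bf (D)}_{F}$.

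The main subtlety is the cartesianness hypothesis needed for axiom $(iii)$: we must have $\pi^{-1}(X)=X$ set-theoretically, so that the open embedding of $X$ into $\ol X'$ is indeed the pullback of $X\hookrightarrow \ol X$ along $\pi$. This is precisely the property ensured by the standard choice of compactification invoked in \cite[\S2.3]{GS1} just above, where one replaces $\ol X'$ by the closure of the graph in $\ol X'\times_{k} \ol X$ if necessary. Once this is in hand, the identification of the induced endomorphism of $\Phi(X)$ with the identity is automatic from axiom $(iii)$, and the argument closes.
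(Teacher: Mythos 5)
Your proof is correct and follows essentially the same route as the paper's: identify $\Phi_{|\cSeq}(\Xi)$ with the total complex of the square, collapse the hyperenvelopes using $\text{\bf (D)}_{\Phi}$, and then use the homology-functor axioms to see the remaining square is acyclic, finishing with $\text{\bf (D)}_{F}$. You go slightly further than the paper in making explicit that, beyond axiom $(ii)$, one also needs axiom $(iii)$ together with the cartesianness of the square $X\hookrightarrow \ol X'\to \ol X\hookleftarrow X$ (which holds after the closure-of-graph trick, since $\ol X_2$ is separated); the paper only cites $(ii)$, but this added care is warranted and consistent with the intended argument.
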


For any homology functor $\Phi$ satisfying $\text{\bf (D)}_{\Phi}$ we have
\[
\Phi_{|\cSeq}(\Xi) \simeq 
\Tot
\begin{pmatrix}
\Phi(Y'_\bullet)  &\to&  \Phi(\ol X'_\bullet)  \\
\downarrow & &\downarrow \\
\Phi(Y_\bullet)  &\to& \Phi(\ol X_\bullet)\\
\end{pmatrix}
\simeq  \Tot
\begin{pmatrix}
\Phi(Y')  &\to&  \Phi(\ol X')  \\
\downarrow & &\downarrow \\
\Phi(Y)  &\to& \Phi(\ol X)\\
\end{pmatrix}
\simeq 0 \;\; \text{ in  } D_+(\Mod_\Lam ),
\]
where the first quasi-isomorphism is obvious from the definition, the second follows from
$\text{\bf (D)}_{\Phi}$, and the last follows from Definition \ref{def.homfunc}$(ii)$.
By $\text{\bf (D)}_{F}$, this implies the claim.
$\square$

\bigskip

\section{Descent criterion for homology functors}\label{descenti}

In this section and the next we will explain two basic descent theorems which give criteria 
for a homology functor $\Phi$ (resp.\ a pre-homology functor $F$) to satisfy the descent condition
$\text{\bf (D)}_{\Phi}$ (resp. $\text{\bf (D)}_{F}$) from \S\ref{exthom}. 
This will then allow us to apply the methods of Section~\ref{exthom} in order to construct equivariant weight homology theory.
\medbreak
 
We say that an equivariant homology functor (Definition \ref{def.homfunc})
$$\Phi:  \eqsch \to  C_+({\Mod}_\Lam ) $$
has {\em quasi-finite flat pullback} if the following conditions hold.

\begin{itemize}
\item 
For a strict quasi-finite flat morphism $f:(X,G)\to (Y,G)$ there is a functorial pullback $f^*: \Phi(Y,G) \to \Phi(X,G)$ which coincides with the usual open pullback if $f$ is an open immersion.
\item 
Consider a commutative diagram in $\schG$
\[
\begin{CD}
(V',G) @>{j'}>> (X',G)  \\
@VV{f_V}V @VV{f}V  \\
(V,G) @>{j}>> (X ,G) , \\
\end{CD}
\]
such that the diagram of the underlying schemes is cartesian, and $f$ and $f_V$ are proper, and
$j$ and $j'$ are quasi-finite flat. Then the following diagram commutes:
\[ 
\xymatrix{
\Phi(X')  \ar[r]^{j^*}  \ar[d]_{f_*}  &  \Phi(V') \ar[d]^{f_*} \\
\Phi(X)  \ar[r]_{j^*}            &   \Phi(V)  .
}
\]
\item 
For a strict finite flat morphism $f:(X,G) \to (Y,G)$ of degree $d$, the composition
\[
f_* \circ f^* : \Phi(Y,G)  \to \Phi(Y,G) 
\] 
is multiplication by $d$.
\end{itemize}
 
\medskip

Our first fundamental descent theorem reads.

\begin{theo}\label{descentthmi}
Consider an equivariant homology functor $\Phi$ on $\schG$, which we
assume to have quasi-finite flat pullback if $\Lam\ne \bZ $.
Then $\Phi$ satisfies $\text{\bf (D)}_{\Phi}$.
\end{theo}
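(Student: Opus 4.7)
The plan is to reduce the hyperenvelope descent condition $\text{\bf (D)}_\Phi$ to the descent for a single $\Lam$-admissible envelope, and then to prove the latter by noetherian induction on $Y$ combined with a generic trivialization of the envelope. By Definition~\ref{def.hypenv}, a $\Lam$-admissible hyperenvelope $f: X_\bullet \to Y_\bullet$ is built degree by degree from $\Lam$-admissible envelopes $X_a \to M_a := (\mathrm{cosk}^{Y_\bullet}_{a-1}\mathrm{sk}_{a-1}X_\bullet)_a$. Filtering the cone of $f_*$ by simplicial skeleta and using the cone axiom \ref{def.homfunc}$(ii)$ together with the coproduct axiom $(iv)$ to control the associated graded, a standard bisimplicial argument reduces the problem to showing that for every $\Lam$-admissible envelope $g: X \to Y$ in $\schG$, the augmented Cech complex
$$
\Phi(\mathrm{cosk}_0^Y X) \to \Phi(Y)
$$
is a quasi-isomorphism.

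I would prove this envelope statement by noetherian induction on the reduced support of $Y$. Given a $G$-stable dense strict open immersion $V \hookrightarrow Y$ with closed complement $Z \hookrightarrow Y$, the envelope restricts to $\Lam$-admissible envelopes over $V$ and over $Z$, and the localization cone axiom produces a commutative diagram of cone triangles relating the Cech augmentations over $V$, $Y$, and $Z$. The five lemma then reduces the problem to exhibiting a single dense $G$-stable open $V \subset Y$ on which the Cech augmentation is a quasi-isomorphism; the complement $Z$ is handled by the induction hypothesis since its support is strictly smaller.

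For $\Lam = \bZ$, applying admissibility to each generic point $\eta \in Y$, viewed together with its stabilizer as a primitive equivariant point $(\Spec \kappa(\eta), G_\eta)$, yields a $G_\eta$-equivariant lift in $X(\kappa(\eta))$. Spreading out and combining $G$-orbits, one obtains a dense $G$-stable open $V \subset Y$ on which $X \times_Y V \to V$ admits a $G$-equivariant section $s$. The section defines an extra degeneracy on $\mathrm{cosk}_0^V(X \times_Y V)$, hence a simplicial contracting homotopy, so $\Phi$ applied to the Cech augmentation is a quasi-isomorphism. For $\Lam = \bZ[1/p]$ one only obtains a lift after a finite $G$-equivariant flat extension $V' \to V$ of degree $d$ prime to a chosen prime $\ell \neq p$; here the quasi-finite flat pullback hypothesis enters through the trace identity $f_* \circ f^* = d \cdot \mathrm{id}$, which shows that the cone of the Cech augmentation is annihilated by $d$. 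Iterating over the finitely many primes $\ell \neq p$ that occur kills the cone after inverting $p$.

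The hardest step, I expect, will be the $\Lam = \bZ[1/p]$ branch of the trivialization: one must realize the prime-to-$\ell$ extension $V' \to V$ $G$-equivariantly, verify that the trace relation $f_* \circ f^* = d \cdot \mathrm{id}$ propagates from $V$ to the full Cech object $\mathrm{cosk}_0^V(X \times_Y V)$ compatibly with its face and degeneracy maps, and organize the iteration over primes uniformly so that a single integer invertible in $\Lam$ annihilates the cone. The reduction steps above are essentially formal consequences of the axioms of a homology functor; the real work lies in the equivariant trace/transfer argument on the simplicial envelope.
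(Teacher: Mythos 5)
Your proposal is correct and follows essentially the same route as the paper: the paper's proof of Theorem~\ref{descentthmi} simply cites Gillet--Soul\'e (Theorems~3.4 and~3.9 of \cite{GS2}) and notes that one must ``add a $G$-action everywhere,'' and your outline reproduces exactly that argument --- skeletal reduction from hyperenvelopes to a single envelope, noetherian induction via the localization axiom, a generic $G$-equivariant section producing an extra degeneracy when $\Lam=\bZ$, and a trace/transfer argument after a prime-to-$\ell$ finite flat $G$-equivariant cover when $\Lam=\bZ[1/p]$ --- with the equivariant adaptations sketched. The steps you identify as the hard part (propagating the trace identity to the Cech nerve compatibly with faces and degeneracies, and organizing the prime-by-prime annihilation of the cone) are exactly the details the paper leaves implicit in deferring to \cite{GS2}.
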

\begin{proof}
  The proof of Theorem \ref{descentthmi} is practically the same as
  that of \cite[Theorem~3.4 and Theorem~3.9] {GS2} (see also
  \cite[\S5.6]{GS2}). One just has to add a $G$-action everywhere. The
  details are left to the readers. 
\end{proof}

\bigskip

\section{Descent criterion for pre-homology functors and weight homology}\label{descentii}

We explain a descent criterion for equivariant pre-homology functors (Definition \ref{def.prehomfunc})
in terms of equivariant Chow motives. We start with explaining the construction of the latter.

\medskip

By the naive equivariant Chow group of $(X,G)\in \eqsch$, one usually means the Chow group of the
quotient variety $\CH_a(X/G) $ ($a\ge 0$). The problem with this definition is that there is no intersection product and no flat pullback on naive Chow groups.
Edidin, Graham and Totaro (\cite{EG}, \cite{To}) suggested a definition of refined Chow groups. Choose a $k$-linear representation $G\times V \to V$ such that for all $1\ne g\in G$ 
the codimension in $V$ of
\[
\Fix (g,V) = \{ v\in V | g\cdot v = v \text{ and } g|_{k(v)}=id \}
\]
is greater than the dimension of $X$. Here we view $V$ as an equivariant affine space. Now define 
\[
\CH_a(X,G) = \CH_{a+d_V}(X\times V /G)\;\; (d_V=\dim(V)),
\]
where we endow $X \times V$ with the diagonal $G$-action. In \cite{EG} it is shown that this is independent of
choices of $V$. For $(X,G)$ primitive of dimension $d$, we write
\[
\CH^{a}(X,G) = \CH_{d-a}(X,G).
\]

\medskip

In what follows we consider covariant Chow motives with coefficients
in $\Lam$. We do not care about pseudo-abelian envelopes nor
non-effective motives. Fix a finite groups $G$. The objects of our
category of Chow motives, denoted by $\ChowG$, are the objects of
$\cSG$. For $(X,G)\in \cSG$ we write $\Mo (X,G)$ for the induced
object in $\ChowG$. For $X,Y\in \cSG$, we let the morphisms in the category of Chow motives be
\[
\Hom_{\ChowG}(\Mo (X,G),\Mo (Y,G)) = 
\underset{i\in I}{\bigoplus} \;\CH_{d_i}(X_i \times Y ,G) \otimes_\Z \Lam .
\]
where $X=\underset{i\in I}{\coprod}  X_i$ with $(X_i,G)\in \cSG$ primitive of dimension $d_i$.
Compositions are defined as compositions of correspondences. Clearly
$\Mo$ defines a covariant functor from $\cSG$ to $\ChowG$ by associating to a morphism
$f: X\to Y$ its graph in $X\times Y$. It extends in an obvious way to a $\Lam$-linear functor  
\begin{equation}\label{eq.Mo}
\Mo : \LSG \to \ChowG ,
\end{equation}
where $\LSG$ is defined in Definition \ref{ZS}(1).
Clearly the functor $\Mo$ in \eqref{eq.Mo} induces a canonical $\Lam$-linear functor  
\begin{equation}\label{eq.Mo2}
\Mo :  C_+(\Lam\cSG)\to C_+(\ChowG)
\end{equation}
which induces a canonical $\Lam$-linear functor  
\begin{equation}\label{eq.Mo3}
\Mo :  K_+(\Lam\cSG)\to K_+(\ChowG),
\end{equation}
where $K_+(\Lam\cSG)$ (resp. $K_+(\ChowG)$) is the homotopy category of $C_+(\Lam\cSG)$ (resp. $C_+(\ChowG)$).
\medbreak

\medbreak

We can now state our fundamental descent theorem for pre-homology functors.

\begin{theo}\label{DescentiiThm}
Given a pre-homology functor 
$$F: \cSeq \to  C_+(Mod_\Lam ),$$
assume that for any finite group $G$, there is an additive $\Lam$-linear functor 
$$\tilde{F}_G: \ChowG  \to  C_+(Mod_\Lam )$$
such that the restriction $F_{|\cSG}$ of $F$ to $\cSG$ is the composition of 
$\tilde{F}_G$ with $\cSG\to \LSG\rmapo{\Mo} \ChowG $.
Then $F$ satisfies the descent property $\text{\bf (D)}_{F}$.
\end{theo}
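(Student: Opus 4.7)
The plan is to reduce the descent property $\text{\bf (D)}_F$ to a contractibility statement for $\Mo(X_*)$ in $C_+(\ChowG)$ and then detect it via Yoneda combined with Theorem~\ref{descentthmi}. Since $\tilde F_G$ is additive and $\Lam$-linear with $F|_{\cSG}=\tilde F_G\circ \Mo$, it preserves chain homotopies; thus $F(X_*)=\tilde F_G(\Mo(X_*))$ is null-homotopic (hence acyclic) as soon as $\Mo(X_*)$ is null-homotopic in $C_+(\ChowG)$. For a bounded-below complex $M_*$ in any additive category, a chain contraction $\{h_n\colon M_n\to M_{n+1}\}$ can be built inductively provided $\Hom(Z,M_*)$ is acyclic for every object $Z$: given $h_{n-1}$, the cycle $\mathrm{id}_{M_n}-h_{n-1}\partial\in \Hom(M_n,M_n)$ lifts under $\partial\circ -$ to the required $h_n\in \Hom(M_n,M_{n+1})$. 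Since every object of $\ChowG$ is a finite disjoint sum of primitive ones and $\Hom_\ChowG$ is additive in the first variable, the task reduces to showing the acyclicity of $\Hom_\ChowG(\Mo(Y),\Mo(X_*))$ for every primitive $(Y,G)\in \cSGpr$.

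For each such primitive $(Y,G)$ of dimension $e$, consider the functor
\[
 \Phi_Y \colon \schG \lra C_+(\Mod_\Lam), \qquad (X,G)\longmapsto z_e(Y\times X,*)_G\otimes_\bZ \Lam,
\]
given by the Edidin--Graham--Totaro equivariant version of Bloch's cycle complex in dimension $e$; on $(X,G)\in \cSG$ its zeroth homology is $\CH_e(Y\times X,G)\otimes_\bZ \Lam=\Hom_\ChowG(\Mo(Y),\Mo(X))$. Proper pushforward, quasi-finite flat pullback compatible in cartesian squares, the projection formula $f_*f^*=\deg(f)$ for strict finite flat $f$, additivity on disjoint unions, and Bloch's localization long exact sequence are all standard properties of the higher Chow complex and upgrade equivariantly through the finite-dimensional approximation $X\times V/G$; hence $\Phi_Y$ is an equivariant homology functor on $\schG$ with quasi-finite flat pullback. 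Theorem~\ref{descentthmi} therefore yields $\text{\bf (D)}_{\Phi_Y}$, and the hypothesis of $\text{\bf (D)}_F$ gives the acyclicity of $\Phi_Y(X_*)$, whence the desired acyclicity of its zeroth-weight subcomplex $\Hom_\ChowG(\Mo(Y),\Mo(X_*))$.

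The main obstacle is the construction and axiomatic verification of $\Phi_Y$ in the strict equivariant sense demanded by Section~\ref{descenti}: each piece of functoriality (proper base change with flat pullback, the projection formula, localization) is an equivariant translation of a classical statement about Bloch's cycle complex via the Totaro--Edidin--Graham approximation, but the bookkeeping through the approximation is substantial. A secondary subtlety is the passage from acyclicity of the total cycle complex $\Phi_Y(X_*)$ to acyclicity of its weight-zero part $\Hom_\ChowG(\Mo(Y),\Mo(X_*))$; this should follow from the fact that $X_*\in C_+(\LSG)$ consists only of smooth projective objects, which forces concentration of weights in the associated spectral sequence. Once these technical points are settled, the remainder of the argument is purely formal, following the Yoneda/chain-contraction scheme outlined above.
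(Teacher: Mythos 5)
Your overall reduction is sound and follows the same skeleton as the paper: one wants to show that $\Mo(X_*)$ is null-homotopic in $C_+(\ChowG)$, after which $F(X_*)=\tilde F_G(\Mo(X_*))$ is null-homotopic because $\tilde F_G$ is additive. The Yoneda/chain-contraction reduction to acyclicity of $\Hom_{\ChowG}(\Mo(Y),\Mo(X_*))$ for every primitive $Y$ is also correct. Where the paper diverges is that it packages exactly this detection step into Proposition~\ref{propdescentGS}, referring to the (non-trivial) proof of Theorem~1 in \cite{GS1}, whereas you try to rederive it directly, and that is where your argument breaks.

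The gap is the step you yourself flag as a ``secondary subtlety'': passing from acyclicity of the total complex $\Phi_Y(X_*)$ to acyclicity of the row $n\mapsto H_0(\Phi_Y(X_n))=\CH_e(Y\times X_n,G)\otimes\Lam$. This is precisely the $E^2_{\bullet,0}$ of the first-quadrant spectral sequence $E^1_{n,m}=H_m(\Phi_Y(X_n))\Rightarrow H_{n+m}(\Phi_Y(X_*))$, and acyclicity of the abutment only gives $E^\infty_{\bullet,0}=0$. Since $E^{r+1}_{n,0}=\ker\bigl(d^r\colon E^r_{n,0}\to E^r_{n-r,r-1}\bigr)$ and the incoming differentials into $(n,0)$ all come from negative $m$, $E^\infty_{n,0}$ is in general a proper subquotient of $E^2_{n,0}$, so there is no implication. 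Your appeal to ``concentration of weights'' would require the higher Chow groups $\CH_e(Y\times X_n,m)$ to vanish for $m>0$, which is false for smooth projective varieties already in the simplest cases (e.g.\ $\CH^1(-,1)=\mathcal{O}^\times$). The analogous difficulty exists if one uses the $K$-theoretic Gersten complex $R_q$ in place of Bloch's cycle complex, and it is exactly what makes Gillet--Soul\'e's Theorem~1 a genuine theorem rather than a spectral-sequence exercise; that proof plays different weights $q$ against one another and inducts on the length of the complex in a more intricate way. To repair your argument you would need to reproduce the content of Proposition~\ref{propdescentGS} (equivalently, the equivariant analogue of \cite[Theorem~1]{GS1}), which is what the paper does by citation.

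A smaller point: the auxiliary functor you choose, $\Phi_Y = z_e(Y\times -,\ast)_G\otimes\Lam$, is reasonable, but verifying that it is a homology functor in the strict sense of Definition~\ref{def.homfunc} with quasi-finite flat pullback compatible with the Edidin--Graham--Totaro approximation is additional unpublished work that the paper avoids by using the Gersten complex $R_q$ from algebraic $K$-theory, whose requisite properties are settled in \cite{Q} and \cite{GS2}. This is not a gap, but it is an avoidable technical burden.
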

 
 \smallskip
\begin{const}\label{constweighthom}(Construction of weight homology)
 Fix a $\Lam$-module $M$. For $(X,G)\in \cSeq$ we set 
\begin{equation}\label{examweighthom.eq1}
F^W_M(X,G) = \Hom_\Z (\CH^{0}(X,G) , M )[0] \simeq M^{\Xcd 0/G}[0]\;\in C_+(\ModL),
\end{equation}
where $\Xcd 0$ is the set of the generic points of $X$.
It is easy to see that $F^W_M$ is a pre-homology functor such that $(F^W_M)_{|\cSG}$ extends to a $\Lam$-linear functor on $\ChowG$.
Therefore Theorem~\ref{DescentiiThm} shows that $F^W_M$ satisfies the descent property $\text{\bf (D)}_{F^W_M}$.
\medbreak

We now assume that the condition $(\bigstar)$ of Theorem \ref{thm.Whom} holds.
If one assumes further that $k$ is perfect, we can apply Construction~\ref{funconst} to $F^W_M$ to get
a homology theory on $\eqsch$:
$$(X, G) \mapsto \{ \HWGM a X \}_{a\geq 0} . $$
If $k$ is not perfect field $k$, we define
\begin{equation}\label{examweighthom.eq2}
\HWGM {*} X = \HWGM {*} {X\otimes_k{\tk}},
\end{equation}
where $\tk$ is the perfection of $k$.
This homology theory is called {\em weight homology with coefficient $M$}.
\medbreak

It is clear from Construction~\ref{funconst} that Theorem~\ref{thm.Whom}$(i)$ through $(v)$ hold.
To define the map $\tau_{k'/k}$ in $(vi)$, we argue as follows. 
By \eqref{examweighthom.eq2}, we may replace $k$ and $k'$ by their perfections to assume that those fields 
are perfect. Let $\iota:\cSeq \to \cSeqp$ be the base change functor $X\to X\otimes_k k'$.
Write $F$ and $F'$ for the pre-homology functors \eqref{examweighthom.eq1} on $\cSeq$ and $\cSeqp$ respectively.
Then we have a morphism $F'\circ \iota \to F$ of pre-homology functors on $\cSG$, induced by natural maps
\[
\CH^{0}(X,G) \to \CH^{0}(X\otimes_k k',G)\qfor (X,G)\in \cSeq.
\]
By Construction \ref{funconst}(4), this induces a morphism of corresponding homology theories
\[
H^{F'\circ\iota}=H^{F'}\circ \iota \to H^{F}
\]
which gives the desired map $\tau_{k'/k}$. Theorem \ref{thm.Whom}$(vi)(a)$ is obvious 
and $(b)$ is easily shown by using $(v)$.
\end{const}
\medskip
 
To prepare the proof of Theorem\ \ref{DescentiiThm} we have to introduce a rigidified version of the equivariant Gersten complex for algebraic $K$-theory \cite{Q}.
In fact one could also use the Gersten complex for Milnor $K$-theory.
Fix a sequence of linear $G$-representations $V_j$ ($j\ge 0$) over $k$, $d_{V_j}= \dim_k V_j$, with linear surjective equivariant transition maps
$V_j\to V_{j-1}$ such that for any $g\ne 1 \in G$, the codimension of $\Fix(g,V_j)$ in $V_j$ converges to infinity as $j \to \infty$.
For $X\in \eqsch$ define
\[
R_{q,i}(X,G) = \lim_{\longrightarrow \atop j} \bigoplus_{x\in (X\times V_j)/G \atop \dim \overline{\{ x \} }=q+i+d_{V_j}} K_i(k(x)) \otimes_\Z  \Lam, 
\]
where the limit is taken over flat pullbacks.
For fixed $q\in \Z$ we get a homological complex of $\Lam$-modules:
\[
R_q(X,G):\; \cdots \rmapo{\partial} R_{q,i}(X,G) \rmapo{\partial} R_{q,i-1}(X,G) \rmapo{\partial} 
\cdots\rmapo{\partial} R_{q,0}(X,G),
\]
where $R_{q,i}(X,G)$ sits in degree $i$ and $\partial$ are boundary maps arising from localization theory for algebraic $K$-theory \cite{Q}. These complexes form homology functors 
\[
R_q: \eqsch \to C_+(Mod_\Lam )
\]
which have quasi-finite flat pullbacks \cite{Q}. 
Hence we deduce the following proposition from Theorem~\ref{descentthmi}. 

\begin{prop}\label{descentR}
For any $q\in \Z$ the descent property  $\text{\bf (D)}_{\Phi}$ is satisfied for $\Phi=R_q$.
\end{prop}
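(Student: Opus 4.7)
The plan is to verify that the functor $R_q$ satisfies all hypotheses of Theorem~\ref{descentthmi} and then invoke it. Concretely, we must check that $R_q$ is an equivariant homology functor in the sense of Definition~\ref{def.homfunc}, that it carries quasi-finite flat pullbacks satisfying the three axioms stated just before Theorem~\ref{descentthmi}, and that the conclusion of Theorem~\ref{descentthmi} then directly yields $\text{\bf (D)}_{R_q}$.

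First I would observe that Quillen's localization theorem for the $K$-theory of noetherian schemes provides, for each pair consisting of a strict closed immersion $i:(Y,G)\hookrightarrow (X,G)$ in $\schG$ with strict open complement $j:(V,G)\hookrightarrow (X,G)$, and each $j$-th stage $V_j$ of the approximating representation, a long exact sequence on the associated Gersten-type complexes after twisting with $V_j$ and passing to the filtered colimit over $j$. Since the $V_j$ share equivariant transition maps and the colimit is filtered, exactness is preserved; this translates into a quasi-isomorphism
\[
\cone\bigl[R_q(Y,G)\xrightarrow{i_*}R_q(X,G)\bigr]\isom R_q(V,G),
\]
which is conditions (i) and (ii) of Definition~\ref{def.homfunc}. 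Proper pushforward on $R_q$ is functorial (with the usual sign conventions on Gersten differentials), and pullback along strict open immersions is the evident restriction to those residue fields $k(x)$ lying over the open subscheme. Compatibility of proper pushforward with open pullback in a cartesian square (condition (iii)) is a standard property of the Gersten complex, inherited termwise from the behavior of the functors $K_i(k(x))$ along flat base change and proper pushforward. Condition (iv) is immediate from the fact that the Gersten complex of a disjoint union splits canonically.

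Next I would verify quasi-finite flat pullback. For a strict quasi-finite flat morphism $f:(X,G)\to (Y,G)$, flat pullback in the Gersten complex is defined via the standard construction: on generic points one uses the flat pullback $K_i(k(y))\to \bigoplus_{x\mapsto y}K_i(k(x))$ available because of flatness, and functoriality and compatibility with boundary maps are part of the foundational package of \cite{Q}. That this flat pullback agrees with the restriction map when $f$ is an open immersion is tautological. Compatibility with proper pushforward in a cartesian square with proper vertical maps and quasi-finite flat horizontal maps is again a classical Quillen-type property, propagated termwise through the definition of $R_{q,\bullet}$. Finally, for a strict finite flat morphism of degree $d$, the composite $f_*f^*$ is multiplication by $d$ on each stalk $K_i(k(y))$ (by the projection formula applied to the trivial class), and this survives the passage to the $G$-equivariant Gersten complex and the colimit over $V_j$.

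The main obstacle is bookkeeping: one must keep track of the twisting with the equivariant representations $V_j$ and confirm that flat pullback, proper pushforward and the localization sequence all extend consistently to the filtered colimit defining $R_{q,i}(X,G)$. Once that is in place, however, all functorial axioms are simultaneously verified on each stage $X\times V_j$ using ordinary Quillen $K$-theory of $(X\times V_j)/G$, and they descend to the colimit because filtered colimits are exact in $\ModL$. With these verifications, Theorem~\ref{descentthmi} applies to $\Phi=R_q$ and yields the descent property $\text{\bf (D)}_{R_q}$ as claimed.
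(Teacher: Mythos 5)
Your proposal is correct and follows the same route as the paper: observe that $R_q$ is an equivariant homology functor with quasi-finite flat pullbacks (a package the paper simply attributes to Quillen \cite{Q}), and then invoke Theorem~\ref{descentthmi}. The only minor imprecision is your appeal to Quillen's localization theorem for the quasi-isomorphism in condition $(ii)$ of Definition~\ref{def.homfunc}: for the Gersten complex this quasi-isomorphism is actually degree-wise exact, coming from the set-theoretic partition of points $X_{(a)}=Y_{(a)}\amalg V_{(a)}$ (Quillen's localization is what supplies the differentials $\partial$, not the termwise exactness), but this does not affect the argument.
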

\bigskip

Recall that the prehomology functor $(R_q)_{|\cSeq}$ defined as the restriction of $R_q$ to $\cSeq$ extends to a canonical functor (cf. \eqref{estprehomfunc}) 
\[
 C_+(\Lam\cSG) \to C_+(\Mod_\Lam) .
\]
which we denote by $R_q$ for simplicity.
With almost verbatim the same proof as for Theorem~1 of \cite{GS1} we obtain the following
(cf. \eqref{eq.Mo3}):

\begin{prop}\label{propdescentGS}
For $X_* \in K_+(\Lam\cSG)$ with $R_q(X_*)$ acyclic for all $q\ge 0$ we have
\[
\Mo (X_*) =0 \in K_+(\ChowG).
\]
\end{prop}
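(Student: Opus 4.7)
The proof will follow the template of \cite[Theorem~1]{GS1}, equivariantized throughout. The basic strategy is to recover $\Hom$-spaces in $\ChowG$ from the equivariant Gersten complexes $R_q$ and then to conclude by a Yoneda-type argument.

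First, I would reconstruct $\Hom$-spaces in $\ChowG$ in terms of $R_q$. Combining the definition of $R_q$ with the localization/Gersten presentation for equivariant algebraic $K$-theory \cite{Q}, one obtains a canonical identification
\[
H_0\bigl(R_q(W,G)\bigr) = \CH_q(W,G)\otimes_\Z \Lam \qfor (W,G)\in\eqsch.
\]
Plugging this into the definition of morphisms in $\ChowG$ given just before \eqref{eq.Mo}, we obtain, for every primitive $(Y,G)\in\cSG$ of dimension $e$ and every $(W,G)\in\cSG$,
\[
\Hom_{\ChowG}\bigl(\Mo(Y),\Mo(W)\bigr) = H_0\bigl(R_e(Y\times W,G)\bigr),
\]
where $Y\times W$ carries the diagonal $G$-action.

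Next, I would show that the hypothesis ``$R_q(X_*)\simeq 0$ for every $q\ge 0$'' implies ``$R_q(Y\times X_*,G)\simeq 0$ for every $q\ge 0$ and every primitive $(Y,G)\in\cSG$'', where $Y\times X_*$ is formed termwise. The strategy is to regard $W\mapsto R_q(Y\times W,G)$ as a homology functor on $\schG$ with quasi-finite flat pullback; the required compatibilities are inherited from $R_q$ through the flat projection $Y\times W\to W$ and the projection formula for proper pushforward, so Proposition~\ref{descentR} applies. A d\'evissage using the stratification of $Y$ by its points then expresses $R_q(Y\times X_*,G)$ as an iterated extension of complexes of the form $R_{q'}(X_*)\otimes_\Lam M$ for $\Lam$-modules $M$ built from $K$-theory of residue fields of $Y$, and acyclicity transfers.

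Finally, combining these two inputs, $\Hom_{\ChowG}(\Mo(Y),\Mo(X_*))$ is acyclic as a complex of $\Lam$-modules for every primitive $(Y,G)\in\cSG$. Since such motives additively generate $\ChowG$, the standard Yoneda-type argument in the homotopy category of an additive category, carried out in detail in \cite[\S 3]{GS1}, produces a contracting homotopy for $\Mo(X_*)$ degree by degree, the bounded-below hypothesis on $X_*$ being used to start the induction. The principal point where real care is required is the d\'evissage in the second step: in the non-equivariant case of \cite{GS1} it is essentially formal, but the equivariant refinement demands that one keep careful track of the auxiliary linear $G$-representations $V_j$ entering the definition of equivariant Chow groups and verify that flat base change and the projection formula for the equivariant Gersten complexes are compatible with these linearizations. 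Once this is done, the rest is a verbatim translation of Gillet--Soul\'e's argument.
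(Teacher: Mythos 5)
Your skeleton — recover $\Hom$ in $\ChowG$ from $H_0$ of the Gersten complexes, show the Hom-complex into $\Mo(X_*)$ is acyclic, and close with a Yoneda induction using the $K_+$ hypothesis — is the strategy behind \cite[Theorem~1]{GS1}, which is all the paper cites as the proof (``almost verbatim the same proof''). But two central steps of your argument are misstated, and they are exactly where the real content of that theorem lies. First, the d\'evissage: the filtration of $R_q(Y\times W,G)$ by dimension of $Y$-support has $j$-th graded piece $\bigoplus_{y\in Y_{(j)}} R_{q-j}(W_{k(y)},G)$, i.e.\ Gersten complexes of the \emph{base change} of $W$ to the residue field $k(y)$, not complexes of the form $R_{q'}(W,G)\otimes_{\Lam} M$. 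For $y$ of positive dimension, $k(y)$ is a transcendental extension of $k$ and there is no K\"unneth-type splitting; so acyclicity of $R_q(X_*)$ over $k$ does not transfer formally, and one actually needs acyclicity of the base-changed complexes $R_q(X_{*,K},G)$ for all finitely generated extensions $K/k$. (This is available where the proposition is applied in Theorem~\ref{DescentiiThm}, because the functors $W\mapsto R_q(W_K,G)$ are again homology functors with quasi-finite flat pullback and hence satisfy $\textbf{(D)}_{\Phi}$, but this point does not appear in your write-up; the equivariant linearization issue you flag is real but secondary by comparison.)

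Second, your final inference does not go through as stated: knowing $R_e(Y\times X_*,G)$ is acyclic as a total complex does not give the acyclicity of the termwise-$H_0$ complex $n\mapsto H_0(R_e(Y\times X_n,G))=\Hom_{\ChowG}(\Mo(Y),\Mo(X_n))$. Taking $H_0$ in the Gersten direction is right-exact but not exact, and in the first-quadrant spectral sequence of the bicomplex, acyclicity of the abutment only forces $E^\infty_{*,0}=0$, not $E^2_{*,0}=0$ (the relevant terms can be killed by higher differentials rather than vanish). One has to descend the coniveau filtration on $Y$ to the level of Chow groups (equivalently, run it through the identification of $H_i(R_q)$ with higher Chow groups on smooth projective schemes) to close the gap; this is the genuinely non-formal part of Gillet--Soul\'e's argument that your ``combining these two inputs'' glosses over.
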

\bigskip\noindent
{\it Proof of Theorem \ref{DescentiiThm}}:
Consider $X_* \in C_+( \Lam \cSG ) $ such that $\Phi_{|\cSeq}(X_*)$ is acyclic for every homology functor $\Phi$ satisfying $\text{\bf (D)}_{\Phi}$. 
In particular, by Proposition~\ref{descentR} $R_q(X_*)$ is acyclic for every $q\in \Z$.
By Proposition~\ref{propdescentGS} this implies that $\Mo(X_*)=0$ in  $K_+( \ChowG )$. 
Since $F$ factors through $\ChowG$ by assumption, we conclude that $F(X_*)=0$ in $K_+( \Mod_\Lam )$.
$\square$

\bigskip

\section{Equivariant cohomology with compact support}\label{eqcohc}

\noindent
Let the notation be as in \S\ref{eqWhom}. 
For $X\in \schG$, we consider  $G$-sheaves $\cF$ of torsion $\Lam$-module on $X_{\et}$, which means that $G$ acts in compatible way  with the action of $G$
on $X$. The $G$-sheaves of torsion $\Lam$-modules on $(X,G)$ form an abelian category $\ShGXL$. 
For a morphism $f:X\to Y$ in $\schG$, the direct image functor $f_*$ on 
sheaves extends to the functor
\begin{equation}\label{eqhom.-3}
f_*: \ShGL X \to \ShGL Y.
\end{equation}
For a morphism $(\phi,f):(X,G)\to (Y,H)$ in $\eqsch$, the pullback functor $f^*$
on sheaves extends to the functor
\begin{equation}\label{eqhom.-2}
 (\phi,f)^*:{\it Shv}_{H}( Y) \to \ShGL X.
\end{equation}
\medbreak

For $\cF\in \ShGXL$, we define its equivariant cohomology groups with compact support.
First we choose an equivariant compactification
$$
j:X\hookrightarrow \Xb
$$
where $j$ is an open immersion in $\schG$ and $\Xb$ is proper over $k$.
Such $j$ always exists: Take a (not necessarily equivariant) compactification
$j':X\hookrightarrow Z$ and consider the map
$$
\tau: X\hookrightarrow \overbrace{Z\times_k \cdots \times_k Z}^{|G|\; times}
$$
induced by $j'\cdot g$ for all $g\in G$. Then one takes $\Xb$ to be the closure of 
the image of $\tau$ and $j$ to be the induced immersion. 
Global sections on $\ol X$ form a functor
$$
\Gamma(\ol X, -): \Shv_G (\ol X ) \to \ModLG\;;\; \cF\to \Gamma (\ol X,\cF).
$$
with its derived functor 
$$
R\Gamma (\ol X, -): D^+( \Shv_G ( \ol X ) ) \to D^+(\ModLG).
$$
Let
$$
\Gamma(G, - ): \ModLG \to \ModL\;;\; M\to M^G
$$
be the functor of taking the $G$-invariants of $\Lam[G]$-modules and
$$
R\Gamma(G, - ): D^+(\ModLG) \to D^+(\ModL)
$$
be its derived functor. 
\medbreak

We define equivariant cohomology groups 
by
\[
H^n (X,G;\cF)=H^n(R\Gamma(G,R\Gamma (X, \cF)))\qfor n\in \bZ
\]
and we define equivariant cohomology groups with compact support as
\begin{equation}\label{eqhom.eq1}
\HcGX n \cF=H^n(R\Gamma(G,R\Gamma (\ol X,j_! \cF)))\qfor n\in \bZ .
\end{equation}
We have a convergent spectral sequence
\begin{equation}\label{eqhom.eq1.5} 
E_2^{a,b}= H^a(G,H^b_c(X,\cF)) \Rightarrow \HcGX {a+b} \cF.
\end{equation}

\begin{lem}\label{eqhom.lem0}
Let $(X,G)\in \eqsch$ with $G$ acting trivially on $X$.
Then there is an isomorphism of $\Lam$-modules
\[
H^n(X,G; \cF ) = H^n(X, B_G( \cF ) ).
\]
Here $B_G$ denotes the cohomological Bar resolution functor
\[
B_G( - ) : \Shv_G( X )  \to C^+ (\Shv (X) ).
\]
\end{lem}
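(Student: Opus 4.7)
The plan is to represent both sides of the asserted isomorphism by the same bicomplex, arising from an injective resolution of $\cF$ in $\Shv_G(X)$ combined with the cohomological bar construction. Since $G$ acts trivially on $X$, there is a natural forgetful functor $U: \Shv_G(X) \to \Shv(X)$, which admits the exact left adjoint $\Lam[G] \otimes_\Lam -$ (using that $\Lam[G]$ is a free, hence flat, $\Lam$-module); consequently $U$ preserves injectives.

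Starting from an injective resolution $\cF \to I^\bullet$ in $\Shv_G(X)$, the complex $\Gamma(X, I^\bullet)$ represents $R\Gamma(X, \cF)$ as a complex of $\Lam[G]$-modules, and applying $B_G$ termwise produces a bicomplex whose total cohomology is, by definition, $H^n(X, G; \cF) = H^n(R\Gamma(G, R\Gamma(X, \cF)))$. To identify the right hand side $H^n(X, B_G(\cF))$, I would use that each term $B_G(\cF)^p$ is a finite product $\prod_{g \in G^p} \cF$; this makes $B_G$ exact in each bar degree and compatible with $\Gamma(X, -)$, yielding a natural identification
\[
\Gamma(X, B_G(I^\bullet)) = B_G(\Gamma(X, I^\bullet))
\]
of bicomplexes of $\Lam$-modules. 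Since a finite product of injective sheaves remains injective, each term of $B_G(I^\bullet)$ is injective in $\Shv(X)$; and since $B_G(-)^p$ is exact, the augmented bicomplex $B_G(\cF) \to B_G(I^\bullet)$ has exact rows, so $\Tot(B_G(I^\bullet))$ is an injective resolution of $B_G(\cF)$ in $\Shv(X)$. Therefore $R\Gamma(X, B_G(\cF))$ is also represented by $\Tot(B_G(\Gamma(X, I^\bullet)))$, and passing to $H^n$ gives the desired isomorphism.

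The main subtle point I anticipate is the verification that $U$ preserves injectives and that $B_G$ behaves well with respect to $\Gamma(X, -)$; both rely essentially on the triviality of the $G$-action on $X$ (so that $U$ makes sense and $B_G$ lands in $C^+(\Shv(X))$) and on the finiteness of $G$ (so that the products $\prod_{G^p} \cF$ are finite, preserving injectivity and commuting with global sections). Beyond this, the argument is a formal manipulation of double complexes in the style of Grothendieck's composition-of-derived-functors spectral sequence.
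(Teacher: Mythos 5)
The paper states Lemma~\ref{eqhom.lem0} without proof, treating it as a standard remark, so there is no argument in the text to compare against. Your proof is correct and complete as a double-complex comparison: the forgetful functor $U$ preserves injectives because its left adjoint $\Lambda[G]\otimes_\Lambda-$ is exact; each $B_G(-)^p$ is the exact, product-preserving functor $\prod_{G^p}U(-)$, whence $\Gamma(X,B_G(I^\bullet))=B_G(\Gamma(X,I^\bullet))$ as bicomplexes; and $\mathrm{Tot}\,B_G(I^\bullet)$ is a complex of injective sheaves quasi-isomorphic to $B_G(\cF)$, so it computes the hypercohomology on the right.

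The one phrase to tighten is ``applying $B_G$ termwise produces a bicomplex whose total cohomology is, \emph{by definition}, $H^n(X,G;\cF)$''. This is not literally by definition: what one uses is that the (homogeneous) bar construction is a functorial resolution by coinduced, hence $\Gamma(G,-)$-acyclic, $\Lambda[G]$-modules, so for any bounded-below complex $M^\bullet$ of $\Lambda[G]$-modules the totalization $\mathrm{Tot}\,B_G(M^\bullet)$ represents $R\Gamma(G,M^\bullet)$. Alternatively, observe that $\Gamma(X,-)\colon\Shv_G(X)\to\mathrm{Mod}_{\Lambda[G]}$ likewise admits an exact left adjoint (the constant $G$-sheaf functor), so each $\Gamma(X,I^q)$ is an injective $\Lambda[G]$-module; the bar rows of your bicomplex are then acyclic in positive degree and the total cohomology collapses to $H^n(\Gamma(X,I^\bullet)^G)=H^n(R\Gamma(G,R\Gamma(X,\cF)))$. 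Either route closes the argument cleanly.
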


\bigskip

Let $i:Z\to X$ be a closed immersion in $\schG$ and $j:U\to X$ be its open complement. For $\cF\in \ShGL X$ we have an exact sequence in $\ShGL X$ 
$$
0\to j_!j^*\cF \to \cF \to i_*i^*\cF\to 0
$$ 
which induces a long exact sequence
\begin{equation}\label{eqhom.eq1.6}
\cdots \rmapo{\delta} \HcGU n {j^*\cF} \rmapo{j_*} \HcGX n \cF \rmapo{i^*} \HcGZ n{i^*\cF} 
\rmapo{\delta} \HcGU {n+1}{j^*\cF} \to \cdots
\end{equation}
\bigskip

Take $f:(X,G) \to (Y,H)$ in $\eqsch$ and choose a commutative diagram in $\eqsch$ 
\begin{equation}\label{eqhom.eq2}
\begin{CD}
(X,G) @>{j_X}>> (\ol X,G)\\
@VV{f}V @VV{\fb}V  \\
(Y,H) @>{j_Y}>> (\ol Y,H)\\
\end{CD}
\end{equation}
where $j_X$ and $j_Y$ are strict open immersions, and $\Xb$ and $\Yb$ are proper over $k$.
Such a choice is always possible by the above construction. 
\medbreak

Assume $G=H$, $f$ is strict and the underlying morphism $f:X\to Y$ is flat and quasi-finite.
If $p$ is invertible in $\Lam$ 
we construct the pushforward map
\begin{equation}\label{eqhom.eq1.8}
f_*: H^{n}_c(X,G; f^*\cF) \to H^{n}_c(Y,G; \cF).
\end{equation}
By SGA XVIII Theorem 2.9, we have the trace morphism
$Rf_!f^*\cF \to \cF$. By applying $(j_Y)_!$, it induces a map
$R\fb_* {j_X}_! f^*\cF \to {j_Y}_!\cF$, which gives rise to a $G$-equivariant map
\[
R\Gamma(\Xb,{j_X}_!f^*\cF)\to R\Gamma(\Yb,{j_Y}_!\cF) .
\]
This induces the map \eqref{eqhom.eq1.8} by applying $R\Gamma(G,-)$
and taking cohomology.
\bigskip

Assume the underlying morphism $f:X\to Y$ is proper.
Then we construct the pullback map
\begin{equation}\label{eqhom.eq1.7}
f^*: H^n_c(Y,H; \cF) \to \HcGX n {f^*\cF}.
\end{equation}
By the properness of $f$ the map
$X\to Y\times_{\Yb} \Xb$ induced from \eqref{eqhom.eq2} is an isomorphism
and we have the base change isomorphism
$\fb^*{j_Y}_!\cF \isom {j_X}_! f^*\cF$ by SGA2 XVII 5.2.1. 
It gives rise to an equivariant map
\[
f^*: R\Gamma(\Yb,{j_Y}_!\cF) \to R\Gamma(\Xb,{j_X}_!f^*\cF)
\]
which induces 
\[
R\Gamma(H,R\Gamma(\Yb,{j_Y}_!\cF)) \to R\Gamma(G,R\Gamma(\Yb,{j_Y}_!\cF)) \rmapo{f^*}
R\Gamma(G,R\Gamma(\Xb,{j_X}_!f^*\cF)),
\]
where the first map is induced by $G\to H$. This induces the map \eqref{eqhom.eq1.7}.
If $f$ is a strict closed immersion, it is easy to see that $f^*$ coincides with the pullback map
in \eqref{eqhom.eq1.6}.

\medbreak

For an extension $k'/k$ of fields and for $(X,G)\in \eqsch$, put 
\[
X'=X\times_{\Spec(k)}\Spec(k') \in \schGkp \text{ with $f:X'\to X$ the natural map,}
\]
where the action of $G$ on $X'$ is induced from that on $X$ via the base change.
We construct the base change map
\begin{equation}\label{eqhom.eq2.7}
\iota_{k'/k}: \HcGX n \cF \to H_c^n(X',G;f^*\cF).
\end{equation}
For a $G$-equivariant compactification $j_X:X\hookrightarrow \Xb$,
we have a cartesian diagram  
\begin{equation*}\label{eqhom.eq2.9}
\begin{CD}
X' @>{j_{X'}}>> \Xb' \\
@VV{f}V  @VV{\fb}V  \\
X @>{j_X}>> \Xb\\
\end{CD}
\end{equation*}
where $\Xb'=\Xb\times_{\Spec(k)}\Spec(k')$ with $\fb:\Xb'\to \Xb$ the natural map.
It gives rise to the base change isomorphism
$\fb^*{j_X}_! \cF \isom {j'_X}_! f^*\cF$, which induces the map \eqref{eqhom.eq2.7} 
by the same argument as before.

\medskip

\begin{lem}\label{eqhom.lem1}
Let the notation be as above.
\begin{itemize}
\item[(1)]
Consider a commutative diagram in $\eqsch$
\[
\begin{CD}
(U',G') @>{j'}>> (X',G') @<{i'}<< (Z',G') \\
@VV{f_U}V @VV{f}V @VV{f_Z}V \\
(U,G) @>{j}>> (X ,G) @<{i}<<  (Z,G) 
\end{CD}
\]
such that the squares of the underlying schemes are cartesian, and 
$i$ (resp. $i'$) is a strict closed immersion and $j$ (resp. $j'$) is its open complement.
Take $\cF\in \ShGXL$ and put $\cF' = f^* \cF$.
\medbreak

If $f$ is proper, the following diagram is commutative.
\begin{small}
$$
\xymatrix@C=10pt{
 \HcG n(U,G;j^*\cF) \ar[r] \ar[d]_{f^*} &   \HcG n(X,G;\cF) \ar[r] \ar[d]_{f^*}  & \HcG n(Z,G;i^*\cF) \ar[r] \ar[d]_{f^*}  & \HcG {n+1}(U,G;j^*\cF)  \ar[d]_{f^*}  \\
 \HcG n(U',G';{j'}^*\cF') \ar[r]  &   \HcG n(X',G';\cF') \ar[r]  &   \HcG n(Z',G';{i'}^*\cF') \ar[r]  &  \HcG {n+1}(U',G';{j'}^*\cF') 
}
$$
\end{small}
\medbreak

If $G=G'$ and $f$ is strict, flat and quasi-finite and 
if $\cF\in \ShGYL$ is the limit of the subsheaves annihilated by integers prime to $\ch(k)$, the following diagram is commutative.
\begin{small}
$$
\xymatrix@C=10pt{
 \HcG n(U',G;{j'}^*\cF') \ar[r] \ar[d]_{f_*} &   \HcG n(X',G;\cF') \ar[r] \ar[d]_{f_*}  & \HcG n(Z',G;{i'}^*\cF') \ar[r] \ar[d]_{f_*}   & \HcG {n+1}(U',G;{j'}^*\cF')  \ar[d]_{f_*}  \\
 \HcG n(U,G;{j}^*\cF) \ar[r]  &   \HcG n(X,G;\cF) \ar[r]  &   \HcG n(Z,G;{i}^*\cF) \ar[r]  &  \HcG {n+1}(U,G;{j}^*\cF) 
}
$$
\end{small}
Assuming further that $f$ is finite flat of degree $d$, the composition $f_*f^*$ is the multiplication 
by $d$.
\item[(2)]
Consider morphisms in $\schG$
\[ 
(U,G) \rmapo{j} (X ,G) \lmapo{i} (Z,G)
\] 
where $i$ is a strict closed immersion and $j$ is its open complement.
Take $\cF\in \ShGXL$.
Let $k'/k$ be an extension of fields and $X',U',Z',\cF'$ be the base changes of $X,U,Z,\cF$ via $k'/k$
respectively. Then the following diagram is commutative.
\begin{small}
$$
\xymatrix@C=10pt{
 \HcG n(U,G;j^*\cF) \ar[r] \ar[d]_{\iota_{k'/k}} &   \HcG n(X,G;\cF) \ar[r] \ar[d]_{\iota_{k'/k}}  & \HcG n(Z,G;i^*\cF) \ar[r] \ar[d]_{\iota_{k'/k}}  & \HcG {n+1}(U,G;j^*\cF)  \ar[d]_{\iota_{k'/k}}  \\
 \HcG n(U',G;{j'}^*\cF') \ar[r]  &   \HcG n(X',G;\cF') \ar[r]  &   \HcG n(Z',G;{i'}^*\cF') \ar[r]  &  \HcG {n+1}(U',G;{j'}^*\cF') 
}
$$
\end{small}
\end{itemize}
\end{lem}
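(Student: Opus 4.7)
The plan is to reduce everything to the short exact sequence of sheaves on $\Xb$
\[
0 \to (j_X)_! j_X^* (j_X)_! \cF \to (j_X)_!\cF \to (j_X)_!(i_* i^*\cF)\to 0,
\]
or more conveniently, to push the open/closed decomposition on $X$ out to $\Xb$: if $\iota: \Zb \hookrightarrow \Xb$ denotes $i$ composed with $j_X$ and $J: U \hookrightarrow \Xb$ denotes $j_X \circ j$, then the localization triangle \eqref{eqhom.eq1.6} is obtained by applying $R\Gamma(G, R\Gamma(\Xb, -))$ to the distinguished triangle
\[
J_! j^* \cF \to (j_X)_! \cF \to \iota_* i^* \cF \to {}[1].
\]
So the statement boils down to showing that each of the maps $f^*$, $f_*$, $\iota_{k'/k}$ lifts to a morphism of such distinguished triangles in $D^+(\Shv_G(\Xb))$ (resp.\ in $D^+(\Shv_{G'}(\Xb'))$, etc.), compatible with the definitions given in \eqref{eqhom.eq1.7}, \eqref{eqhom.eq1.8}, \eqref{eqhom.eq2.7}.

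For part (1), proper case: choose compactifications of $Y$ and $X$ as in \eqref{eqhom.eq2} and cover the entire diagram by extending $f$ to $\fb: \Xb' \to \Xb$ and letting $\ol{i}', \ol{j}'$ be the appropriate compactified immersions. Since $f$ is proper, the square $X' = Y \times_{\ol Y} \ol X'$ is cartesian, so by proper base change (SGA 4 XVII 5.2.1) we obtain isomorphisms $\fb^* (\ol{J}_Y)_! (j^*\cF) \simeq (\ol{J}_{X'})_! ({j'}^* f^*\cF)$, $\fb^* (j_Y)_! \cF \simeq (j_{X'})_! f^*\cF$, and $\fb^* (\ol{\iota}_Y)_* i^*\cF \simeq (\ol{\iota}_{X'})_* {i'}^* f^*\cF$ — the last using proper base change on the closed piece. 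These assemble into a morphism of triangles in $D^+(\Shv_{G'}(\Xb'))$, and applying $R\Gamma(G, R\Gamma(\Xb,-))\to R\Gamma(G', R\Gamma(\Xb', \fb^*-))$ produces the required commutative diagram.

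For part (1), flat quasi-finite case: now $f = f' : (X',G) \to (X,G)$ is strict flat quasi-finite and we choose a common compactification $\fb: \Xb' \to \Xb$ (so $\fb$ need not be flat or quasi-finite, but the relative trace $Rf_! f^* \to \mathrm{id}$ exists on the open locus $X$). The trace morphism is functorial with respect to restriction to opens and commutes with extension by zero in the expected way; equivalently one has a canonical map $R\fb_* (j_{X'})_! f^* \cF \to (j_X)_! \cF$ that, by compatibility of the SGA 4 XVIII trace with open embeddings and with the natural transformation $j_! \to i_* \to {}[1]$, induces a morphism of distinguished triangles going the other way around. The identity $f_* \circ f^* = d$ for finite flat of degree $d$ is the corresponding standard statement for the trace, and transfers term by term. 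The hypothesis that $\cF$ is a filtered colimit of subsheaves annihilated by integers prime to $\ch(k)$ is exactly what legitimises using the SGA 4 XVIII trace on the (possibly inseparable) ramification loci.

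For part (2), base change: the cartesian square of \eqref{eqhom.eq2.9} immediately yields $\fb^* (j_X)_! \simeq (j_{X'})_! f^*$, and the analogous isomorphisms on the open and closed strata are formal from the base change of each of $J_!$, $j_!$, $\iota_*$ (the last via proper base change applied to the closed immersion). Thus $\iota_{k'/k}$ lifts to a morphism of the three-term distinguished triangles, giving the commutative ladder.

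The only non-routine step is the flat quasi-finite pushforward case, where one must verify that the SGA 4 XVIII trace really does assemble into a map of distinguished triangles over a non-flat compactification $\fb$; this is why we impose the torsion hypothesis on $\cF$. Everything else is diagram chasing with base change isomorphisms and the functoriality of $R\Gamma(G, R\Gamma(\Xb, -))$.
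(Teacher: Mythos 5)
Your proof is correct and is exactly the standard argument that the paper's one‑line proof ("the corresponding fact in the non-equivariant case is well-known and the lemma is proved by the same argument") is alluding to: lift $f^*$, $f_*$, and $\iota_{k'/k}$ to morphisms of the short exact sequence $0 \to (j_X)_! j_! j^*\cF \to (j_X)_!\cF \to (j_X)_! i_* i^*\cF \to 0$ on the compactification, using base change isomorphisms (resp. the SGA~4 XVIII trace) on each of the three terms, then apply $R\Gamma(G, R\Gamma(\Xb,-))$. Two small notational points worth tightening: what you write as $\ol\iota_*$ on the compactification should be $\ol\iota_! = (j_X)_! i_*$ since $\ol\iota$ is only a locally closed immersion, and for the connecting morphism it is cleaner to work with the actual short exact sequence of $G$-sheaves rather than an abstract distinguished triangle, since a morphism of short exact sequences automatically induces a morphism of long exact sequences without the non-uniqueness issue of completing a map of two terms of a triangle.
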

\begin{proof}\label{eqcohc.prop} 
The corresponding fact in the non-equivariant case (i.e. the case $G$ is trivial)
are well-known and the lemma is proved by the same argument as in that case. 
\end{proof}
\bigskip

For $(X,G)\in \eqsch$, $X$ is quasi-projective over $k$ by definition, and by SGA1, V\S1, 
the geometric quotient $X/G$ exists. Let $\pi: X\to X/G$ be the natural projection,
which is viewed as a map $(X,G)\to (X/G,e)$ in $\eqsch$, where $e$ is the trivial group.
A sheaf $\cF$ of torsion $\Lam$-modules on $X/G$ gives rise to $\pi^*\cF\in \ShGXL$ and a map
\begin{equation}
\pi^*: H^n_c(X/G,\cF) \to \HcGX n {\pi^*\cF}.
\end{equation}
by the formalism of \eqref{eqhom.eq1.7}.

\begin{prop}\label{eqhom.pr1}
If $G$ acts freely on $X$, $\pi^*$ is an isomorphism for all $n$.
\end{prop}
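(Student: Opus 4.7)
The strategy is to apply the Hochschild--Serre spectral sequence \eqref{eqhom.eq1.5}
\[
E_2^{a,b} = H^a(G, H^b_c(X, \pi^*\cF)) \Longrightarrow H^{a+b}_c(X, G; \pi^*\cF)
\]
and show it degenerates at $E_2$ onto the line $a=0$, then identify the resulting isomorphism with the map $\pi^*$.

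Since $G$ acts freely on $X$, the projection $\pi\colon X \to X/G$ is a finite \'etale Galois cover with group $G$. In particular $\pi$ is proper, and because $\pi$ is finite we have $\pi_!=\pi_*$ and $\pi_*$ is exact. Thus for any $\cF \in \Shv(X/G,\Lambda)$ there are canonical identifications
\[
R\Gamma_c(X, \pi^*\cF) \simeq R\Gamma_c(X/G, \pi_*\pi^*\cF).
\]
Because $\pi$ is a $G$-torsor, there is a canonical isomorphism of $G$-equivariant sheaves $\pi_*\pi^*\cF \cong \cF \otimes_\Lambda \Lambda[G]$, the $G$-action coming entirely from the left regular representation on $\Lambda[G]$. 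Passing to $H^b_c$ and using $\Lambda$-linearity yields an isomorphism of $\Lambda[G]$-modules
\[
H^b_c(X, \pi^*\cF) \cong H^b_c(X/G, \cF) \otimes_\Lambda \Lambda[G].
\]

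The right-hand side is an induced, hence cohomologically trivial, $\Lambda[G]$-module, so
\[
H^a(G, H^b_c(X, \pi^*\cF)) = \begin{cases} H^b_c(X/G, \cF) & a=0,\\ 0 & a>0.\end{cases}
\]
Consequently $E_2^{a,b}$ vanishes for $a>0$, the spectral sequence degenerates, and the edge homomorphism provides an isomorphism
\[
H^n_c(X, G; \pi^*\cF) \;\cong\; H^n_c(X/G, \cF)
\]
for every $n$.

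To conclude, one must check that this isomorphism is inverse to the map $\pi^*$ defined in \eqref{eqhom.eq1.7}. Both maps are induced by the adjunction unit $\cF \to \pi_*\pi^*\cF$ composed with the change-of-group morphism associated to $e \hookrightarrow G$, so they agree. The main technical obstacle I anticipate is precisely this last step: the definition \eqref{eqhom.eq1} of $H^n_c(X,G;-)$ goes through a $G$-equivariant compactification $j_X\colon X \hookrightarrow \ol X$ on which the $G$-action need not be free, so matching the edge morphism of the spectral sequence with the construction of $\pi^*$ via base change along $\ol\pi\colon\ol X\to \ol Y$ requires a careful diagram chase, using proper base change for $j_{X!}$ and the fact that the freeness of the action on $X$ (not on $\ol X$) already suffices to identify $\pi_*\pi^*\cF$ on the open part $X/G \subset \ol Y$.
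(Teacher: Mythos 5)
There is a genuine gap. Your key claim, that $\pi_*\pi^*\cF\cong\cF\otimes_\Lambda\Lambda[G]$ as $G$-equivariant sheaves on $X/G$, holds only \'etale-locally on $X/G$, not globally: the identification is twisted by the $G$-torsor $\pi\colon X\to X/G$, and $\pi_*\pi^*\cF$ is in general a nonconstant form of $\cF\otimes\Lambda[G]$. Consequently $H^b_c(X,\pi^*\cF)$ need not be an induced $\Lambda[G]$-module and the spectral sequence \eqref{eqhom.eq1.5} does \emph{not} degenerate at $E_2$. Concretely, if $X$ is connected then $H^0_c(X,\pi^*\Lambda)=\Lambda$ carries the \emph{trivial} $G$-action, not the regular representation; so $E_2^{a,0}=H^a(G,\Lambda)$, which is nonzero for infinitely many $a$ when $\Lambda$ has torsion, while the abutment $H^n_c(X/G,\Lambda)$ vanishes for $n$ large (take $X$ an elliptic curve over $\ol k$ and $G$ a nontrivial group of translations by torsion points, so that $X/G$ is again a smooth projective curve). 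Thus there must be nontrivial differentials. The spectral sequence is the analogue of the Hochschild--Serre spectral sequence for the extension $1\to\pi_1(X)\to\pi_1(X/G)\to G\to 1$, and one should not expect $E_2$-degeneration in general.

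The paper's argument avoids this by working at the level of complexes of sheaves rather than at the level of cohomology groups. Choosing an equivariant compactification $j\colon X\hookrightarrow\ol X$ with $\ol\pi\colon\ol X\to\ol X/G$, one uses that $\ol\pi$ is finite and that $X=\ol\pi^{-1}(X/G)$ (so $\ol\pi_*j_!=j_!\pi_*$) together with Lemma~\ref{eqhom.lem0} to rewrite
\[
R\Gamma\bigl(G,\,R\Gamma(\ol X, j_!\pi^*\cF)\bigr)\;\simeq\;
R\Gamma\bigl(\ol X/G,\; j_!\,B_G(\pi_*\pi^*\cF)\bigr),
\]
where $B_G$ denotes the Bar-resolution functor. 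The augmentation $B_G(\pi_*\pi^*\cF)\to\cF$ is then shown to be a quasi-isomorphism by checking on stalks, and it is precisely on stalks that $\pi_*\pi^*\cF$ really is the induced module $\cF_y\otimes_\Lambda\Lambda[G]$. Inducedness is thus invoked only locally, where it holds, rather than globally on cohomology, where it fails. Also note that the technical obstacle you anticipated (matching the edge homomorphism with $\pi^*$ through the compactification) is not the real problem --- the real problem is the failure of degeneration itself.
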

\begin{proof}
Take an equivariant compactification $j: X \hookrightarrow \Xb$  and let $\pib :\Xb\to \Xb/G$ be the projection to the geometric quotient.
Then $j$ induces a commutative diagram
$$
\begin{CD}
X @>{j}>> \Xb\\
@VV{\pi}V @VV{\pib}V  \\
X/G @>{j}>> \Xb/G\\
\end{CD}
$$

As in \eqref{eqhom.-3} we have a canonical pushforward functor
\[
\pi_* : \ShGL X \to \ShGL {X/G},
\]
where we consider $X/G$ with the trivial $G$-action,
and similarly for $\ol \pi$.
Putting $Y=X/G$ and $\Yb=\Xb/G$.
we have
$$
\begin{aligned}
H^n_c( \Xb , G ; \pi^*\cF ) &= H^n( \Xb,G ; j_!\pi^*\cF ) \\
&= H^n (\Yb,G ;  \pib_*j_!\pi^*\cF) \\
&= H^n (\Yb,G ;  j_!\pi_*\pi^*\cF) \\
&=  H^n (\Yb, B_G ( j_!\pi_*\pi^*\cF) ) \\
&= H^n (\Yb, j_! B_G ( \pi_*\pi^*\cF) ) \\
&= H^n_c(\Yb ,  B_G ( \pi_*\pi^*\cF) ) 
\end{aligned}
$$
where the second  and third equality hold since $\pi$ and $\pib$ are finite. 
The fourth equality holds by Lemma~\ref{eqhom.lem0}.
Finally it is well known that there is a quasi-isomorphism  $B_G ( \pi_*\pi^*\cF ) \simeq
\cF$, since $\pi$ is an \'etale covering with Galois group $G$. Indeed, on stalks $ \pi_*\pi^*\cF$
is given by $G$-representations which are induced from the trivial group.  This completes
the proof of the proposition.
\end{proof}
\bigskip

A similar isomorphism to that in Proposition \ref{eqhom.pr1} holds in a radicial situation:

\begin{lem}\label{eqhom.radicial}
\begin{itemize}
\item[(1)]
Let $f: (X,G) \to (Y,G)$ be a map in $\schG$ where the underlying morphism is finite, surjective and radicial. For $\cF \in \ShGL Y$, the pullback map \eqref{eqhom.eq1.7}
\[
f^* : \HcGY n \cF \to \HcGX n {f^* \cF}
\]
is an isomorphism.
\item[(2)]
For a purely inseparable extension $k'/k$ of fields and for $(X,G)\in \eqsch$, the base change map \eqref{eqhom.eq2.7}
\[
\iota_{k'/k}: \HcGX n \cF \to H_c^n(X',G;f^*\cF)
\]
is an isomorphism. Here $X'=X  \otimes_k k'$.
\end{itemize}
\end{lem}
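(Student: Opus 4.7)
The plan is to reduce both parts to invariance of the étale topos under universal homeomorphisms (SGA~4, Exp.~VIII, Théorème~1.1), which applies because a finite surjective radicial morphism is precisely a universal homeomorphism, combined with the proper base change theorem. In each case I construct a $G$-equivariant cartesian square of compactifications, apply these two tools at the level of sheaves on the compactified base, and then take $R\Gamma(G,-)$ at the end.

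For part~(2), start from any equivariant compactification $j_X:X\hookrightarrow\ol X$ and base-change along $\Spec k'\to\Spec k$ to form the cartesian square
\[
\begin{CD}
X' @>{j_{X'}}>> \ol{X}' \\
@VV{f}V @VV{\fb}V \\
X @>{j_X}>> \ol X.
\end{CD}
\]
Since $k'/k$ is purely inseparable, $\Spec k' \to \Spec k$ is finite, surjective and radicial, and these properties are stable under base change, so $\fb$ has them as well. Hence $\fb^*$ is an equivalence on torsion étale sheaves. Combined with the proper base-change isomorphism $\fb^*(j_X)_!\cF \simeq (j_{X'})_! f^*\cF$, this gives a $G$-equivariant quasi-isomorphism
\[
R\Gamma(\ol X,(j_X)_!\cF) \simeq R\Gamma(\ol{X}',(j_{X'})_! f^*\cF),
\]
and applying $R\Gamma(G,-)$ and taking cohomology identifies $\iota_{k'/k}$ as an isomorphism.

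For part~(1), the key preparatory step is to construct an equivariant cartesian compactification
\[
\begin{CD}
X @>{j_X}>> \ol X \\
@VV{f}V @VV{\fb}V \\
Y @>{j_Y}>> \ol Y
\end{CD}
\]
with $\fb$ finite (hence proper). Fixing an equivariant compactification $j_Y$ of $Y$, I extend the coherent $G$-equivariant $\cO_Y$-algebra $f_*\cO_X$ to a coherent $G$-equivariant $\cO_{\ol Y}$-subalgebra $\ol\cA \subset (j_Y)_* f_*\cO_X$ by standard extension of coherent sheaves together with averaging over $G$, and set $\ol X := \underline{\Spec}_{\ol Y}(\ol\cA)$. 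With this diagram in hand, I claim that the adjunction map
\[
(j_Y)_!\cF \longrightarrow R\fb_* (j_X)_! f^*\cF
\]
is a quasi-isomorphism, which I verify by inspecting its pullback under the open immersion $j_Y$ and the closed immersion $i_Y$ of the complement. On the open part, proper base change and the identity $Rf_* f^*\cF \simeq \cF$ (which holds since $f$ is a universal homeomorphism, so $f^*$ and $f_*$ are inverse equivalences of étale topoi, and $Rf_* = f_*$ for finite $f$) give $j_Y^* R\fb_*(j_X)_! f^*\cF \simeq \cF$. On the closed part, proper base change together with the vanishing of $(j_X)_! f^*\cF$ on $\ol X\setminus X = \fb^{-1}(\ol Y\setminus Y)$ (a consequence of the square being cartesian) yields $i_Y^* R\fb_*(j_X)_! f^*\cF = 0$. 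The localization triangle on $\ol Y$ then identifies $R\fb_*(j_X)_! f^*\cF$ with $(j_Y)_!\cF$, and applying $R\Gamma(\ol Y,-)$ followed by $R\Gamma(G,-)$ completes the proof.

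The main obstacle is the construction of the $G$-equivariant cartesian compactification in part~(1): extending $f_*\cO_X$ to a coherent $G$-equivariant $\cO_{\ol Y}$-subalgebra of $(j_Y)_* f_*\cO_X$. The extension of $f_*\cO_X$ as a coherent $\cO$-module is classical (Nagata), equivariance can be arranged by $G$-averaging, and one uses that $f_*\cO_X$ is integral over $\cO_Y$ to ensure the extension can be taken to be a subalgebra. Once this technical point is settled, the remainder of the argument is a purely formal application of proper base change and invariance of étale cohomology under universal homeomorphisms.
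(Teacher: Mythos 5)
The paper disposes of this lemma with a one-line citation to \cite{SGA4} XVIII 1.2 and XVII 5.2.6 (topological invariance of the \'etale topos under universal homeomorphisms, together with the compatibility of $Rf_!$ with composition), so your direct verification is a genuinely different write-up, not the paper's argument.

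Part (2) is essentially correct, with one inaccuracy: $\Spec k'\to\Spec k$ need not be \emph{finite} when $k'/k$ is purely inseparable but infinite (e.g.\ $k'=\tilde k$, the perfection, which is exactly the case the paper needs). It is, however, integral, surjective and radicial, i.e.\ a universal homeomorphism, and the topological invariance you invoke holds in that generality, so the argument survives once you replace ``finite'' by ``integral''.

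Part (1) has the right backbone (identify $Rf_*f^*\cF\cong\cF$ for the universal homeomorphism $f$ and feed this through proper base change), and the check of the quasi-isomorphism by restricting to $Y$ and to $\ol Y\setminus Y$ is correct. But the preparatory construction of a $G$-equivariant \emph{cartesian} compactification with $\ol f$ \emph{finite} is both under-justified and unnecessary. Under-justified: the $\cO_{\ol Y}$-subalgebra of $(j_Y)_*f_*\cO_X$ generated by a coherent submodule need not be coherent without a further argument (it does work locally by clearing denominators against the integral equations, but this needs to be said and then globalized), and ``averaging over $G$'' cannot mean taking the $G$-orbit sum of a coherent subalgebra, since a sum of subalgebras is not a subalgebra --- one should intersect the $G$-translates instead. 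Unnecessary: the diagram \eqref{eqhom.eq2} that the paper already uses to \emph{define} $f^*$ is cartesian over $Y$ (i.e.\ $\fb^{-1}(Y)=X$) whenever $f$ is proper and $X$ is dense in $\Xb$, which is all your open-part computation via proper base change requires; proper base change only needs $\fb$ proper, not finite; and for the closed-part vanishing you only need $\fb^{-1}(\ol Y\setminus Y)\subset\Xb\setminus X$, which follows from the mere commutativity of the square. So the whole finite-cartesian-compactification apparatus can be dropped in favor of the compactification the paper already has in hand.
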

\begin{proof}
This follows from \cite{SGA4} XVIII 1.2 and XVII 5.2.6.
\end{proof}

\bigskip

\section{Equivariant \'etale homology}\label{eqethom}

Let the assumption be as in \S\ref{eqcohc}.
Let $G_k$ be the absolute Galois group of $k$ and $M$ be a discrete $\Lam[G_k]$-module which is torsion as 
a $\Lam$-module and continuous as a $G_k$ module. 
We let $M$ denote the corresponding \'etale sheaf on $\Spec(k)$.
When $M$ is finite, we write $M^\vee = \Hom_{\Lam} (M,\Linfty)$, the dual $\Lam[G_k]$-module,
where $\Linfty$ be the torsion  $\Lam$-module $\Q / \Lam$.

\begin{defn}\label{eqhom.def1}
The equivariant \'etale homology groups of $(X,G)\in \eqsch$ with coefficient in a finite 
$\Lam[G_k]$-module $M$ are defined as
$$
H^{\et}_{a}(X,G;M)= \HcGX a {M^\vee}^\vee  \quad\text{ for } a\geq 0,
$$
where $M^\vee$ on the right hand side denotes the pullback to $X$ of the \'etale sheaf ${M^\vee}$
on $\Spec(k)$. For a $\Lam[G_k]$-module $M$ which is torsion as a $\Lam$-module, we write $$M=\varinjlim_n M_n $$
with $M_n$ finite $\Lam[G_k]$-modules and set
\[
H^{\et}_{a}(X,G;M) = \varinjlim_n H^{\et}_{a}(X,G;M_n).
\]
\end{defn}

By Lemma \ref{eqhom.lem1}, the functor
$$
\eqsch \to \ModL\;;\; (X,G) \to H^\et_a(X,G;M)  \quad (a\in\bZ)
$$
provides us with a homology theory on $\eqsch$ in the sense of Definition \ref{def.eqhom}.
\medbreak

For $(X,G) \in \eqsch$ and an integer $a\geq 0$, let $X_{(a)}$ denotes the set of such 
$x\in X$ that $\dim(\overline{\{x\}})=a$. The group $G$ acts on the set $X_{(a)}$ and 
we let $X_{(a)}/G$ denotes the quotient set. For $x\in X_{(a)}/G$, let $Gx$ be the 
corresponding $G$-orbit in $X$. 
\medbreak

For a homology theory $H$ on $\eqsch$ (Definition \ref{def.eqhom}),  
we have a spectral sequence of homological type,
called the niveau spectral sequence:
\begin{equation}\label{spectralsequence1}
E^1_{a,b}(X,G;M)=\bigoplus_{x\in X_{(a)}/G }   H_{a+b}(x,G;M)~~\Rightarrow ~~H_{a+b}(X,G;M),
\end{equation}
where
$$ 
H_a(x,G;M)=\indlim{V\subseteq \overline{\{Gx\}}} H_a(V,G;M),
$$
with the limit taken over all non-empty $G$-stable open subschemes $V$ in 
the closure $\overline{\{Gx\}}$ of $Gx$ in $X$. 
This spectral sequence was constructed by Bloch--Ogus \cite{BO} in the non-equivariant case.
The same construction works for the equivariant case and one can show the following:

\begin{prop}\label{eqhom.prop0}
For the spectral sequence \eqref{spectralsequence1}, the following facts hold.
\begin{itemize}
\item[(1)]
The spectral sequence is covariant with respect to morphisms in $\eqsch$ whose underlying morphism of schemes
are proper, and contravariant with respect to strict open immersions in $\eqsch$.
\item[(2)]
Consider a commutative diagram in $\eqsch$
\[
\begin{CD}
(V',G') @>{j'}>> (X',G')  \\
@VV{f_V}V @VV{f}V  \\
(V,G) @>{j}>> (X ,G) , \\
\end{CD}
\]
such that the diagram of the underlying schemes is cartesian, and $f$ and $f_V$ are proper, and
$j$ and $j'$ are strict open immersions.
Then the following square commutes:
\[ 
\xymatrix{
(E^1_{a,b}(X',G';M ),d^1_{a,b}) \ar[r]^{(j')^*}  \ar[d]_{(f_V)_*}  &  (E^1_{a,b}(V',G';M ),d^1_{a,b}) \ar[d]^{f_*} \\
(E^1_{a,b}(X,G;M ),d^1_{a,b}) \ar[r]_{j^*}            &   (E^1_{a,b}(V,G;M ),d^1_{a,b})
}
\]
\end{itemize}
\end{prop}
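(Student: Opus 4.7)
The plan is to adapt the Bloch--Ogus construction of the niveau spectral sequence \cite{BO} to the equivariant setting, using only the axioms of a homology theory on $\eqsch$ from Definition \ref{def.eqhom}. For $(X,G)\in\eqsch$ and $a\ge -1$, let $\cZ^G_a(X)$ denote the filtered poset (ordered by inclusion) of $G$-stable reduced closed subschemes $Z\subset X$ with $\dim Z\le a$; filteredness is clear since the union of two such subschemes is again $G$-stable of dimension $\le a$.

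First I would build the exact couple. For each pair $Z'\subset Z$ with $Z\in\cZ^G_a(X)$ and $Z'\in\cZ^G_{a-1}(X)$, the strict closed immersion $Z'\hookrightarrow Z$ with strict open complement $Z-Z'$ yields the localization sequence from Definition \ref{def.eqhom}$(ii)$. Taking filtered colimits of these long exact sequences over the poset of such pairs produces the $D^1$- and $E^1$-terms of an exact couple and hence the spectral sequence. The identification
\[
\lim_{Z'\subset Z}\,H_{a+b}(Z-Z',G;M)\;\cong\;\bigoplus_{x\in \Xd a /G}H_{a+b}(x,G;M)
\]
follows from the definition of $H_{a+b}(x,G;M)$ given just before \eqref{spectralsequence1}, once one checks that pairs of the form $(Z',Z)$ in which $Z-Z'$ is a $G$-stable non-empty open subset of a single orbit closure $\overline{\{Gx\}}$, with $x\in \Xd a$, are cofinal in the indexing poset. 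The convergence to $H_{a+b}(X,G;M)$ comes from the colimit of $D^1$-terms together with Definition \ref{def.eqhom}$(i)$.

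Next I would verify functoriality. For a strict open immersion $j:V\hookrightarrow X$ in $\eqsch$ the map $Z\mapsto Z\cap V$ sends $\cZ^G_a(X)$ into $\cZ^G_a(V)$; combined with the open pullbacks of Definition \ref{def.eqhom}$(i)$ this induces $j^*$ on the exact couple. For a morphism $(\phi,f):(X',G')\to(X,G)$ in $\eqschp$ the scheme-theoretic image of a $G'$-stable closed subscheme is $G$-stable of dimension bounded by the original, so $f$ sends $\cZ^{G'}_a(X')$ into $\cZ^G_a(X)$; combined with proper pushforward from Definition \ref{def.eqhom} this induces $f_*$. The naturality clause of Definition \ref{def.eqhom}$(iii)$ ensures that both constructions commute with the connecting maps of the individual localization sequences, and hence pass to the colimits defining the exact couple; this gives assertion $(1)$. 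Part $(2)$ is now an immediate consequence: for the cartesian square in question the intersection and image operations on $G$-stable closed subschemes commute, so for each pair $Z'\subset Z$ over $X$ the corresponding four localization sequences fit into a commutative diagram by Definition \ref{def.eqhom}$(iii)$, and colimiting yields the desired commutative square on $E^1$-pages.

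The main obstacle will be the cofinality step in the identification of $E^1$. One must show that any pair $Z'\subset Z$ with $Z\in\cZ^G_a(X)$, $Z'\in\cZ^G_{a-1}(X)$, can be refined to one in which $Z-Z'$ is a $G$-stable open subscheme of a single orbit closure; equivalently, that filtered colimits of the localization sequences decouple along the $G$-orbits of points in $\Xd a$. In the non-equivariant case this is the standard observation that $\cZ_a/\cZ_{a-1}$ splits as a coproduct over $\Xd a$; in our equivariant setting the splitting must be indexed by $\Xd a/G$, which requires verifying that writing a $G$-stable subscheme as a union of orbit closures is compatible with the filtered colimit structure. Once this combinatorial point is settled, the remaining verifications reduce mechanically to the axioms of Definition \ref{def.eqhom}.
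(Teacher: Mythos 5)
Your proposal takes the same route the paper intends: the paper offers no proof of its own, only the remark that the Bloch--Ogus construction of the niveau spectral sequence ``works for the equivariant case,'' so your task is to supply the details, and your exact-couple construction over the filtered poset of $G$-stable closed subschemes is exactly the right way to do so.

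There is, however, one step that fails as written. You claim that for a morphism $(\phi,f):(X',G')\to(X,G)$ in $\eqschp$ the scheme-theoretic image of a $G'$-stable closed subscheme is $G$-stable, so that $f$ sends your poset $\cZ^{G'}_a(X')$ into $\cZ^G_a(X)$. This is false when $\phi:G'\to G$ is not surjective: since $f$ is equivariant only for the $G'$-action on $X$ induced by $\phi$, the image $f(Z')$ is merely $\phi(G')$-stable. A concrete counterexample is the morphism $(X,e)\to(X,G)$ in $\eqschp$ given by the identity on $X$ and the inclusion of the trivial group; every closed $Z'\subset X$ is $e$-stable, but $f(Z')=Z'$ is typically not $G$-stable. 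The repair is routine: send $Z'$ to $G\cdot f(Z')$, a finite union of closed subschemes of dimension $\le\dim Z'$ and hence in $\cZ^G_a(X)$, and push forward along the proper morphism $(Z',G')\to(G\cdot f(Z'),G)$ in $\eqschp$; the same correction has to be carried through the compatibility check in part (2). Your phrasing of the cofinality step is also a bit off --- one cannot cut down to a cofinal family of pairs $(Z',Z)$ with $Z-Z'$ contained in a single orbit closure; rather, one enlarges $Z'$ within a given pair (adjoining the pairwise intersections of the $a$-dimensional $G$-orbit closures in $Z$) so that $Z-Z'$ becomes a disjoint union of $G$-stable opens, one per orbit, and the direct sum over $X_{(a)}/G$ then falls out of the localization axiom of Definition~\ref{def.eqhom}(ii) in the colimit. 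With these two repairs your argument goes through.
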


For the homology theory in Definition \ref{eqhom.def1}, the above spectral sequence 
for $(X,G)\in \eqsch$ is written as
\begin{equation}\label{spectralsequence2}
E^1_{a,b}  (  X,G ; M )  =\sumdG X a H^\et_{a+b}(x,G;M )~~\Rightarrow ~~
H^\et_{a+b}(X,G;M ),
\end{equation}

\begin{prop}\label{eqhom.prop1}
For the spectral sequence \eqref{spectralsequence2}, the following facts hold.
\begin{itemize}
\item[(1)]
If $\ch(k)$ is invertible in $\Lam$, the spectral sequence is contravariantly functorial for strict flat quasi-finite maps. For a commutative diagram in $\eqsch$
\[
\begin{CD}
(V',G') @>{j'}>> (X',G')  \\
@VV{f_V}V @VV{f}V  \\
(V,G) @>{j}>> (X ,G) , \\
\end{CD}
\]
such that the diagram of the underlying schemes is cartesian, and $f$ and $f_V$ are proper, and
$j$ and $j'$ are strict and flat quasi-finite, the following square commutes:
\[ 
\xymatrix{
(E^1_{a,b}(X',G';M ),d^1_{a,b}) \ar[r]^{(j')^*}  \ar[d]_{(f_V)_*}  &  (E^1_{a,b}(V',G';M ),d^1_{a,b}) \ar[d]^{f_*} \\
(E^1_{a,b}(X,G;M ),d^1_{a,b}) \ar[r]_{j^*}            &   (E^1_{a,b}(V,G;M ),d^1_{a,b})
}
\]
Assuming further that $f$ is finite flat of degree $d$, the composition $f_*f^*$ is the multiplication 
by $d$.
\item[(2)]
If $f: (X',G) \to (X,G)$ is a map in $\schG$ where the underlying morphism is finite, surjective and radicial, it induces an isomorphism of spectral sequences
\[
(E^1_{a,b}(X',G;M ) ,d^1_{a,b}) \isom (E^1_{a,b}(X,G;M ) ,d^1_{a,b}).
\]
\item[(3)]
For a purely inseparable extension $k'/k$ of fields and for $(X,G)\in \eqsch$, 
there is a natural isomorphism of spectral sequences
\[
(E^1_{a,b}(X',G;M ) ,d^1_{a,b}) \isom (E^1_{a,b}(X,G;M ) ,d^1_{a,b}),
\]
where $X'=X\times_{\Spec(k)}\Spec(k')$ with the $G$-action induced from that on $X$.
\item[(4)]
We have $\EGXM 1 a b=0$ for $b<0$.
The projection $\pi:X\to X/G$ induces isomorphisms for all $a\in \bZ$
$$
\begin{aligned}
\EGXM 1 a 0&\simeq \Big(\sumd X a H^\et_{a}(x;M)\Big)_G\\
&\simeq \sumdy {(X/G)} a H_{a}^\et(y;M)= E^1_{a,0}(X/G;M)
\end{aligned}
$$
and hence an isomorphism of complexes
\[
(\EGXM 1 \bullet 0,d^1_{\bullet,0}) \isom (E^1_{\bullet,0}(X/G;M),d^1_{\bullet,0}).
\]
\end{itemize}
\end{prop}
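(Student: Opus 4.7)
The plan is to deduce Proposition~\ref{eqhom.prop1} from the Bloch--Ogus construction of the niveau spectral sequence \eqref{spectralsequence2} together with the properties of equivariant cohomology with compact support already established in Lemmas~\ref{eqhom.lem1} and \ref{eqhom.radicial}. The key mechanism is that \eqref{spectralsequence2} is built by filtering $X$ by dimension of support and iterating the localization sequence \eqref{eqhom.eq1.6}, so any functorial or base-change compatibility on $H_c^*$ that respects the localization triangle automatically descends termwise to a compatibility of the entire $E^1$-page, and hence of all subsequent pages.

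With this principle in hand, parts (1), (2), and (3) become routine: Lemma~\ref{eqhom.lem1}(1) supplies the strict flat quasi-finite pullback on $H_c^*$ (when $\ch(k)$ is invertible in $\Lam$), its compatibility with proper pushforward in a cartesian square, and the degree formula $f_*f^*=d\cdot\mathrm{id}$; Lemma~\ref{eqhom.radicial}(1)--(2) supplies the radicial base change isomorphism and the purely inseparable field extension isomorphism. In each case the compatibility with the localization triangle lifts to the niveau spectral sequence, yielding the claimed functoriality or isomorphism.

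For part (4), the vanishing $\EGXM{1}{a}{b}=0$ for $b<0$ is equivalent to $H_n^\et(x,G;M)=0$ for $n<a$ and $x\in X_{(a)}/G$. Via the Hochschild--Serre spectral sequence \eqref{eqhom.eq1.5}, this reduces to $H_c^n(V,M^\vee)=0$ for $n<a$ and $V\subset \overline{\{Gx\}}$ a small dense $G$-invariant open, which follows from Artin vanishing combined with Poincar\'e duality after shrinking $V$ to a smooth affine open. For the coinvariant description, fix a $G$-orbit $[x]$ with stabilizer $G_x$; a small $G$-invariant open $V\subset \overline{\{Gx\}}$ decomposes as $G\times^{G_x}V_0$ with $V_0\subset \overline{\{x\}}$, so that $H_c^q(V,M^\vee)\cong \Ind_{G_x}^G H_c^q(V_0,M^\vee)$ as a $G$-module. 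Shapiro's lemma combined with the vanishing just proved forces \eqref{eqhom.eq1.5} to collapse in total degree $a$, giving $H_c^a(V,G;M^\vee)\cong H_c^a(V_0,M^\vee)^{G_x}$; dualizing and passing to the limit yields $H_a^\et(x,G;M)\cong H_a^\et(x;M)_{G_x}$. Shapiro's lemma for coinvariants applied orbit by orbit then identifies the sum over $X_{(a)}/G$ with the $G$-coinvariants of the sum over $X_{(a)}$. The second isomorphism is obtained by further shrinking $V_0$ so that $G_x$ acts freely on it --- absorbing any purely inseparable residual action via Lemma~\ref{eqhom.radicial}(1) --- and invoking Proposition~\ref{eqhom.pr1} to identify $H_a^\et(V_0,G_x;M)$ with $H_a^\et(V_0/G_x;M)$, the latter converging in the limit to $H_a^\et(y;M)$ for $y=\pi(x)\in (X/G)_{(a)}$.

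The main obstacle will be the coinvariant identification in (4): controlling the $G_x$-action on $\kappa(x)$ in the direct limit requires carefully separating its free part (handled by Proposition~\ref{eqhom.pr1}) from any purely inseparable residual action (handled by Lemma~\ref{eqhom.radicial}(1)), and verifying that both contributions combine correctly with the Hochschild--Serre degeneration so as to produce the expected identification at the generic point.
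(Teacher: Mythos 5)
Your reduction of parts (1)--(3) to Lemmas~\ref{eqhom.lem1} and \ref{eqhom.radicial} and the localization-triangle compatibility is exactly what the paper does, and the first half of part (4) (vanishing for $b<0$ via affine vanishing, and the coinvariant description of $E^1_{a,0}$ via Shapiro and the collapse of the Hochschild--Serre spectral sequence) also matches the paper's argument in substance.

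However, the last step of part (4) has a genuine gap. You propose to ``further shrink $V_0$ so that $G_x$ acts freely on it'' in order to apply Proposition~\ref{eqhom.pr1}. This is not achievable in general: if $K$ denotes the kernel of $G\to\Aut(\overline{\{Gx\}})$ (elements that fix each point of $Gx$ and act trivially on its residue fields), then one checks that $K$ acts trivially on all of $\overline{\{x\}}$ (a $\sigma\in K$ is the identity on $\kappa(x)$, hence on a dense open, hence everywhere by separatedness and reducedness), so $K\cap G_x = K$ sits inside every point-stabilizer and no shrinking of $V_0$ can make $G_x$ act freely once $K\neq 1$. This occurs e.g.\ whenever a nontrivial $g\in G$ fixes $\overline{\{x\}}$ pointwise. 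The paper avoids this precisely by replacing $G_x$ with $H_x=G_x/K$, the stabilizer inside the image $H$ of $G$ in the scheme-theoretic automorphism group of $Gx$; since $K$ acts trivially on $H^a_c(V_0)$ one has $H^a_c(V_0)^{G_x}=H^a_c(V_0)^{H_x}$, and now $H_x$ does act freely on a suitable shrinkage of $V_0$, so Proposition~\ref{eqhom.pr1} applies.

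The other imprecision is where you invoke Lemma~\ref{eqhom.radicial}(1) to ``absorb a purely inseparable residual action.'' The radicial input is not about the stabilizer action at all; it is needed because the quotients $V_0/H_x$ (equivalently $G\cdot V_0/G$) are open subsets of $T/G$ with $T=\overline{\{Gx\}}$, not of $S=\overline{\{y\}}$. One must then prove that the finite morphism $T/G\to S$ is a universal homeomorphism, i.e.\ that the residue-field extension $\kappa(z)/\kappa(y)$ at the generic point $z$ of $T/G$ is purely inseparable; this is the Bourbaki citation in the paper's proof, and only after that does Lemma~\ref{eqhom.radicial}(1) identify the limit of $H^a_c(G\cdot V_0/G)$ with $H^\et_a(y)$. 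Your sketch omits this comparison entirely and places the radicial input in the wrong step.
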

\begin{proof}
(1) through (3) follow immediately from Lemma \ref{eqhom.lem1} and Lemma \ref{eqhom.radicial}.
To show (4), we may assume $M$ is finite.
In the proof all homology (resp.\ cohomology) functors have coefficients
$M$ (resp.\ $M^\vee$). For simplicity we suppress $M$ in the notation. Recall for $x\in X_{(a)}/G$
$$
\HG {a+b} x =
\indlim{U\subseteq
  \overline{\{Gx\}}}\Hom_\Lambda\big(H_c^{a+b}(U,G),\Lambda_\infty \big),
$$
where the limit is taken over all non-empty $G$-stable affine open subschemes $U$ in 
the closure $\overline{\{Gx\}}$ of $Gx$ in $X$. 
By the affine Lefschetz theorem we have
$H^i_c(U)=0$ for all $i<a$. Hence the spectral sequence \eqref{eqhom.eq1.5} implies
\begin{equation}\label{eqhom.eq3}
\HcG i(U,G)=0\;\text{  for $i<a$},\quad
\HcG a(U,G)\simeq H^a_c(U)^G.
\end{equation}
This immediately implies part (1) of the proposition and an isomorphism
$$
\EGX 1 a 0= \bigoplus_{x\in X_{(a)}/G }   H^\et_{a}(x,G) \simeq \Big(\sumd X a H^\et_{a}(x)\Big)_G
$$
noting that for $x\in X_{(a)}$ we have 
$$
H^\et_{a}(x)=\indlim{V\subseteq \overline{\{x\}}}
\Hom_\Lambda\big(H_c^{a}(V),\Lambda_\infty \big)
$$
where the limit is taken over all non-empty affine open subscheme $V$ in 
the closure $\overline{\{x\}}$ of $x$ in $X$. 
To complete the proof, it remains to show 
\begin{equation}\label{eqhom.eq3.1}
\HG a x \simeq H_a^\et(y)\quad\text{ for 
$x\in X_{(a)}/G$ and $y=\pi(x)$},
\end{equation}
noting $X_{(a)}/G\simeq (X/G)_{(a)}$, an isomorphism of sets. Let $H$
be the image of $G$ in $\Aut(Gx)$, the scheme theoretic automorphism
group, and put $H_x=\{\sigma\in H\;|\;\sigma(x)=x\}$. One can choose a
$H_x$-stable affine open subschemes $U$ in $\overline{\{x\}}$ such
that $H_x$ acts freely on $U$ and $\sigma(U)\cap \sigma'(U)=\varnothing$
for $\sigma,\sigma'\in H$ with $\sigma^{-1}\sigma'\not\in H_x$. Put
$$
G\cdot U=\underset{\sigma\in G}{\bigcup}\sigma(U)=\underset{\sigma\in H/H_x}{\coprod}\sigma(U).
$$
By \eqref{eqhom.eq3}, \eqref{eqhom.eq1.5} implies
$$
\HcG a(U,G)\simeq \big(\underset{\sigma\in H/H_x}{\bigoplus}
H^a_c(\sigma(U))\big)^H = H^a_c(U)^{H_x}\simeq \HcGx a(U,H_x),
$$
where the last group denotes the equivariant cohomology with compact support for 
$(U,H_x)\in \eqsch$. By Proposition \ref{eqhom.pr1} we get
$$
\HcHx a(U,H_x)\simeq H^a_c(U/H_x)=H^a_c(G\cdot U/G),
$$
where the last equality holds since $U/H_x=G\cdot U/G$. Thus we get
\begin{equation}\label{eqhom.eq3.2}
\HG a x \simeq \indlim{U\subseteq \overline{\{x\}}}
\Hom_\Lambda\big(H^a_c(G\cdot U/G),\Lambda_\infty \big).
\end{equation}
As $U$ runs over all $H_x$-stable non-empty open subsets of $\overline{\{x\}}$, $G\cdot U/G$ 
runs over all non-empty open subsets of $T/G$ where $T=\overline{\{Gx\}}$.
Consider the finite morphism $\pi_T:T/G\to S$ where $S\subset X/G$ is the closure of 
$y=\pi(x)$ in $X/G$. 

\begin{claim}
Letting $z\in T/G$ be the generic point (note that $T/G$ is integral),
$\k(z)/\k(y)$ is purely inseparable. In particular $\pi_T$ is an isomorphism on 
the underlying topological spaces.
\end{claim}
\medbreak
The claim follows from \cite[Ch.V \S2.2 Thm.2(ii)]{Bo}.
\medbreak

Thanks to the above claim, we get from \eqref{eqhom.eq3.2} and 
Proposition~\ref{eqhom.radicial}
$$
\HG
 a x \simeq \indlim{V\subseteq \overline{\{y\}}}
\Hom_\Lambda\big(H^a_c(V),\Lambda_\infty \big) = H_a^\et (y),
$$ 
where $V$ ranges over all non-empty open subsets of $\overline{\{y\}}$, 
which proves the desired claim \eqref{eqhom.eq3.1}.
This completes the proof of Proposition \ref{eqhom.prop1}.

\end{proof}

\bigskip

\section{Equivariant Kato homology}\label{eqKhom}

Let the assumption be as in \S\ref{eqethom}.

\begin{defn}\label{def.eqKC}
For $(X,G) \in \eqsch$, the equivariant Kato complex $\KCGM X$
of $X$ with coefficient $M$ is defined as
$$
 E^1_{d,0}(X,G;M) \cdots \to E^1_{a,0}(X,G;M) \to E^1_{a-1,0}(X,G;M) \to \cdots\to E^1_{0,0}(X,G;M ) \;,
$$
where $d=\dim(X)$ and the boundary maps are $d^1$-differentials and $E^1_{a,0}(X,G;M)$ is put in degree $a$.
\end{defn}

In case $G=e$, $\KCGM X$ is simply denoted by $\KCM X$. 
This complex has been introduced first in \cite{JS1} as a generalization of a seminal work
by Kato \cite{K}. The complex $KC(X; \mathbb Z / n \mathbb Z)$ is isomorphic to the complex \eqref{eq.KC1} by
duality, see \cite[Sec.\ 1]{KeS}.

\begin{lem}\label{eqhom.lem2}
The correspondence
$$
\KCGM - :\eqsch \to C_+(\ModL)\;;\;  (X,G)\to \KCGM X
$$
is a homological functor on $\eqsch$ in the sense of Definition~\ref{def.homfunc}. 
\end{lem}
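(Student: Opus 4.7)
The plan is to verify the axioms (i)--(iv) of Definition \ref{def.homfunc} for the functor $\Phi = KC_\bullet(-,G;M)$, using the niveau spectral sequence machinery set up in Propositions \ref{eqhom.prop0} and \ref{eqhom.prop1}. Covariance for proper morphisms and contravariance for strict open immersions are already recorded in Proposition~\ref{eqhom.prop0}(1), since $KC_\bullet(X,G;M)$ is by definition the row $E^1_{\bullet,0}(X,G;M)$ of the equivariant niveau spectral sequence together with its $d^1$-differentials.

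For axiom (iv), the disjoint union formula is immediate: for $(X,G), (Y,G)\in \cSeq$ one has
\[
(X\coprod Y)_{(a)}/G = X_{(a)}/G \sqcup Y_{(a)}/G,
\]
and the local homology groups $H^\et_a(x,G;M)$ depend only on an \'etale neighborhood of the orbit $Gx$, so $E^1_{a,0}(X\coprod Y,G;M) = E^1_{a,0}(X,G;M)\oplus E^1_{a,0}(Y,G;M)$ compatibly with $d^1$.

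The core of the argument is axioms (i) and (ii). Given a strict closed immersion $i\colon Y\hookrightarrow X$ with strict open complement $j\colon V\hookrightarrow X$, I will show that one has a short exact sequence of complexes
\[
0 \lra KC_\bullet(Y,G;M) \rmapo{i_*} KC_\bullet(X,G;M) \rmapo{j^*} KC_\bullet(V,G;M) \lra 0.
\]
In each degree $a$, the set decomposition $X_{(a)}/G = Y_{(a)}/G\sqcup V_{(a)}/G$ and the fact that $H^\et_a(x,G;M)$ depends only on a $G$-stable open neighborhood of $Gx$ yield a direct sum decomposition
\[
E^1_{a,0}(X,G;M) = E^1_{a,0}(Y,G;M) \oplus E^1_{a,0}(V,G;M).
\]
The point is that this decomposition is compatible with $d^1$ only as a filtration, not as a direct sum: if $x\in Y_{(a)}$, its closure lies in $Y$, so the boundary $d^1(x)$ lands in $E^1_{a-1,0}(Y,G;M)$; whereas for $x\in V_{(a)}$, the closure of $x$ in $X$ may meet $Y$, so $d^1(x)$ can have components in both $E^1_{a-1,0}(V,G;M)$ and $E^1_{a-1,0}(Y,G;M)$. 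Thus $KC_\bullet(Y,G;M)$ sits as a subcomplex of $KC_\bullet(X,G;M)$, with quotient canonically $KC_\bullet(V,G;M)$. This gives (i) that $j^*\circ i_* = 0$, and (ii) that the natural map $\cone[KC_\bullet(Y,G;M)\to KC_\bullet(X,G;M)]\to KC_\bullet(V,G;M)$ is a quasi-isomorphism (in fact an isomorphism in each degree up to homotopy).

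Finally, axiom (iii) asserts commutativity of the base-change square for $f,f_V$ proper and $j,j'$ strict open immersions; this is just the content of Proposition~\ref{eqhom.prop0}(2) applied to the row $E^1_{\bullet,0}$, together with the identifications of pullback and pushforward just described. The main obstacle is the bookkeeping in (ii): one must carefully verify that the differentials in the niveau spectral sequence respect the dimension-of-support filtration for the pair $(Y,V)$ so that the three-term sequence of Kato complexes is exact on the nose. Once this is checked, the remaining properties are essentially formal from the construction of the niveau spectral sequence.
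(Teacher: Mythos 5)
Your proposal is correct and follows essentially the same route as the paper's proof: the key short exact sequence of Kato complexes deduced from the decomposition $X_{(a)} = Y_{(a)} \sqcup V_{(a)}$ (with the differential respecting the filtration by $Y$), combined with Proposition~\ref{eqhom.prop0} for functoriality and compatibility. Your added explanation of why the decomposition is only a filtration rather than a direct sum decomposition of complexes is exactly the point the paper leaves implicit in its phrase ``an easy consequence of the fact $\Xd a = \Yd a \coprod \Ud a$.''
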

\begin{proof}
The properties \ref{def.homfunc}$(i)$ and $(ii)$ follow from the exact sequence of complexes
$$
0\to \KCGM Y \rmapo {i_*} \KCGM X \rmapo {j^*}  \KCGM U \to 0
$$
for a closed immersion $i:Y\to X$ in $\schG$ and its complement $j:U=X-Y\to X$,
which is an easy consequence of the fact $\Xd a =\Yd a\coprod \Ud a$.
The functoriality for proper morphisms and strict open immersions follows from 
Proposition \ref{eqhom.prop0}(1).
The property \ref{def.homfunc}$(iii)$ follows from Proposition \ref{eqhom.prop0}(2).
The property \ref{def.homfunc}$(iv)$ is obvious.
\end{proof}

\begin{prop}\label{eqKH.prop1}
For $(G,X)\in \eqsch$ with the projection $\pi:X\to X/G$, the natural map
$$
\pi_*: \KCGM X  \to \KCM {X/G}
$$
is an isomorphism of complexes.
\end{prop}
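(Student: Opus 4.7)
The plan is to deduce Proposition \ref{eqKH.prop1} as an essentially immediate consequence of Proposition \ref{eqhom.prop1}(4), which has just been established. Indeed, by Definition \ref{def.eqKC}, the equivariant Kato complex $\KCGM X$ is nothing but the complex $(\EGXM{1}{\bullet}{0}, d^1_{\bullet,0})$ sitting on the bottom row of the niveau spectral sequence for equivariant étale homology of $(X,G)$, and similarly $\KCM{X/G} = (E^1_{\bullet,0}(X/G;M), d^1_{\bullet,0})$. The map $\pi_* : \KCGM X \to \KCM {X/G}$ is the one induced on bottom rows by the morphism of niveau spectral sequences attached to $\pi \colon (X,G)\to (X/G,e)$, which exists by Proposition \ref{eqhom.prop0}(1) since $\pi$ is finite hence proper.

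First I would observe that termwise, $\pi_*$ acts as
\[
\bigoplus_{x\in X_{(a)}/G} \HG{a}{x} \;\longrightarrow\; \bigoplus_{y\in (X/G)_{(a)}} H^\et_a(y;M),
\]
using the bijection $X_{(a)}/G \simeq (X/G)_{(a)}$. The isomorphism on each summand $\HG{a}{x}\isom H^\et_a(y;M)$ with $y=\pi(x)$ is precisely the content of the proof of Proposition \ref{eqhom.prop1}(4), which combines three ingredients already available: affine Lefschetz (giving $H^a_c(U,G)\simeq H^a_c(U)^G$ via the spectral sequence \eqref{eqhom.eq1.5}), Proposition \ref{eqhom.pr1} applied locally after choosing an affine open on which the stabilizer $H_x$ acts freely, and Proposition \ref{eqhom.radicial} to absorb the purely inseparable residue extension $\k(z)/\k(y)$ at the generic point of the closure of $Gx$.

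Compatibility with the $d^1$-differentials then follows from the functoriality of the niveau spectral sequence under proper pushforward, i.e.\ Proposition \ref{eqhom.prop0}(1) applied to the finite map $\pi$. Combining the termwise isomorphism with the differential compatibility yields the claimed isomorphism of complexes $\pi_* \colon \KCGM{X} \isom \KCM{X/G}$.

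The main obstacle in this whole argument has already been overcome in the proof of Proposition \ref{eqhom.prop1}(4); specifically, in the delicate local computation that identifies $\HG{a}{x}$ with $H^\et_a(y;M)$ through the purely inseparable extension $\k(z)/\k(y)$ (the claim invoking \cite[Ch.V §2.2 Thm.2(ii)]{Bo}). Once that local identification is granted, Proposition \ref{eqKH.prop1} is purely formal, and no additional work is needed beyond recording that the Kato complex is by definition the bottom row of the niveau spectral sequence.
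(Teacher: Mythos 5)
Your proposal is correct and follows essentially the same route as the paper, which simply cites Proposition \ref{eqhom.prop1}(4). In fact that proposition already asserts the isomorphism of complexes $(\EGXM 1 \bullet 0,d^1_{\bullet,0}) \isom (E^1_{\bullet,0}(X/G;M),d^1_{\bullet,0})$, so once one recalls that $\KCGM X$ is by definition this bottom row, the deduction is immediate; your additional step of re-deriving the $d^1$-compatibility from Proposition \ref{eqhom.prop0}(1) is not strictly necessary (it is already part of the cited statement) but is harmless and accurately reflects how that part of \ref{eqhom.prop1}(4) is itself established.
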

\begin{proof}
This follows from Proposition \ref{eqhom.prop1}(4).
\end{proof}
\medbreak

\begin{prop}\label{eqKH.prop2}
\begin{itemize}
\item[(1)]
If $f: (X',G) \to (X,G)$ is a map in $\schG$ where the underlying morphism is finite, surjective and radicial, it induces an isomorphism of complexes
\[
f_* : \KCGM {X'} \isom \KCGM X.
\]
\item[(2)]
For a purely inseparable extension $k'/k$ of fields and for $(X,G)\in \eqsch$, 
there is a natural isomorphism of complexes
\[
\KCGM {X'} \isom \KCGM X.
\]
where $\Xb'=\Xb\times_{\Spec(k)}\Spec(k')$ with the $G$-action induced from that on $X$.
\end{itemize}
\end{prop}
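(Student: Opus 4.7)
The plan is to deduce both parts directly from the corresponding statements about the niveau spectral sequence proved in Proposition~\ref{eqhom.prop1}, since by Definition~\ref{def.eqKC} the Kato complex $\KCGM X$ is nothing but the bottom row $(E^1_{\bullet,0}(X,G;M),d^1_{\bullet,0})$ of the niveau spectral sequence \eqref{spectralsequence2}, read as a homological complex concentrated in nonnegative degrees up to $\dim(X)$.

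For part~(1), I would simply invoke Proposition~\ref{eqhom.prop1}(2), which asserts that any finite, surjective, radicial morphism $f\colon(X',G)\to(X,G)$ in $\schG$ induces an isomorphism of the entire $E^1$-pages together with their $d^1$-differentials. Restricting this isomorphism to the row $b=0$ yields an isomorphism of complexes $f_*\colon \KCGM{X'}\isom \KCGM X$, which is what is needed.

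For part~(2), the same strategy applies using Proposition~\ref{eqhom.prop1}(3): a purely inseparable field extension $k'/k$ induces an isomorphism of niveau spectral sequences between $X$ and $X'=X\otimes_k k'$, and restricting to $b=0$ gives the claimed isomorphism $\KCGM{X'}\isom \KCGM X$. At the level of underlying sets one uses that $X'\to X$ is a finite universal homeomorphism, so it induces a bijection $X'_{(a)}/G\isom X_{(a)}/G$ on orbits of points of given dimension; the isomorphism of terms in each degree is then induced by the pullback isomorphism $H^\et_a(x;M)\isom H^\et_a(x';M)$ coming from Lemma~\ref{eqhom.radicial}.

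There is essentially no obstacle here: all the geometric and cohomological content was absorbed into Proposition~\ref{eqhom.prop1}, and the only task is the formal check that the isomorphism of spectral sequences restricts compatibly to the complex $(E^1_{\bullet,0},d^1_{\bullet,0})$. Compatibility with the $d^1$-differentials is automatic, because Proposition~\ref{eqhom.prop1} states an isomorphism of spectral sequences, not merely of $E^1$-terms. No further input is needed.
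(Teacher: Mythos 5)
Your proposal matches the paper's proof exactly: the paper likewise deduces both parts directly from Proposition~\ref{eqhom.prop1}(2) and (3), since the Kato complex is by definition the $b=0$ row of the niveau spectral sequence. Your additional remarks about compatibility with $d^1$-differentials and the bijection on points are correct and simply spell out what the paper leaves implicit.
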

\begin{proof}
This follows from Proposition \ref{eqhom.prop1}(2) and (3).
\end{proof}
\medbreak

\begin{prop}\label{thm.KHdescent}
 The homological functor $\KCGM -$ on $\eqsch$ satisfies the descent condition 
$\text{\bf (D)}_{KC}$ from Section~\ref{exthom}.
\end{prop}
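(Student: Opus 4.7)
The plan is to reduce the assertion to Theorem \ref{descentthmi}. For each fixed finite group $G$, restrict $\KCGM{-}$ to the subcategory $\schG \subset \eqsch$; Lemma \ref{eqhom.lem2} already shows that this restriction is an equivariant homology functor on $\schG$ in the sense of Definition \ref{def.homfunc}. If $\Lam = \bZ$, Theorem \ref{descentthmi} applies immediately. If $\Lam \ne \bZ$ (so $\ch(k)$ is invertible in $\Lam$), we must in addition equip $\KCGM{-}$ with a quasi-finite flat pullback structure compatible with the proper pushforward and strict open pullback already in place.

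The construction of the flat pullback is supplied by Proposition \ref{eqhom.prop1}(1). For a strict flat quasi-finite morphism $f: (X,G) \to (Y,G)$ in $\schG$, that proposition provides a contravariant functorial action on the entire niveau spectral sequence \eqref{spectralsequence2}. Restricting to the $b=0$ row, which by Definition \ref{def.eqKC} is precisely $\KCGM{-}$, yields the desired morphism of complexes $f^*: \KCGM{Y} \to \KCGM{X}$. Functoriality in $f$ is built into the proposition. Since the proper pushforward and strict open pullback used in Lemma \ref{eqhom.lem2} also arise from the bottom row of the same spectral sequence via Proposition \ref{eqhom.prop0}, the agreement of $f^*$ with the previously defined open pullback in the case where $f$ is a strict open immersion is automatic.

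The remaining two axioms of quasi-finite flat pullback, namely the commutativity
\[
\xymatrix{
\KCGM{X'} \ar[r]^{(j')^*} \ar[d]_{(f_V)_*} & \KCGM{V'} \ar[d]^{f_*} \\
\KCGM{X} \ar[r]_{j^*} & \KCGM{V}
}
\]
for a cartesian square with $f, f_V$ proper and $j, j'$ strict flat quasi-finite, and the identity $f_* \circ f^* = d \cdot \mathrm{id}$ for a strict finite flat morphism $f$ of degree $d$, are precisely the remaining assertions of Proposition \ref{eqhom.prop1}(1) applied in degree $b = 0$. Having checked all three axioms, Theorem \ref{descentthmi} yields the descent property $\text{\bf (D)}_\Phi$ for $\Phi = \KCGM{-}|_{\schG}$, and since this holds for every finite group $G$, the condition $\text{\bf (D)}_{KC}$ of Section \ref{exthom} follows. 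There is no substantive obstacle here: all the real work is absorbed into Proposition \ref{eqhom.prop1} and the equivariant version of Gillet--Soul\'e descent encoded in Theorem \ref{descentthmi}, and the present proof is a routine packaging.
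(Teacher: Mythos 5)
Your proof is correct and takes essentially the same route as the paper, whose own proof is the one-liner ``This follows from Proposition \ref{eqhom.prop1}(1) and Theorem \ref{descentthmi}.'' You have simply unpacked that citation, making explicit how Lemma \ref{eqhom.lem2} supplies the homology-functor structure and how the three axioms of quasi-finite flat pullback are read off from the $b=0$ row of the niveau spectral sequence via Proposition \ref{eqhom.prop1}(1).
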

\begin{proof}
This follows from Proposition \ref{eqhom.prop1}(1) and Theorem \ref{descentthmi}.
\end{proof}

\begin{defn}\label{def.eqKH}
The equivariant Kato homology with coefficient $M$ is the homology theory on $\eqsch$ associated to
the homology functor $\KCGM -$ (see Definition \ref{def.homfunc2}).
By definition, for $(X,G)\in \eqsch$, 
$$
\KHGM a X = H_a(\KCGM X)= E^2_{a,0}(X,G;M)\quad (a\in \bZ_{\geq 0}).
$$
In case $G=e$, $\KHGM a X$ is simply denoted by $\KHM a X$. 
\end{defn}

By Proposition \ref{eqhom.prop1}(1)
we have an edge homomorphism for each integer $a\geq 0$:
\begin{equation}\label{eqKH.edgehom}
\edgehom a X: H^{\et}_{a}(X,G;M) \to \KHGM a X
\end{equation}
which is viewed as a map of homology theories (see Definition \ref{def.eqhom}) on $\eqsch$.

\bigskip

Now we assume that $G_k$ acts trivially on $M$ and 
construct a map of homology theories on $\eqsch$
\[
\gamma_M: \{\KHM a -\}_{a\in \bZ} \to \{\HWM a -\}_{a\in \bZ}
\]
First note
$H^{\et}_0(\Spec(k);M)\cong M$, and hence
\[
\KCM {\Spec(k)}=M[0],
\]
where $\Spec(k)$ is viewed as an object of $\eqsch$ with the trivial action of $e$. 
For $(X,G)\in \cSpr$, the natural map $\pi:X\to \Spec(k)$ induces a map of complex
\begin{equation*}
\pi_* :\KCGM X \to \KCM {\Spec(k)}=M[0].
\end{equation*}
For $(X,G)\in \cSeq$, we get a natural map (see \eqref{examweighthom.eq1})
\begin{equation}\label{eqKhom.3}
\gamma_M :\KCGM X \to M^{\Xcd 0/G}[0] \simeq F^W_M(X,G) 
\end{equation}
by taking the sum of the above maps for the $G$-orbits of connected components of $X$. 
This gives us a map of prehomology theories
\begin{equation}\label{eqKhom.4}
\gamma_M: \KCM - \to F^W_M 
\end{equation}

\begin{theo}\label{thm.weighthom}
Assume the condition $(\bigstar)$ of Theorem \ref{thm.Whom}.
There exists a map of homology theories on $\eqsch$:
\[
\gamma_M: \{\KHM a -\}_{a\in \bZ} \to \{\HWM a -\}_{a\in \bZ}
\]
such that for $(X,G)\in \cSeq$, it coincides with the map induced from \eqref{eqKhom.3}.
\end{theo}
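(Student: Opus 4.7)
The plan is to derive the theorem as a formal consequence of the descent machinery of Section~\ref{exthom}, by exhibiting $\gamma_M$ as the map of homology theories induced by the morphism of pre-homology functors \eqref{eqKhom.4}.

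First I would verify that \eqref{eqKhom.3} assembles into a morphism of pre-homology functors
\[
\gamma_M : (\KCM{-})_{|\cSeq}\longrightarrow F^W_M\qquad\text{on }\cSeq.
\]
Unwinding the definitions, on a primitive $(X,G)\in\cSpr$ both sides in degree zero are canonically $M$ (via $\pi_{*}$ for the structure morphism $X\to\Spec(k)$, together with the identification $\KCM{\Spec(k)}=M[0]$), and the functoriality required of $\gamma_M$ along a proper morphism $f:(X,G)\to (Y,H)$ in $\cSeq$ is just the compatibility of the pushforwards $(\pi_X)_{*}$ and $(\pi_Y)_{*}\circ f_{*}$, which follows from the transitivity of proper pushforward already encoded in Lemma~\ref{eqhom.lem1}.

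Next I would apply the general machinery. By Proposition~\ref{thm.KHdescent} the homology functor $KC$ on $\eqsch$ satisfies $\text{\bf (D)}_{KC}$, so by Lemma~\ref{PHIF.lem}(1) its restriction $(KC)_{|\cSeq}$ satisfies the pre-homology descent condition $\text{\bf (D)}_{(KC)_{|\cSeq}}$; the pre-homology functor $F^W_M$ satisfies $\text{\bf (D)}_{F^W_M}$ by Construction~\ref{constweighthom}. Applying Construction~\ref{funconst}(4) to $\gamma_M:(KC)_{|\cSeq}\to F^W_M$ yields a canonical morphism of homology theories $H^{(KC)_{|\cSeq}}\to H^{F^W_M}$. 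Construction~\ref{funconst}(3) identifies the source with the homology theory $H(KC)=KH$ associated to the full homology functor $KC$ on $\eqsch$, while by definition the target is $H^W_M$. Composing gives the desired map $\gamma_M:KH_a(-;M)\to H^W_a(-;M)$. The fact that it coincides with \eqref{eqKhom.3} on smooth projective $(X,G)$ is precisely Construction~\ref{funconst}(1) applied to both $(KC)_{|\cSeq}$ and $F^W_M$, together with the naturality statement in Construction~\ref{funconst}(4).

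Finally, when $k$ is not perfect, one reduces to the perfect case by base change to the perfection $\tk$: the target $H^W_M$ was defined this way in \eqref{examweighthom.eq2}, and the source is invariant under purely inseparable base change by Proposition~\ref{eqKH.prop2}(2); hence the map defined over $\tk$ descends uniquely to a map over $k$. I do not expect any serious obstacle in this argument; the only mildly delicate point is checking that the morphism $\gamma_M$ of pre-homology functors is indeed natural under proper morphisms in $\cSeq$, which reduces to the compatibilities recorded in Lemma~\ref{eqhom.lem1} and Proposition~\ref{eqhom.prop0}. All the real work is hidden in the descent theorems of Sections~\ref{descenti} and~\ref{descentii}, which were already invoked to construct the two homology theories themselves.
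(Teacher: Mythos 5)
Your proof follows essentially the same route as the paper's: establish $\gamma_M$ as a morphism of pre-homology functors \eqref{eqKhom.4}, invoke Proposition~\ref{thm.KHdescent} together with Lemma~\ref{PHIF.lem}(1) to verify the descent hypotheses, apply Construction~\ref{funconst}(4) (and (3), which the paper leaves implicit) to produce the map of homology theories, and reduce the imperfect base-field case to the perfect one via Theorem~\ref{thm.Whom}$(vi)(a)$ and Proposition~\ref{eqKH.prop2}(2). Your write-up is a correct and somewhat more detailed version of the paper's terse argument.
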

\begin{proof}
If $k$ is perfect, this is an immediate consequence of 
Proposition \ref{thm.KHdescent} and Construction \ref{funconst}(4) with Lemma \ref{PHIF.lem}(1).
The general case follows from the above case thanks to Theorem \ref{thm.Whom}$(vi)(a)$
and Proposition \ref{eqKH.prop2}(2).
\end{proof}

\bigskip

\section{Proof of McKay principle for weight homology}\label{proof}

\noindent
Let the notation be as in \S\ref{eqKhom} and assume that $G_k$ acts trivially on $M$. 
Recall that $\Lam$ denotes either $\bZ$ or $\bZ[1/p]$
where $p$ is the exponential characteristic of $k$.
Write $\Linfty=\bQ/\Lam$ and $\Ln=\Lam/n\Lam$ for an integer $n>0$.
Concerning the map of homology theories on $\eqsch$:
\[
\gamma_M: \{\KHM a -\}_{a\in \bZ} \to \{\HWM a -\}_{a\in \bZ},
\]
we have the following result, which is one of the main results of this paper.

\begin{theo}\label{mainthm1} 
Assume that $k$ is a purely inseparable extension of a  finitely
generated field.
Assume the condition $(\bigstar)$ of Theorem \ref{thm.Whom}.
Then $\gamma_M$ is an isomorphism for $M=\Linfty$. In particular, for $(X,G)\in \cSeq$
we have
\[
\KHGLinf a X= \left\{ \begin{array}{lr}
(\Linfty)^{\Xcd 0/G} & \text{ for } a=0 \\
0 & \text{ for } a\ne 0 \end{array} \right. 
\]
where $\Xcd 0$ is the set of the generic points of $X$.
If $k$ is finite, the same holds by replacing $\Linfty$ by $\Ln$ for any integer $n>0$.

\end{theo}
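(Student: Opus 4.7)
The plan is to reduce, by descent, to showing that $\gamma_M$ is a quasi-isomorphism on smooth projective equivariant schemes, and then to invoke Theorem~\ref{KeS.intro} together with a degree-$0$ computation.

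\textbf{Step 1 (Reduction to perfect $k$).} Since $k$ is purely inseparable over a finitely generated field, its perfection $\tk$ is perfect and still purely inseparable over a finitely generated field. By Theorem~\ref{thm.Whom}$(vi)(a)$ and Proposition~\ref{eqKH.prop2}$(2)$, weight and Kato homology are invariant under purely inseparable base change compatibly with $\gamma_M$, so we may replace $k$ by $\tk$ and assume $k$ is perfect; this is precisely the setting in which Construction~\ref{funconst} applies.

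\textbf{Step 2 (Reduction to $\cSeq$ via descent).} By Construction~\ref{funconst}, both $\HWM$ and $\KHM$ arise as the homology theories associated by descent to their restrictions $F^W_M$ and $\KCM{-}|_{\cSeq}$ on $\cSeq$; the second pre-homology functor satisfies descent by Proposition~\ref{thm.KHdescent} combined with Lemma~\ref{PHIF.lem}$(1)$. The map $\gamma_M$ of homology theories is induced by the morphism $\gamma_M\colon \KCM{-} \to F^W_M$ of pre-homology functors on $\cSeq$. Hence, by Construction~\ref{funconst}$(4)$, it suffices to prove that for every $(X,G)\in\cSeq$ the map
\[
\gamma_M\colon \KCGM X \longrightarrow M^{\Xcd 0/G}[0]
\]
is a quasi-isomorphism; we may further assume $(X,G)$ is primitive, so that the target is simply $M[0]$.

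\textbf{Step 3 (Quotient and a further smooth hyperenvelope).} For primitive $(X,G)\in\cSeq$, Proposition~\ref{eqKH.prop1} identifies $\KCGM X$ with the non-equivariant Kato complex $\KCM{X/G}$, and under this identification $\gamma_M$ becomes the pushforward along the structure map $X/G \to \Spec k$ composed with $\KCM{\Spec k} = M[0]$. The variety $Y = X/G$ is proper but possibly singular. Apply Proposition~\ref{Ladmienvelop} with the trivial group to obtain a $\Lam$-admissible hyperenvelope $Z_\bullet \to Y$ with each $Z_p$ smooth projective over $k$. Proposition~\ref{thm.KHdescent} yields a quasi-isomorphism $\Tot(\KCM{Z_\bullet}) \xrightarrow{\sim} \KCM Y$, so matters are reduced to showing that $\gamma_M\colon \KCM Z \to M^{\pi_0(Z)}[0]$ is a quasi-isomorphism for every smooth projective $Z/k$.

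\textbf{Step 4 (Smooth projective case; main obstacle).} By Theorem~\ref{KeS.intro}, for $Z$ smooth projective over $k$ one has $\KHLinf a Z = 0$ for $a\ne 0$ (and, when $k$ is finite, the analogous vanishing with $\Ln$ coefficients), so $\KCM Z$ is quasi-isomorphic to $\KHLinf 0 Z$ placed in degree $0$. The hardest step is the degree-$0$ identification: for each connected component $Z_i$ of $Z$ one must show that $\gamma_M\colon \KHLinf 0 {Z_i} \xrightarrow{\sim} \Linfty$ coincides with the proper pushforward along $Z_i \to \Spec k$. This refines Theorem~\ref{KeS.intro} and uses the convergence of the niveau spectral sequence~\eqref{spectralsequence2} together with Poincar\'e duality, which for geometrically connected smooth projective $Z_i/k$ identifies $H^\et_0(Z_i;\Linfty) \cong \Linfty$ via the trace map that corresponds to pushforward. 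This degree-$0$ arithmetic input is the main obstacle; in the finite-field case the parallel identification must be carried out at finite level with $\Ln$-coefficients.
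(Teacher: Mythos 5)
Your Steps 1 and 2 match the paper: reduction to perfect $k$, and the reduction (via Construction~\ref{funconst}(4), which is the content of Lemma~\ref{eqKhom.lem3}) to showing that $\gamma_M\colon \KCGM X \to M[0]$ is a quasi-isomorphism for primitive $(X,G)\in\cSeq$. And your Step~3 identification $\KCGM X\cong\KCM{X/G}$ via Proposition~\ref{eqKH.prop1}, and your Step~4 observation that a degree-$0$ identification is needed, are also correct (the paper handles the latter in Lemma~\ref{lem.proof0} via the edge homomorphism of the niveau spectral sequence).

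However, there is a genuine gap in the rest of Step~3. After passing to a smooth projective hyperenvelope $Z_\bullet\to Y=X/G$, you claim it suffices to show $\gamma_M\colon\KCM Z\to M^{\pi_0(Z)}[0]$ is a quasi-isomorphism for each smooth projective $Z$. But even granting that, all you obtain is a quasi-isomorphism between $\KCM Y$ and the complex $p\mapsto M^{\pi_0(Z_p)}$, whose homology is by definition $\HWM{\bullet}Y$. In other words, you deduce only that $\gamma\colon\KHM a Y\to\HWM a Y$ is an isomorphism for all $a$ (which is exactly Lemma~\ref{eqKhom.lem3} for $G=e$ plus Theorem~\ref{thm.KeS}) — not that these groups vanish for $a\ne 0$. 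The needed vanishing is exactly Corollary~\ref{cor.Whomgraph}, which the paper derives from Theorem~\ref{thm.Whomgraph}, which in turn is deduced from the statement you are trying to prove; so your reduction is circular. The paper sidesteps this by a commutative-square argument: for $(X,G)\in\cSeq$, the isomorphisms $\pi_*\colon\KHGM a X\to\KHM a {X/G}$ (Proposition~\ref{eqKH.prop1}) and $\gamma\colon\KHM a {X/G}\to\HWM a {X/G}$ (Theorem~\ref{thm.KeS} $+$ Lemma~\ref{eqKhom.lem3} with $G=e$) together with the \emph{known} vanishing $\HWGM a X=0$ for $a\ne 0$ on the smooth projective side (Theorem~\ref{thm.Whom}(i)) force $\KHGM a X=0$ for $a\ne 0$, and only then does Lemma~\ref{eqKhom.lem3} for general $G$ conclude the proof. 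Your proposal is missing this transfer-of-vanishing step.
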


Theorem \ref{thm.Whomgraph} and Proposition~\ref{prop.bounded} will be deduced from from Theorem \ref{mainthm1}.
Theorem \ref{mainthm1} will be deduced from the following result quoted from 
\cite{JS1} Theorem 3.8 and \cite{KeS} Theorem 3.5.

\begin{theo}\label{thm.KeS}
Assume that $k$ is a purely inseparable extension of a finitely
generated field.
Assume the condition $(\bigstar)$ of Theorem \ref{thm.Whom}.
Then, for a smooth projective scheme $X$ over $k$, we have $\KHLinf a X=0$ for $a\not=0$.
If $k$ is finite, the same holds by replacing $\Linfty$ by $\Ln$ for any integer $n>0$.
\end{theo}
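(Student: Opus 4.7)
The strategy, essentially that of Jannsen--Saito \cite{JS1} and Kerz--Saito \cite{KeS}, is a two-step reduction to the cohomological Hasse principle for smooth projective varieties over a finite field, followed by a weight-theoretic proof in that case. I would first dispose of the purely inseparable part: by Proposition \ref{eqKH.prop2}(2) the Kato complex is invariant under purely inseparable base change, so we may assume $k$ is itself finitely generated over its prime field. Next, spread $X$ out: write $k = \kappa(\eta)$ for the generic point of a smooth scheme $S$ of finite type over $\bZ$ or $\bF_p$, and extend $X$ to a smooth projective morphism $\cX \to U$ over a dense open $U \subset S$. Combining the localization sequence for the Kato complex (Lemma \ref{eqhom.lem2}) with a continuity argument for Kato homology reduces the vanishing over $k$ to the vanishing for the closed fibres $\cX_s$ and for $\cX$ itself; induction on $\dim S$ then reduces the theorem to the case where the base field is finite.

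For $k$ finite the statement is Kato's original conjecture for smooth projective varieties. My approach would be to use the niveau spectral sequence \eqref{spectralsequence2} converging to \'etale homology $\Het_a(X;\Ln)$, whose $E^2_{a,0}$-term is $\KHLn a X$ by Definition \ref{def.eqKH}. Via Poincar\'e duality the abutment is identified with $H^{2d-a}(X_{\bar k};\Ln(d))^{G_k}$, where $d=\dim X$, and Deligne's theorem on the Weil conjectures says that $H^i(X_{\bar k};\ql)$ is pure of weight $i$. A weight analysis of the niveau filtration, comparing Frobenius eigenvalues on the $E^1$-page (Galois cohomology of function fields of integral subschemes of $X$) with those of the abutment, identifies $\KHLn a X$ with the weight-zero quotient of $H^{2d-a}$, which vanishes for $a \ne 0$. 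Taking the direct limit over $n$ then recovers the $\Linf$-statement in the reduction to finitely generated $k$.

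The main obstacle --- and the reason this result requires Gabber's refinement of de Jong rather than just classical alteration --- is that the subvarieties of $X$ appearing as strata in the niveau filtration are in general singular, so Deligne's weight bounds do not apply to them directly. Here Theorem \ref{Gabberres} is crucial: for every prime $\ell \ne p$ it produces a smooth projective alteration of degree prime to $\ell$ of any equidimensional closed subscheme, and a trace argument (using that multiplication by the alteration degree is invertible after inverting $\ell$) then transports weight bounds from the alteration back to the original stratum. An induction on $\dim X$, combined with Bertini-type hyperplane cutting to feed the induction, propagates the vanishing through all rows of the spectral sequence. The stronger $\Ln$-statement for finite $k$ follows from the $\Linf$-statement by standard finite generation properties of the Kato homology groups in this setting.
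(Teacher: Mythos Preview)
The paper does not prove this theorem: it is stated as a result \emph{quoted} from \cite[Theorem~3.8]{JS1} and \cite[Theorem~3.5]{KeS}, and is used as a black box input to the proof of Theorem~\ref{mainthm1}. So there is no ``paper's own proof'' to compare against; you have supplied a sketch of the argument from those external references.

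Your outline is broadly faithful to the strategy in \cite{JS1} and \cite{KeS}: reduction to a finitely generated base via Proposition~\ref{eqKH.prop2}(2), a fibration/spreading-out induction to reach finite base fields, and then a weight argument over a finite field using Deligne's purity together with Gabber's prime-to-$\ell$ alterations to control the singular strata in the niveau filtration. Two imprecisions are worth flagging. First, the reduction step is more delicate than a straightforward localisation sequence for a spread-out family: the Kato complex in this paper is defined only over a fixed base field, and the actual inductive mechanism in \cite{JS1} is a Lefschetz-type pull-back argument lowering the transcendence degree of $k$, not a localisation on the total space $\cX$. Second, your final sentence has the logic reversed: over a finite field the $\Lambda_n$-vanishing is proved \emph{directly} in \cite{KeS} (the weight argument is carried out with $\ell$-adic coefficients and then descends to finite coefficients), and the $\Lambda_\infty$-statement is obtained by taking the colimit --- not the other way around. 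Knowing $KH_a(X;\Lambda_\infty)=0$ does not by itself force $KH_a(X;\Lambda_n)=0$ without an additional injectivity or divisibility input.
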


Note that Theorem \ref{mainthm1} and Proposition \ref{eqKH.prop1} imply the following extension of 
Theorem \ref{thm.KeS} to a singular case.

\begin{coro}\label{mainthm1.cor} 
Let the assumption be as in Theorem~\ref{thm.KeS}.
For $(X,G)\in \cSeq$, we have $\KHLinf a {X/G}=0$ for $a\not=0$.
If $k$ is finite, the same holds by replacing $\Linfty$ by $\Ln$ for any integer $n>0$.
\end{coro}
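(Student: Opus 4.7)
The plan is to combine Proposition \ref{eqKH.prop1} with Theorem \ref{mainthm1} to obtain the vanishing on the singular quotient $X/G$ from the vanishing on the smooth equivariant scheme $X$.

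First, I recall that Proposition \ref{eqKH.prop1} asserts that for any $(X,G)\in \eqsch$ the projection $\pi: X\to X/G$ induces an isomorphism of Kato complexes
\[
\pi_*: \KCGM{X} \isom \KCM{X/G},
\]
and consequently an isomorphism on equivariant Kato homology
\[
\pi_*: \KHGM{a}{X} \isom \KHM{a}{X/G}\qfor \text{all } a\in \bZ.
\]
This is a purely formal consequence of the niveau spectral sequence computation in Proposition \ref{eqhom.prop1}(4), and it holds for \emph{any} coefficient module $M$ on which $G_k$ acts trivially; in particular for $M=\Linf$, and also for $M=\Ln$ when $k$ is finite.

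Next, Theorem \ref{mainthm1} (applied to the coefficient $\Linf$, or to $\Ln$ in the case $k$ finite) states that for $(X,G)\in \cSeq$ one has
\[
\KHGM{a}{X}=
\left.\left\{\begin{gathered}
M^{\Xcd 0/G} \\
0 \\
\end{gathered}\right.\quad
\begin{aligned}
&\text{$a=0$,}\\
&\text{$a\neq 0$.}
\end{aligned}\right.
\]
Substituting this into the isomorphism $\pi_*: \KHGM{a}{X} \isom \KHM{a}{X/G}$ from the previous paragraph immediately yields $\KHM{a}{X/G}=0$ for $a\neq 0$, which is exactly the claim of Corollary \ref{mainthm1.cor}.

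Since both ingredients are already in place, there is no real obstacle to the argument: the only thing to verify is that the hypotheses of Theorem \ref{mainthm1} are compatible with those of the corollary (which they are, since we assume the same condition $(\bigstar)$ and the same restriction on $k$), and that the coefficient situation matches (trivial $G_k$-action, which is built into both the definition of $\Linf$ and the statement of Theorem \ref{mainthm1}). The proof is therefore a one-line combination of two previously established results.
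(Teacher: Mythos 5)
Your proposal is correct and follows exactly the paper's intended argument: the paper itself states immediately before Corollary~\ref{mainthm1.cor} that it follows from Theorem~\ref{mainthm1} (giving vanishing of $\KHGLinf a X$ for $a\neq 0$ when $(X,G)\in\cSeq$) combined with Proposition~\ref{eqKH.prop1} (the isomorphism $\KHGM a X \isom \KHM a {X/G}$ induced by $\pi$). Your verification that the coefficient conditions ($G_k$ acting trivially on $M$, $M=\Linf$ or $\Ln$) match in both ingredients is a useful sanity check, but the argument is indeed the same one-line combination the authors had in mind.
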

\bigskip

For the proof of Theorem \ref{mainthm1}, we prepare some lemmas.

\begin{lem}\label{lem.proof0}
For $X\in \cSeq$, the map \eqref{eqKhom.3} induces an isomorphism
$$
\KHGM 0 X  \stackrel{\sim}{\to} H^W_0(X,G;M) .
$$
\end{lem}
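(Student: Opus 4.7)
I would first decompose $X$ into its $G$-orbits of connected components: both $\KHGM 0 X$ and $H^W_0(X,G;M) = M^{X^{(0)}/G}$ split as direct sums along this decomposition, and $\gamma_M$ respects the splitting by functoriality along the open-and-closed strict inclusions, so one may assume $(X,G)$ is primitive. In that case $H^W_0(X,G;M) = M$, and by Proposition~\ref{eqKH.prop1} the projection $\pi \colon X \to X/G$ induces an isomorphism $\pi_* \colon \KCGM X \simeq \KCM{X/G}$, under which $\gamma_M$ in degree $0$ becomes the proper pushforward $\pi'_* \colon \KHM 0 Y \to \KHM 0 {\Spec k} = M$ for the proper integral $k$-scheme $Y := X/G$ with structure morphism $\pi'$.

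The remaining task is then purely non-equivariant: show that for any proper integral $k$-scheme $Y$ the pushforward $\KHM 0 Y \to M$ is an isomorphism. Unwinding the definition, $\KHM 0 Y$ is the cokernel of $d^1 \colon \bigoplus_{y \in Y_{(1)}} H^{\et}_1(y;M) \to \bigoplus_{p \in Y_{(0)}} M$, and $\pi'_*$ is the summing map, using that $H^{\et}_0(\Spec L; M) = M$ for every closed point (since $G_k$ acts trivially on $M$) and that the pushforward from a closed point down to $\Spec k$ is then the identity. Surjectivity is immediate from a single closed point as a section. For injectivity it suffices, for every pair of closed points $p, p_0 \in Y$, to produce $[p] - [p_0]$ in the image of $d^1$. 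This I would extract from the \'etale-homology localization sequence applied to any irreducible closed curve $C \subseteq Y$ containing $p$ and $p_0$: for a dense open $V \subsetneq C$, one has $H^{\et}_0(V;M) = 0$ (as $V$ is non-proper and connected) and $H^{\et}_0(C;M) = M$ (as $C$ is proper integral), so that passing to the colimit over $V$ forces the image of $d^1$ on the $y_C$ summand to fill out the entire kernel of the summing map on $C_{(0)}$; in particular it contains $[p] - [p_0]$.

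The hard part will be the classical geometric input that any two closed points of a proper integral $k$-scheme lie on a connected chain of irreducible closed curves: once this is granted, the injectivity follows by decomposing $[p] - [p_0]$ telescopically along the chain. I would prove the curve-chaining statement by reducing to the projective case via Chow's lemma and then cutting $Y$ inductively by ample divisors passing through both given points, using Bertini (and Poonen's Bertini over a finite base field) to control irreducibility of the resulting one-dimensional subscheme and splitting it into its irreducible components if necessary. The rest of the argument is formal unwinding of the definitions of equivariant Kato and weight homology combined with Proposition~\ref{eqKH.prop1}.
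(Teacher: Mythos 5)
Your proposal is correct in outline, but it takes a genuinely different and substantially heavier route than the paper. The paper's proof is a one-line spectral-sequence argument: after the same reduction to $X$ primitive, it invokes Proposition~\ref{eqhom.prop1}(4), which gives $E^1_{a,b}(X,G;M)=0$ for $b<0$, so the edge homomorphism $\epsilon^0_X: H^{\et}_0(X,G;M)\to \KHGM{0}{X}$ of the niveau spectral sequence is an isomorphism; one then only needs the easy observation that the structure map induces an isomorphism $H^{\et}_0(X,G;M)\isom H^{\et}_0(\Spec(k);M)=M$ for $X$ smooth projective primitive, which follows directly from Definition~\ref{eqhom.def1} (properness gives $H^0_c=H^0$, and primitivity gives one $G$-orbit of components). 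By the naturality of the niveau spectral sequence under proper pushforward, composing with $\epsilon^0_X$ identifies $\gamma_M$ with this second isomorphism, and one is done --- no curves, no chains, no Bertini.

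Your argument instead computes $\KHM{0}{X/G}$ by hand as the cokernel of $d^1$ on the degree-$0$ niveau piece, and establishes injectivity of the summing map by producing $[p]-[p_0]$ explicitly in the image of $d^1$ via the localization sequence over curves. The localization step itself is fine: for an irreducible proper curve $C$ one does get that the $d^1$-image from the generic point of $C$ is exactly the kernel of $\bigoplus_{p\in C_{(0)}} M\to M$, since $H^{\et}_0$ vanishes on non-proper dense opens and equals $M$ on $C$. The price you pay is the ``curve-chaining'' lemma that any two closed points of a proper integral $k$-scheme lie on a connected chain of irreducible curves --- you correctly identify this as the hard part, and over an arbitrary field (finite or imperfect) the Bertini/Poonen machinery you would need to invoke requires some care. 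In effect you are re-proving, in a concrete geometric way, precisely the content that the vanishing $E^1_{a,b}=0$ for $b<0$ plus properness packages for free: that content already forces the niveau $E^2_{0,0}$ term to coincide with $H^{\et}_0$ of the whole proper scheme. So your argument buys nothing extra but costs an auxiliary geometric theorem; the paper's argument is the efficient one here, and a further practical advantage is that it stays entirely inside the machinery already set up in \S\ref{eqethom}, whereas your curve-chaining lemma would be a genuinely new input to the paper.
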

\begin{proof}
We may suppose that $X$ is primitive.
By Proposition \ref{eqhom.prop1}(4), the edge homomorphism
\[
\epsilon^0_X: H^{\et}_0(X,G;M) \to \KHGM 0 X
\]
is an isomorphism. Thus the assertion follows from the fact that the map
\[
H^{\et}_0(X,G;M) \to H^{\et}_{0}(\Spec(k),M)
\]
is an isomorphism, which is
easily seen from the Definition \ref{eqhom.def1}. 
\end{proof}
\medbreak

\begin{lem}\label{eqKhom.lem3}
Fix a finite group $G$ and assume $\KHGM i Z=0$ for all $(Z,G)\in
\cSeq$ and for all $i>0$. Then 
\begin{equation}\label{eq1}
\gamma_{M} : \KHGM a X \to \HWGM a X
\end{equation}
is an isomorphism for all $(X,G)\in \schG$ and for all $a\in \bZ$.
\end{lem}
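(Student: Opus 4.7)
The plan is a two-step descent argument: first handle $X$ proper over $k$ via a smooth hyperenvelope, then reduce the general case by compactification and the five lemma.

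\medskip

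\emph{Step 1 (proper case).} For proper $X\in\schG$ I would choose, by Proposition~\ref{Ladmienvelop}, a smooth $\Lam$-admissible hyperenvelope $X_\bullet\to X$ with $X_a\in\cSG$ for every $a\ge 0$. The Kato-complex functor $\KCGM -$ is a homology functor satisfying the descent condition $\text{\bf (D)}_{\KC}$ by Proposition~\ref{thm.KHdescent}, and the pre-homology functor $F^W_M$ underlying $\HW$ satisfies $\text{\bf (D)}_{F^W_M}$ by Construction~\ref{constweighthom}. Hence Construction~\ref{funconst}(2)--(3) supplies convergent descent spectral sequences
\[
E^1_{a,b}(\KH)=\KHGM b{X_a}\Longrightarrow\KHGM{a+b}X,\qquad E^1_{a,b}(\HW)=\HWGM b{X_a}\Longrightarrow\HWGM{a+b}X,
\]
and $\gamma_M$ induces a morphism between them via Construction~\ref{funconst}(4) applied to the morphism of pre-homology functors in \eqref{eqKhom.4}. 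The standing hypothesis of the lemma kills $E^1_{a,b}(\KH)$ for $b>0$, while Theorem~\ref{thm.Whom}$(i)$ kills $E^1_{a,b}(\HW)$ for $b>0$. Both spectral sequences therefore collapse to the row $b=0$, where Lemma~\ref{lem.proof0} identifies $\gamma_M$ with an isomorphism at each term. A morphism of first-quadrant spectral sequences that is an isomorphism on a single surviving row is an isomorphism on the abutment, so $\gamma_M$ is an isomorphism for every $a$.

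\medskip

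\emph{Step 2 (general case).} For arbitrary $X\in\schG$ I would pick an equivariant compactification $j:X\hookrightarrow\Xb$ in $\schG$, constructed as in the beginning of \S\ref{eqcohc}, with closed complement $i:Y\hookrightarrow\Xb$. Both $\KH$ and $\HW$ are homology theories on $\eqsch$ in the sense of Definition~\ref{def.eqhom}, so the pair $(Y,\Xb,X)$ produces a commutative ladder of localization long exact sequences joined by $\gamma_M$. By Step~1 applied to $\Xb$ and to $Y$, both of which are proper over $k$, four of every five vertical maps in the ladder are isomorphisms, so the five lemma yields that $\gamma_M:\KHGM aX\to\HWGM aX$ is an isomorphism for every $a$.

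\medskip

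The main obstacle will be to verify cleanly that $\gamma_M$ genuinely defines a morphism between the two descent spectral sequences, and that its $E^1$-page coincides with the pointwise comparison maps furnished by Lemma~\ref{lem.proof0}. This is a compatibility statement encoded in the functoriality of Construction~\ref{funconst}(3)--(4): the descent construction is natural in morphisms of the underlying (pre-)homology functors, and a single common hyperenvelope computes both spectral sequences, so the required compatibility reduces to naturality of $\gamma_M$ under pushforward along the face maps of the hyperenvelope.
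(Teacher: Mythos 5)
Your proposal is correct and follows essentially the same route the paper takes: reduce to the smooth projective case via Lemma~\ref{lem.proof0}, Theorem~\ref{thm.Whom}$(i)$, and the standing hypothesis; compare the two descent spectral sequences from Construction~\ref{funconst}(2) along a common smooth $\Lam$-admissible hyperenvelope for proper $X$; and finally pass to arbitrary $X$ by compactification, the localization sequences, and the five lemma. The only cosmetic difference is that you spell out the $b>0$ vanishing separately, whereas the paper folds it into the one-line observation that $\gamma_M$ is an isomorphism on every $E^1$ term.
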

\begin{proof}
By Lemma \ref{lem.proof0}, the assumption implies that \eqref{eq1} is an isomorphism 
for all $(X,G)\in \cSG$ and for all $a\in \bZ$.
Then the lemma follows from Construction \ref{funconst}(4).
For convenience of the readers, we give a proof.
We suppress the coefficient $M$ from the notation.
First assume that $X$ is proper over $k$. Take a $\Lam$-admissible hyperenvelope 
$f: (X_\bullet,G) \to (X,G)$ in $\schG$ such that $(X_a,G)\in \cSG$ for all $a\ge 0$.
By Construction \ref{funconst}(2) and Theorems \ref{thm.KHdescent}, we have spectral sequences
$$
\begin{aligned}
&^KE^1_{a,b} =\KHGXa b \Rightarrow \KHGX {a+b},\\
&^WE^1_{a,b} =\HWGXa b \Rightarrow \HWGX {a+b},\\
\end{aligned}
$$
and $\gamma_M$ induces a map of spectral sequences.
By the assumption the map on the $E_1$ terms are all isomorphism which implies
the desired isomorphism.

\medbreak

In case $X$ is not proper, take an open immersion $j:X \hookrightarrow \Xb$ in $\schG$
with $\Xb$ proper over $k$. Putting $Y=\Xb-X$, we have a commutative diagram 
with exact horizontal sequences
\begin{small}
$$
\xymatrix@C=20pt{
\KHG i Y \ar[r] \ar[d]_{\gamma}  &   \KHG i {\Xb}  \ar[r] \ar[d]_{\gamma}  & \KHG i {X}
\ar[r] \ar[d]_{\gamma}  & 
\KHG {i-1} Y \ar[r] \ar[d]_{\gamma}  & \KHG {i-1} {\Xb} \ar[d]_{\gamma}  \\
\HWG i Y \ar[r]&  \HWG i {\Xb}  \ar[r] &  \HWG i {X} \ar[r] &  \HWG
{i-1} Y \ar[r] &
\HWG {i-1} {\Xb}   }
$$
\end{small}
Since $\Xb$ and $Y$ are proper over $k$, all vertical maps except the middle are isomorphisms
and so is the middle one. This completes the proof. 
\end{proof}
\bigskip

{\it Proof of Theorem \ref{mainthm1}:}
By Theorem~\ref{thm.Whom}$(vi)$ and Proposition \ref{eqKH.prop2}(2),
we may assume that $k$ is the perfection of a finitely generated field.
Let $M=\Ln$ for an integer $n>0$ if $k$ is finite, and let $M=\Linfty$ otherwise.
For $(X,G)\in \eqsch$, let $\pi:(X,G)\to (X/G,e)$ be the induced map in $\eqsch$.
Consider the commutative diagram
$$
\begin{CD}
\KHGM a X @>{\simeq}>{\pi_*}> \KHM a {X/G}\\
@VV{\gamma}V @V{\simeq}V{\gamma}V \\
\HWGM a X @>>{\pi_*}> \HWM a {X/G} \\
\end{CD}
$$
The map $\pi_*$ in the upper row is an isomorphism by Proposition \ref{eqKH.prop1}.
The map $\gamma$ on the right hand side is an isomorphism by Theorem \ref{thm.KeS} and 
Lemma \ref{eqKhom.lem3} with $G$ the trivial group. In case $(X,G)\in \cS_{eq/k}$, $\HWGM a X=0$ for $a\not=0$ by 
Theorem \ref{thm.Whom}$(iv)$. Thus we get $\KHGM a X=0$ from the diagram.
By Lemma \ref{eqKhom.lem3}, this implies that $\graphhom a X$ is an isomorphism 
for all $a\in \bZ$ and for all $(X,G)\in \eqsch$. 
This completes the proof of Theorem \ref{mainthm1}.
$\square$
\bigskip

Now we deduce Theorem~\ref{thm.Whomgraph} from Theorem \ref{mainthm1}. 
By Theorem~\ref{thm.Whom}$(vi)$, we may assume that $k$ is the perfection of a finitely generated field. 
By the universal coefficient spectral sequence in Theorem~\ref{thm.Whom}$(iv)$, we may assume $M=\Lam$.
Theorem~\ref{thm.Whom}$(v)$ shows that  $ \HWGL a X $ and $ \HWL a {X/G}$ 
are finitely generated $\Lam$-modules.
The subcategory ${\it Modf}_{\Lam} \subset {\it Mod}_{\Lam}$ of finitely generated $\Lam$-modules is a Serre-subcategory, so we can talk about the
quotient category ${\it QMod}_\Lam={\it Mod}_{\Lam} / {\it Modf}_{\Lam}$.
The long exact homology sequence for weight homology associated to
\begin{equation}\label{eqshortex}
0\to \Lam \to \bQ \to \Linf\to 0 
\end{equation}
gives us isomorphisms in ${\it QMod}_\Lam$:

$$
\HWGQ a X \simeq \HWGLinf a X\;\;\;  \text{ and }\;\;\;  
\HWQ {a} {X/G} \simeq \HWLinf {a}{X/G}.
$$
By Theorem~\ref{mainthm1} and Proposition~\ref{eqKH.prop1} we have isomorphisms in ${\it Mod}_{\Lam}$
\[
\HWGLinf a X  \stackrel{\sim}{\leftarrow}  \KHGLinf a X  \stackrel{\sim}{\to}  
\KHLinf a {X/G}  \stackrel{\sim}{\leftarrow} \HWLinf a {X/G}.
\]
Putting this together we get that $\pi_*:(X,G)\to (X/G,e)$ induces an isomorphism 
in ${\it QMod}_\Lam$:
\begin{equation}\label{eqaltenQ}
\HWGQ a X  \simeq \HWQ a {X/G}.
\end{equation}
Lemma~\ref{lem.equalQLam} below implies that \eqref{eqaltenQ} is a true isomorphism of $\Q$-modules.

\begin{lem}\label{lem.equalQLam}
If a morphism of $\Q$-modules $\phi:A \to B$ becomes an isomorphism
in the category ${\it QMod}_{\Lam}$ then $\phi$ is already a true isomorphism of $\Q$-modules.
\end{lem}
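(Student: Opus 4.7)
The plan is to unpack the definition of ``isomorphism in the Serre quotient category'' and then reduce the statement to an elementary observation about $\Q$-modules versus $\Lam$-modules.

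First, I would recall the standard characterization of isomorphisms in a Serre quotient category: a morphism $\phi$ in $\Mod_\Lam$ becomes an isomorphism in ${\it QMod}_\Lam = \Mod_\Lam / \Modf_\Lam$ if and only if both $\ker\phi$ and $\coker\phi$ lie in the Serre subcategory $\Modf_\Lam$, i.e.\ are finitely generated as $\Lam$-modules. Since $\phi$ is $\Q$-linear (and $\Lam\subset\Q$, so $\Q$-linearity implies $\Lam$-linearity), both $\ker\phi$ and $\coker\phi$ inherit natural $\Q$-module structures.

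The lemma therefore reduces to the following elementary claim, which I would prove next: \emph{any $\Q$-module $M$ that is finitely generated as a $\Lam$-module is zero.} Indeed, since $\Lam\hookrightarrow\Q$, finite generation over $\Lam$ implies finite generation over $\Q$, so $M\cong\Q^n$ as a $\Q$-vector space for some $n\ge 0$. If $n\ge 1$ then $\Q$ itself would be a quotient of $M$, hence a finitely generated $\Lam$-module, which is false: given any finite list $a_1/b_1,\dots,a_n/b_n\in\Q$, every $\Lam$-linear combination has denominator (in lowest terms) dividing $b_1\cdots b_n$ up to a unit of $\Lam$; choosing a prime $\ell$ with $\ell^{-1}\notin\Lam$ (any prime if $\Lam=\bZ$, any $\ell\neq p$ if $\Lam=\bZ[1/p]$) coprime to $b_1\cdots b_n$ exhibits $1/\ell\in\Q$ outside the $\Lam$-span. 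Hence $n=0$ and $M=0$.

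Applying this claim to $\ker\phi$ and $\coker\phi$ gives $\ker\phi=0=\coker\phi$, so $\phi$ is already an isomorphism of $\Q$-modules, as desired. There is no serious obstacle: the only point demanding any care is correctly interpreting ``isomorphism in ${\it QMod}_\Lam$'' via the Serre quotient construction; the remainder is a short commutative-algebra argument.
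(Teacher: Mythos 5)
Your proof is correct, and since the paper states Lemma \ref{lem.equalQLam} without any proof at all (treating it as an elementary observation), there is no argument in the text to compare against; your write-up supplies exactly the reasoning the authors left implicit. You correctly invoke the standard characterization of isomorphisms in a Serre quotient (kernel and cokernel lie in the Serre subcategory), correctly observe that kernel and cokernel of a $\Q$-linear map are again $\Q$-modules, and correctly settle the key point that a nonzero $\Q$-vector space cannot be finitely generated over $\Lam=\bZ$ or $\bZ[1/p]$, using the existence of a prime $\ell$ not inverted in $\Lam$ and coprime to the denominators of any proposed finite generating set. No gaps.
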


We deduce Theorem~\ref{thm.Whomgraph} with $M=\Lam$ using the five-lemma from the commutative diagram with exact rows associated to the sequence \eqref{eqshortex}
\begin{small}
\[
\xymatrix@C=10pt{
\HWGQ {a+1} X \ar[r] \ar^{\wr}[d] &   \HWGLinf {a+1} X \ar[r] \ar^{\wr}[d] &    
\HWGL {a} X \ar[r] \ar[d] & \HWGQ {a} X \ar[r] \ar^{\wr}[d] & \HWGLinf {a} X\ar^{\wr}[d]\\
\HWQ {a+1} {X/G} \ar[r] & \HWLinf {a+1}{X/G}\ar[r] &  \HWL {a}{X/G} \ar[r] & 
\HWQ {a}{X/G} \ar[r] & \HWLinf {a}{X/G}.
}
\]
\end{small}
This completes the proof of Theorem~\ref{thm.Whomgraph}.
\medbreak

The following theorem is an analogue of Theorem \ref{thm.Whomgraph} for a radicial morphism.

\begin{theo}\label{thm.Whom.radicial}
Assume $\ch(k)=p>0$ and that the condition $(\bigstar)$ of Theorem \ref{thm.Whom} holds.
Let $M$ be a $\Lam$-module.
A finite radicial surjective morphism $\pi: X' \to X$ in $\sch$ induces an isomorphism
\[
\pi_* : \HWM a {X'} \isom \HWM a {X}\qfor a\in \bZ.
\]
\end{theo}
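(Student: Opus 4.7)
The plan is to mimic the proof of Theorem \ref{thm.Whomgraph} given immediately after its statement, substituting the radicial invariance of the Kato complex (Proposition \ref{eqKH.prop2}(1)) for the McKay principle on Kato homology (Proposition \ref{eqKH.prop1}).

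First I would reduce to the case where $k$ is the perfection of a finitely generated field. By part $(b)$ of Theorem \ref{thm.Whom}(vi) one descends the data $(X, X', \pi)$ to a finitely generated subfield $k_0 \subset k$, and by part $(a)$ the pushforward along $\Spec k \to \Spec \tk_0$ is an isomorphism on weight homology, so one may replace $k$ by $\tk_0$. The universal coefficient spectral sequence of Theorem \ref{thm.Whom}(iv) then reduces the problem to the single coefficient $M = \Lam$.

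By Theorem \ref{thm.Whom}(v), both $\HWL{a}{X'}$ and $\HWL{a}{X}$ are finitely generated $\Lam$-modules, so one can pass to the quotient category ${\it QMod}_\Lam$ of $\Mod_\Lam$ by the Serre subcategory of finitely generated $\Lam$-modules, exactly as in the proof of Theorem \ref{thm.Whomgraph}. The long exact weight homology sequence associated to $0 \to \Lam \to \bQ \to \Linf \to 0$ then identifies, inside ${\it QMod}_\Lam$, the $\bQ$-coefficient weight homology of $X$ (resp.\ $X'$) with its $\Linf$-coefficient weight homology. Next, Theorem \ref{mainthm1} identifies $\HWLinf{a}{-}$ with $\KHLinf{a}{-}$ naturally in $\pi$, while Proposition \ref{eqKH.prop2}(1) provides an isomorphism of complexes $\pi_{*} : KC_{\bullet}(X',\Linf) \isom KC_{\bullet}(X,\Linf)$ and hence isomorphisms $\pi_{*} : \KHLinf{a}{X'} \isom \KHLinf{a}{X}$ for all $a$. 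Combining these yields $\pi_{*} : \HWLinf{a}{X'} \isom \HWLinf{a}{X}$ and, by the previous step, an isomorphism in ${\it QMod}_\Lam$
\[
\pi_{*} : \HWQ{a}{X'} \isom \HWQ{a}{X}.
\]
Lemma \ref{lem.equalQLam} upgrades this to a genuine isomorphism of $\bQ$-modules, and the five-lemma applied to the commutative ladder of long exact sequences in weight homology for $X$ and $X'$ associated to $0 \to \Lam \to \bQ \to \Linf \to 0$ then yields the desired isomorphism for $M = \Lam$.

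The main obstacle is essentially bookkeeping rather than substance: every tool required (finite generation, universal coefficients, the Kato--weight comparison, and invariance of the Kato complex under finite radicial surjective morphisms) is already in place, so the argument amounts to combining them in the right order. The step needing most care is the reduction to the perfection of a finitely generated field, since the data $(X, X', \pi)$ must descend simultaneously; one should also keep in mind the case when $\tk_0$ happens to be finite, in which Theorem \ref{mainthm1} is stated with $\Ln$-coefficients rather than $\Linf$-coefficients, but $\Linf = \varinjlim_n \Ln$ together with the compatibility of both weight and Kato homology with filtered colimits in the coefficient module handles this case uniformly.
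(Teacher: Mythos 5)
Your proposal is correct and matches the paper's proof exactly: the paper reduces to $M=\Linfty$ "by the same argument as the proof of Theorem~\ref{thm.Whomgraph}" (i.e.\ the reduction to a finitely generated base field, then to $M=\Lam$, the passage through ${\it QMod}_\Lam$ using the sequence $0\to\Lam\to\bQ\to\Linf\to 0$, Lemma~\ref{lem.equalQLam}, and the five-lemma), and then invokes Theorem~\ref{mainthm1} together with Proposition~\ref{eqKH.prop2}(1) for the radicial invariance of the Kato complex. One small remark: your caution about the finite-field case is unnecessary, since Theorem~\ref{mainthm1} already asserts the $\Linfty$-coefficient isomorphism over any purely inseparable extension of a finitely generated field, finite fields included; the $\Ln$-statement there is an additional refinement, not a replacement.
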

\begin{proof}
By the same argument as the proof of Theorem~\ref{thm.Whomgraph},
we are reduced to the case $M=\Linfty=\bQ/\Lam$. 
Then the theorem follows from Theorem \ref{mainthm1} and Proposition \ref{eqKH.prop2}.
\end{proof}

\medskip

{\it Proof of Proposition \ref{prop.bounded}}:
We may assume that the base field $k$ is finitely generated. For $M=\Linf$ we have by Theorem \ref{mainthm1}
\[
\gamma_{\Lambda_\infty} : \KHGLinf a X  \stackrel{\simeq}{\longrightarrow}  \HWGLinf a X.
\]
Thus we get $\HWGLinf a X=0$ for $a>\dim(X)$ since $\KHGLinf a X$ vanishes in degrees $a>\dim(X)$ 
by definition. By a simple devissage, this implies 
\begin{enumerate}
\item[$(*)$]
$\HWGM a X=0$ for $a>\dim(X)$ and $M$ torsion.
\end{enumerate}
For a finitely generated $\Lam$-module $M$, we consider the exact sequences
\[
0\to M/M_{tor} \to M\otimes_\bZ\bQ  \to M\otimes\qz \to 0,
\]
\[
0\to M_{tor} \to M \to M/M_{tor} \to 0,
\]
where $M_{tor}$ is the torsion part of $M$.
By $(*)$ the first sequence implies
\[
H^W_a(X,G;M/M_{tor}) \simeq H^W_a(X,G;M/M_{tor})\otimes_{\bZ}\bQ \qfor a>\dim(X),
\]
which implies $H^W_a(X,G;M/M_{tor})=0$ for $a>\dim(X)$ since $H^W_a(X,G;M/M_{tor})$ is 
a finitely generated $\Lam$-module. By $(*)$ we deduce from the second sequence 
that $H^W_a(X,G;M)=0$ for $a>\dim(X)$. 
This completes the proof of Proposition~\ref{prop.bounded}.
$\Box$
\bigskip

\section{Dual complexes and cs-coverings}\label{csfundgroup}
\bigskip

The dual complex of a simple normal crossing divisor is a special kind of $CW$-complex as
sketched in the introduction (called $\Delta$-complex in  \cite[Section 2.1]{Hat}).
It describes the configuration of the irreducible components of the divisor (see \cite{St}, \cite{Pa} and \cite{ABW}). 

Let $E$ be a locally noetherian scheme and $\{E_v\}_{v\in K}$ be the set of irreducible components of $E$. 
For a finite subset $\sigma=\{v_0,\dots,v_k\}\subset K$, put
\[
E_\sigma:=E_{v_0}\cap\cdots\cap E_{v_k}.
\]
Let $\{E_\alpha\}_{\alpha\in K_\sigma}$ be the set of connected components of $E_\sigma$.
In case $E_\sigma=\varnothing$, $K_\sigma=\varnothing$ by convention.

\medbreak

To a locally noetherian scheme $E$, we associate its dual complex $\Gamma=\dualG E$ as follows.
Fix a linear ordering of $K$.
The set of vertices of $\dualG E$ is identified with $K$.
For a finite subset $\sigma=\{v_0,\dots,v_k\}\subset K$ with $v_0 < \cdots < v_k$, put
\[
\Delta_\sigma=\{\underset{0\leq i\leq k}{\sum}\; t_i v_i\;|\; t_i\geq 0,\; 
\underset{0\leq i\leq k}\sum t_i= 1\}\;\subset \bR^K.
\]
To each $\alpha\in K_\sigma$,
is associated a $k$-simplex $\Delta_{\sigma,\alpha}$ of $\dualG E$ which is a copy of $\Delta_\sigma$.
The interiors of these simplices are disjoint in $\dualG E$ and we have 
\[
\dualG E =\underset{\sigma\subset K}{\coprod}\underset{\alpha\in K_\sigma}{\coprod} 
\;\Delta_{\sigma,\alpha}/\sim,
\]
where for finite subsets $\tau\subset \sigma\subset K$ and for $\alpha\in K_\sigma$ and 
$\beta\in K_\tau$ such that $E_\alpha\subset E_\beta$,
$\Delta_{\tau,\beta}$ is identified with a face of $\Delta_{\sigma,\alpha}$ by the
inclusion $\Delta_{\tau}\hookrightarrow \Delta_{\sigma}$ induced by $\tau\hookrightarrow
\sigma$.

Clearly, as a topological space $\Gamma(E)$ does not depend on the ordering of $K$.
\medbreak

Let $f:E\to E'$ be a morphism of locally noetherian schemes.
Let $\{E'_w\}_{w\in K'}$ be the set of irreducible components of $E'$.
Choosing $\phi(v)\in K'$ for each $v\in K$ such that $f(E_v)\subset E'_{\phi(v)}$, we get
a continuous map
\[
\phi: K \to K'.
\]
For each $\alpha\in K_\sigma$,
there is a unique $\phi(\alpha) \in K'_{\phi(\sigma)}$ 
such that $f(E_\alpha) \subset E'_{\phi(\alpha)}$. We define an affine map
\[
\phi: \Delta_{\sigma,\alpha} \to \Delta_{\phi(\sigma),\phi(\alpha)}\;;\;
\underset{0\leq i\leq k}{\sum}\; t_i v_i\; \to \underset{0\leq i \leq k}{\sum}\; t_i \phi(v_i)
\quad (t_i\geq 0,\; \sum t_i= 1).
\]
These maps glue to induce a map of $\Delta$-complexes
\[
\phi: \dualG E \to \dualG {E'}.
\]

\begin{lem}\label{lem1.dualG}
Up to homotopy, $\phi$ does not depends on the choice of $\phi: K\to K'$.
\end{lem}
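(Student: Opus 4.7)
The plan is to construct an explicit homotopy between the two simplicial maps induced by any two choices of lifts, by linear interpolation on each simplex. Let $\phi_0,\phi_1 : K \to K'$ be two choices satisfying $f(E_v)\subset E'_{\phi_i(v)}$ for every $v\in K$ and $i\in\{0,1\}$, and let $\phi_0,\phi_1 : \Gamma(E)\to \Gamma(E')$ denote the induced maps of $\Delta$-complexes. I want to produce a continuous map $H : \Gamma(E)\times [0,1] \to \Gamma(E')$ with $H(-,0)=\phi_0$ and $H(-,1)=\phi_1$.

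The key geometric observation is that for each simplex $\Delta_{\sigma,\alpha}$ of $\Gamma(E)$, with $\sigma=\{v_0<\dots<v_k\}\subset K$ and $\alpha\in K_\sigma$, one has $f(E_\alpha)\subset E'_{\phi_0(v_i)}\cap E'_{\phi_1(v_j)}$ for all $i,j$, so in particular $f(E_\alpha)\subset E'_\tau$ where $\tau=\phi_0(\sigma)\cup\phi_1(\sigma)\subset K'$; in particular $E'_\tau\neq\varnothing$. Let $\beta=\beta(\sigma,\alpha)\in K'_\tau$ be the connected component of $E'_\tau$ containing $f(E_\alpha)$, giving a simplex $\Delta_{\tau,\beta}\subset \Gamma(E')$ whose vertex set contains both $\phi_0(\sigma)$ and $\phi_1(\sigma)$.

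I then define the homotopy on each cell by affine interpolation:
\[
H_{\sigma,\alpha} : \Delta_{\sigma,\alpha}\times [0,1] \longrightarrow \Delta_{\tau,\beta},\qquad
\Bigl(\sum_{i=0}^k t_i v_i,\,s\Bigr) \longmapsto \sum_{i=0}^k (1-s)t_i\,\phi_0(v_i) + \sum_{i=0}^k s\,t_i\,\phi_1(v_i),
\]
where the right-hand side is interpreted in $\Delta_{\tau,\beta}$ by collecting coefficients at repeated vertices in $\tau$. At $s=0$ and $s=1$ this recovers the affine maps defining $\phi_0$ and $\phi_1$ on $\Delta_{\sigma,\alpha}$ respectively.

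The main thing to check is that the local homotopies $H_{\sigma,\alpha}$ glue to a continuous map on all of $\Gamma(E)\times[0,1]$. For this, let $\tau'\subset\sigma$ be a sub-face and $\beta'\in K_{\tau'}$ the component with $E_\alpha\subset E_{\beta'}$, so that $\Delta_{\tau',\beta'}$ is identified with a face of $\Delta_{\sigma,\alpha}$. One has $\phi_i(\tau')\subset \phi_i(\sigma)$, hence the ``target'' simplex built from $\phi_0(\tau')\cup\phi_1(\tau')$ together with the connected component of $E'$ containing $f(E_{\beta'})$ is canonically identified with a face of $\Delta_{\tau,\beta}$; and the affine formula above restricts on $\Delta_{\tau',\beta'}\times[0,1]$ to the analogous formula for $(\tau',\beta')$. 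This is the only delicate point, but it is immediate from the linearity of the interpolation and the functoriality of the component assignment $(\sigma,\alpha)\mapsto (\tau,\beta)$, so the $H_{\sigma,\alpha}$ assemble into a continuous map $H$ as required. This $H$ is the desired homotopy, completing the proof.
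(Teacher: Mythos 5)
Your proof is correct, and the underlying geometric idea is the same as the paper's: on each simplex $\Delta_{\sigma,\alpha}$ one passes to the larger simplex $\Delta_{\tau,\beta}$ where $\tau$ contains all the relevant vertices of $K'$ and $\beta$ is pinned down by connectedness of $f(E_\alpha)$, and then interpolates affinely. The organizational difference is that the paper changes $\phi$ one vertex at a time (so $\tau=\phi(\sigma)\cup\psi(\sigma)$ differs from $\phi(\sigma)$ only by possibly one extra vertex) and appeals to transitivity of homotopy, while you build one global homotopy directly between two arbitrary admissible choices $\phi_0,\phi_1$. Your version avoids having to justify that an arbitrary change of lift is a composition of single-vertex changes, which is immediate when $K$ is finite (the case of an SNC divisor with finitely many components, as in the paper's applications) but would require a small extra limiting argument if $K$ were infinite, since $E$ is only assumed locally noetherian; so your one-shot interpolation is marginally cleaner in that generality. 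Your verification of the gluing condition along faces is the right point to check, and it is correct: for $\sigma'\subset\sigma$, the target simplex built from $\phi_0(\sigma')\cup\phi_1(\sigma')$ and the component containing $f(E_{\beta'})$ is a face of $\Delta_{\tau,\beta}$ because the connected set $E'_\beta$ meets $E'_{\beta''}$ and hence lies in it, and the affine formula visibly restricts.
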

\begin{proof}
Pick $v_0\in K$ and let $\phi(v_0)=w_0\in K'$.
Assume $f(E_{v_0})\subset E'_{w_0'}$ for $w_0'\in K'$ with $w_0'\not=w_0$.
Let $\psi: \dualG E \to \dualG {E'}$ be the map induced by the map $\psi:K\to K'$ defined as
$\psi(v)=\phi(v)$ if $v\not=v_0$ and $\psi(v_0)=w_0'$.
It suffices to show that $\psi$ is homotopic to $\phi$.
Let $\sigma=\{v_0,\dots,v_k\}\subset K$ and $\alpha\in K_\sigma$. The assumption implies
\[
f(E_\alpha)\subset f(E_\sigma)\subset E'_\theta=
E'_{w_0}\cap E'_{w_0'}\cap E_{\phi(v_1)}\cap \cdots \cap E_{\phi(v_k)},
\]
where 
\[
\theta=\phi(\sigma)\cup \psi(\sigma)=\{w_0,w_0',\phi(v_1),\dots,\phi(v_k)\} \subset K'.
\]
There is a unique connected component $E'_\gamma$ of $E'_\theta$ such that 
$f(E_\alpha)\subset E'_\gamma$. We have $E'_\gamma \subset E'_{\phi(\alpha)} \cap E'_{\psi(\alpha)}$
and hence $\Delta_{\phi(\sigma),\phi(\alpha)}$ and $\Delta_{\psi(\sigma),\psi(\alpha)}$ are 
faces of $\Delta_{\theta,\gamma}$. We define a map
\[
H_{\sigma,\alpha}: [0,1]\times \Delta_{\sigma,\alpha} \to \Delta_{\theta,\gamma}\;;\;
(r,\underset{0\leq i\leq k}{\sum}\; t_i v_i) \; \to 
t_0(r w_0 +(1-r) w_0') + \underset{1\leq i \leq k}{\sum}\; t_i \phi(v_i).
\]
which gives a homotopy from $\phi_{|\Delta_{\sigma,\alpha}}$ to $\psi_{|\Delta_{\sigma,\alpha}}$.
The maps $H_{\sigma,\alpha}$ for all $\sigma$ and $\alpha$ glue to give a desired homotopy
$H: [0,1]\times \dualG E \to \dualG {E'}$.
\end{proof}
\bigskip

\begin{lem}\label{lem.Gstrict}
Let $E$ be a $G$-strict simple normal crossing divisor over a field as in Definition~\ref{def.Gstrict}. 
There is a canonical homeomorphism
\[
\dualG {E/G} \isom \dualG E/G,
\]
of topological spaces.
\end{lem}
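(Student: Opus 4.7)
The plan is to construct a continuous bijection $\ol\phi\colon\dualG{E}/G\to\dualG{E/G}$ using the functoriality developed before Lemma~\ref{lem1.dualG} applied to $\pi\colon E\to E/G$, and then check it is a homeomorphism of CW complexes. Write $K$ (resp.\ $\bar K$) for the set of irreducible components of $E$ (resp.\ of $E/G$). Finiteness of $\pi$ identifies $\bar K$ with $K/G$, since each $\pi(E_v)$ is an irreducible component of $E/G$ and $E_v,E_{v'}$ have the same image if and only if $v,v'$ lie in the same $G$-orbit; thus the map $K\to\bar K$ sending $v$ to its orbit is canonical, and the functorial construction yields a $G$-invariant continuous map $\dualG{E}\to\dualG{E/G}$ descending to $\ol\phi$.

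The crucial computation is that of $\pi_0((E/G)_{\bar\sigma})$ for an ordered subset $\bar\sigma=\{\bar v_0<\cdots<\bar v_k\}\subset\bar K$. Denote by $L(\bar\sigma)$ the set of lifts $\sigma=\{v_0,\ldots,v_k\}\subset K$ with $v_i$ in the $G$-orbit corresponding to $\bar v_i$. I would establish
\[
\pi^{-1}((E/G)_{\bar\sigma})=\coprod_{\sigma\in L(\bar\sigma)}E_\sigma,
\]
the decisive point being the disjointness: two distinct lifts differ at some coordinate, say $v_0\ne v_0'=gv_0$, so $g(E_{v_0})=E_{v_0'}\ne E_{v_0}$, which by $G$-strictness forces $E_{v_0}\cap E_{v_0'}=\varnothing$, hence $E_\sigma\cap E_{\sigma'}=\varnothing$. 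Since $\pi$ is finite surjective, this yields a canonical bijection between $\pi_0((E/G)_{\bar\sigma})$ and the set of $G$-orbits of pairs $(\sigma,\alpha)$ with $\sigma\in L(\bar\sigma)$ and $\alpha\in K_\sigma$, i.e., with $G$-orbits of $k$-simplices of $\dualG E$ whose vertex set lifts $\bar\sigma$.

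The same $G$-strictness argument shows that if $g\in G$ preserves a simplex $\Delta_{\sigma,\alpha}$ of $\dualG E$ setwise then $g$ fixes each vertex $v_i\in\sigma$ — a non-trivial permutation $v_i\leftrightarrow v_j$ would give $E_{v_i}\cap E_{v_j}=\varnothing$ contradicting $E_\sigma\ne\varnothing$ — so the $G$-action on $\dualG E$ is admissible and $\dualG E/G$ inherits a $\Delta$-complex structure whose $k$-cells are the $G$-orbits of $k$-cells of $\dualG E$. Combined with the previous paragraph, the cells of $\dualG E/G$ and $\dualG{E/G}$ are in canonical bijection, $\ol\phi$ is affine on each cell and hence restricts to an affine isomorphism between corresponding closed simplices; since both sides carry the weak topology from their cell structures, $\ol\phi$ is a homeomorphism. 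The main obstacle is the combinatorial claim in the middle paragraph: once one sees that $G$-strictness is precisely the condition ensuring that the preimage of each simplex-defining intersection in $E/G$ decomposes as a disjoint union of the $E_\sigma$, everything else reduces to formal bookkeeping with CW complexes.
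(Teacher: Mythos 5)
The paper dismisses this lemma with ``straightforward and left to the readers,'' so there is no written proof to compare against, but your argument is a correct and complete execution of the intended routine verification. The two places where $G$-strictness is genuinely used — disjointness of the $E_\sigma$ for distinct lifts $\sigma$ of a fixed $\bar\sigma$, giving $\pi_0((E/G)_{\bar\sigma})\cong\bigl(\coprod_{\sigma\in L(\bar\sigma)}K_\sigma\bigr)/G$, and the fact that any $g$ stabilizing a simplex of $\Gamma(E)$ must fix its vertices, so the quotient inherits a $\Delta$-complex structure with cells the $G$-orbits of cells — are exactly right, and the same computation shows the vertex map $\sigma\to\bar\sigma$ is injective so that $\ol\phi$ is affine-isomorphic on closed cells; the passage from a cellwise isomorphism of CW complexes to a homeomorphism is then standard.
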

\begin{proof}
The proof is straightforward and left to the readers.
\end{proof}

\bigskip

To investigate fundamental groups of dual complexes, we need a more geometric interpretation.
In the following we introduce the technique of covering spaces of non-path-connected
topological spaces, which we apply to the Zariski topological space underlying a locally
noetherian scheme. 

\newcommand\cscov{{\rm{Cov}}}
\newcommand\Sets{{\rm{Sets}}}

\begin{defi}
For a connected topological space $S$ 
let $\cscov(S)$ be the category of topological coverings of $S$.
For a point $s_0\in S$, let $\pics {S,s_0}$ be the automorphism group of the fiber functor
\[
\cscov(S) \to \Sets\;;\; S' \mapsto S'\times_S s_0.
\]
In what follows we omit the base point $s_0$ from the notation. We call $\pics S$ the
completely split fundamental group.
\end{defi}

If $S$ is a connected scheme we write $\pics S $ for the cs-fundamental group of the underlying
Zariski topological space of $S$. 

\begin{lem}\label{unicovsp}
If the connected topological space $S$ admits a covering by open subsets $\mathcal U =(U_i)_{i\in I}$ such that
 $\cscov (U_i)$ consists only of the trivial coverings for all $i\in I$, then there is a universal connected covering
$S'\to S$. We have  $\pics S= {\rm Aut} (S'/S)$.
\end{lem}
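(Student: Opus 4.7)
The plan is to adapt the classical construction of the universal covering space from topology to the combinatorial setting of Zariski coverings, replacing continuous paths by chains in $\mathcal{U}$ and using the hypothesis that $\cscov(U_i)$ consists only of trivial coverings as the analogue of semi-local simple connectivity. As a preliminary reduction, I would refine $\mathcal{U}$ by passing to connected components of each $U_i$: this preserves the hypothesis, since a cs-covering of $U_i$ restricts to one on each component, and a trivial cs-covering of $U_i = \coprod_\alpha V_\alpha$ restricts to a trivial cs-covering of each $V_\alpha$. So without loss of generality each $U_i$ is connected.

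Fix a base point $s_0 \in U_{i_0}$. Define a $\mathcal{U}$-chain to be a sequence $\gamma = (U_{i_0}, U_{i_1}, \ldots, U_{i_n})$ of members of $\mathcal{U}$ together with a choice of connected component $V_j \subseteq U_{i_{j-1}} \cap U_{i_j}$ for each $j \geq 1$, and declare two such chains equivalent via the elementary moves (a) inserting or removing a trivial step $(\ldots, U_i, U_i, \ldots)$, and (b) replacing $(\ldots, U_i, U_j, \ldots)$ by $(\ldots, U_i, U_k, U_j, \ldots)$ whenever $U_i \cap U_j \cap U_k$ has a component compatible with the chosen components on either side. Let $S'$ be the set of equivalence classes $[(\gamma, s)]$ of pointed chains starting at $(U_{i_0}, s_0)$ and with $s \in U_{i_n}$, topologized so that basic neighborhoods of $[(\gamma, s)]$ are of the form $\{\, [(\gamma \cdot U_i, t)] : t \in U_i \,\}$ for $U_i \ni s$. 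The projection $p: S' \to S$ sending $[(\gamma, s)] \mapsto s$ is then a cs-covering: the preimage $p^{-1}(U_i)$ decomposes as a disjoint union of copies of $U_i$ indexed by equivalence classes of chains ending at $U_i$, the decomposition coming directly from the hypothesis that trivializes cs-coverings over each member of $\mathcal{U}$. By construction $S'$ is connected.

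To verify the universal property, let $q: T \to S$ be any connected cs-covering with chosen lift $t_0$ of $s_0$. Each $\mathcal{U}$-chain lifts uniquely to $T$ step by step, using local triviality of $q$ over each $U_{i_j}$ together with connectedness of $U_{i_j}$ and of the chosen components of intersections; the lift depends only on the equivalence class. This yields a unique pointed morphism $(S', s_0') \to (T, t_0)$ of cs-coverings, so $S'$ is the universal connected cs-covering of $S$. The deck group $\Aut(S'/S)$ acts simply transitively on $p^{-1}(s_0)$, and this action canonically identifies $\Aut(S'/S)$ with the automorphism group of the fiber functor $\cscov(S) \to \Sets$ sending $T \mapsto q^{-1}(s_0)$, which is $\pics S$ by definition. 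The main obstacle will be calibrating the equivalence relation on chains correctly: the elementary moves must encode exactly the gluing data for cs-coverings over the double and triple intersections (which need not themselves be connected), neither so coarse that $p$ fails to be a cs-covering, nor so fine that the lifting property fails for general connected cs-coverings $T$.
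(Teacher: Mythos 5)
Your strategy is genuinely different from the paper's: rather than reducing to the $\Delta$-complex $\Gamma(\mathcal U)$ and quoting Hatcher's existence theorem for universal covers of $CW$-complexes (after establishing the equivalence $\cscov(S)\cong\cscov(\Gamma(\mathcal U))$ via descent data), you attempt a direct combinatorial construction via $\mathcal U$-chains. That route can in principle work, but as written it has a concrete defect precisely at the place you flag as the likely obstacle.

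The issue is that your moves (a) and (b) never change the \emph{last} open set of a chain: move (a) collapses a repeated entry and move (b) inserts or deletes an interior entry, so $U_{i_n}$ is an invariant of the equivalence class $[\gamma]$. Consequently a pointed chain $(\gamma,s)$ with $\gamma$ ending at $U_{i_n}$ and $s\in U_{i_n}\cap U_j$ is \emph{never} identified with $(\gamma\cdot U_j,s)$, which ends at $U_j$. This breaks both of your subsequent claims: the sets $\{[(\gamma'\cdot U_i,t)]:t\in U_i\}$ do not cover $p^{-1}(U_i)$ (the point $[(\gamma,s)]$ above lies over $U_i$ but is in none of them), and $S'$ need not be connected. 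The simplest counterexample: let $S$ be a single point and $\mathcal U=(U_1,U_2)$ with $U_1=U_2=S$. Your $S'$ then has two points, one for each possible last open, and $p$ is $2$-to-$1$ onto a point -- so $S'$ is disconnected and $p$ is not the identity, whereas the universal cs-covering of a point is a point. The fix is to add a third elementary move that truncates a chain at its endpoint: $((\ldots,U_{i_{n-1}},U_{i_n}),s)\sim((\ldots,U_{i_{n-1}}),s)$ whenever $s$ lies in the chosen component of $U_{i_{n-1}}\cap U_{i_n}$. This is exactly the extra datum implicit in the paper's phrasing ``a family of sets $(W_i)$ and isomorphisms $w_\alpha:W_i\isom W_j$'': passing between the $W_i$- and $W_j$-indices over a shared component $\alpha$.

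Two smaller points. First, you misattribute where the hypothesis $\cscov(U_i)=\{\text{trivial}\}$ is used: the disjoint-union decomposition of $p^{-1}(U_i)$ is purely formal once the equivalence relation is set up correctly, whereas the hypothesis is genuinely needed in the lifting argument for the universal property (to trivialize $q^{-1}(U_i)\to U_i$ so that chains lift uniquely). Without the hypothesis your $S'$ would still be a cs-covering, just not a universal one. Second, the identification $\pics S=\Aut(S'/S)$ deserves a sentence: once the universal property and connectedness of $S'$ are in place one checks that the fiber functor is represented by $\Hom_S(S',-)$ and that $\Aut(S'/S)$ acts simply transitively on $p^{-1}(s_0)$; this is standard but not automatic.
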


\begin{proof}
Fix a linear ordering of $I$.
One associates to the open covering $\mathcal U$ a $\Delta$-complex $\Gamma(\mathcal U)$
in analogy to the construction above. Its $k$-simplices are the connected
components of $U_{v_0} \cap \cdots \cap U_{v_k}$ for $v_0<\cdots < v_k \in I$. There is a
canonical equivalence $\cscov(S) \cong \cscov(\Gamma(\mathcal U))$.
In fact objects of both sides are described in terms of the following data: A family of
sets $(W_i)_{i\in I}$ and a family of isomorphisms $w_{\alpha}:W_i \isom W_j$ for each connected
component $\alpha$ of $U_i \cap U_j$ ($i<j$) which satisfy cocycle conditions (one for each
connected component of $U_i\cap U_j \cap U_k$ with $i<j<k$).
 Since
$\Gamma(\mathcal U)$ is a $CW$-complex, it is locally path-connected
and locally simply connected and therefore by \cite[Thm.~1.38]{Hat} it has a universal
covering space.
\end{proof}
\smallskip

The assumption of Lemma~\ref{unicovsp} is satisfied in particular for the topological space underlying a locally
noetherian scheme.

\begin{prop}\label{thm.pics} Let $X,Y$ be connected locally noetherian schemes and let $f:X\to Y$ be
  a proper surjective morphism with connected fibers. Then the induced map
\[
f_* : \pics X \to \pics Y
\]
is surjective.
\end{prop}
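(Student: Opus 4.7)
The plan is to interpret the surjectivity of $f_*$ as a geometric statement about base changes of cs-coverings and then to verify that statement by exploiting the connectedness of fibers of $f$.

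By Lemma~\ref{unicovsp}, both $X$ and $Y$ admit universal connected cs-coverings, $\widetilde{X} \to X$ and $\widetilde{Y} \to Y$, and $\pics{-}$ is realized as the automorphism group of the associated fiber functor. A standard argument with fiber functors (cf.\ the discussion preceding the proposition) shows that the image of $f_*: \pics X \to \pics Y$ is precisely the stabilizer in $\pics Y$ of the connected component of the pullback $f^*\widetilde{Y} = X\times_Y \widetilde{Y}$ containing a chosen lift of the base point. In particular, it is enough to prove that for every connected cs-covering $Y' \to Y$, the base change $X' := X \times_Y Y'$ is again connected; applied to $Y' = \widetilde{Y}$ this forces $f_*(\pics X) = \pics Y$.

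Fix such a connected cs-covering $Y' \to Y$. The property of being a cs-covering is Zariski-local on the base, so it is preserved under arbitrary base change; hence $X' \to X$ is a cs-covering and $X'$ is locally noetherian. The second projection $f' : X' \to Y'$ is the pullback of $f$, hence proper. Its fibers are connected: for $y' \in Y'$ mapping to $y \in Y$, the map $Y' \to Y$ is locally an isomorphism near $y'$, so $f'^{-1}(y') \cong f^{-1}(y)$, which is connected by hypothesis. Moreover $f'$ is surjective: since $Y' \to Y$ is locally a disjoint union of copies of $Y$, both its image and the complement of its image are open, so the image is clopen; as $Y'$ is nonempty and $Y$ is connected, the image equals $Y$, and surjectivity of $f'$ then follows from surjectivity of $f$.

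It remains to invoke the elementary topological fact that a closed surjective map with connected nonempty fibers whose target is connected has connected source: if $X' = A \sqcup B$ with $A,B$ disjoint, nonempty and open (hence closed), then $f'(A)$ and $f'(B)$ are closed in $Y'$ (as $f'$ is closed, being proper), they cover $Y'$ (as $f'$ is surjective), and they are disjoint because each connected fiber of $f'$ lies entirely in $A$ or entirely in $B$; this contradicts the connectedness of $Y'$. The main non-formal input is this last topological step, together with the verification that fiber-connectedness is preserved by base change along the locally trivial map $Y' \to Y$; everything else is a formal combination of base-change stability of the relevant classes of maps with the Galois-theoretic reduction provided by Lemma~\ref{unicovsp}.
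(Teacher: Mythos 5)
Your proof is correct and follows essentially the same route as the paper's: reduce the surjectivity of $f_*$ to the connectedness of $X\times_Y \widetilde Y$ (where $\widetilde Y\to Y$ is the universal connected cs-covering provided by Lemma~\ref{unicovsp}), and then conclude via the elementary topological fact that a closed surjective map with connected fibers onto a connected space has connected source. You spell out a few steps the paper leaves implicit --- that fiber-connectedness is preserved under base change along the locally trivial map $Y'\to Y$, and the point-set argument for connectedness of $X'$ --- but the underlying strategy is identical.
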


\begin{proof}
Let $Y'\to Y$ be the universal connected covering. Clearly $Y'$ has a canonical scheme structure. We have to show that $X':=X\times_Y Y'$ is connected.
In fact the fibers of $X'\to Y'$ are connected and the map is closed and surjective. Therefore different connected
components of $X'$ map onto different connected components of $Y'$, but
there is just one of the latter.
\end{proof}

\begin{prop}\label{prop.cspi1}
For a connected locally noetherian scheme $E$, there is a natural isomorphism
\[
\gamma_E: \pics E \isom \pi_1(\dualG E).
\]
\end{prop}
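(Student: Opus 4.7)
My plan is to construct an equivalence of categories
\[
\Phi : \cscov(E) \;\xrightarrow{\;\sim\;}\; \cscov(\Gamma(E))
\]
between cs-coverings of the scheme $E$ and classical topological coverings of the dual complex $\Gamma(E)$, and to deduce the asserted isomorphism $\gamma_E$ by passing to the automorphism groups of the corresponding fiber functors at compatible base points. Naturality in $E$ will then follow from the same combinatorial bookkeeping that proves Lemma~\ref{lem1.dualG}.

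The basic local fact I need first is that for any irreducible locally noetherian scheme $X$ every cs-covering $X' \to X$ is trivial, i.e.\ $X' \cong \coprod_{s \in S} X$ for some set $S$. Indeed, if $(U_i)_{i\in I}$ is a Zariski cover of $X$ with $X'|_{U_i} = \coprod_{s\in S_i} U_i$, then since $X$ is irreducible every pairwise intersection $U_i \cap U_j$ is nonempty and irreducible, so on $U_i \cap U_j$ the two induced sheet labellings canonically agree; patching the $S_i$ gives a single index set $S$ and $X' = \coprod_{s\in S} X$. In particular $\pics(X) = 1$ for $X$ irreducible.

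Given a cs-covering $f : E' \to E$, restriction to each irreducible component $E_v$ is then trivial with some sheet set $S_v$, so $E'\times_E E_v = \coprod_{s\in S_v} E_v$. For each connected component $E_\alpha$ of a pairwise intersection $E_v \cap E_w$ (i.e.\ each edge of $\Gamma(E)$), comparing the trivializations inherited from $E_v$ and from $E_w$ on $E_\alpha$ yields a locally constant, hence (since $E_\alpha$ is connected) constant, bijection $\phi_\alpha^{v,w} : S_v \isom S_w$. For each connected component $E_\beta$ of a triple intersection $E_u\cap E_v\cap E_w$ (i.e.\ each $2$-simplex of $\Gamma(E)$), $E_\beta$ lies inside unique connected components $E_{\alpha_{uv}}, E_{\alpha_{vw}}, E_{\alpha_{uw}}$ of the three pairwise intersections, and comparing the three induced trivializations on $E_\beta$ forces the cocycle relation
\[
\phi_{\alpha_{uw}}^{u,w} \;=\; \phi_{\alpha_{vw}}^{v,w} \circ \phi_{\alpha_{uv}}^{u,v}.
\]
This is precisely the data defining a covering of the $\Delta$-complex $\Gamma(E)$, and any coherence on components $E_\alpha$ with $|\sigma| \geq 4$ is automatic because covering spaces of a CW-complex are determined by their restriction to the $2$-skeleton. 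I thus define $\Phi(E')$ by this data and construct an inverse $\Psi$ by gluing trivial covers on each $E_v$ along the prescribed edge-bijections; the cocycle relation ensures well-definedness on triple intersections, and extension to all of $E$ then uses the irreducible-component chain argument from the local fact above.

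The main obstacle is that a connected component $E_\alpha$ of some $E_\sigma$ can itself fail to be irreducible, so a priori $\pics(E_\alpha)$ might carry monodromy invisible to $\Gamma(E)$. The resolution is that the restriction $f|_{E_\alpha}$ factors through the restriction of $f$ to any single ambient component $E_{v_i} \supset E_\alpha$, where it is already trivial by the local fact; so $f|_{E_\alpha}$ is forced to be a trivial cs-covering and no hidden monodromy arises. This is the geometric reason the purely combinatorial complex $\Gamma(E)$ captures $\pics(E)$ faithfully.
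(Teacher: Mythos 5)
Your proof is correct and follows essentially the same path as the paper's: both establish the equivalence $\cscov(E)\cong\cscov(\Gamma(E))$ by observing that an irreducible scheme has only trivial cs-coverings and then encoding objects of both categories by the identical combinatorial data of sheet sets on vertices, transition bijections on edges, and cocycle conditions on $2$-simplices, exactly as in the argument for Lemma~\ref{unicovsp}. Your extra remark resolving the potential subtlety that a deeper stratum $E_\alpha$ may be connected but reducible (it inherits triviality from an ambient component $E_v$) is a useful clarification of a point the paper leaves implicit.
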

\begin{proof}
An irreducible scheme has only trivial cs-topological coverings. This implies that there is a
canonical equivalence $\cscov (E) \cong \cscov(\Gamma(E))$ by an argument similar to the proof of Lemma~\ref{unicovsp}.
\end{proof}
\bigskip

\begin{coro}\label{csfundgroup.thm}
Let $E$ be a $G$-strict simple normal crossing divisor as in Definition \ref{def.Gstrict}.
Then we have a natural isomorphism
\[
  \pi_1^{cs}(E/G) \isom \pi_1(\dualG {E}/G).
\]
\end{coro}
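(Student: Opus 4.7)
The plan is to deduce this corollary immediately by combining the two preceding results: Proposition~\ref{prop.cspi1} (which identifies $\pi_1^{cs}$ of a scheme with $\pi_1$ of its dual complex) and Lemma~\ref{lem.Gstrict} (which identifies the dual complex of $E/G$ with the topological quotient $\dualG{E}/G$). Concretely, assuming without loss of generality that $E/G$ is connected (otherwise apply the argument to each connected component), Proposition~\ref{prop.cspi1} applied to the locally noetherian scheme $E/G$ yields a canonical isomorphism
\[
\gamma_{E/G}: \pi_1^{cs}(E/G) \isom \pi_1(\dualG{E/G}).
\]
Lemma~\ref{lem.Gstrict} then supplies a natural homeomorphism $\dualG{E/G}\cong \dualG{E}/G$, which in turn induces a canonical isomorphism on $\pi_1$. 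Composing the two gives the asserted isomorphism.

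Before doing so I would make sure that the geometric quotient $E/G$ exists as a scheme and that its underlying Zariski topology agrees with the topological quotient of the underlying space of $E$ by the induced $G$-action; this follows from $G$-strictness of $E$ (so that $G$ permutes the irreducible components in a controlled way) together with the fact that $E$ is quasi-projective, so the geometric quotient exists by standard results of SGA1, V\S1. Granted this, the category $\cscov(E/G)$ coincides with the category of $G$-equivariant cs-coverings of $E$, so $\pi_1^{cs}(E/G)$ is indeed the object we want on the left-hand side.

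The only genuine point to verify is the naturality of the composition: one should check that the equivalence $\cscov(E/G)\cong \cscov(\dualG{E/G})$ used in the proof of Proposition~\ref{prop.cspi1} is compatible with the description of $\dualG{E/G}$ as $\dualG{E}/G$ from Lemma~\ref{lem.Gstrict}. This is a direct unraveling of definitions: the $k$-simplices of $\dualG{E/G}$ are $G$-orbits of connected components of intersections $E_{v_0}\cap\cdots\cap E_{v_k}$, which exactly corresponds under the equivalence to orbits of covering data on these intersections. I do not expect any serious obstacle here; the corollary is essentially a formal consequence of the two results just cited.
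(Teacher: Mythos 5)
Your proof is correct and is exactly the paper's argument: apply Proposition~\ref{prop.cspi1} to the scheme $E/G$ to get $\pi_1^{cs}(E/G)\isom\pi_1(\dualG{E/G})$, then compose with the isomorphism on $\pi_1$ induced by the homeomorphism $\dualG{E/G}\cong\dualG{E}/G$ from Lemma~\ref{lem.Gstrict}. The additional remarks about existence of the geometric quotient and compatibility of the two identifications are sensible sanity checks but are not treated explicitly in the paper's one-line proof.
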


\begin{proof}
One combines the isomorphism $\gamma_{E/G}$ of Proposition~\ref{prop.cspi1} with the
isomorphism of Lemma~\ref{lem.Gstrict}.
\end{proof}
\bigskip

\section{McKay principle for homotopy type of dual complexes}\label{MacKayhomtype}

\medskip

\renewcommand\EYb{\overline{E}_Y}
\renewcommand\pihat{\widehat{\pi}}
\renewcommand{\wh}[1]{\widehat{#1}}
\renewcommand{\wht}[1]{\widehat{\tilde{#1}}}
\renewcommand{\dualG}[1]{\Gamma(#1)}

Let the notation be as \S\ref{eqWhom}. 
We assume $\ch(k)=0$ or canonical resolution of singularities in the sense of \cite{BM} holds over $k$.
Let $(X,G)\in \schG$ with $X$ smooth and $\pi:X\to X/G$ be the projection.
Fix a closed reduced subscheme $S\subset X/G$ which is projective over $k$ and contains the singular locus $(X/G)_{sing}$ of $X/G$.
Let $T=\pi^{-1}(S)_{red}$ be the reduced part of $\pi^{-1}(S)$. 
Assume that we are given the following datum:
\begin{itemize}
\item
a proper birational morphism $g:\tY \to X/G$ such that $\tY$ is smooth, $E_S=g^{-1}(S)_{red}$ is a simple normal crossing divisor on $\tY$ and $g$ is an isomorphism over $X/G-S$.
\item
a proper birational $G$-equivariant morphism $f:\tX \to X$ in $\schG$ such that $\tX$ is smooth, $E_T=f^{-1}(T)_{red}$ is a $G$-strict simple normal crossing divisor on $\tX$ 
(cf. Definition \ref{def.Gstrict}) and $f$ is an isomorphism over $X-T$.
\end{itemize}
\[
\begin{CD}
\pi^{-1}(S)_{red}=\; @. T @>>>  X @<{f}<< \tX @<<< E_T = f^{-1}(T)_{red}   \\
 @. @VVV @VV{\pi}V\\
 @. S  @>>> {X/G} @<{g}<<   \tY  @<<< E_S=g^{-1}(S)_{red} 
\end{CD}
\]
\bigskip

Note that we do not assume that there exists a morphism $E_T \to E_S$.
By definition $G$ act on $\Gamma(E_T)$ and we can form the topological space $\Gamma(E_T)/G$.

\begin{theo}\label{thm.MPht} 
In the homotopy category of $CW$-complexes, there exists a canonical map
\[
\phi: \Gamma(E_T)/G \to \Gamma(E_S)
\]
which induces isomorphisms on the homology and fundamental groups:
\[
H_a(\Gamma(E_T)/G) \isom  H_a(\Gamma(E_S))\qfor \forall a\in \bZ,
\]
\[
\pi_1(\Gamma(E_T)/G) \isom  \pi_1(\Gamma(E_S)).
\]
If $\Gamma(E_T)/G$ is simply connected, then $\phi$ is a homotopy equivalence.
\end{theo}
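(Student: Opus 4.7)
The strategy is to mediate between the two sides by a common $G$-equivariant resolution. Form the $G$-equivariant fibre product $W=\tX\times_{X/G}\tY$, with $G$ acting via $\tX$ and trivially on $\tY$, and choose by canonical equivariant resolution of singularities a $G$-equivariant proper birational morphism $h:Z\to W$ with $Z$ smooth and $G$-strict and $E_Z:=h^{-1}(E_T\cup E_S)_{\mathrm{red}}$ a $G$-strict simple normal crossing divisor. The composites $p:Z\to\tX$ and $q:Z\to\tY$ are respectively $G$-equivariant proper birational (being a base change of the birational $g$) and $G$-invariant proper; they induce a $G$-equivariant map $p_{\ast}:\Gamma(E_Z)\to\Gamma(E_T)$ and a $G$-invariant map $q_{\ast}:\Gamma(E_Z)\to\Gamma(E_S)$. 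Taking quotients by $G$ (noting $\Gamma(E_S)/G=\Gamma(E_S)$) yields $\bar p_{\ast}$ and $\bar q_{\ast}$. By the birational invariance of dual complexes of simple normal crossing divisors (Stepanov's theorem and its generalisations cited in the introduction), $\bar p_{\ast}$ is a homotopy equivalence, so we may set $\phi:=\bar q_{\ast}\circ\bar p_{\ast}^{-1}$ in the homotopy category. Canonicity of $\phi$ follows by dominating any two choices of $Z$ by a third.

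\textbf{Isomorphism on homology.} Example~\ref{Whom.ex} gives $H_a(\Gamma(E)/G;\bZ)=\HWGM a E$ for any $G$-strict simple normal crossing divisor $E$; applied to $E_T$, $E_Z$ and $E_S$ this reformulates the homology claim as one about equivariant weight homology. By the McKay principle (Theorem~\ref{thm.Whomgraph}), $\HWGM a {E_T}\cong\HWM a {E_T/G}$ and likewise for $E_Z$. Corollary~\ref{cor.Whomgraph2} applied to $g:\tY\to X/G$ gives $\HWM a {E_S}\cong\HWM a S$; its natural $G$-equivariant variant, obtained by running the same localisation argument while using the vanishing $\HWGM a X=0$ for $a>0$ when $X$ is smooth projective $G$-scheme (Theorem~\ref{thm.Whom}$(i)$ together with Corollary~\ref{cor.Whomgraph}), gives $\HWGM a {E_T}\cong\HWM a S$ and $\HWGM a {E_Z}\cong\HWM a S$. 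Compatibility of these identifications with $\bar p_{\ast}$ and $\bar q_{\ast}$ then shows that $\phi$ induces an isomorphism on $H_{\ast}$.

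\textbf{Fundamental group and the final claim.} By Corollary~\ref{csfundgroup.thm}, $\pi_1(\Gamma(E)/G)\cong\pics{E/G}$ for any $G$-strict simple normal crossing $E$, so the $\pi_1$ statement becomes one about cs-fundamental groups of $E_T/G$, $E_Z/G$ and $E_S$. The induced scheme morphisms $E_Z/G\to E_T/G$ and $E_Z/G\to E_S$ are proper surjective with connected geometric fibres (coming from birationality of $p$ and from birationality of the induced $Z/G\to\tY$), so Proposition~\ref{thm.pics} yields surjections on $\pi_1^{cs}$. Injectivity will be shown by directly lifting cs-coverings across the resolutions, using that on each side the incidence combinatorics of the exceptional strata match. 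Combining, $\phi$ induces an isomorphism on $\pi_1$. Finally, if $\Gamma(E_T)/G$ is simply connected, then so is $\Gamma(E_S)$ via $\phi$, and $\phi$ is then an $H_{\ast}$-isomorphism between simply connected $CW$-complexes, hence a homotopy equivalence by Whitehead's theorem together with Hurewicz. The main obstacle will be the injectivity in the $\pi_1$ isomorphism: unlike the homology part, it does not emerge formally from weight-homology machinery and requires a direct geometric argument via cs-coverings.
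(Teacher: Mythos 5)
Your construction of $\phi$ by resolving $W=\tX\times_{X/G}\tY$ and the homology argument via equivariant weight homology and Theorem~\ref{thm.Whomgraph} (together with Example~\ref{Whom.ex}) are in essence the same as what the paper does; the paper packages the homology step as a diagram chase with two localization sequences (Claim~\ref{thm.fundgroup0}), which is a bit more direct than routing through $\HWM a S$ and an implicit ``$G$-equivariant variant'' of Corollary~\ref{cor.Whomgraph2}, but the substance is the same. The genuine gap is exactly where you flag it: injectivity on $\pi_1$. Your mediating resolution $Z$ gives a homotopy equivalence $\Gamma(E_Z)/G\to\Gamma(E_T)/G$ and a $\pi_1$-surjection $\Gamma(E_Z)/G\to\Gamma(E_S)$ (via Proposition~\ref{thm.pics}), hence only surjectivity of $\phi_*$; ``directly lifting cs-coverings using matching incidence combinatorics'' is not an argument, and I do not see how to make it one.

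The paper closes this gap by a sandwich argument, which is the key idea you are missing. After using Theorem~\ref{thm.hidual} to replace $\tX$ by a $\tX'$ admitting a $G$-equivariant morphism $\tX'\to\tY$ (so that $\Gamma(E_T)/G$ gets a map $\phi_*$ to $\Gamma(E_S)$), one takes a \emph{further} resolution $\tY'\to\tX/G$ of the quotient, with reduced exceptional divisor $E'_S$. Using Lemma~\ref{lem.Gstrict} to identify $\Gamma(E_T)/G\cong\Gamma(E_T/G)$, this gives a chain of maps
\[
\Gamma(E'_S)\rmapo{h_*}\Gamma(E_T)/G\rmapo{\phi_*}\Gamma(E_S)
\]
whose composite is a homotopy equivalence, because $\tY'\to\tY$ is a proper birational morphism between smooth resolutions with snc exceptional divisors, so Theorem~\ref{thm.hidual} (Stepanov) applies. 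By Corollary~\ref{csfundgroup.thm}, Proposition~\ref{prop.cspi1} and Proposition~\ref{thm.pics} (applied to $E'_S\to E_T/G$ and $E_T/G\to E_S$, both proper surjective with connected fibres by Zariski's main theorem), both $h_*$ and $\phi_*$ are surjective on $\pi_1$. Now the elementary group-theoretic observation finishes it: since $\phi_*\circ h_*$ is bijective and $h_*$ is surjective, $h_*$ is injective, hence bijective; and then $\phi_*=(\phi_*\circ h_*)\circ h_*^{-1}$ is bijective. So the injectivity you are worried about is not obtained by lifting covers; it is forced by trapping $\Gamma(E_T)/G$ between two copies of the dual complex of resolutions of $X/G$ with a homotopy equivalence as composite.

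One smaller point you should be explicit about: Theorem~\ref{thm.hidual} is stated for smooth \emph{projective} $G$-schemes, while $\tX$, $\tY$ need only be quasi-projective; the paper reduces to the projective case using the hypothesis that $S$ (hence $T$, $E_S$, $E_T$) is projective over $k$ before applying it. You should insert the analogous reduction before invoking Stepanov for your $p:Z\to\tX$.
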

\bigskip

The following corollaries are immediate consequences of Theorem \ref{thm.MPht}.

\begin{coro}\label{thm.homequiv} 
If $\dualG {E_T}/G$ is contractible, then $\dualG {E_S}$ is contractible.
\end{coro}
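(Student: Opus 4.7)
The plan is to deduce this corollary essentially as an immediate formal consequence of Theorem~\ref{thm.MPht}. The assertion of the theorem splits naturally into a conditional statement: if the source $\dualG{E_T}/G$ happens to be simply connected, then the canonical map $\phi$ is upgraded from a homology-and-$\pi_1$ equivalence to an honest homotopy equivalence. Our task is therefore to verify the simple-connectivity hypothesis and then transport contractibility across $\phi$.

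First I would observe that any contractible space is (path-connected and) simply connected. So assuming $\dualG{E_T}/G$ is contractible, the hypothesis $\pi_1(\dualG{E_T}/G)=1$ of the second half of Theorem~\ref{thm.MPht} is satisfied. Applying that half of the theorem yields that the canonical map
\[
\phi: \dualG{E_T}/G \longrightarrow \dualG{E_S}
\]
is a homotopy equivalence in the homotopy category of $CW$-complexes.

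Finally, contractibility is a homotopy invariant: if $\phi$ is a homotopy equivalence and the source is contractible, then the target is contractible as well (one can exhibit an explicit null-homotopy of $\mathrm{id}_{\dualG{E_S}}$ by conjugating a null-homotopy of $\mathrm{id}_{\dualG{E_T}/G}$ by $\phi$ and its homotopy inverse). Hence $\dualG{E_S}$ is contractible, completing the proof.

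There is really no obstacle here beyond invoking the theorem; all the genuine content (the construction of $\phi$, verification of the homology and $\pi_1$ isomorphisms, and the Whitehead-type upgrade under the simple-connectivity assumption) is already encapsulated in Theorem~\ref{thm.MPht}. The corollary amounts to noting that ``contractible'' is a stronger property than ``simply connected,'' so the stronger hypothesis triggers the stronger conclusion of the theorem and then propagates across the resulting homotopy equivalence.
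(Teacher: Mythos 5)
Your argument is correct and is precisely the ``immediate consequence'' the paper has in mind: contractibility of $\dualG{E_T}/G$ gives simple connectivity, so the last clause of Theorem~\ref{thm.MPht} makes $\phi$ a homotopy equivalence, and contractibility transports across it. The paper gives no separate proof for this corollary, so there is nothing further to compare.
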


\begin{coro}\label{thm.homequiv2} 
If $T$ is smooth (e.g. $\dim(T)=0$ which means that $(X/G)_{sing}$ is isolated), 
then $\dualG {E_S}$ is contractible.
\end{coro}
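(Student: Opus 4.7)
The plan is to invoke Corollary \ref{thm.homequiv}: since the weak equivalence $\dualG{E_T}/G \to \dualG{E_S}$ provided by Theorem \ref{thm.MPht} is valid for any compatible pair of resolutions, it is enough to exhibit \emph{some} $G$-equivariant resolution $f:\tilde X \to X$ for which $\dualG{E_T}/G$ is contractible. The strategy is to make the specific choice of $\tilde X$ dictated by the smoothness of $T$.

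Since $X$ is smooth and $T \subset X$ is a smooth $G$-invariant closed subscheme, I would take $\tilde X$ to be the (automatically $G$-equivariant) blowup of $X$ along $T$. Then $\tilde X$ is smooth, $f$ is proper birational and an isomorphism outside $T$, and the exceptional divisor $E_T = f^{-1}(T)_{red}$ is the projectivized normal bundle $\mathbb P(N_{T/X})$. Because the connected components $T_1,\dots,T_r$ of the smooth scheme $T$ are pairwise disjoint,
\[
E_T \;=\; \coprod_{i=1}^{r} \mathbb P(N_{T_i/X})
\]
is a disjoint union of irreducible projective bundles. In particular the irreducible components of $E_T$ are mutually non-intersecting and permuted by $G$ (with $g(E_{T_i})=E_{T_{g(i)}}$), so $E_T$ is a $G$-strict simple normal crossing divisor in the sense of Definition \ref{def.Gstrict}.

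Since distinct irreducible components of $E_T$ do not meet, $\dualG{E_T}$ has only $0$-simplices, one per element of $\pi_0(T)$; hence $\dualG{E_T}/G$ is the finite discrete set $\pi_0(T)/G$. Working one connected component of $S$ at a time (the constructions and Theorem \ref{thm.MPht} respect the decomposition into connected components), I may reduce to the case where $S$ is a single connected component of $(X/G)_{sing}$. Then $T = \pi^{-1}(S)_{red}$ forms a single $G$-orbit of components, $\pi_0(T)/G$ consists of one point, and $\dualG{E_T}/G$ is therefore contractible. Corollary \ref{thm.homequiv} now yields the contractibility of $\dualG{E_S}$.

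The only mild obstacle is the verification that the blowup $E_T$ is genuinely $G$-strict SNC. This boils down to the fact that two distinct connected components of the smooth scheme $T$ are disjoint, so that their respective exceptional divisors remain disjoint after blowing up; all other properties (smoothness, $G$-equivariance, the description of $E_T$ as a projectivized normal bundle) are standard consequences of blowing up a smooth center in a smooth ambient scheme.
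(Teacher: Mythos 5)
Your proof is correct and follows the same route the paper takes, which is the one-liner: take $\tilde X$ to be the blowup of $X$ along the smooth center $T$, observe that $\Gamma(E_T)/G$ has only $0$-simplices, and invoke Corollary \ref{thm.homequiv}. You fill in the details that the paper leaves implicit (the exceptional locus is a disjoint union of irreducible projective bundles, hence trivially a $G$-strict simple normal crossing divisor, and $\Gamma(E_T)$ is a discrete set).

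One point where you are actually more careful than the printed proof deserves emphasis: a discrete set with more than one point is \emph{not} contractible, so the bare observation ``$\Gamma(E_T)/G$ consists only of $0$-simplices'' does not by itself yield contractibility. Your reduction to a single connected component of $S$ (under which $T/G\to S$ is a homeomorphism on underlying spaces, so $\pi_0(T)/G$ is a single point) is exactly what is needed to make the conclusion literally correct, and it is a genuine clarification of the paper's terse argument rather than a different method. The rest (smooth center $\Rightarrow$ exceptional divisor a disjoint union of $\mathbb{P}(N_{T_i/X})$'s, disjointness $\Rightarrow$ $G$-strict SNC with discrete dual complex) is the standard verification the paper silently relies upon; note only that if some $T_i$ has codimension one the blowup is the identity there and $E_{T_i}=T_i$, which is still irreducible and disjoint from the others, so your description survives unchanged.
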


Indeed, if $T$ is smooth, one can take $\tX$ to be the blowup of $X$ along $T$ and $\Gamma(E_T)/G$ consists only of $0$-simplices.

\begin{coro}\label{thm.homequiv3}
Let $A$ be a complete regular local ring containing $\Q$ and let $G$ be a finite group acting on
$A$. Set $X=\Spec (A)$ and assume that $X/G$ has an isolated singularity $s\in X/G$. Let $g:\tilde Y\to
X/G$ be a proper morphism such that $g$ is an isomorphism outside $s$ and $E_s=g^{-1}(s)_{red}$
is a simple normal crossing divisor in the regular scheme $\tilde Y$. Then the topological
space $\Gamma(E_s)$
is contractible.
\end{coro}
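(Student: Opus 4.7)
The plan is to reduce the complete-local setting of the corollary to the global quasi-projective setting where Corollary~\ref{thm.homequiv2} applies.

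\emph{Step 1 (Cohen structure and linearization).} By the Cohen structure theorem, since $A$ is a complete regular local ring containing $\Q$, after possibly enlarging the residue field (which does not affect the dual complex), one may write $A \cong k[[x_1,\ldots,x_n]]$ for a field $k$ of characteristic zero. The group $G$ preserves the maximal ideal $\mathfrak{m}$. By choosing any $k$-linear lift $\mathfrak{m}/\mathfrak{m}^2 \to \mathfrak{m}$ and averaging over $G$ (which is possible since $|G|$ is invertible in $k$), one obtains a $G$-equivariant isomorphism $A \cong k[[x_1,\ldots,x_n]]$ in which $G$ acts linearly on the coordinates $x_1,\ldots,x_n$ through its induced representation on $\mathfrak{m}/\mathfrak{m}^2$.

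\emph{Step 2 (Algebraic model).} This linear action arises from a linear $G$-action on $V = \bA^n_k$, a smooth quasi-projective $G$-scheme over $k$. The formal completion of $V/G$ at the origin $0 \in V/G$ is canonically isomorphic to $\Spec A^G = X/G$. Since $(X/G)_{\mathrm{sing}} = \{s\}$ is isolated by hypothesis, the origin is an isolated singularity of $V/G$, and after restricting to a $G$-stable affine open neighborhood of the origin in $V$ we may arrange that $V/G$ is smooth outside $0$.

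\emph{Step 3 (Apply the global McKay principle).} By Hironaka's theorem, choose a proper birational morphism $g_0: \tilde{Z} \to V/G$ that is an isomorphism outside $0$ and such that $E_0 = g_0^{-1}(0)_{\mathrm{red}}$ is a simple normal crossing divisor on the smooth scheme $\tilde{Z}$. Applying Corollary~\ref{thm.homequiv2} to the datum $(V,G)$ with $S = \{0\}$ --- which is zero-dimensional and hence smooth --- one concludes that $\Gamma(E_0)$ is contractible.

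\emph{Step 4 (Formal invariance) and main obstacle.} It remains to identify $\Gamma(E_s)$ with $\Gamma(E_0)$. Via Grothendieck's formal GAGA, the base change $\tilde{Z} \times_{V/G} X/G \to X/G$ is a proper birational resolution of $(X/G, s)$ whose exceptional divisor is canonically isomorphic to $E_0$, so its dual complex is $\Gamma(E_0)$. Stepanov's theorem and its generalizations (\cite{St}, \cite{ABW}, \cite{Pa}, \cite{Th}) then assert that all resolutions of $(X/G, s)$ with SNC exceptional divisor have homotopy-equivalent dual complexes, so $\Gamma(E_s) \simeq \Gamma(E_0)$, which is contractible. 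The principal technical difficulty is precisely this last step: one must know that the dual complex is an invariant of the singularity $(X/G, s)$ itself, not merely of a fixed algebraic model, so that the resolution of the formal completion can be legitimately compared with an algebraic one. As the authors note in the introduction, this formal/analytic invariance is most cleanly handled by Thuillier's Berkovich approach \cite{Th}, in which the dual complex is manifestly a formal invariant from the outset, bypassing the need for an explicit algebraization.
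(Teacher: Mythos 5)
Your Steps 1--3 are a reasonable way to produce \emph{some} resolution of $X/G$ with contractible dual complex, but you have correctly put your finger on the genuine gap in Step 4: the paper nowhere establishes that the homotopy type of the dual complex is an invariant of a resolution of a complete local scheme, and the Stepanov-type results it quotes (Theorem~\ref{thm.hidual} and \cite{St}, \cite{Pa}, \cite{ABW}) are all confined to the quasi-projective setting, because they rest on the weak factorization theorem \cite{Wl}, \cite{AKMW}. So you cannot appeal to them to compare the given $\tilde Y \to X/G$ with the base-changed resolution $\tilde Z \times_{V/G} X/G$. As you yourself note, the clean way to obtain such a formal invariance statement is Thuillier's Berkovich-space approach \cite{Th}, which the paper advertises as an alternative but does not use here.

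The paper takes a structurally different route that sidesteps this issue entirely. Rather than fixing a preferred algebraic model (the linearized action on $\bA^n_k$) and then trying to transport conclusions across the completion, it \emph{algebraizes the given resolution itself}. Concretely, it first algebraizes the isolated singularity $Y = X/G$ to a finite-type $k$-scheme $(Y',s')$ (Artin, \cite[Thm.~3.8]{Art}), then algebraizes the finite $G$-cover $X \to Y$ over $Y'$ (Elkik, \cite[Thm.~4]{Elk}), and finally uses Artin approximation once more to find a proper morphism $\tilde Y' \to Y'$ whose base change to $Y$ agrees with the given $\tilde Y \to Y$ modulo an arbitrarily high power of the maximal ideal. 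Choosing that power large enough forces $\tilde Y'$ to be regular, $E_{s'} = (\tilde Y')_{s'}^{\rm red}$ to be simple normal crossing, and $E_{s'}$ to coincide as a scheme with the given $E_s$ (both being cut out by a bounded-order ideal in $\tilde Y\times_Y \Spec(A^G/\mathfrak m^N)$). Corollary~\ref{thm.homequiv2} then applies directly to the algebraic data $(X',G,\tilde Y')$ and yields contractibility of $\Gamma(E_{s'}) = \Gamma(E_s)$. The essential difference from your proposal: the paper never has to compare two different resolutions over the complete local ring; the comparison is collapsed to the tautology $E_{s'} = E_s$ built into the approximation, so no formal invariance statement for dual complexes is needed. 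If you want to salvage your approach without Berkovich techniques, you would have to approximate the given $\tilde Y$ anyway --- at which point the linearization in Step~1 has bought you nothing and you have rederived the paper's proof.
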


\begin{proof}
Note that $Y=X/G=\Spec(A^G)$ is automatically noetherian, $X\to Y$ is finite and $Y$ is complete, because we are in characteristic
zero \cite[Sec.\ 1.2]{Mum}.
 One observes that the morphism
$X\to Y$ is flat and locally complete intersection outside $s$, see \cite[Thm.\ 46, Thm.\
36(4)]{Mat}. 

By algebraization of isolated singularities \cite[Thm.\ 3.8]{Art} the
complete local scheme
$Y$ results from the completion at an isolated singularity $s'\in Y'$ of a scheme $Y'$ of finite
type over the residue field $k$ of $A^G$. By \cite[Thm.\ 4]{Elk} the morphism $X\to Y$
algebraizes to a morphism $X'\to Y'$ together with the group action $G$ on $X'/Y'$, after possibly replacing $Y'$ by an \'etale
neighborbood of $s'$. Again after shrinking $Y'$ around $s'$ and using Artin approximation \cite{Art} one finds a proper morphism
$\tilde Y' \to Y'$ such that the
base change $\tilde Y' \times_{Y'} Y$ coincides with $\tilde Y \to Y$ to some arbitrary
high order with respect to powers of the maximal ideal of $A^G$.
 By choosing the order large one can make sure that $\tilde Y' \to Y'$ is an
isomorphism outside $s'$, that $Y'$ is regular and that the reduced preimage $E_{s'}$ of $s'$ in $\tilde Y'$
is a simple normal crossing divisor. 

It follows from Corollary~\ref{thm.homequiv2} that $\Gamma( E_{s'}) =\Gamma(E_{s}) $ is contractible.
\end{proof}

\bigskip
\noindent
{\it Proof of Theorem \ref{thm.MPht}}
\medbreak
The last assertion follows from the first using the relative version of the Hurewicz theorem (cf. \cite[4.33]{Hat}).
Choose a $G$-equivariant proper birational morphism $h:\tX' \to \tX$ such that
\begin{itemize}
\item
$\tX'$ is smooth, $E'_S=h^{-1}(E_T)_{red}$ is a $G$-strict simple normal crossing divisor on $\tX'$ 
and $h$ is an isomorphism over $\tX-E_T$,
\item
there exists a $G$-equivariant proper morphism $\tX' \to \tY$.
\end{itemize}
Thus we get a commutative diagram 
\[
\xymatrix{
T \ar[d] &\ar[l] E_T\ar[d] &\ar[l] E'_T\ar[d]\\
 X \ar[d]^\pi & \ar[l]  \tX   & \ar[l]_h \tX'  \ar[ld] &   \\
{X/G} &  \ar[l] \tY & \\
}
\]
and a $G$-equivariant map $\rho: E'_T\to E_T$. 

\begin{claim}\label{MP.claim1}
 $\rho$ induces a $G$-equivariant homotopy equivalence $\Gamma(E'_T) \underset{h.e.}{\simeq} \Gamma(E_T)$
and hence a homotopy equivalence $\Gamma(E'_T)/G \underset{h.e.}{\simeq} \Gamma(E_T)/G$.
\end{claim}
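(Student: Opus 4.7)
The plan is to adapt the classical birational invariance of the dual complex of a simple normal crossing divisor (after \cite{St}, \cite{Pa}, \cite{Th}) to the $G$-equivariant setting, using canonical $G$-equivariant resolution of singularities \cite{BM} together with an elementary combinatorial analysis of each blowup step.

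First, I would factor the $G$-equivariant proper birational morphism $h:\tX'\to\tX$ as a finite composition
\[
\tX = \tX_0 \leftarrow \tX_1 \leftarrow \cdots \leftarrow \tX_N = \tX'
\]
of $G$-equivariant blowups along smooth $G$-invariant centers $C_i\subset\tX_i$, each in normal crossings with the reduced preimage $D_i$ of $E_T$ in $\tX_i$. By splitting each center into its $G$-orbits we may further assume $C_i$ is $G$-primitive. Since $h$ is an isomorphism outside $E_T$, every $C_i$ lies in $D_i$, say inside the closed stratum $\bigcap_{v\in\sigma_i}(D_i)_v$ for a minimal index set $\sigma_i$.

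Next, for each such elementary blowup $\tX_{i+1}\to\tX_i$, I would verify that the induced map on dual complexes $\Gamma(D_{i+1})\to\Gamma(D_i)$ is a $G$-equivariant deformation retraction. The blowup introduces one new irreducible component (the exceptional divisor $E$), hence a new vertex $v_E$ in $\Gamma(D_{i+1})$, together with new simplices spanned by $v_E$ and the subfaces of $\sigma_i$; the simplicial map collapsing $v_E$ to a chosen vertex of $\sigma_i$ (as in Lemma~\ref{lem1.dualG}) is the desired retraction, and the independence-of-choice statement in Lemma~\ref{lem1.dualG} ensures it is canonical up to $G$-equivariant homotopy. Composing these retractions yields the $G$-equivariant homotopy equivalence $\Gamma(E'_T)\simeq\Gamma(E_T)$, from which the quotient statement $\Gamma(E'_T)/G \simeq \Gamma(E_T)/G$ follows formally, since $G$-equivariant homotopy equivalences of $G$-$CW$-complexes descend to homotopy equivalences of orbit spaces.

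The hard part will be the subcase where $C_i$ is strictly contained in its ambient closed stratum $\bigcap_{v\in\sigma_i}(D_i)_v$, so that the change in the dual complex is not a single stellar subdivision but a more elaborate star-shaped modification whose status as an equivariant deformation retract requires careful verification. A conceptually cleaner alternative would be to invoke Thuillier's identification \cite{Th} of $\Gamma(E_T)$ with the skeleton of the Berkovich analytification of $\tX\setminus E_T$, which is an intrinsic invariant of this open $G$-scheme and so manifestly independent of the compactification, as noted in the introductory remark on \cite{Th}.
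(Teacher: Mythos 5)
Your proposal has the right high-level idea but contains a genuine gap in the first step. You claim that, using canonical $G$-equivariant resolution of singularities \cite{BM}, the proper birational morphism $h:\tX'\to\tX$ between \emph{smooth} $G$-schemes can be factored as a one-directional chain of blowups $\tX\la\tX_1\la\cdots\la\tX'$ along smooth $G$-invariant centers in normal crossings with the accumulated boundary. This is the \emph{strong} factorization conjecture, which is not a consequence of resolution of singularities and remains open in dimension $\ge 3$; resolution of singularities lets you dominate both $\tX$ and $\tX'$ by a common sequence of blowups of $\tX$, but it does not factor $h$ itself. What is actually available is the \emph{weak} factorization theorem — a zigzag $\tX\la\tX_1\rightarrow\tX_2\la\cdots\rightarrow\tX'$ of blowups and blowdowns — and its $G$-equivariant form \cite[Theorem~0.3.1 and Remark~(2)]{AKMW}. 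The paper's own proof of Claim~\ref{MP.claim1} proceeds exactly this way: it reduces to the general statement Theorem~\ref{thm.hidual} (the $G$-equivariant version of Stepanov's theorem on the homotopy invariance of dual complexes), which is obtained from the non-equivariant versions of \cite{St}, \cite{Pa}, \cite{ABW} by replacing Wlodarczyk's weak factorization with the equivariant one of \cite{AKMW}. So you would need to replace your appeal to \cite{BM} with an appeal to equivariant weak factorization, and then the zigzag structure (rather than a straight composite of retractions) forces the argument to be organized as ``each elementary blowup induces a $G$-homotopy equivalence,'' combined through the zigzag.

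Beyond the factorization issue, the step you flag as ``the hard part'' — the elementary blowup whose center is strictly smaller than the ambient stratum, so that the change in $\Gamma$ is not a stellar subdivision — is precisely the combinatorial content of Stepanov's theorem and its generalizations, and your proposal stops short of verifying it. As written, the proposal re-derives the shape of Stepanov's argument without completing it; citing Theorem~\ref{thm.hidual} (equivalently, \cite{St}, \cite[Theorem~1.1]{Pa}, \cite[Theorem~7.5]{ABW} plus \cite{AKMW}) is what closes the gap. Your final remark about Thuillier's skeleton \cite{Th} is a genuinely different route (it avoids resolution of singularities and weak factorization entirely), and the paper's introduction acknowledges that Thuillier's Berkovich-analytic approach recovers the main results; however, making this work here would require arguing that the $G$-action on the skeleton of the analytification of $\tX\sm E_T$ is compatible with the identification $\Gamma(E_T)\simeq\Gamma(E'_T)$, which you do not carry out.
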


The claim is a consequence of the following general fact:

\begin{theo}\label{thm.hidual}
Let $G$ be a group.
Let $\phi: X' \to X$ be a $G$-equivariant proper morphism of smooth projective $G$-schemes over $k$ such that
$\phi$ is an isomorphism over a dense open subset $U\subset X$ and such that $E=X-U$
(resp. $E'=X'-\phi^{-1}(U)$) is a $G$-strict simple normal crossing divisor on 
$X$ (resp. $X'$). Then the induced map $\Gamma(E')\to \Gamma(E)$ is a $G$-equivariant homotopy equivalence.
\end{theo}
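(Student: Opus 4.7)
The plan is to establish the $G$-equivariant homotopy equivalence by verifying a weak equivalence on each $H$-fixed subcomplex $\phi^H \colon \Gamma(E')^H \to \Gamma(E)^H$ for every subgroup $H \leq G$, and then invoking the equivariant Whitehead theorem for $G$-CW complexes. At each such $H$ the required weak equivalence will combine the weight homology machinery developed in the paper (for homology) with the cs-fundamental group identification of \S\ref{csfundgroup} (for $\pi_1$).

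For the non-equivariant case ($G$ trivial) I would argue as follows. By Theorem~\ref{thm.Whom}~$(i)$, the smooth projective $X$ and $X'$ satisfy $H^W_0(-;M) \cong M^{\pi_0(-)}$ with higher weight homology vanishing, and $\phi_\ast$ is a bijection on connected components. Comparing the localization long exact sequences (Definition~\ref{def.eqhom}~$(ii)$) for $E \hookrightarrow X$ and $E' \hookrightarrow X'$, and using the isomorphism $X'-E' \stackrel{\sim}{\to} X-E$ induced by $\phi$, the 5-lemma yields $\phi_\ast \colon H^W_\ast(E';M) \stackrel{\sim}{\to} H^W_\ast(E;M)$ for every $\Lambda$-module $M$. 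Example~\ref{Whom.ex} identifies these groups with the singular homology of $\Gamma(E')$ and $\Gamma(E)$, giving the homology equivalence. For the fundamental group, Proposition~\ref{prop.cspi1} identifies $\pi_1(\Gamma(E)) \cong \pi_1^{cs}(E)$, and a stratum-by-stratum analysis of cs-coverings (the fibers of $\phi$ over each smooth stratum $E_\sigma$ being connected by Zariski's connectedness theorem) yields an isomorphism $\pi_1^{cs}(E') \stackrel{\sim}{\to} \pi_1^{cs}(E)$. Whitehead's theorem then gives the non-equivariant homotopy equivalence $\Gamma(E') \simeq \Gamma(E)$.

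To promote to the $G$-equivariant statement I would use the equivariant Whitehead theorem: a $G$-map of $G$-CW complexes is a $G$-homotopy equivalence iff $\phi^H$ is a weak equivalence on every $H$-fixed subcomplex. The $G$-strictness of $E$ (Definition~\ref{def.Gstrict}) guarantees that $\Gamma(E)^H$ is the bona fide subcomplex spanned by those simplices whose vertex-components are all $H$-stable and whose underlying connected component of the intersection is $H$-fixed. The previous paragraph's argument is then replayed with $H$ in place of the trivial group, using equivariant weight homology with the McKay principle (Theorem~\ref{thm.Whomgraph}) on the homology side and the corresponding $H$-equivariant cs-covering analysis on the $\pi_1$ side.

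The main obstacle is precisely this last step: the subcomplex $\Gamma(E)^H$ is in general \emph{not} the dual complex of the SNCD on $X^H$ cut out by the $H$-stable components of $E$, because a single $H$-stable irreducible component $E_i$ may contribute several connected components to $(E_i)^H$ while adding only one vertex to $\Gamma(E)^H$. The cleanest way to sidestep this mismatch is to invoke equivariant weak factorization, available under the canonical-resolution hypothesis of the paper, reducing to the case of a single $G$-equivariant blow-up along a smooth $G$-invariant center contained in a stratum of $E$. For such a blow-up the dual complex transforms by a star subdivision along the face associated to the center, which is a $G$-equivariant homotopy equivalence by direct combinatorial inspection, and the full theorem then follows by composing the contributions of the individual blow-ups.
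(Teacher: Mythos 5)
The paper's proof of this theorem is a one-line reduction: it cites Stepanov's theorem and the generalizations of Payne and Arapura--Bakhtary--W\l odarczyk for the non-equivariant case, and says the equivariant case follows by the same argument once the weak factorization theorem of W\l odarczyk is replaced by its equivariant version in \cite[Thm.~0.3.1 and Remark (2)]{AKMW}. Your final paragraph arrives at essentially this approach, but the route you take to get there contains a genuine gap and a misdiagnosis that are worth spelling out.

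The central flaw in your first three paragraphs is the appeal to Whitehead's theorem from isomorphisms on $H_*$ and $\pi_1$ alone. Whitehead requires a \emph{weak} homotopy equivalence, i.e.\ isomorphisms on all homotopy groups, and $H_*$-iso together with $\pi_1$-iso does not give this unless the spaces are simply connected (or at least one has control of the homology of universal covers, local coefficients, nilpotence, etc.). The paper is careful about precisely this point: Theorem~\ref{thm.MPht} asserts $H_*$- and $\pi_1$-isomorphisms in general and upgrades to a homotopy equivalence only under a simple-connectedness hypothesis. Theorem~\ref{thm.hidual}, by contrast, asserts an unconditional (equivariant) homotopy equivalence, and it is used as a tool inside the proof of Theorem~\ref{thm.MPht}; it cannot be derived from the weaker $H_*/\pi_1$ package you build in paragraphs 1--3. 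Your diagnosis that the real obstacle is the description of $\Gamma(E)^H$ is therefore a red herring: even in the non-equivariant case your plan does not yield the claimed homotopy equivalence.

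Your last paragraph is the right idea and matches the paper: reduce via equivariant weak factorization \cite{AKMW} to a zigzag of $G$-equivariant blow-ups along smooth $G$-invariant centers having normal crossings with the boundary, and observe that each such blow-up induces an explicit $G$-equivariant homotopy equivalence (essentially a star subdivision or a collapse) of the dual complex, following \cite{St}, \cite{Pa}, \cite{ABW}. Two cautions if you wanted to make this precise: weak factorization produces a zigzag of blow-ups and blow-downs, not a monotone chain, and one must verify at each intermediate stage that the boundary divisor remains $G$-strict simple normal crossing so that the dual complex and its $G$-action are well-defined; both points are handled in the cited non-equivariant proofs and transfer once one has the equivariant factorization. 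If you keep only this last paragraph and cite the precise references, your proof coincides with the paper's.
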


The case $G=e$ is Stepanov's theorem \cite{St} and its generalizations \cite[Theorem 1.1]{Pa} and \cite[Theorem 7.5]{ABW}.
The $G$-equivariant version is proved by the exactly same argument by replacing the weak factorization theorem of Wlodarczyk \cite{Wl} with its equivariant version 
\cite[Theorem 0.3.1 and Remark (2)]{AKMW}.
$\square$

\bigskip

By Claim \ref{MP.claim1} we may assume that there exist a morphism $\phi: \tX/G\to \tY$.
Take a proper birational morphism $h:\tY' \to \tX/G$ such that $\tY'$ is smooth, $E'_S=h^{-1}(E_T/G)_{red}$ is a simple normal crossing divisor on $\tY'$ 
and $h$ is an isomorphism on $\tY'- E'_S$. Thus we get a commutative diagram:
\begin{equation}\label{MP.eq2}
\begin{matrix}
\pi^{-1}(S)=\hskip -10pt&T &\gets & E_T \\
 & \downarrow & &\downarrow \\&
  X &\lmapo{f}& \tX    \\
 & \downarrow\rlap{$\pi$} &&   &\searrow  \\
 & {X/G} &\lmapo{g}&   \tY  &\lmapo{\phi} & \tX/G  &\lmapo{h} & \tY'\\
 & \uparrow & &\uparrow& &\uparrow & &\uparrow& \\
 & S &\gets & E_S &\gets & E_T/G &\gets & E'_S
\end{matrix}
\end{equation}
From the diagram we get maps of $CW$-complexes:
\[
\Gamma(E'_S) \rmapo{h_*} \Gamma(E_T)/G \rmapo{\phi_*} \Gamma(E_S).
\]
The composite of these maps is a homotopy equivalence by Theorem \ref{thm.hidual}.
Hence the assertion on the fundamental groups follows if one shows the surjectivity of the induced maps
\[
\pi_1(\Gamma(E'_S)) \rmapo{h_*} \pi_1(\Gamma(E_T)/G) \rmapo{\phi_*} \pi_1(\Gamma(E_S)).
\]
Noting $\tY'\to \tX/G \to \tY$ have geometrically connected fibers by Zariski's main theorem, the assertion follows from
Proposition \ref{prop.cspi1}, Corollary \ref{csfundgroup.thm} and Proposition \ref{thm.pics}.
\medbreak

It remains to show the assertion on the homology groups. Recall that we are given a
commutative diagram 
\begin{equation}\label{MP.eq3}
\begin{CD}
\pi^{-1}(S)_{red}=\; @. T @>>>  X @<{f}<< \tX @<<< E_T = f^{-1}(T)_{red}   \\
 @. @VVV @VV{\pi}V @VV{\tilde{\pi}}V\\
 @. S  @>>> {X/G} @<{g}<<   \tY  @<<< E_S=g^{-1}(S)_{red} 
\end{CD}
\end{equation}
where the assumption is the same as in the beginning of this section.
Note that we are now given $\tilde{\pi}:\tX\to \tY$ which extends $\pi: X\to X/G$.
It induces a map $\phi: E_T/G \to E_S$ and we are ought to show that it induces isomorphisms on the homology groups
\[
H_a(\Gamma(E_T)/G) \isom H_a(\Gamma(E_S))\quad (a\in \bZ).
\]
Since $S$ (and hence $T$) is assumed projective over $k$, we may choose $X$, $\tX$ and $\tY$ 
projective over $k$. By Example \ref{Whom.ex},
the homology groups of the dual complexes are described as the weight homology groups:
\[
H_a(\dualG {E_T}/G,M)\simeq \HWGM a {E_T},\quad 
H_a(\dualG {E_S},M)\simeq \HWM a {E_S},
\]
where $M$ is any coefficient module.
Thus the desired assertion follows from the following claim which holds over a field $k$ of arbitrary characteristic.

\begin{claim}\label{thm.fundgroup0} 
Let the assumption be as \eqref{MP.eq3} and assume that $X$, $\tX$ and $\tY$ are projective over $k$.
Assume the condition $(\bigstar)$ of Theorem \ref{thm.Whom}.
For a $\Lam$-module $M$, the map $(E_T,G)\to (E_S,e)$ in $\eqsch$ induces an isomorphism
\[
\HWGM a {E_T}\simeq \HWM a {E_S}.
\]
\end{claim}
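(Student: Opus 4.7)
The plan is to combine two applications of McKay's principle for weight homology (Theorem \ref{thm.Whomgraph}) with a five-lemma argument on localization sequences. First, I observe that $\tilde\pi:\tX\to\tY$ is $G$-equivariant with $G$ acting trivially on $\tY$, so it descends to a proper morphism $\bar\pi:\tX/G\to\tY$; both $\tX/G$ and $\tY$ are birational to $X/G$, hence $\bar\pi$ is birational. Set-theoretically $\tilde\pi^{-1}(E_S)=E_T$ since both are the reduced preimages of $T=\pi^{-1}(S)$, so $\tilde\pi$ restricts to $E_T\to E_S$ and descends to a morphism $\phi:E_T/G\to E_S$; on the open complements $\bar\pi$ restricts to an isomorphism $\tX/G-E_T/G\isom\tY-E_S$, both canonically identified with $X/G-S$. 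By Theorem \ref{thm.Whomgraph} applied to $(E_T,G)$, the quotient map induces $\HWGM *{E_T}\isom\HWM *{E_T/G}$, so it suffices to show that $\phi_*:\HWM a{E_T/G}\to\HWM a{E_S}$ is an isomorphism for every $a$.

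To this end I will use the commutative ladder of localization sequences for the closed immersions $E_T/G\hookrightarrow\tX/G$ and $E_S\hookrightarrow\tY$, with vertical maps induced by $\bar\pi$ and $\phi$:
\begin{small}
\[
\xymatrix@C=4pt{
\HWM{a+1}{\tX/G} \ar[r] \ar[d]^{\bar\pi_*} & \HWM{a+1}{\tX/G-E_T/G} \ar[r] \ar[d]^{\simeq} & \HWM a{E_T/G} \ar[r] \ar[d]^{\phi_*} & \HWM a{\tX/G} \ar[r] \ar[d]^{\bar\pi_*} & \HWM a{\tX/G-E_T/G} \ar[d]^{\simeq} \\
\HWM{a+1}{\tY} \ar[r] & \HWM{a+1}{\tY-E_S} \ar[r] & \HWM a{E_S} \ar[r] & \HWM a{\tY} \ar[r] & \HWM a{\tY-E_S} .
}
\]
\end{small}
The second and fifth vertical maps are isomorphisms because $\bar\pi$ restricts to an isomorphism of complements, so by the five-lemma it will suffice to show that $\bar\pi_*:\HWM *{\tX/G}\to\HWM *{\tY}$ is an isomorphism in every degree.

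The main obstacle is this last assertion. The strategy is to apply Theorem \ref{thm.Whomgraph} once more to rewrite $\HWM *{\tX/G}\cong\HWGM *{\tX}$, so that $\bar\pi_*$ corresponds to the pushforward along $\tilde\pi$ viewed as a morphism $(\tX,G)\to(\tY,e)$ in $\cSeq$. By Theorem \ref{thm.Whom}$(i)$, both $\HWGM a{\tX}$ and $\HWM a{\tY}$ vanish for $a\neq 0$, while in degree $0$ they equal $M^{\tX^{(0)}/G}$ and $M^{\tY^{(0)}}$ respectively, with pushforward induced by the natural map of sets $\tX^{(0)}/G\to\tY^{(0)}$. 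The remaining point to verify is that this map of sets is a bijection: since $\bar\pi:\tX/G\to\tY$ is proper birational and restricts to an isomorphism on the dense open $\tX/G-E_T/G\cong\tY-E_S$, it induces a bijection between irreducible components of $\tX/G$ and of $\tY$, while irreducible components of $\tX/G$ are in natural bijection with $G$-orbits of irreducible components of $\tX$. Composing these bijections gives the required isomorphism and completes the proof.
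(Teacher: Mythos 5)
Your proof is correct and takes essentially the same route as the paper: a localization-sequence ladder, with the middle and outer verticals handled by the McKay principle (Theorem~\ref{thm.Whomgraph}) and Theorem~\ref{thm.Whom}$(i)$ respectively. The only cosmetic difference is that you first pass everything to the quotient $\tX/G$ and work with non-equivariant weight homology (invoking McKay twice), whereas the paper writes the ladder directly for the $G$-equivariant theory of $E_T\hookrightarrow\tX$ versus the non-equivariant theory of $E_S\hookrightarrow\tY$ and invokes McKay once; you also spell out the bijection $\tX^{(0)}/G\to\tY^{(0)}$ behind the degree-$0$ isomorphism, a detail the paper leaves implicit.
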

\begin{proof}
We have a commutative diagram with exact rows
\begin{small}
\[
\xymatrix@C=20pt{
 \HWGM {a+1} \tX \ar[d] \ar[r] & \HWGM {a+1} {\tX-E_T} \ar[r] \ar[d]^{\simeq} & 
 \HWGM {a} {E_T} \ar[d] \ar[r] &  \HWGM {a} \tX \ar[d]  \\
 \HWM {a+1} {\tY} \ar[r]  &  \HWM {a+1} {\tY-E_S} \ar[r] &   \HWM {a} {E_S} \ar[r]  &   \HWM {a} \tY .
}
\]
\end{small}
The second vertical isomorphism follows from Theorem \ref{thm.Whomgraph} 
noting $(\tX-E_T)/G \cong \tY-E_S$.
By Theorem \ref{thm.Whom}$(i)$ this implies the desired isomorphism of the claim.
\end{proof}

\bigskip

\bigskip

\end{document}